\numberwithin{equation}{section}
\title[Manin's conjecture for two singular quartic del Pezzo surfaces]{Manin's conjecture for two quartic del Pezzo surfaces with
$3 \mathbf{A}_1$ and $\mathbf{A}_1 + \mathbf{A}_2$ singularity types}
\author{Pierre Le Boudec}
\subjclass{$11$D$45$, $14$G$05$}
\keywords{Rational points, Manin's conjecture, del Pezzo surfaces, universal torsors, Kloosterman sums}
\address{Université Denis Diderot (Paris VII) \\ Institut de Mathématiques de Jussieu \\ UMR 7586 \\ Case $7012$ - Bâtiment Chevaleret \\ Bureau $7$C$14$ \\ $75205$ Paris Cedex 13, France}
\email{pleboude@math.jussieu.fr}
\begin{document}

\makeatletter
\def\imod#1{\allowbreak\mkern10mu({\operator@font mod}\,\,#1)}
\makeatother

\newtheorem{lemma}{Lemma}
\newtheorem{theorem}{Theorem}

\newcommand{\vol}{\operatorname{vol}}
\newcommand{\D}{\mathrm{d}}
\newcommand{\rank}{\operatorname{rank}}
\newcommand{\Pic}{\operatorname{Pic}}
\newcommand{\Gal}{\operatorname{Gal}}
\newcommand{\meas}{\operatorname{meas}}
\newcommand{\Spec}{\operatorname{Spec}}

\begin{abstract}
We prove Manin's conjecture for two del Pezzo surfaces of degree four which are split over $\mathbb{Q}$ and whose singularity types are respectively $3 \mathbf{A}_1$ and $\mathbf{A}_1 + \mathbf{A}_2$. For this, we study a certain restricted divisor function and use a result about the equidistribution of its values in arithmetic progressions. In this task, Weil's bound for Kloosterman sums plays a key role.
\end{abstract}

\maketitle

\tableofcontents

\section{Introduction}

Let $V \subset \mathbb{P}^n$ be a singular del Pezzo surface defined over $\mathbb{Q}$ and anticanonically embedded and let
$U \subset V$ be the open subset formed by deleting the lines from $V$. Manin's conjecture \cite{MR974910} predicts the asymptotic behaviour of the number of rational points of bounded height on $U$, namely of the quantity
\begin{eqnarray}
\label{N_U}
N_{U,H}(B) & = & \# \{x \in U(\mathbb{Q}), H(x) \leq B \} \textrm{,}
\end{eqnarray}
where $H : \mathbb{P}^n(\mathbb{Q}) \to \mathbb{R}_{> 0}$ is the exponential height defined for
$(x_0, \dots, x_n) \in \mathbb{Z}^{n+1}$ such that
$\gcd(x_0, \dots, x_n) = 1$ by
\begin{eqnarray*}
H(x_0: \dots :x_n) & = & \max \{ |x_i|, 0 \leq i \leq n \}\textrm{.}
\end{eqnarray*}
More precisely, if $\widetilde{V}$ denotes the minimal desingularization of $V$, it is expected that
\begin{eqnarray}
\label{Manin's conjecture}
N_{U,H}(B) & = & c_{V,H} B \log(B)^{\rho -1} (1+o(1)) \textrm{,}
\end{eqnarray}
where $c_{V,H}$ is a constant which has been given a conjectural interpretation from Peyre \cite{MR1340296} and where
$\rho = \rho_{\widetilde{V}}$ is the rank of the Picard group of $\widetilde{V}$.

In this paper, we are interested in singular del Pezzo surfaces of degree four. These surfaces can be defined as the intersection of two quadrics in $\mathbb{P}^4$. Their classification is well-known and can be found in the work of Coray and Tsfasman \cite{MR940430}. Up to isomorphism over $\overline{\mathbb{Q}}$, there are fifteen types of such surfaces and they are categorized by their extended Dynkin diagrams, which describe the intersection behaviour of the negative curves on the minimal desingularizations (see for instance \cite[Table $4$]{D-hyper}).

From now on, we restrict our attention to surfaces which are split over $\mathbb{Q}$. Manin's conjecture is already known to hold for seven surfaces of different types. Batyrev and Tschinkel have proved it for toric varieties \cite{MR1620682} (which covers the three types $4 \mathbf{A}_1$, $2 \mathbf{A}_1 + \mathbf{A}_2$ and $2 \mathbf{A}_1 + \mathbf{A}_3$) and Chambert-Loir and Tschinkel have proved it for equivariant compactifications of vector groups \cite{MR1906155} (which covers the type $\mathbf{D}_5$). In these two proofs, the conjecture follows from the study of the height Zeta function
\begin{eqnarray*}
Z_{U,H}(s) & = & \sum_{x \in U(\mathbb{Q})} H(x)^{-s} \textrm{,}
\end{eqnarray*}
which is well-defined for $\Re(s) \gg 1$, using techniques coming from harmonic analysis. Let us note that for a certain surface of type $\mathbf{D}_5$, la Bretèche and Browning have given an independent proof \cite{MR2320172}. Furthermore, they have proved the following result, which is much stronger than \eqref{Manin's conjecture}. There exists a monic polynomial of degree $5 = \rho - 1$ such that for any fixed $\varepsilon >0$,
\begin{eqnarray}
\label{ImprovedManin}
N_{U,H}(B) & = & c_{V,H} B P(\log(B)) + O \left( B^{11/12+ \varepsilon} \right) \textrm{.}
\end{eqnarray}
Manin's conjecture has also been proved for three other surfaces, a surface of type $\mathbf{D}_4$ by Derenthal and Tschinkel \cite{MR2290499}, a surface of type $\mathbf{A}_1 + \mathbf{A}_3$ by Derenthal \cite{MR2520770} and a surface of type $\mathbf{A}_4$ by Browning and Derenthal \cite{MR2543667}. These proofs are intrinsically very different from those using harmonic analysis. They use a passage to universal torsors, which consists in defining a bijection between the set of points to be counted on $U$ and a certain set of integral points on an affine variety of higher dimension. This can be done using only elementary techniques (see section \ref{torsor section} for an example).

The aim of this paper is to give a proof of Manin's conjecture for two other surfaces which are split over $\mathbb{Q}$. The first, $V_1 \subset \mathbb{P}^4$, has singularity type $3 \mathbf{A}_1$ and is defined as the intersection of the two quadrics
\begin{eqnarray*}
x_0 x_1 - x_2^2 & = & 0 \textrm{,} \\
x_2^2 + x_1 x_2 + x_3 x_4 & = & 0 \textrm{.}
\end{eqnarray*}
We denote by $U_1$ the complement of the lines in $V_1$ and $N_{U_1,H}(B)$ is defined as in \eqref{N_U}. There are six lines on $V_1$ and they are given by $x_i = x_2 = x_j = 0$ and $x_0 + x_2 = x_1 + x_2 = x_j = 0$ for $i \in \{0,1\}$ and $j \in \{3,4\}$. The three singularities of $V_1$ are $(1:0:0:0:0)$, $(0:0:0:1:0)$ and $(0:0:0:0:1)$. We see that $V_1$ is actually split over $\mathbb{Q}$ and thus, if $\widetilde{V_1}$ denotes the minimal desingularization of $V_1$, the Picard group of $\widetilde{V_1}$ has rank
$\rho_1 = 6$. The universal torsor we use is an open subset of the hypersurface embedded in
$\mathbb{A}^9 \simeq \Spec \left( \mathbb{Q}[\eta_1, \dots, \eta_9] \right)$ and defined by
\begin{eqnarray}
\label{torsor 1}
\eta_4\eta_5 + \eta_1\eta_6\eta_7 + \eta_8\eta_9 & = & 0 \textrm{.}
\end{eqnarray}
Note that the two variables $\eta_2$ and $\eta_3$ do not appear in the equation.

The second surface $V_2 \subset \mathbb{P}^4$ has singularity type $\mathbf{A}_1 + \mathbf{A}_2$ and is defined as the intersection of the two quadrics
\begin{eqnarray*}
x_0 x_1 - x_2 x_3 & = & 0 \textrm{,} \\
x_1 x_2 + x_2 x_4 + x_3 x_4 & = & 0 \textrm{.}
\end{eqnarray*}
The open subset $U_2$, $N_{U_2,H}(B)$ and $\widetilde{V_2}$ are defined in a similar way. There are also six lines on $V_2$ and they are given by $x_i = x_2 = x_j = 0$ for $i \in \{0,1\}$ and $j \in \{3,4\}$, $x_1 = x_3 = x_4 = 0$ and
$x_0 = x_3 = x_1 + x_4 = 0$. The two singularities of $V_2$ are $(1:0:0:0:0)$ and $(0:0:0:0:1)$, of type $\mathbf{A}_2$ and $\mathbf{A}_1$ respectively. Just as before we have $\rho_2 = 6$. In this case, the universal torsor we use is an open subset of the hypersurface embedded in $\mathbb{A}^9 \simeq \Spec \left( \mathbb{Q}[\xi_1, \dots, \xi_9] \right)$ and defined by
\begin{eqnarray}
\label{torsor 2}
\xi_4\xi_5 + \xi_1^2\xi_6\xi_7 + \xi_8\xi_9 & = & 0 \textrm{.}
\end{eqnarray}
We immediately see that the equations \eqref{torsor 1} and \eqref{torsor 2} are very much alike and it is not hard to imagine that the proofs have strong similarities, that is why we have decided to couple them in this paper.

This work has been motivated by a result of Browning \cite[Theorem 3]{MR2362193}. Using the equation \eqref{torsor 1} of the universal torsor described above, he has proved the upper bound of the expected order of magnitude for $N_{U_1,H}(B)$, namely
\begin{eqnarray}
\label{bound N_{U_1,H}}
N_{U_1,H}(B) & \ll & B \log(B)^5 \textrm{.}
\end{eqnarray}

In most of the proofs of Manin's conjecture for del Pezzo surfaces using universal torsors, the first step consists in summing over two variables, viewing the torsor equation as a congruence and counting the number of integers lying in a prescribed region and satisfying this congruence. The novelty here is that we start by summing over three variables instead. In our two cases, this is linked to studying the distribution of the values of a certain restricted divisor function in arithmetic progressions. In this task, Weil's bound for Kloosterman sums plays a crucial role. Our result is the following.

\begin{theorem}
\label{Main Theorem}
For $i = 1, 2$, as $B$ tends to $+ \infty$, we have the estimate
\begin{eqnarray*}
N_{U_i,H}(B) & = & c_{V_i,H} B \log(B)^{5} \left( 1 + O \left( \frac{\log(\log(B))}{\log(B)} \right) \right) \textrm{,}
\end{eqnarray*}
where $c_{V_1,H}$ and $c_{V_2,H}$ agree with Peyre's prediction.
\end{theorem}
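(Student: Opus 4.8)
The plan is to establish a bijection between the rational points on $U_i$ counted by $N_{U_i,H}(B)$ and integral points on an open subset of the universal torsor given by \eqref{torsor 1}, respectively \eqref{torsor 2}, together with explicit coprimality conditions and a reformulation of the height bound $H(x) \leq B$ as a region in the torsor variables. This passage is carried out by elementary means, as indicated in the reference to section \ref{torsor section}: one lifts a point of $U_i(\mathbb{Q})$ to a primitive integer vector $(x_0, \dots, x_4)$, successively factors the coordinates using the quadric equations (e.g.\ from $x_0 x_1 = x_2^2$ one extracts greatest common divisors to introduce the $\eta_j$ or $\xi_j$), and checks that the resulting map is a bijection onto the set of $(\eta_1, \dots, \eta_9) \in \mathbb{Z}^9$ (respectively $(\xi_1, \dots, \xi_9)$) lying on the hypersurface, subject to finitely many coprimality constraints and sign/positivity conventions. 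At this stage $N_{U_i,H}(B)$ is expressed as a sum over the torsor variables of the indicator of these conditions, restricted to a box whose shape depends on $B$.

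**Next I would** carry out the main analytic step: rather than the usual approach of summing over two variables and treating the torsor equation as a congruence, here one sums over \emph{three} variables. Fixing all but the three relevant variables, the equation \eqref{torsor 1} (resp.\ \eqref{torsor 2}) takes the shape $\eta_8 \eta_9 \equiv -\eta_4\eta_5 \imod{\eta_1\eta_7}$ after an appropriate reindexing, i.e.\ one must count pairs $(\eta_8, \eta_9)$ in a region with a fixed product residue modulo a modulus built from the other variables. This is exactly a restricted divisor function: the number of ways of writing a given residue class as a product of two bounded integers. One then needs an estimate, on average, for the distribution of the values of this restricted divisor function in arithmetic progressions. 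The key input is that, after opening up the divisor function and applying Poisson summation (or a hyperbola-type decomposition) in the remaining variable, the error term is governed by incomplete Kloosterman sums, to which \emph{Weil's bound} applies and yields a power-saving cancellation. Summing this main term back over the remaining five torsor variables, using standard real-variable estimates (comparison of sums with integrals, Perron-type or Mellin manipulations to handle the height condition) produces a main term of size $B \log(B)^5$ with a secondary term of relative size $\log\log(B)/\log(B)$.

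**The hard part will be** precisely the control of the restricted divisor function in arithmetic progressions with a uniform and explicit error term: one must handle the ranges where the modulus $\eta_1 \eta_7$ is large compared to the box for $(\eta_8,\eta_9)$ (where the congruence is close to trivial) and where it is small (where Weil's bound is essential), and glue these regimes without losing logarithmic factors, since the target error is only a $\log\log/\log$ saving over a $\log^5$ main term. A parallel difficulty is the bookkeeping of the numerous coprimality conditions via Möbius inversion: each introduces extra divisor-sum factors that must be shown to converge and to assemble into the arithmetic (local) part of Peyre's constant. The final verification that the constants $c_{V_1,H}$ and $c_{V_2,H}$ obtained in this way coincide with Peyre's prediction requires computing the $\alpha$, $\beta$ and the product of local densities invariants for $\widetilde{V_i}$ and matching them term by term with the leading coefficient extracted from the asymptotic; this is conceptually routine given the explicit torsor but is calculation-heavy, and for $V_2$ the presence of the $\mathbf{A}_2$ singularity (reflected in the $\xi_1^2$ in \eqref{torsor 2}) makes the relevant local factors slightly more delicate than for $V_1$.
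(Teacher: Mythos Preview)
Your overall strategy is correct and matches the paper's: torsor bijection, three-variable summation treating the torsor equation as a congruence, control of a restricted divisor function in arithmetic progressions via Weil's bound for Kloosterman sums, then summation over the remaining variables and matching with Peyre's constant. However, your choice of which three variables to sum over, and hence which modulus governs the congruence, differs from the paper's and is not quite right as stated.

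The paper does \emph{not} sum over $(\eta_8,\eta_9)$ with modulus $\eta_1\eta_7$. It first uses the symmetry $(\eta_3,\eta_4,\eta_6,\eta_8)\leftrightarrow(\eta_2,\eta_5,\eta_7,\eta_9)$ to reduce to the case $|\eta_9|\leq|\eta_8|$, which forces $\eta_9^2 \leq 2 B^{1/2}/(\eta_2\eta_3)^{1/2}$. It then sums over the three variables $\eta_4,\eta_5,\eta_8$: after M\"obius inversion on the coprimality of $\eta_8$, the equation becomes the congruence $\eta_4\eta_5 \equiv -\eta_1\eta_6\eta_7 \pmod{k_8\eta_9}$, and one counts pairs $(\eta_4,\eta_5)$ in the region cut out by the height conditions. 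The point of this particular choice is that the modulus $k_8\eta_9$ is provably $\leq X^{2/3}$ where $X$ is the size of $|\eta_4\eta_5|$, which is exactly the range in which the Kloosterman-based estimate (the paper's Lemma~\ref{lemma tau}) beats the trivial bound. Your congruence $\eta_8\eta_9\equiv-\eta_4\eta_5\pmod{\eta_1\eta_7}$ also omits the factor $\eta_6$ from the modulus, and it is not clear that $\eta_1\eta_6\eta_7$ can be uniformly controlled below the $X^{2/3}$ threshold without a corresponding symmetry reduction.

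You are also missing a step that the paper treats carefully: before any of the above, one must show that the contribution from $(\eta_1,\dots,\eta_9)$ with some $|\eta_i|\leq \log(B)^A$ (for $i$ among the torsor variables appearing in the equation) is $O(B\log(B)^4\log\log(B))$. This domain restriction is what allows the error terms $X/(\varphi(q)L_j)$ in Lemma~\ref{lemma tau} to be absorbed; without it the Kloosterman saving alone is insufficient. For $V_2$ there is the additional subtlety that one must split into two cases $|\xi_9|\leq|\xi_8|$ and $|\xi_9|>|\xi_8|$ and treat them separately, the second case requiring a variant of the divisor lemma (Lemma~\ref{lemma tau2}) adapted to the condition $|x|y^2|xy+T|\leq X_3$ coming from the $\xi_1^2$ in \eqref{torsor 2}. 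Your remark that the $\mathbf{A}_2$ singularity makes the local factors more delicate is slightly off target: the arithmetic (local) constants are computed identically in both cases; what is genuinely harder for $V_2$ is this analytic step. Finally, the paper does not use Poisson summation but rather direct detection of the congruence by additive characters, which leads immediately to Kloosterman sums.
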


Since $\rho_1 = \rho_2 = 6$, these estimates prove that Manin's conjecture holds for $V_1$ and $V_2$. Let us note that Derenthal has shown that $V_1$ and $V_2$ are not toric \cite[Proposition $12$]{D-hyper} and Derenthal and Loughran have proved that they are not equivariant compactifications of $\mathbb{G}_a^2$ \cite{DL-equi}, so this work is not covered by the existing general results. In view of theorem \ref{Main Theorem}, it remains to deal with six types of split singular quartic del Pezzo surfaces among the list of fifteen.

In both cases, we have noted that the universal torsor is an open subset of a hypersurface embedded in $\mathbb{A}^9$. In
\cite{D-hyper}, Derenthal has determined the del Pezzo surfaces whose universal torsors are hypersurfaces and it turns out that in the case of split quartic surfaces, Manin's conjecture has only been proved for surfaces whose universal torsors are either open affine subsets (which is equivalent to being toric), or open subsets of hypersurfaces. It would be interesting to prove Manin's conjecture for a surface which is in neither of these two classes.

The author is extremely grateful to his supervisor Professor de la Bretèche for introducing him to the problem of counting rational points on algebraic varieties and also for his precious advice all along this work. It is also a pleasure to thank Professor Browning for his careful reading of the manuscript and for his useful comments.

This work has received the financial support of the ANR PEPR (\textit{Points Entiers Points Rationnels}).

\section{Preliminary results}

\subsection{Equidistribution of the values of a restricted divisor function in \text{arithmetic} progressions}

Let $\tau$ denote the divisor function. We start by recalling a classical fact about the sums of the values of $\tau$ in arithmetic progressions. For $a, q \in \mathbb{Z}_{\geq 1}$ two coprime integers and $X \geq 1$, define
\begin{eqnarray*}
D(X;q,a) & = & \sum_{\substack{n \leq X \\ n \equiv a \imod{q}}} \tau(n) \textrm{.}
\end{eqnarray*}
Then (see \cite[Corollary $1$]{MR532980} for instance) there exists an explicit quantity $D^{\ast}(X;q)$ independent of $a$ such that for $q \leq X^{2/3}$,
\begin{eqnarray*}
D(X;q,a) - D^{\ast}(X;q) & \ll & X^{1/3 + \varepsilon} \textrm{.}
\end{eqnarray*}
We need a more general result since we have to consider a sum similar to $D(X;q,a)$ but with $\tau$ replaced by a function which only counts certain divisors. However, we will not determine a specific value of our main term and we will content ourselves with the value provided by averaging the estimate over $a$ coprime to $q$.

The results stated in this section use several classical ideas which have for example been developed in Heath-Brown's investigation of the divisor function $\tau_3 := \tau \ast 1$ \cite{MR866901}. Let $\mathcal{I}$ and $\mathcal{J}$ be two ranges. We define the quantities
\begin{eqnarray*}
N(\mathcal{I}, \mathcal{J};q,a) & = & \# \left\{ (u,v) \in \mathcal{I} \times \mathcal{J} \cap \mathbb{Z}^2, uv \equiv a \imod{q} \right\} \textrm{,}
\end{eqnarray*}
and
\begin{eqnarray*}
N^{\ast}(\mathcal{I}, \mathcal{J}; q) & = &
\frac1{\varphi(q)} \# \left\{(u,v) \in \mathcal{I} \times \mathcal{J} \cap \mathbb{Z}^2, \gcd(uv,q) = 1 \right\} \textrm{.}
\end{eqnarray*}

\begin{lemma}
\label{Weil lemma}
Let $\varepsilon > 0$ be fixed. We have the estimate
\begin{eqnarray*}
N(\mathcal{I}, \mathcal{J};q,a) - N^{\ast}(\mathcal{I}, \mathcal{J};q) & \ll & q^{1/2 + \varepsilon} \textrm{.}
\end{eqnarray*}
\end{lemma}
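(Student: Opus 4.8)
The plan is to detect the congruence condition $uv \equiv a \imod{q}$ using additive characters modulo $q$. Writing $\mathcal{I} = (I_1, I_2]$ and $\mathcal{J} = (J_1, J_2]$, we have
\begin{eqnarray*}
N(\mathcal{I}, \mathcal{J};q,a) & = & \frac1{q} \sum_{h \imod{q}} \sum_{\substack{u \in \mathcal{I} \\ v \in \mathcal{J}}} e\left( \frac{h(uv-a)}{q} \right) \textrm{,}
\end{eqnarray*}
where $e(t) = e^{2 \pi i t}$. The term $h = 0$ contributes $\frac1{q} \# (\mathcal{I} \cap \mathbb{Z}) \# (\mathcal{J} \cap \mathbb{Z})$. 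The first subtlety is that this is not quite $N^{\ast}(\mathcal{I}, \mathcal{J};q)$: the latter restricts to $\gcd(uv,q) = 1$ and divides by $\varphi(q)$ rather than $q$. So I would first separate the count in $N(\mathcal{I}, \mathcal{J};q,a)$ according to $d = \gcd(u,q)$ and $d' = \gcd(v,q)$ — since $uv \equiv a \imod q$ with $\gcd(a,q) = 1$ forces $\gcd(uv,q)=1$, only $d = d' = 1$ survives — and correspondingly decompose $N^{\ast}$ via the standard identity $\frac1{\varphi(q)} = \frac1{q} \sum_{\chi} $ (or rather by expanding the coprimality conditions with Möbius); the upshot is that the main terms match up to an acceptable error, and the real content is bounding the non-principal frequencies.

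For $h \not\equiv 0 \imod{q}$, set $g = \gcd(h,q)$ and $q = g q'$, $h = g h'$ with $\gcd(h', q') = 1$. Summing over $v \in \mathcal{J}$ first, the inner sum $\sum_{v \in \mathcal{J}} e(h' u v / q')$ is a geometric progression of length $O(1 + |\mathcal{J}|)$ whose modulus is $\ll \min\left( |\mathcal{J}|, \| h' u / q' \|^{-1} \right)$, where $\| \cdot \|$ denotes distance to the nearest integer — this vanishes unless $q' \nmid h' u$. Actually the cleaner route, which genuinely invokes Weil's bound as advertised, is to complete the sums over $u$ and $v$ to full residue systems modulo $q'$ at the cost of the usual $\log q$ factors (Pólya–Vinogradov-type completion), reducing matters to bounding the complete bilinear exponential sum $\sum_{u, v \imod{q'}} e(h'(uv-a)/q')$ — but for the incomplete ranges one instead expands each of $u, v$ over an interval, so I would complete only in $v$, obtaining $\sum_{u \imod{q'}} c_u \, e(-h'a/q') S_u$ where $S_u = \sum_{v \imod{q'}} e(h' u v/q')$ is a Ramanujan-type sum that is $0$ unless $q' \mid u$; this collapses the $u$-sum and, after reinserting the truncation in $v$ via completion, leaves a Kloosterman sum
\begin{eqnarray*}
S(m, n; q') & = & \sum_{\substack{x \imod{q'} \\ \gcd(x,q')=1}} e\left( \frac{mx + n \overline{x}}{q'} \right) \textrm{,}
\end{eqnarray*}
to which Weil's bound $|S(m,n;q')| \ll (q')^{1/2 + \varepsilon}$ applies.

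Assembling: the $h$-sum is organized by $g \mid q$, and for each $g$ there are $\varphi(q/g)$ values of $h$; each contributes $\ll (q/g)^{1/2+\varepsilon}$ times the completion factors $\ll q^{\varepsilon}$, and the outer factor $\frac1q$ together with the divisor-bounded number of $g$'s yields $\ll \frac1q \sum_{g \mid q} \varphi(q/g) (q/g)^{1/2+\varepsilon} q^{\varepsilon} \ll q^{1/2 + \varepsilon}$, as claimed. The main obstacle is bookkeeping rather than any single hard estimate: one must handle the common divisor $g = \gcd(h,q)$ carefully so that the Kloosterman bound is applied to the correct modulus $q' = q/g$, track the completion logarithms so they get absorbed into $q^{\varepsilon}$, and — the genuinely delicate point — verify that the difference between the principal term $\frac1q \#(\mathcal{I}\cap\mathbb{Z})\#(\mathcal{J}\cap\mathbb{Z})$ restricted appropriately and the quantity $N^{\ast}(\mathcal{I}, \mathcal{J};q)$ as defined (with $\varphi(q)$ and the coprimality conditions on $u$ and $v$ separately) is itself $\ll q^{1/2+\varepsilon}$, which again comes down to a short character-sum computation.
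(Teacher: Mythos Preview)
Your high-level strategy (Fourier analysis plus Weil's bound) is right, but the specific route you describe does not reach a Kloosterman sum. When you detect $uv\equiv a\imod q$ with a single additive character, the inner object is the bilinear sum $\sum_{u\in\mathcal{I}}\sum_{v\in\mathcal{J}} e_q(huv)$, and this is linear in each variable. Completing in $v$ gives, as you yourself observe, $\sum_{v\imod{q'}} e_{q'}(h'uv)=q'\cdot\mathbf{1}[q'\mid u]$, which collapses the sum rather than producing anything of the shape $\sum_x e_q(mx+n\bar x)$. Nothing in your description introduces a modular inverse, and without one there is no Kloosterman sum to which Weil's bound applies; the claim that ``reinserting the truncation in $v$ via completion leaves a Kloosterman sum'' is where the argument breaks.

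The paper's approach differs at exactly this point. One first parametrizes the solutions of $\alpha\beta\equiv a\imod q$ by $\alpha$ invertible and $\beta=a\bar\alpha$, then Fourier-expands the two separate congruences $u\equiv\alpha$, $v\equiv\beta$ with \emph{two} frequency variables $r,s$. The $\alpha$-sum then reads $\sum_{\gcd(\alpha,q)=1} e_q(r\alpha+as\bar\alpha)=K(r,as;q)$, a genuine Kloosterman sum, and Weil's bound together with the geometric-series estimate for the incomplete interval sums gives the $q^{1/2+\varepsilon}$. The second point you flagged as ``genuinely delicate'' --- matching the principal term to $N^{\ast}(\mathcal{I},\mathcal{J};q)$ --- is handled in the paper not by a further character computation but by a one-line averaging trick: the terms with $r=q$ or $s=q$ are Ramanujan sums $c_q(s)$, $c_q(r)$ and hence independent of $a$ (since $\gcd(a,q)=1$), so averaging the estimate over $a$ coprime to $q$ replaces that block by $N^{\ast}$ for free. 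Your proposal misses this device and leaves the main-term comparison unresolved.
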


\begin{proof}
Let $e_q$ be the function defined by $e_q(x) = e^{2 i \pi x /q}$. We detect the congruence using sums of exponentials, we get
\begin{eqnarray*}
N(\mathcal{I}, \mathcal{J};q,a) & = & \sum_{\substack{\alpha,\beta = 1 \\ \alpha \beta \equiv a \imod{q}}}^q
\# \{ (u,v) \in \mathcal{I} \times \mathcal{J} \cap \mathbb{Z}^2, q| \alpha - u, \beta - v \} \\
& = & \sum_{\substack{\alpha,\beta = 1 \\ \alpha \beta \equiv a \imod{q}}}^q \frac1{q^2}
\left( \sum_{u \in \mathcal{I}} \sum_{r=1}^q e_q(r \alpha - r u) \right)
\left( \sum_{v \in \mathcal{J}} \sum_{s=1}^q e_q(s \beta - s v) \right) \\
& = & \frac1{q^2} \sum_{r,s = 1}^q K(r,as,q) F_q(r,s) 
 \textrm{,}
\end{eqnarray*}
where $K(r,as,q)$ is the Kloosterman sum defined by
\begin{eqnarray*}
K(r,as,q) & = & \sum_{\substack{\alpha = 1 \\ \gcd(\alpha,q) = 1}}^q e_q \left( r \alpha + a s \alpha^{-1} \right) \textrm{,}
\end{eqnarray*}
where $\alpha^{-1}$ denotes the inverse of $\alpha$ modulo $q$ and where
\begin{eqnarray*}
F_q(r,s) & = & \left( \sum_{u \in \mathcal{I}} e_q(- r u) \right) \left( \sum_{v \in \mathcal{J}} e_q(- s v) \right)
\textrm{.}
\end{eqnarray*}
Let $||x||$ denote the distance from $x$ to the set of integers. If $r,s \neq q$, $F_q(r,s)$ is a product of two geometric sums and we therefore have
\begin{eqnarray*}
F_q(r,s) & \ll & \left| \left| \frac{r}{q} \right| \right| ^{-1} \left| \left| \frac{s}{q} \right| \right| ^{-1} \textrm{.}
\end{eqnarray*}
Let $N(\mathcal{I}, \mathcal{J};q)$ be the sum of the terms corresponding to $r=q$ or $s=q$. Since $\gcd(a,q) = 1$,  we see that $K(q,as,q)$ and $K(r,aq,q)$ are independent of $a$ and thus $N(\mathcal{I}, \mathcal{J};q)$ is also independent of $a$. We are therefore led to give a bound for $N(\mathcal{I}, \mathcal{J};q,a) - N(\mathcal{I}, \mathcal{J};q)$. Weil's bound for Kloosterman sums (see \cite{MR0126420}) yields
\begin{eqnarray*}
N(\mathcal{I}, \mathcal{J};q,a) - N(\mathcal{I}, \mathcal{J};q) & = & \frac1{q^2} \sum_{r,s = 1}^{q-1} K(r,as,q) F_q(r,s) \\
& \ll & \frac1{q^2} \tau(q) q^{1/2} \sum_{r,s = 1}^{q-1} \gcd(r,s,q)^{1/2} \left| \left| \frac{r}{q} \right| \right| ^{-1}
\left| \left| \frac{s}{q} \right| \right| ^{-1} \\
& \ll & \tau(q) q^{1/2} \sum_{0 < |r|,|s| \leq q/2} \gcd(r,s,q)^{1/2} |r|^{-1} |s|^{-1} \textrm{.}
\end{eqnarray*}
Let us bound the sum of the right-hand side, we get
\begin{eqnarray*}
\sum_{0 < |r|,|s| \leq q/2} \gcd(r,s,q)^{1/2} |r|^{-1} |s|^{-1} & \ll &
\sum_{d|q} d^{1/2} \sum_{\substack{r = 1 \\ d|r }}^q r^{-1} \sum_{\substack{s = 1 \\ d|r }}^q s^{-1} \\
& \ll & \log(q)^2  \textrm{.}
\end{eqnarray*}
Since $N(\mathcal{I}, \mathcal{J};q)$ does not depend on $a$, averaging over $a$ coprime to $q$ shows that we can replace $N(\mathcal{I}, \mathcal{J};q)$ by $N^{\ast}(\mathcal{I}, \mathcal{J};q)$, which completes the proof.
\end{proof}

An immediate consequence of this lemma is the bound
\begin{eqnarray}
\label{average bound}
N(\mathcal{I}, \mathcal{J};q,a) & \ll & \frac1{\varphi(q)} \# \left( \mathcal{I} \times \mathcal{J} \cap \mathbb{Z}^2 \right)
+ q^{1/2 + \varepsilon} \textrm{.}
\end{eqnarray}
Lemmas \ref{lemma tau} and \ref{lemma tau2} below are respectively devoted to the treatment of the varieties $V_1$ and $V_2$.

Let $X, X_1, X_2, T, Z, L_1, L_2 > 0$. Let also $\mathcal{S} = \mathcal{S}(X, X_1, X_2, T, Z, L_1, L_2)$ be the set of
$(x,y) \in \mathbb{R}^2$ such that
\begin{eqnarray}
\label{A}
|xy| & \leq & X \textrm{,} \\
\label{B}
|x| |xy + T| & \leq & X_1 \textrm{,} \\
\label{C}
|y| & \leq & X_2 \textrm{,} \\
\label{D}
Z & \leq & |xy + T|  \textrm{,} \\
\label{E}
L_1 & \leq & |x| \textrm{,} \\
\label{E'}
L_2 & \leq & |y| \textrm{.}
\end{eqnarray}
Finally, we introduce
\begin{eqnarray}
\label{definition D}
D(\mathcal{S};q,a) & = & \# \left\{ (u,v) \in \mathcal{S} \cap \mathbb{Z}^2, uv \equiv a \imod{q} \right\} \textrm{,}
\end{eqnarray}
and
\begin{eqnarray}
\label{definition E}
D^{\ast}(\mathcal{S};q) & = & \frac1{\varphi(q)} \# \left\{ (u,v) \in \mathcal{S} \cap \mathbb{Z}^2, \gcd(uv,q) = 1 \right\} \textrm{.}
\end{eqnarray}

\begin{lemma}
\label{lemma tau}
Let $\varepsilon > 0$ be fixed. If $T \leq X$ then for $q \leq X^{2/3}$, we have the estimate
\begin{eqnarray*}
D(\mathcal{S};q,a) - D^{\ast}(\mathcal{S};q) & \ll & \frac{X^{2/3 + \varepsilon}}{q^{1/2}} + \frac{X}{\varphi(q)} \left( \frac1{L_1} + \frac1{L_2} \right) \textrm{.}
\end{eqnarray*}
\end{lemma}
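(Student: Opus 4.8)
The strategy I would follow is to carry out the summation over the variable $u$ first, and to reduce everything, via a dyadic decomposition and an approximation of $\mathcal{S}$ by product boxes, to repeated applications of Lemma~\ref{Weil lemma}. A few reductions come first. Since $\gcd(a,q)=1$, every pair counted in \eqref{definition D} satisfies $\gcd(u,q)=\gcd(v,q)=1$, so we may impose this restriction — it is exactly the one already present in \eqref{definition E}. We may also assume $\mathcal{S}\neq\emptyset$, since both quantities vanish otherwise; then \eqref{A} together with the hypothesis $T\leq X$ gives $|xy+T|\leq 2X$ on $\overline{\mathcal{S}}$ (this is the only place where $T\leq X$ is used), and also $L_1L_2\leq X$. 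Next, decompose the sum over $u$ according to the sign of $u$ and to $|u|\in[U,2U)$ for $U$ a power of $2$; by \eqref{A}, \eqref{C}, \eqref{E}, \eqref{E'} only $O(\log X)$ ranges are non-empty. For fixed $u$, the set $\mathcal{J}(u)$ of $v$ with $(u,v)\in\mathcal{S}$ is, by \eqref{A}--\eqref{E'}, an intersection of $O(1)$ intervals and complements of intervals, hence a union of at most four intervals, whose endpoints lie on the curves $y=\pm X/x$, $x^2y+Tx=\pm X_1$, $y=\pm X_2$, $xy+T=\pm Z$ and $y=\pm L_2$ (condition \eqref{E} only limits the range of $u$).

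The geometric point is that on the portion of each of these curves meeting $\overline{\mathcal{S}}$ one has $|\mathrm{d}y/\mathrm{d}x|\ll X/x^2$. For $y=\pm X/x$, $y=\pm X_2$ and $y=\pm L_2$ this is clear; for $xy+T=\pm Z$ it follows from $Z\leq|xy+T|\leq 2X$ and $T\leq X$; and for $x^2y+Tx=\pm X_1$ one has $\mathrm{d}y/\mathrm{d}x=\mp 2X_1/x^3+T/x^2$, and $X_1=|x|\,|xy+T|\leq 2X|x|$ on $\overline{\mathcal{S}}$ forces $X_1/|x|^3\ll X/x^2$, while $T/x^2\leq X/x^2$. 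Fixing a dyadic range $|u|\in[U,2U)$, a sign, and a parameter $R=R(U)\geq 1$, cut this range into $R$ subintervals $\mathcal{I}_r$ of length $U/R$; the derivative bound shows that for $u\in\mathcal{I}_r$ the set $\mathcal{J}(u)$ is caught between two fixed unions of $O(1)$ intervals $\mathcal{J}_r^-\subseteq\mathcal{J}(u)\subseteq\mathcal{J}_r^+$ with $\meas(\mathcal{J}_r^+\setminus\mathcal{J}_r^-)\ll X/(UR)$.

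For each $r$, set $D_r=\#\{(u,v)\in\mathbb{Z}^2:u\in\mathcal{I}_r,\,v\in\mathcal{J}(u),\,uv\equiv a\imod q\}$ and $E_r=\varphi(q)^{-1}\#\{(u,v)\in\mathbb{Z}^2:u\in\mathcal{I}_r,\,v\in\mathcal{J}(u),\,\gcd(uv,q)=1\}$. Both are sandwiched between the analogous quantities with $\mathcal{J}(u)$ replaced by $\mathcal{J}_r^-$ or $\mathcal{J}_r^+$; Lemma~\ref{Weil lemma} identifies $N(\mathcal{I}_r,\mathcal{J}_r^\mp;q,a)$ with $N^{\ast}(\mathcal{I}_r,\mathcal{J}_r^\mp;q)$ up to $O(q^{1/2+\varepsilon})$ (applied to the $O(1)$ product boxes involved), and a trivial lattice-point estimate gives $N^{\ast}(\mathcal{I}_r,\mathcal{J}_r^+;q)-N^{\ast}(\mathcal{I}_r,\mathcal{J}_r^-;q)\ll\varphi(q)^{-1}(U/R+1)(X/(UR)+1)$. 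Hence $|D_r-E_r|\ll q^{1/2+\varepsilon}+\varphi(q)^{-1}(U/R+1)(X/(UR)+1)$, and since $\sum_{\text{ranges},\,r}D_r=D(\mathcal{S};q,a)$ and $\sum_{\text{ranges},\,r}E_r=D^{\ast}(\mathcal{S};q)$, summing over the $R$ subintervals of a fixed range shows that this range contributes to $D(\mathcal{S};q,a)-D^{\ast}(\mathcal{S};q)$ at most
\[
\ll R\,q^{1/2+\varepsilon}+\frac{1}{\varphi(q)}\left(\frac{X}{R}+U+\frac{X}{U}+R\right).
\]

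It then remains to sum over the $O(\log X)$ dyadic ranges and to choose $R$. Taking $R(U)$ of size $\min\bigl(U,(X\varphi(q)^{-1}q^{-1/2-\varepsilon})^{1/2}\bigr)$, but at least $1$, balances $R\,q^{1/2+\varepsilon}$ against $X/(R\varphi(q))$, contributing $\ll(X\varphi(q)^{-1}q^{1/2+\varepsilon})^{1/2}$ per range — or less when one is forced down to $R=U$, i.e.\ to a pointwise count of $v$ — while the terms $\varphi(q)^{-1}(U+X/U)$ telescope over the ranges to $\ll\varphi(q)^{-1}\bigl(X/\max(1,L_1)+X/\max(1,L_2)\bigr)\ll\varphi(q)^{-1}X(L_1^{-1}+L_2^{-1})$. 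Using $\varphi(q)\gg q^{1-\varepsilon}$ one checks that $(X\varphi(q)^{-1}q^{1/2+\varepsilon})^{1/2}\log X\ll X^{2/3+\varepsilon}q^{-1/2}$ precisely in the range $q\leq X^{2/3}$, which is the hypothesis, and the edge cases ($R=1$ near $q=X^{2/3}$, $R=U$ for small $U$) are absorbed the same way. This gives the asserted estimate. I expect the two most delicate points to be the uniform boundary bound $|\mathrm{d}y/\mathrm{d}x|\ll X/x^2$ — where the hypothesis $T\leq X$ is needed — and the optimisation of $R$, which is exactly what pins down the condition $q\leq X^{2/3}$.
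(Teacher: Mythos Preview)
Your proof is correct and reaches the same bound under the same hypothesis $q\leq X^{2/3}$, but the decomposition is genuinely different from the one in the paper. The paper tiles $\mathcal{S}$ by \emph{two-dimensional} multiplicative boxes $]U,\zeta U]\times]V,\zeta V]$ with $\zeta=1+\delta$: on interior boxes Lemma~\ref{Weil lemma} is applied, while boundary boxes are controlled trivially via the consequence~\eqref{average bound}, and the contribution of each of the six constraints \eqref{A}--\eqref{E'} to the boundary is computed separately; the optimisation $\delta=q^{1/2}X^{-1/3}$ then gives the result. Your route is one-dimensional: a coarse dyadic slicing in $u$ followed by a fine equal subdivision into $R$ pieces, with the slices $\mathcal{J}(u)$ trapped between fixed interval systems using the uniform derivative bound $|\mathrm{d}y/\mathrm{d}x|\ll X/x^{2}$ along every active piece of $\partial\mathcal{S}$. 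One point worth noting: this derivative bound for the curve $|x|\,|xy+T|=X_{1}$ is only justified at points of $\partial\mathcal{S}\cap\overline{\mathcal{S}}$, but since $X_{1}$ is a fixed constant, the fact that the curve is active for \emph{some} $u$ in a dyadic block already forces $X_{1}\ll XU$ and hence the derivative bound throughout that block --- so the sandwich $\mathcal{J}_{r}^{-}\subseteq\mathcal{J}(u)\subseteq\mathcal{J}_{r}^{+}$ is legitimate.

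What each approach buys: the paper's two-dimensional tiling is symmetric in $u$ and $v$ and transfers almost verbatim to Lemma~\ref{lemma tau2}, where the extra constraints \eqref{F}--\eqref{J} are handled by the same mechanism (only \eqref{J}, being quadratic in both variables, forces the auxiliary Lemma~\ref{square} and a different exponent of $\delta$). Your slice-by-slice method packages the boundary regularity into a single Lipschitz estimate, which is conceptually clean here; it would, however, require extra care for a constraint like \eqref{J}, since the $v$-endpoints there are not even single-valued functions of $u$, so a straightforward derivative bound is unavailable.
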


Note that the conditions $T \leq X$, $|xy| \leq X$ and $|xy + T| \geq Z$ imply $Z \leq 2 X$. 

\begin{proof}
The result is true if $\mathcal{S} \cap \mathbb{Z}_{\neq 0}^2 = \emptyset$ so we assume from now on that
$\mathcal{S} \cap \mathbb{Z}_{\neq 0}^2 \neq \emptyset$.  Let $0 < \delta \leq 1$ be a parameter to be selected in due course and $\zeta = 1 + \delta$. Let also $U$ and $V$ be variables running over the set $\{ \pm \zeta^n, n \in \mathbb{Z}_{\geq -1} \}$ and let $\mathcal{I} = ]U,\zeta U]$ if $U > 0$ ($\mathcal{I} = [\zeta U,U[$ if $U < 0$) and $\mathcal{J} = ]V,\zeta V]$ if $V > 0$ ($\mathcal{J} = [\zeta V, V[$ if $V < 0$). We have
\begin{eqnarray*}
D(\mathcal{S};q,a) -  \sum_{\mathcal{I} \times \mathcal{J} \cap \mathbb{Z}^2 \subset \mathcal{S}} N(\mathcal{I},\mathcal{J};q,a) & \ll & \sum_{\substack{\mathcal{I} \times \mathcal{J} \cap \mathbb{Z}^2 \nsubseteq \mathcal{S} \\
\mathcal{I} \times \mathcal{J} \cap \mathbb{Z}^2 \nsubseteq  \mathbb{R}^2 \setminus \mathcal{S}}} N(\mathcal{I},\mathcal{J};q,a) \textrm{.}
\end{eqnarray*}
We define the quantity
\begin{eqnarray*}
D(\mathcal{S};q) & = & \sum_{\mathcal{I} \times \mathcal{J} \cap \mathbb{Z}^2 \subset \mathcal{S}} N^{\ast}(\mathcal{I},\mathcal{J};q) \textrm{.}
\end{eqnarray*}
Note that since $N^{\ast}(\mathcal{I},\mathcal{J};q)$ is independent of $a$, so is $D(\mathcal{S};q)$. Furthermore, we have
\begin{eqnarray*}
\sum_{\mathcal{I} \times \mathcal{J} \cap \mathbb{Z}^2 \subset \mathcal{S}} N(\mathcal{I},\mathcal{J};q,a) - D(\mathcal{S};q) & \ll & \frac{X^{\varepsilon} q^{1/2 + \varepsilon}}{\delta^2} \textrm{,}
\end{eqnarray*}
using lemma \ref{Weil lemma} and noticing that the number of rectangles $\mathcal{I} \times \mathcal{J}$ subject to the condition
$\mathcal{I} \times \mathcal{J} \cap \mathbb{Z}^2 \subset \mathcal{S}$ is less than
$\left( 2 \log(X) / \log(\zeta) \right)^2 \ll X^{\varepsilon} \delta^{-2}$ since $\delta \leq 1$. Since
$q^{\varepsilon} \leq X^{\varepsilon}$, we have obtained
\begin{eqnarray*}
D(\mathcal{S};q,a) - D(\mathcal{S};q) & \ll &
\sum_{\substack{\mathcal{I} \times \mathcal{J} \cap \mathbb{Z}^2 \nsubseteq \mathcal{S} \\
\mathcal{I} \times \mathcal{J} \cap \mathbb{Z}^2 \nsubseteq \mathbb{R}^2 \setminus \mathcal{S}}} N(\mathcal{I},\mathcal{J};q,a) + \frac{X^{\varepsilon} q^{1/2}}{\delta^2} \textrm{.}
\end{eqnarray*}
Using the bound \eqref{average bound}, we finally deduce
\begin{eqnarray*}
D(\mathcal{S};q,a) - D(\mathcal{S};q) & \ll & \frac1{\varphi(q)}
\sum_{\substack{\mathcal{I} \times \mathcal{J} \cap \mathbb{Z}^2 \nsubseteq \mathcal{S} \\
\mathcal{I} \times \mathcal{J} \cap \mathbb{Z}^2 \nsubseteq \mathbb{R}^2 \setminus \mathcal{S}}}
\# \left( \mathcal{I} \times \mathcal{J}  \cap \mathbb{Z}^2 \right) + \frac{X^{\varepsilon} q^{1/2}}{\delta^2} \textrm{,}
\end{eqnarray*}
since the number of rectangles $\mathcal{I} \times \mathcal{J}$ such that
$\mathcal{I} \times \mathcal{J} \cap \mathbb{Z}^2 \nsubseteq \mathcal{S}$ and
$\mathcal{I} \times \mathcal{J} \cap \mathbb{Z}^2 \nsubseteq  \mathbb{R}^2 \setminus \mathcal{S}$ is also
$\ll X^{\varepsilon} \delta^{-2}$. The sum of the right-hand side is over all the rectangles $\mathcal{I} \times \mathcal{J}$ for which we have $(\zeta^{s_1} U, \zeta^{s_2} V) \in \mathbb{Z}^2 \cap \mathcal{S}$ and
$(\zeta^{t_1} U, \zeta^{t_2} V) \in \mathbb{Z}^2 \setminus \mathcal{S}$ for some $(s_1,s_2) \in ]0,1]^2$ and $(t_1,t_2) \in ]0,1]^2$. This implies that one of the inequalities defining $\mathcal{S}$ is not satisfied by $(\zeta^{t_1} U, \zeta^{t_2} V)$ and we need to estimate the contribution coming from each condition among \eqref{A}, \eqref{B}, \eqref{C}, \eqref{D}, \eqref{E} and \eqref{E'}. Note that the conditions \eqref{A}, \eqref{E} and \eqref{E'} together imply
\begin{eqnarray}
\label{condition A}
|U| & \ll & \frac{X}{L_2} \textrm{,} \\
\label{condition B}
|V| & \ll & \frac{X}{L_1} \textrm{.}
\end{eqnarray}
In the following, we could sometimes write strict inequalities instead of non-strict ones but this would not change anything in our reasoning. Let us start by treating the case of the condition \eqref{A}. For the rectangles $\mathcal{I} \times \mathcal{J}$ described above, we have $\zeta^{s_1 + s_2} |U V| \leq X$ and $\zeta^{t_1 + t_2} |U V| > X$ for some $(s_1,s_2) \in ]0,1]^2$ and $(t_1,t_2) \in ]0,1]^2$. These two inequalities imply
\begin{eqnarray}
\label{condition C}
& & \zeta^{-2} X < |U V| \leq X \textrm{.}
\end{eqnarray}
Going back to the variables $u$ and $v$, we get $\zeta^{-2} X < |u v| \leq \zeta^2 X$. Therefore, the error we aim to estimate is bounded by
\begin{eqnarray*}
\sum_{\eqref{condition B}, \eqref{condition C}} \# \left( \mathcal{I} \times \mathcal{J} \cap \mathbb{Z}^2 \right)
& \ll & \# \left\{ (u,v) \in \mathbb{Z}_{\neq 0}^2, 
\begin{array}{l}
\zeta^{-2} X < |u v| \leq \zeta^2 X \\
|v| \ll X / L_1 
\end{array}
\right\} \\
& \ll & \sum_{|v| \ll X / L_1} \left( \frac{\delta X}{|v|} + 1 \right) \\
& \ll & \delta X^{1 + \varepsilon} + \frac{X}{L_1} \textrm{.}
\end{eqnarray*}
We now deal with the other conditions in a similar fashion. Let us treat the case of the condition \eqref{B}. In this case, for some $(s_1,s_2) \in ]0,1]^2$ and $(t_1,t_2) \in ]0,1]^2$, we have
\begin{eqnarray}
\label{condition C'}
\zeta^{s_1} |U| \left| \zeta^{s_1 + s_2} UV + T \right| & \leq & X_1 \textrm{,} \\
\label{condition C''}
\zeta^{t_1} |U| \left| \zeta^{t_1 + t_2} UV + T \right| & > & X_1 \textrm{.}
\end{eqnarray}
Note that using $|UV| \leq X$ and $T \leq X$, the inequality \eqref{condition C''} gives
\begin{eqnarray}
\label{condition E}
|U| & \gg & \frac{X_1}{X} \textrm{.}
\end{eqnarray}
The inequalities \eqref{condition C'} and \eqref{condition C''} imply
\begin{eqnarray}
\label{condition D}
& & \zeta^{-3} \frac{X_1}{|U|} - \left( 1 - \zeta^{-2} \right) T < \left| UV + T \right| \leq
\frac{X_1}{|U|} + \left( 1 - \zeta^{-2} \right) T \textrm{.}
\end{eqnarray} 
Going back to the variables $u$ and $v$, we easily get
\begin{eqnarray*}
\big| \left| u v + T \right| - \left|U V + T \right| \big| & \leq &
\left| uv - UV \right| \\
& \leq & 3 \delta |UV| \\
& \leq & 3 \delta \left( \frac{X_1}{|U|} + T \right) \textrm{,}
\end{eqnarray*}
using the condition \eqref{condition C'}. Since $1 - \zeta^{-2} \leq 2 \delta$, the inequality \eqref{condition D} gives
\begin{eqnarray*}
& & \left( \zeta^{-3} - 3 \delta \right) \frac{X_1}{|U|} - 5 \delta T < \left| uv + T \right| \leq
(1 + 3 \delta) \frac{X_1}{|U|} + 5 \delta T \textrm{,}
\end{eqnarray*}
and therefore
\begin{eqnarray}
\label{condition F}
& & \left( \zeta^{-3} - 3 \delta \right) \frac{X_1}{|u|} - 5 \delta T < \left| uv + T \right| \leq
\zeta (1 + 3 \delta) \frac{X_1}{|u|} + 5 \delta T \textrm{.}
\end{eqnarray}
Note that we have not tried to sharpen this inequality because this is useless for our purpose. Thus in this case, the error is bounded by
\begin{eqnarray*}
\sum_{\substack{\eqref{condition A} \\ \eqref{condition E}, \eqref{condition D}}}
\# \left( \mathcal{I} \times \mathcal{J} \cap \mathbb{Z}^2 \right) & \ll &
\# \left\{ (u,v) \in \mathbb{Z}_{\neq 0}^2, 
\begin{array}{l}
\eqref{condition F} \\
|u| \gg X_1 / X \\
|u| \ll X / L_2 
\end{array}
\right\} \\
& \ll & \sum_{\substack{|u| \gg X_1 / X \\ |u| \ll X / L_2}}
\left( \frac{\delta X_1}{u^2} + \frac{\delta T}{|u|} + 1 \right) \\
& \ll & \delta X^{1 + \varepsilon} + \frac{X}{L_2} \textrm{,}
\end{eqnarray*}
since $T \leq X$. In the case of the condition \eqref{C}, the condition which plays the role of the conditions \eqref{condition C} and \eqref{condition D} in the previous two cases is
\begin{eqnarray}
\label{condition G}
& & \zeta^{-1} X_2 < |V| \leq X_2 \textrm{.}
\end{eqnarray}
Combined with $|UV| \leq X$, this condition gives
\begin{eqnarray}
\label{condition H}
|U| & \ll & \frac{X}{X_2} \textrm{.}
\end{eqnarray}
Moreover, in terms of the variable $v$, we have $\zeta^{-1} X_2 < |v| \leq \zeta X_2$. Therefore, this contribution is bounded by
\begin{eqnarray*}
\sum_{\substack{\eqref{condition A} \\ \eqref{condition G}, \eqref{condition H}}}
\# \left( \mathcal{I} \times \mathcal{J} \cap \mathbb{Z}^2 \right) & \ll &
\# \left\{ (u,v) \in \mathbb{Z}_{\neq 0}^2, 
\begin{array}{l}
\zeta^{-1} X_2 < |v| \leq \zeta X_2 \\
|u| \ll X / X_2 \\
|u| \ll X / L_2 
\end{array}
\right\} \\
& \ll & \sum_{\substack{|u| \ll X / X_2 \\ |u| \ll X / L_2}} \left( \delta X_2 + 1 \right) \\
& \ll & \delta X + \frac{X}{L_2} \textrm{.}
\end{eqnarray*}
Let us now deal with the condition \eqref{D}. Here, reasoning as we did to deduce the inequality \eqref{condition D} from the conditions \eqref{condition C'} and \eqref{condition C''}, we get the inequality
\begin{eqnarray}
\label{condition I}
& & \zeta^{-2} Z - \left( 1 - \zeta^{-2} \right) T \leq \left| UV + T \right| <
Z + \left( 1 - \zeta^{-2} \right) T \textrm{,}
\end{eqnarray}
and following the reasoning we made to derive the condition \eqref{condition F} from the condition \eqref{condition D}, we obtain
\begin{eqnarray}
\label{condition I'}
& & \left( \zeta^{-2} - 3 \delta \right) Z - 5 \delta T \leq \left| uv + T \right| <
(1 + 3 \delta) Z + 5 \delta T \textrm{.}
\end{eqnarray}
We therefore see that this contribution is bounded by
\begin{eqnarray*}
\sum_{\eqref{condition A}, \eqref{condition I}} \# \left( \mathcal{I} \times \mathcal{J} \cap \mathbb{Z}^2 \right) & \ll &
\# \left\{ (u,v) \in \mathbb{Z}_{\neq 0}^2, 
\begin{array}{l}
\eqref{condition I'} \\
|u| \ll X / L_2 
\end{array}
\right\} \\
& \ll & \sum_{|u| \ll X / L_2} \left( \frac{\delta Z}{|u|} + \frac{\delta T}{|u|} + 1 \right) \\
& \ll & \delta X^{1 + \varepsilon} + \frac{X}{L_2} \textrm{,}
\end{eqnarray*}
since $T \leq X$ and $Z \leq 2 X$. Mimicking what we have done for the condition \eqref{C}, we find that the contributions corresponding to the conditions \eqref{E} and \eqref{E'} are respectively $\ll \delta X + X / L_1$ and $\ll \delta X + X / L_2$. Writing $1 / \varphi(q) \ll X^{\varepsilon} /q$ and rescaling $\varepsilon$, we have finally proved that
\begin{eqnarray*}
D(\mathcal{S};q,a) - D(\mathcal{S};q) & \ll & X^{\varepsilon} \left( \frac{\delta X}{q} + \frac{q^{1/2}}{\delta^2} \right)
+ \frac{X}{\varphi(q)} \left( \frac1{L_1} + \frac1{L_2} \right) \textrm{.}
\end{eqnarray*}
Averaging this estimate over $a$ coprime to $q$ and using the fact that $D(\mathcal{S};q)$ does not depend on $a$, we see that we can replace $D(\mathcal{S};q)$ by $D^{\ast}(\mathcal{S};q)$. Furthermore, the choice $\delta = q^{1/2} X^{-1/3}$ is allowed provided that $q \leq X^{2/3}$, which completes the proof.
\end{proof}

We emphasize that the average effect which yields the term $1/\varphi(q)$ in $D^{\ast}(\mathcal{S};q)$ is really the key step of the proof. Note that the estimate of lemma \ref{lemma tau} is actually true for $q \leq X$ but the error term is no longer better than the trivial error term $X^{1 + \varepsilon} / q$ when $q \geq X^{2/3}$.

For given $X_3 > 0$, let $\mathcal{S}_1 = \mathcal{S}_1(X, X_1, X_2, X_3, T, Z, L_1, L_2)$ be the set of $(x,y) \in \mathbb{R}^2$ satisfying the conditions \eqref{A}, \eqref{B}, \eqref{C}, \eqref{D}, \eqref{E}, \eqref{E'} and
\begin{eqnarray}
\label{F}
|xy + T| & \leq & X \textrm{,} \\
\label{G}
|x| y^2 & \leq & X_3 \textrm{.}
\end{eqnarray}
Let also $\mathcal{S}_2 = \mathcal{S}_2(X, X_1, X_2, X_3, T, Z, L_1, L_2)$ be the set of $(x,y) \in \mathbb{R}^2$ satisfying the conditions \eqref{A}, \eqref{D}, \eqref{E}, \eqref{E'}, \eqref{F} and
\begin{eqnarray}
\label{H}
|x| & \leq & X_1 \textrm{,} \\
\label{I}
|y| |xy + T| & \leq & X_2 \textrm{,} \\
\label{J}
|x| y^2 |xy + T| & \leq & X_3 \textrm{.}
\end{eqnarray}
Finally, $D(\mathcal{S}_1;q,a)$ and $D(\mathcal{S}_2;q,a)$ are defined exactly as in \eqref{definition D} and $D^{\ast}(\mathcal{S}_1;q)$ and $D^{\ast}(\mathcal{S}_2;q)$ as in \eqref{definition E}.

\begin{lemma}
\label{lemma tau2}
Let $\varepsilon > 0$ be fixed. If $T \leq 2 X$ then for $q \leq X^{2/3}$, we have the estimates
\begin{eqnarray*}
D(\mathcal{S}_1;q,a) - D^{\ast}(\mathcal{S}_1;q) & \ll & \frac{X^{2/3 + \varepsilon}}{q^{1/2}}
+ \frac{X}{\varphi(q)} \left( \frac1{L_1} + \frac1{L_2} \right) \textrm{,}
\end{eqnarray*}
and
\begin{eqnarray*}
D(\mathcal{S}_2;q,a) - D^{\ast}(\mathcal{S}_2;q) & \ll & \frac{X^{4/5 + \varepsilon}}{q^{7/10}}
+ \frac{X}{\varphi(q)} \left( \frac1{L_1} + \frac1{L_2} \right) \textrm{.}
\end{eqnarray*}
\end{lemma}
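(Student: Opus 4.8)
\emph{Proof proposal.} The plan is to run, for each $i$, the same machine as in the proof of Lemma~\ref{lemma tau}. Fix a parameter $0<\delta\le 1$, set $\zeta=1+\delta$, let $U,V$ range over $\{\pm\zeta^{n},\ n\ge -1\}$ and cut $\mathbb R^{2}$ into the associated rectangles $\mathcal I\times\mathcal J$ of relative side $\delta$. On the rectangles with $\mathcal I\times\mathcal J\cap\mathbb Z^{2}\subset\mathcal S_{i}$ one replaces $N(\mathcal I,\mathcal J;q,a)$ by the $a$-free quantity $N^{\ast}(\mathcal I,\mathcal J;q)$ by Lemma~\ref{Weil lemma}; since there are $\ll X^{\varepsilon}\delta^{-2}$ rectangles this costs $\ll X^{\varepsilon}q^{1/2}\delta^{-2}$, and the resulting sum $D(\mathcal S_{i};q)$ is independent of $a$, hence may be traded for $D^{\ast}(\mathcal S_{i};q)$ after averaging over $a$ coprime to $q$. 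On the rectangles straddling the boundary of $\mathcal S_{i}$ one invokes \eqref{average bound}, which brings in $\frac1{\varphi(q)}$ times the number of integer points contained in such rectangles, plus again $\ll X^{\varepsilon}q^{1/2}\delta^{-2}$. Thus everything reduces to estimating, for each inequality defining $\mathcal S_{i}$, the number of integer points in the rectangles straddling that inequality, and then choosing $\delta$ optimally.

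For $\mathcal S_{1}$ the six inequalities \eqref{A}--\eqref{E'} contribute exactly as in Lemma~\ref{lemma tau}, namely $\ll \delta X^{1+\varepsilon}+X(1/L_{1}+1/L_{2})$ after writing $1/\varphi(q)\ll X^{\varepsilon}/q$. The inequality \eqref{F}, $|xy+T|\le X$, is of the same affine type as \eqref{D}: a straddling rectangle pins $|uv+T|$ to an interval of length $\ll\delta X$, and summing over the $\ll X/L_{2}$ (or $X/L_{1}$) admissible values of the complementary variable gives $\ll\delta X^{1+\varepsilon}+X/L_{2}$. The inequality \eqref{G}, $|x|y^{2}\le X_{3}$, pins $|u|v^{2}\asymp X_{3}$; here I would use \eqref{A} in the form $|uv|\le X$ to gain the extra lower bound $|v|\gg X_{3}/X$, which turns $\sum_{v}\delta X_{3}/v^{2}$ into $\ll\delta X$, and \eqref{C}, \eqref{E} to truncate the range of $v$, obtaining once more $\ll\delta X^{1+\varepsilon}+X(1/L_{1}+1/L_{2})$. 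The total boundary contribution is therefore of the same shape as in Lemma~\ref{lemma tau}, so the choice $\delta=q^{1/2}X^{-1/3}$, legitimate for $q\le X^{2/3}$, balances $\delta X/q$ against $q^{1/2}\delta^{-2}$ and yields the first estimate.

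For $\mathcal S_{2}$ the affine inequalities \eqref{A}, \eqref{D}, \eqref{E}, \eqref{E'}, \eqref{F}, \eqref{H} are handled in the same way. The novelty is the pair \eqref{I}, $|y|\,|xy+T|\le X_{2}$, and \eqref{J}, $|x|y^{2}|xy+T|\le X_{3}$, in which $|xy+T|$ occurs as a factor. Writing $w=xy+T$ (so $Z\le|w|\le X$ by \eqref{D}, \eqref{F}), a rectangle straddling \eqref{I} pins the hyperbolic quantity $|v|\,|w|\asymp X_{2}$, while one straddling \eqref{J} pins the cubic quantity $|u|v^{2}|w|=|uv|\cdot|vw|\asymp X_{3}$. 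I would estimate the integer points in such a rectangle by fixing one of $u$, $v$, or the product $m=uv$ (using $\tau(|m|)\ll X^{\varepsilon}$ to count the factorisations of a given $m$), reading off the interval to which the remaining variable is confined, and truncating its range by the surviving inequalities, in particular \eqref{A}, \eqref{H}, \eqref{I}, and, as for \eqref{G}, using \eqref{A} to keep a variable away from $0$. Condition \eqref{I} can still be absorbed into an $\ll\delta X^{1+\varepsilon}+X(1/L_{1}+1/L_{2})$ contribution; the obstruction is \eqref{J}, for which the integer-point count near the cubic surface $|x|y^{2}|xy+T|=X_{3}$ is intrinsically weaker than in the affine or hyperbolic cases and can only be controlled up to a term of size $\delta^{1/2}X^{1+\varepsilon}$ (beyond the admissible $\frac{X}{\varphi(q)}(1/L_{1}+1/L_{2})$). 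Balancing $\delta^{1/2}X/q$ against $q^{1/2}\delta^{-2}$ then forces the weaker choice $\delta=q^{3/5}X^{-2/5}$, still admissible precisely when $q\le X^{2/3}$, and produces the bound $\ll X^{4/5+\varepsilon}q^{-7/10}+\frac{X}{\varphi(q)}(1/L_{1}+1/L_{2})$.

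The main obstacle, and the point where the two estimates diverge, is exactly this treatment of the cubic inequality \eqref{J}: one must extract from \eqref{A}, \eqref{H}, \eqref{I} enough localisation of the variables to keep the number of integer points in the $\delta$-neighbourhood of $\{|x|y^{2}|xy+T|=X_{3}\}$ under control, while ensuring that the unavoidable loss there is confined to a term $\delta^{1/2}X^{1+\varepsilon}$ together with the allowed $\frac{X}{\varphi(q)}(1/L_{1}+1/L_{2})$, so that the final optimisation in $\delta$ still goes through. Everything else is a routine, if somewhat lengthy, repetition of the estimates carried out in Lemma~\ref{lemma tau}.
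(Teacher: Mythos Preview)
Your framework is right and matches the paper: cover $\mathcal S_i$ by $\zeta$-dyadic rectangles, use Lemma~\ref{Weil lemma} on the interior rectangles, use \eqref{average bound} on the boundary ones, then bound the boundary lattice points inequality by inequality and optimise $\delta$. Your handling of \eqref{F}, \eqref{G}, \eqref{H}, \eqref{I} is essentially what the paper does (the paper sums over $u$ rather than $v$ for \eqref{G}, and dispatches \eqref{H}, \eqref{I} by observing they are the conditions \eqref{C}, \eqref{B} with the roles of $x$ and $y$ swapped, but your variants are fine). You also identify correctly that \eqref{J} is the bottleneck, that the boundary count there is $\delta^{1/2}X^{1+\varepsilon}+X/L_1$, and that this forces $\delta=q^{3/5}X^{-2/5}$.

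The gap is in \emph{how} you obtain the $\delta^{1/2}$ for \eqref{J}. None of the three devices you list (fix $u$; fix $v$ and ``read off the interval''; fix $m=uv$ and invoke $\tau$) gives this exponent by itself. Fixing $v$ is indeed the right move, but the resulting constraint on $u$ is \emph{quadratic}, not an interval: after dividing the straddling inequalities by $v^2$ you are left (as in the paper's derivation of \eqref{condition N}) with
\[
Y' \;<\; \Bigl|u^2+\tfrac{T}{v}\,u\Bigr| \;\le\; Y, \qquad Y-Y'\ll \delta\,M(v)^2,\quad M(v)=\max\!\Bigl(\tfrac{X_3^{1/2}}{|v|^{3/2}},\,\tfrac{T}{|v|}\Bigr),
\]
and one needs the elementary annulus estimate of Lemma~\ref{square}, $\#\{u\in\mathbb Z:\,Y'<|u^2+2Au|\le Y\}\ll\delta^{1/2}M+1$, to extract $\delta^{1/2}$. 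Summing $\delta^{1/2}M(v)+1$ over $|v|\gg X_3/X^2$ and $|v|\ll X/L_1$ then gives $\delta^{1/2}X^{1+\varepsilon}+X/L_1$. Without this quadratic step (and the preliminary trick of multiplying by $|U|$ to pass from $|UV+T|$ to $|u||u+T/v|$), your argument for \eqref{J} is incomplete; this is exactly why the paper isolates Lemma~\ref{square} before proving Lemma~\ref{lemma tau2}.
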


To prove lemma \ref{lemma tau2}, we can proceed almost exactly as in the proof of lemma \ref{lemma tau} apart from the fact that the condition \eqref{J} is more complicated than the others. Indeed, it is the only condition where both $x$ and $y$ appear with powers greater or equal to $2$. To solve this problem, we need the following result.

\begin{lemma}
\label{square}
Let $0 < \delta \leq 1$, $Y  \in \mathbb{R}_{>0}$ and $A, Y' \in \mathbb{R}$ be such that $0 < Y - Y' \ll \delta M^2$ where
$M = \max \left( |A|, Y^{1/2} \right)$. Let $\mathcal{R} \subset \mathbb{R}$ be the set of real numbers $y$ subject to
\begin{eqnarray}
\label{double inequality}
& & Y' < \left| y^2 + 2 A y \right| \leq Y \textrm{.}
\end{eqnarray}
We have the bound
\begin{eqnarray*}
\# \left( \mathcal{R} \cap \mathbb{Z} \right) & \ll & \delta^{1/2} M + 1 \textrm{.}
\end{eqnarray*}
\end{lemma}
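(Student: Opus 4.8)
The plan is to complete the square and reduce everything to counting integers whose square lies in a short interval. Putting $w = y + A$, one has $y^2 + 2Ay = w^2 - A^2$, and since translating a subset of $\mathbb{R}$ by a fixed real does not change the number of integers it contains, $\#(\mathcal{R} \cap \mathbb{Z})$ equals the number of $w \in A + \mathbb{Z}$ with $Y' < |w^2 - A^2| \leq Y$. The set $\{t \in \mathbb{R} : Y' < |t| \leq Y\}$ is covered by at most two intervals, each of length at most $2(Y - Y')$: if $Y' \geq 0$ it is $[-Y,-Y') \cup (Y',Y]$, and if $Y' < 0$ it is $[-Y,Y] = [-Y,0] \cup [0,Y]$. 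Since $Y - Y' \ll \delta M^2$, translating each of these intervals by $A^2$ shows that it is enough to prove the following: for any interval $[\alpha,\beta] \subset \mathbb{R}$ with $\beta - \alpha \ll \delta M^2$, the number of $w \in A + \mathbb{Z}$ with $\alpha \leq w^2 \leq \beta$ is $\ll \delta^{1/2} M + 1$.

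I would prove this by a dichotomy on the size of $\beta$. If $\beta \leq \delta M^2$, then every such $w$ satisfies $w^2 \leq \delta M^2$, hence $|w| \leq \delta^{1/2} M$, and an interval of that length contains $\ll \delta^{1/2} M + 1$ elements of $A + \mathbb{Z}$. If instead $\beta > \delta M^2$, then, writing $\gamma = \max(\alpha, 0) \geq 0$, the admissible $w$ satisfy $|w| \in [\gamma^{1/2}, \beta^{1/2}]$, an interval of length
\begin{eqnarray*}
\beta^{1/2} - \gamma^{1/2} & = & \frac{\beta - \gamma}{\beta^{1/2} + \gamma^{1/2}} \;\leq\; \frac{\beta - \alpha}{\beta^{1/2}} \;\ll\; \frac{\delta M^2}{\delta^{1/2} M} \;=\; \delta^{1/2} M \textrm{,}
\end{eqnarray*}
so $w$ lies in a union of at most two intervals of length $\ll \delta^{1/2} M$, again contributing $\ll \delta^{1/2} M + 1$. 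Summing the at most two contributions coming from the decomposition of $\{Y' < |t| \leq Y\}$ then gives the lemma.

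The computation is short once this reduction is set up, and the one delicate point — which I expect to be the crux — is that taking square roots does not preserve the length of a short interval: a sub-interval of $[0, \delta M^2]$ of length comparable to $\delta M^2$ can have square-root image of length up to $(\delta M^2)^{1/2} = \delta^{1/2} M$ rather than $\delta M$. This is exactly why the exponent on $\delta$ in the conclusion is $1/2$ and not $1$, and it forces one to peel off the range where $w^2$ is close to $0$ and bound it trivially instead of by differencing square roots. The specific value $M = \max(|A|, Y^{1/2})$ is not used beyond the inequality $\beta - \alpha \ll \delta M^2$; it simply makes the hypothesis $Y - Y' \ll \delta M^2$ the natural scale-invariant one for this count.
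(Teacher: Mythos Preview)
Your proof is correct and follows essentially the same approach as the paper: complete the square via $w = y + A$, then bound the length of the set of $w$ whose square lies in a short interval, treating separately the range where $w^2$ is close to $0$ (where differencing square roots is lossy). Your dichotomy on $\beta$ versus $\delta M^2$ packages this more uniformly than the paper's four sub-cases (split by the sign of $z^2 - A^2$ and then by the sign of $Y' + A^2$ or $A^2 - Y$), but the content is the same; the one phrasing slip is that translating by a non-integer $A$ does not literally preserve ``the number of integers it contains'' --- what you use, and correctly write afterwards, is that it gives a bijection onto $A + \mathbb{Z}$.
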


\begin{proof}
It clearly suffices to show that $\meas(\mathcal{R}) \ll \delta^{1/2} M$. Setting $z = y + A$, the condition \eqref{double inequality} can be rewritten as $Y' < \left| z^2 - A^2 \right| \leq Y$. Let us treat first the case where $z^2 - A^2 > 0$. If $Y' + A^2 > 0$ then
\begin{eqnarray*}
& & \left( Y' + A^2 \right)^{1/2} < |z| \leq \left(Y + A^2 \right)^{1/2} \textrm{.}
\end{eqnarray*}
Therefore,
\begin{eqnarray*}
\meas \left( \mathcal{R} \cap \left\{ y \in \mathbb{R}, (y+A)^2 > A^2 \right\} \right) & \ll & \left(Y + A^2 \right)^{1/2} - \left(Y' + A^2 \right)^{1/2} \\
& = & \frac{Y - Y'}{\left(Y + A^2 \right)^{1/2} + \left(Y' + A^2 \right)^{1/2}} \\
& \ll & \delta M \textrm{,}
\end{eqnarray*}
which is satisfactory. Now if $Y' + A^2 \leq 0$ then $Y + A^2 \leq Y - Y' \ll \delta M^2$ and thus
\begin{eqnarray*}
\meas \left( \mathcal{R} \cap \left\{ y \in \mathbb{R}, (y+A)^2 > A^2 \right\} \right) & \ll & \left(Y + A^2 \right)^{1/2} \\
& \ll & \delta^{1/2} M \textrm{.}
\end{eqnarray*}
Let us now deal with the case where $z^2 - A^2 \leq 0$. Under this assumption, we have $A^2 - Y \leq z^2 < A^2 - Y'$ so we can assume that $A^2 - Y' > 0$. First if $A^2 - Y \geq 0$ then
\begin{eqnarray*}
\meas \left( \mathcal{R} \cap \left\{ y \in \mathbb{R}, (y+A)^2 \leq A^2 \right\} \right) & \ll & \left(A^2 - Y' \right)^{1/2} - \left(A^2 - Y \right)^{1/2} \\
& \ll & \delta^{1/2} M \textrm{,}
\end{eqnarray*}
where we have used $a^{1/2} - b^{1/2} \leq (a-b)^{1/2}$, which is valid for any $a \geq b \geq 0$. Finally, if $A^2 - Y < 0$ then
$A^2 - Y' < Y - Y' \ll \delta M^2$ and thus
\begin{eqnarray*}
\meas \left( \mathcal{R} \cap \left\{ y \in \mathbb{R}, (y+A)^2 \leq A^2 \right\} \right) & \ll & \delta^{1/2} M \textrm{,}
\end{eqnarray*}
which completes the proof.
\end{proof}

We are now in position to prove lemma \ref{lemma tau2}.

\begin{proof}
We proceed as in the proof of lemma \ref{lemma tau}. The only thing we have to do is to repeat for all our new conditions what we have done for the conditions \eqref{A}, \eqref{B}, \eqref{C}, \eqref{D}, \eqref{E} and \eqref{E'}. By symmetry between the conditions \eqref{H}, \eqref{I} and \eqref{B}, \eqref{C}, it suffices to consider the cases of the conditions \eqref{F}, \eqref{G} and \eqref{J}. Reasoning as for the condition \eqref{D}, we see that the contribution corresponding to the condition \eqref{F} is
$\ll \delta X^{1 + \varepsilon} + X / L_2$. In the case of the condition \eqref{G}, we have the inequality
\begin{eqnarray}
\label{condition J}
& & \zeta^{-3} X_3 < |U| V^2 \leq X_3 \textrm{.}
\end{eqnarray}
Combined with $|U V| \leq X$, it implies
\begin{eqnarray}
\label{condition K}
|U| & \ll & \frac{X^2}{X_3} \textrm{.}
\end{eqnarray}
In terms of the variables $u$ and $v$, we have $\zeta^{-3} X_3 < |u| v^2 \leq \zeta^3 X_3$. Therefore, this contribution is seen to be
\begin{eqnarray*}
\sum_{\substack{\eqref{condition A} \\ \eqref{condition J}, \eqref{condition K}}}
\# \left( \mathcal{I} \times \mathcal{J} \cap \mathbb{Z}^2 \right) & \ll &
\# \left\{ (u,v) \in \mathbb{Z}_{\neq 0}^2, 
\begin{array}{l}
\zeta^{-3} X_3 < |u| v^2 \leq \zeta^3 X_3 \\
|u| \ll X^2 / X_3  \\
|u| \ll X / L_2
\end{array}
\right\} \\
& \ll &
\sum_{\substack{|u| \ll X^2/X_3 \\ |u| \ll X / L_2}}
\left( \frac{\delta X_3^{1/2}}{|u|^{1/2}} + 1 \right) \\
& \ll & \delta X + \frac{X}{L_2} \textrm{.}
\end{eqnarray*}
Finally, let us deal with the case of the condition \eqref{J}. For some $(s_1,s_2) \in ]0,1]^2$ and $(t_1,t_2) \in ]0,1]^2$, we have
\begin{eqnarray}
\label{condition K'}
\zeta^{s_1 + 2 s_2} |U| V^2 \left| \zeta^{s_1 + s_2} UV + T \right| & \leq & X_3 \textrm{,} \\
\label{condition K''}
\zeta^{t_1 + 2 t_2} |U| V^2 \left| \zeta^{t_1 + t_2} UV + T \right| & > & X_3 \textrm{.}
\end{eqnarray}
Using $|U V| \leq X$ and $T \leq 2 X$, the condition \eqref{condition K''} gives
\begin{eqnarray}
\label{condition M}
|V| & \gg & \frac{X_3}{X^2} \textrm{.}
\end{eqnarray}
For $t \in \mathbb{R}_{\neq 0}$, we set
\begin{eqnarray*}
M(t) & = & \max \left( \frac{X_3^{1/2}}{|t|^{3/2}}, \frac{T}{|t|} \right) \textrm{.}
\end{eqnarray*}
The condition \eqref{condition K'} shows that we have $|U| \leq 2 M(V)$. The inequalities \eqref{condition K'} and
\eqref{condition K''} imply
\begin{eqnarray}
\label{condition L}
& & \zeta^{-5} \frac{X_3}{|U| V^2} - \left( 1 - \zeta^{-2} \right) T < \left| UV + T \right|
\leq \frac{X_3}{|U| V^2} + \left( 1 - \zeta^{-2} \right) T \textrm{.}
\end{eqnarray}
To go back to the variables $u$ and $v$, we can proceed as we did to deduce the inequality \eqref{condition F} from the condition \eqref{condition D} in the proof of lemma \ref{lemma tau}. We get
\begin{eqnarray*}
& & \left( \zeta^{-5} - 3 \delta \right) \frac{X_3}{|U| V^2} - 5 \delta T < \left| uv + T \right| \leq
(1 + 3 \delta) \frac{X_3}{|U| V^2} + 5 \delta T \textrm{,}
\end{eqnarray*}
and thus, multiplying by $|U|$ and using $|U| \leq 2 M(V)$, we obtain
\begin{eqnarray*}
& & \left( \zeta^{-5} - 3 \delta \right) \frac{X_3}{V^2} - 10 \delta M(V) T < |u| \left| uv + T \right| \leq
\zeta (1 + 3 \delta) \frac{X_3}{V^2} + 10 \zeta \delta M(V) T \textrm{.}
\end{eqnarray*}
Setting $c = 10 \zeta^{3/2}$ for short and using $M(V) \leq \zeta^{3/2} M(v)$ and $T/|v| \leq M(v)$, we finally see that
\begin{eqnarray}
\label{condition N}
& & \left( \zeta^{-5} - 3 \delta \right) \frac{X_3}{|v|^3} - c \delta M(v)^2 < |u| \left| u + \frac{T}{v} \right| \leq
\zeta^{3} (1 + 3 \delta) \frac{X_3}{|v|^3} + \zeta c \delta M(v)^2 \textrm{.}
\end{eqnarray}
Applying lemma \ref{square} to count the number of $u$ subject to the inequality \eqref{condition N}, we get
\begin{eqnarray*}
\sum_{\substack{\eqref{condition B} \\ \eqref{condition M}, \eqref{condition L}}}
\# \left( \mathcal{I} \times \mathcal{J} \cap \mathbb{Z}^2 \right) & \ll &
\# \left\{ (u,v) \in \mathbb{Z}_{\neq 0}^2, 
\begin{array}{l}
\eqref{condition N} \\
|v| \gg X_3/X^2  \\
|v| \ll X/L_1
\end{array}
\right\} \\
& \ll & \sum_{\substack{|v| \gg X_3/X^2 \\ |v| \ll X/L_1}}
\left( \frac{\delta^{1/2} X_3^{1/2}}{|v|^{3/2}} + \frac{\delta^{1/2} T}{|v|} + 1 \right) \\
& \ll & \delta^{1/2} X^{1 + \varepsilon} + \frac{X}{L_1} \textrm{,}
\end{eqnarray*}
since $T \leq X$. In the case of $\mathcal{S}_1$, the proof can be completed as that of lemma \ref{lemma tau}. In the case of $\mathcal{S}_2$, the optimal choice of $\delta$ is seen to be $\delta = q^{3/5} X^{-2/5}$, which yields the result claimed.
\end{proof}

\subsection{Arithmetic functions}

\label{Arithmetic functions}

Let us introduce the following arithmetic functions,
\begin{eqnarray}
\label{def ast}
\varphi^{\ast}(n) & = & \prod_{p|n} \left( 1 - \frac1{p} \right) \textrm{,} \\
\label{def dag}
\varphi^{\dag}(n) & = & \prod_{p|n} \left( 1 - \frac1{p^2} \right) \textrm{, } \\
\label{def '}
\varphi'(n) & = & \prod_{p|n} \left( 1 - \frac1{p} \right)^{-1} \left( 1 + \frac1{p} - \frac1{p^2} \right)^{-1} \textrm{.}
\end{eqnarray}
Define also, for $a,b,c \geq 1$,
\begin{eqnarray*}
\psi_{a,b,c}(n) & = &
\begin{cases}
\varphi^{\ast}(n)^2 \varphi^{\ast}(\gcd(n,a))^{-1} \varphi^{\ast}(\gcd(n,b))^{-1} & \textrm{ if } \gcd(n,c) = 1 \textrm{,} \\
0 & \textrm{ otherwise.}
\end{cases}
\end{eqnarray*}
Finally, for $\sigma > 0$, let
\begin{eqnarray}
\label{varphi_sigma}
\varphi_{\sigma}(n) & = & \sum_{k|n} 2^{\omega(k)} k^{-\sigma} \textrm{,}
\end{eqnarray}
where $\omega(k)$ denotes the number of prime numbers dividing $k$. The next two lemmas are built following the reasoning of \cite[section 3]{MR2543667}.

\begin{lemma}
\label{arithmetic preliminary 0}
Let $0 < \sigma \leq 1$ be fixed. We have the estimate
\begin{eqnarray*}
\sum_{n \leq X} \psi_{a,b,c}(n) & = & \mathcal{P} \Psi(a,b,c) X + O_{\sigma} \left( \varphi_{\sigma}(c) X^{\sigma} \right) \textrm{,}
\end{eqnarray*}
where
\begin{eqnarray}
\label{def P}
\mathcal{P} & = & \prod_p \varphi'(p)^{-1} \textrm{,} \\
\nonumber
\Psi(a,b,c) & = & \varphi^{\ast}(c) \frac{\varphi^{\dag}(abc)}{\varphi^{\dag}(\gcd(a,b)c)} \varphi'(abc) \textrm{.}
\end{eqnarray}
\end{lemma}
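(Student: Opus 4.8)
The plan is to exploit the multiplicativity of $n\mapsto\psi_{a,b,c}(n)$ and to reduce the sum to a Dirichlet convolution with a function supported on squarefree integers whose Dirichlet series converges in the half-plane $\Re s>0$. First I would record the values on prime powers. Each of $\varphi^{\ast}(n)$, $\varphi^{\ast}(\gcd(n,a))$, $\varphi^{\ast}(\gcd(n,b))$ and the condition $\gcd(n,c)=1$ is multiplicative in $n$, hence so is $\psi_{a,b,c}$, and for a prime $p$ the value $\psi_{a,b,c}(p^{k})$ does not depend on $k\ge1$: it equals $0$ if $p\mid c$, equals $(1-1/p)^{2}$ if $p\nmid abc$, equals $1-1/p$ if $p\mid ab$ but $p\nmid\gcd(a,b)c$, and equals $1$ if $p\mid\gcd(a,b)$ and $p\nmid c$. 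Writing $\psi_{a,b,c}=\mathbf 1\ast h$ with $h$ multiplicative, we then get $h(p^{k})=0$ for $k\ge2$, so $h$ is supported on squarefree integers, and $h(p)=\psi_{a,b,c}(p)-1$; explicitly $h(p)=-1$ if $p\mid c$, $h(p)=-2/p+1/p^{2}$ if $p\nmid abc$, $h(p)=-1/p$ if $p\mid ab$ but $p\nmid\gcd(a,b)c$, and $h(p)=0$ if $p\mid\gcd(a,b)$, $p\nmid c$.

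Next I would apply the hyperbola identity $\sum_{n\le X}\psi_{a,b,c}(n)=\sum_{d\le X}h(d)\lfloor X/d\rfloor$ and replace $\lfloor X/d\rfloor$ by $X/d$, which yields
\[
\sum_{n\le X}\psi_{a,b,c}(n)=X\sum_{d\ge1}\frac{h(d)}{d}+O\Big(\sum_{d\le X}|h(d)|\Big)+O\Big(X\sum_{d>X}\frac{|h(d)|}{d}\Big).
\]
This is legitimate because away from the fixed set of primes dividing $c$ one has $|h(p)|\le 2/p$, so $\sum_{d}|h(d)|d^{-s}=\prod_{p}\bigl(1+|h(p)|p^{-s}\bigr)$ converges for $\Re s>0$; in particular $\sum_{d\ge1}h(d)/d$ is absolutely convergent.

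To obtain the stated dependence on $c$ in the two error sums I would factor each squarefree $d$ as $d=d'd''$, where $d'$ is composed of the primes dividing $c$ (so $d'\mid\operatorname{rad}(c)$) and $\gcd(d'',c)=1$. Then $|h(d')|=1$, while $|h(d'')|\le 2^{\omega(d'')}/d''$, so by Rankin's trick (namely $\sum_{m\le Y}2^{\omega(m)}/m\ll_{\sigma}Y^{\sigma}$ and $\sum_{m>Y}2^{\omega(m)}/m^{2}\ll_{\sigma}Y^{\sigma-1}$, the ranges $Y<1$ being trivial) both error sums are
\[
\ll_{\sigma}\;X^{\sigma}\sum_{d'\mid\operatorname{rad}(c)}d'^{-\sigma}=X^{\sigma}\prod_{p\mid c}\bigl(1+p^{-\sigma}\bigr)\le X^{\sigma}\,\varphi_{\sigma}(c),
\]
the last step because $\prod_{p\mid c}(1+p^{-\sigma})=\sum_{k\mid\operatorname{rad}(c)}\mu^{2}(k)k^{-\sigma}\le\sum_{k\mid c}2^{\omega(k)}k^{-\sigma}$. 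This is the one place where a little care is needed, and it is precisely why the splitting $d=d'd''$ together with Rankin's trick is used.

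Finally, by multiplicativity $\sum_{d\ge1}h(d)/d=\prod_{p}\bigl(1+h(p)/p\bigr)$, and a direct computation from the values of $h(p)$ shows that the Euler factor $1+h(p)/p$ equals $(1-1/p)(1+1/p-1/p^{2})=\varphi'(p)^{-1}$ if $p\nmid abc$, equals $1-1/p$ if $p\mid c$, equals $1-1/p^{2}$ if $p\mid ab$ but $p\nmid\gcd(a,b)c$, and equals $1$ if $p\mid\gcd(a,b)$, $p\nmid c$. Comparing this prime by prime with $\mathcal P\,\Psi(a,b,c)=\prod_{p}\varphi'(p)^{-1}\cdot\varphi^{\ast}(c)\cdot\varphi^{\dag}(abc)\,\varphi^{\dag}(\gcd(a,b)c)^{-1}\cdot\varphi'(abc)$ — using that the primes dividing $abc$ but not $\gcd(a,b)c$ are exactly those dividing $ab$ but not $\gcd(a,b)c$ — gives $\prod_{p}(1+h(p)/p)=\mathcal P\,\Psi(a,b,c)$, and combined with the two previous displays this proves the lemma. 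Apart from the error-term point mentioned above, the remaining work is routine bookkeeping of Euler factors.
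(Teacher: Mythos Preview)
Your proof is correct and follows essentially the same route as the paper: write $\psi_{a,b,c}=\mathbf 1\ast h$ with $h=\psi_{a,b,c}\ast\mu$ supported on squarefrees, sum $\lfloor X/d\rfloor$, and evaluate the Euler product. The only cosmetic difference is in the error term: the paper uses the single estimate $[t]=t+O(t^{\sigma})$ (valid for all $t>0$) together with the bound $|h(d)|\le 2^{\omega(d)}\gcd(c,d)/d$, whereas you split $d\le X$ and $d>X$ and apply Rankin's trick after factoring $d=d'd''$; both give $O_{\sigma}(\varphi_{\sigma}(c)X^{\sigma})$.
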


\begin{proof}
Let us calculate the Dirichlet convolution of the function $\psi_{a,b,c}$ with the Möbius function $\mu$. We have
\begin{eqnarray*}
(\psi_{a,b,c} \ast \mu)(n) & = & \sum_{d|n} \psi_{a,b,c} \left( \frac{n}{d} \right) \mu(d) \\
& = & \prod_{p^\nu \parallel n} \left( \psi_{a,b,c} \left( p^\nu \right) - \psi_{a,b,c} \left( p^{\nu - 1} \right) \right) \textrm{.}
\end{eqnarray*}
Moreover $\psi_{a,b,c}(1) = 1$ and for all $\nu \geq 1$, we have
\begin{eqnarray*}
\psi_{a,b,c} \left( p^\nu \right) = \psi_{a,b,c}(p) =
\begin{cases}
\left( 1 - 1/p \right)^2 & \textrm{ if } p \nmid abc \textrm{, } \\
1 - 1/p & \textrm{ if } p \nmid c, p|ab \textrm{ and } p \nmid \gcd(a,b) \textrm{, } \\
1 & \textrm{ if } p \nmid c \textrm{ and } p | \gcd(a,b) \textrm{, } \\
0 & \textrm{ if } p | c \textrm{.}
\end{cases}
\end{eqnarray*}
Thus, we get
\begin{eqnarray*}
(\psi_{a,b,c} \ast \mu)(n) & = & \mu(n) 2^{\omega(n) - \omega(\gcd(n,abc))} \frac{\gcd(c,n)}{n}
\prod_{p|n, p\nmid abc} \left( 1 - \frac1{2p} \right) \textrm{,}
\end{eqnarray*}
if $\gcd(a,b,n) |c$ and $(\psi_{a,b,c} \ast \mu)(n) = 0$ otherwise. Writing
$\psi_{a,b,c} = (\psi_{a,b,c} \ast \mu) \ast 1$ yields
\begin{eqnarray*}
\sum_{n \leq X} \psi_{a,b,c}(n) & = & \sum_{n \leq X} \sum_{d|n} (\psi_{a,b,c} \ast \mu)(d) \\
& = & \sum_{d = 1}^{+ \infty} (\psi_{a,b,c} \ast \mu)(d) \left[ \frac{X}{d} \right] \textrm{.}
\end{eqnarray*}
Let $0 < \sigma \leq 1$ be fixed. Let us use the elementary estimate $[t] = t + O \left( t^{\sigma} \right)$ for $t = X/d$. Since
\begin{eqnarray*}
\sum_{d = 1}^{+ \infty} \frac{\left| (\psi_{a,b,c}  \ast \mu)(d)  \right|}{d^{\sigma}} & \leq &
\sum_{d = 1}^{+ \infty} 2^{\omega(d)} \frac{\gcd(c,d)}{d^{1 + \sigma}} \\
& \ll & \varphi_{\sigma}(c) \textrm{,}
\end{eqnarray*}
we have shown that
\begin{eqnarray*}
\sum_{n \leq X} \psi_{a,b,c}(n) & = & X \sum_{d = 1}^{+ \infty} \frac{(\psi_{a,b,c}  \ast \mu)(d)}{d}
 + O \left( \varphi_{\sigma}(c) X^{\sigma} \right) \textrm{.}
\end{eqnarray*}
A straightforward calculation finally yields
\begin{eqnarray*}
\sum_{d = 1}^{+ \infty} \frac{(\psi_{a,b,c}  \ast \mu)(d)}{d} & = & \prod_{p|c} \left( 1- \frac1{p} \right)
\prod_{\substack{p \nmid c, p|ab \\ p \nmid \gcd(a,b)}} \left( 1- \frac1{p^2} \right) \\
& & \prod_{p \nmid c, p \nmid ab} \left( 1- \frac{2}{p^2} \left( 1 - \frac1{2p} \right) \right) \\
& = & \varphi^{\ast}(c) \frac{\varphi^{\dag}(ab)}{\varphi^{\dag}(\gcd(ab,\gcd(a,b)c))} \mathcal{P} \varphi'(abc) \textrm{,}
\end{eqnarray*}
which completes the proof.
\end{proof}

\begin{lemma}
\label{arithmetic preliminary}
Let $0 < \sigma \leq 1$ be fixed. Let $0 \leq t_1 < t_2$ and $I = [t_1,t_2]$. Let also $g : \mathbb{R}_{> 0} \to \mathbb{R}$ be a function having a piecewise continuous derivative on $I$ whose sign changes at most $R_g(I)$ times on $I$. We have
\begin{eqnarray*}
\sum_{n \in I \cap \mathbb{Z}_{>0}} \psi_{a,b,c}(n) g(n) & = & \mathcal{P} \Psi(a,b,c) \int_I g(t) \D t +
O_{\sigma} \left( \varphi_{\sigma}(c) t_2^{\sigma} M_I(g) \right) \textrm{,}
\end{eqnarray*}
where $M_I(g) = (1 + R_g(I)) \sup_{t \in I \cap \mathbb{R}_{> 0}} |g(t)|$.
\end{lemma}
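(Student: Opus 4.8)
The plan is to deduce the weighted sum from the unweighted count of Lemma \ref{arithmetic preliminary 0} by partial summation (Abel summation), exactly in the spirit of \cite[section 3]{MR2543667}. Write $S(t) = \sum_{n \le t} \psi_{a,b,c}(n)$, so that by Lemma \ref{arithmetic preliminary 0} we have $S(t) = \mathcal{P}\Psi(a,b,c)\, t + E(t)$ with $E(t) \ll_{\sigma} \varphi_{\sigma}(c)\, t^{\sigma}$ for $t \ge 1$ (and $S(t) = 0$ for $t < 1$). Then
\begin{eqnarray*}
\sum_{n \in I \cap \mathbb{Z}_{>0}} \psi_{a,b,c}(n) g(n) & = & \int_I g(t)\, \D S(t) \\
& = & \mathcal{P}\Psi(a,b,c) \int_I g(t)\, \D t + \int_I g(t)\, \D E(t) \textrm{,}
\end{eqnarray*}
so that everything reduces to bounding the Stieltjes integral $\int_I g(t)\, \D E(t)$ by $O_{\sigma}\!\left( \varphi_{\sigma}(c)\, t_2^{\sigma} M_I(g) \right)$.

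First I would handle the main term: since $\psi_{a,b,c}$ is supported on positive integers, $\D S(t)$ is a sum of point masses and the Riemann--Stieltjes integral against the continuous density $\mathcal{P}\Psi(a,b,c)$ is literally $\mathcal{P}\Psi(a,b,c)\int_I g(t)\,\D t$; no error is incurred there. For the error term I would integrate by parts:
\begin{eqnarray*}
\int_I g(t)\, \D E(t) & = & \bigl[ g(t) E(t) \bigr]_{t_1}^{t_2} - \int_I E(t) g'(t)\, \D t \textrm{,}
\end{eqnarray*}
taking care at the endpoint $t_1$ (if $t_1 < 1$ then $E$ and $g$ contributions there vanish or are trivially controlled, since $S \equiv 0$ below $1$; if $t_1 \ge 1$ the boundary term is $\ll \varphi_{\sigma}(c) t_2^{\sigma} \sup_I |g|$ using monotonicity of $t^{\sigma}$). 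The boundary term is thus $\ll_{\sigma} \varphi_{\sigma}(c)\, t_2^{\sigma} \sup_{t \in I} |g(t)|$, which is absorbed into $\varphi_{\sigma}(c)\, t_2^{\sigma} M_I(g)$.

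The step I expect to be the real point is bounding $\int_I E(t) g'(t)\, \D t$. Here I cannot just take absolute values and use $|E(t)| \ll \varphi_{\sigma}(c) t^{\sigma}$, because $\int_I |g'(t)|\,\D t$ can be large; the hypothesis that $g'$ changes sign at most $R_g(I)$ times on $I$ is exactly what saves us. I would split $I$ into at most $R_g(I)+1$ subintervals on each of which $g'$ has constant sign, hence on which $g$ is monotone; on such a subinterval $[u_1,u_2]$,
\begin{eqnarray*}
\left| \int_{u_1}^{u_2} E(t) g'(t)\, \D t \right| & \le & \Bigl( \sup_{t \in [u_1,u_2]} |E(t)| \Bigr) \int_{u_1}^{u_2} |g'(t)|\, \D t \\
& \ll_{\sigma} & \varphi_{\sigma}(c)\, t_2^{\sigma}\, \bigl| g(u_2) - g(u_1) \bigr| \\
& \ll_{\sigma} & \varphi_{\sigma}(c)\, t_2^{\sigma}\, \sup_{t \in I} |g(t)| \textrm{,}
\end{eqnarray*}
where we used $\int_{u_1}^{u_2}|g'| = |g(u_2)-g(u_1)|$ by monotonicity and $|E(t)| \ll_{\sigma} \varphi_{\sigma}(c) t^{\sigma} \le \varphi_{\sigma}(c) t_2^{\sigma}$. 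Summing over the $\le R_g(I)+1$ pieces gives the bound $\ll_{\sigma} \varphi_{\sigma}(c)\, t_2^{\sigma} (1+R_g(I)) \sup_{t \in I}|g(t)| = \varphi_{\sigma}(c)\, t_2^{\sigma} M_I(g)$, as required. The only mild subtleties are the piecewise-continuity bookkeeping for $g'$ (so the integration by parts is legitimate on each piece) and the treatment of the left endpoint when $t_1 < 1$, both of which are routine; combining the main term with these error bounds completes the proof. $\qed$
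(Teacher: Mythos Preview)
Your proof is correct and follows essentially the same approach as the paper: Abel/partial summation using Lemma \ref{arithmetic preliminary 0}, integration by parts, and then splitting $I$ into the at most $R_g(I)+1$ subintervals of monotonicity to control $\int_I |g'|$. The only cosmetic difference is that you separate $S(t)$ into main term plus $E(t)$ before integrating by parts, whereas the paper does partial summation on $S(t)$ first and then substitutes the asymptotic; the substance is identical.
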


\begin{proof}
We only treat the case where $t_1 > 0$ since the statement for $t_1 = 0$ easily follows from it. Let $S$ be the function defined for $t > 0$ by
\begin{eqnarray*}
S(t) & = & \sum_{n \leq t} \psi_{a,b,c}(n) \textrm{.}
\end{eqnarray*}
Splitting $I$ into several ranges, we can assume that $g$ has a continuous derivative. An application of partial summation gives
\begin{eqnarray*}
\sum_{n \in ]t_1,t_2] \cap \mathbb{Z}_{>0}} \psi_{a,b,c}(n) g(n) & = &
S(t_2) g(t_2) - S(t_1) g(t_1) - \int_{t_1}^{t_2} S(t) g'(t) \D t \textrm{.}
\end{eqnarray*}
Lemma \ref{arithmetic preliminary 0} implies that
$S(t) = \mathcal{P} \Psi(a,b,c) t + O \left( \varphi_{\sigma}(c) t^{\sigma} \right)$. An integration by parts reveals that the sum to be estimated is equal to
\begin{eqnarray*}
& & \mathcal{P} \Psi(a,b,c) \int_I g(t) \D t +
O \left( \varphi_{\sigma}(c) t_2^{\sigma} \left( |g(t_2)| + |g(t_1)| + \int_I |g'(t)| \D t \right) \right) \textrm{.}
\end{eqnarray*}
It only remains to split $I$ in the $R_g(I) + 1$ ranges where $g'$ has constant sign to conclude.
\end{proof}

\subsection{Lemma for the final summation}

Let $r \geq 1$ and $\mathbf{n} = (n_1, \dots, n_r) \in \mathbb{Z}_{>0}^r$ (and by analogy, $\mathbf{d}$, $\mathbf{k}$). Let $\mathcal{V}$ be the set of $\mathbf{n} \in \mathbb{Z}_{>0}^r$ satisfying the following conditions, indexed by
$1 \leq j \leq N$,
\begin{eqnarray*}
\prod_{i=1}^r n_i^{\beta_{i,j}} & \leq & X^{\varepsilon_j} \textrm{,}
\end{eqnarray*}
where $X \geq 1$ is a quantity independent of $j$ and $\varepsilon_j \in \{ -1, 0, 1 \}$, $\beta_{i,j} \in \mathbb{Q}$ are bounded by an absolute constant and such that the polytope $\mathcal{C}$ defined by $t_1, \dots, t_r \geq 0$ and the $N$ inequalities
\begin{eqnarray*}
\sum_{i=1}^r \beta_{i,j} t_i & \leq & \varepsilon_j \textrm{,}
\end{eqnarray*}
satisfies $\mathcal{C} \subset [0,1]^r$. We are concerned with sums of the form
\begin{eqnarray*}
& & \sum_{\mathbf{n} \in \mathcal{V}} \frac{\Psi(\mathbf{n})}{n_1 \cdots n_r} \textrm{,}
\end{eqnarray*}
where $\Psi$ is an arithmetic function of $r$ variables.

\begin{lemma}
\label{final 0}
Let $f$ be the characteristic function of a polytope $\mathcal{D} \subset [0,1]^r$. We have
\begin{eqnarray*}
\sum_{n_1, \dots, n_r \leq X} \frac{f \left( \frac{\log(n_1)}{\log(X)}, \dots, \frac{\log(n_r)}{\log(X)} \right)}{n_1 \cdots n_r} & = & \vol(\mathcal{D}) \log(X)^r + O \left( \log(X)^{r-1} \right) \textrm{.}
\end{eqnarray*}
\end{lemma}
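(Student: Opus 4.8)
The plan is to prove the estimate by induction on $r$, reducing at each step to the classical one‑dimensional sum. Write $u_i = \log(n_i)/\log(X)$, so that $u_i \in [0,1]$ whenever $1 \le n_i \le X$, and for $0 \le k \le r$ introduce the partial integrals
\[
g_k(t_1,\dots,t_k) \;=\; \int_{[0,1]^{r-k}} f(t_1,\dots,t_k,s_{k+1},\dots,s_r)\,\D s_{k+1}\cdots\D s_r,
\]
so that $g_r = f$, $g_0 = \vol(\mathcal{D})$, and $\int_0^1 g_k(t_1,\dots,t_{k-1},s)\,\D s = g_{k-1}(t_1,\dots,t_{k-1})$ since $\mathcal{D} \subset [0,1]^r$. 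Let $S_k$ denote the sum in the statement but over $n_1,\dots,n_k \le X$ and with $g_k$ in place of $f$; the goal is $S_r = \vol(\mathcal{D})\log(X)^r + O(\log(X)^{r-1})$.

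The first ingredient is the one‑variable estimate: for any $h : [0,1] \to \mathbb{R}$ of bounded variation, partial summation against $\sum_{n \le Y} 1/n = \log(Y) + O(1)$ gives
\[
\sum_{1 \le n \le X} \frac{h(\log(n)/\log(X))}{n} \;=\; \log(X) \int_0^1 h(u)\,\D u \;+\; O\left( \|h\|_\infty + V(h) \right),
\]
with an absolute implied constant, where $V(h)$ is the total variation of $h$. Applying this to the innermost variable $n_k$ in $S_k$, with $n_1,\dots,n_{k-1}$ frozen and $h(s) = g_k(u_1,\dots,u_{k-1},s)$, and then summing the resulting error against $1/(n_1\cdots n_{k-1})$ over $n_1,\dots,n_{k-1} \le X$, one obtains the recursion $S_k = \log(X)\, S_{k-1} + O(\log(X)^{k-1})$. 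Starting from $S_1 = \vol(\mathcal{D})\log(X) + O(1)$ and iterating (noting $r$ is fixed) yields the claimed formula.

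For this to go through, the crucial point — which I expect to be the main obstacle — is that the error $O(\|h\|_\infty + V(h))$ at each stage is \emph{uniform} in the frozen variables, i.e.\ that every axis‑parallel slice of $g_k$ has supremum and total variation bounded by a constant depending only on $r$ and $\mathcal{D}$. For $k = r$ this is clear: the restriction of $f = \mathbf{1}_{\mathcal{D}}$ to a coordinate line is the indicator of an interval, since $\mathcal{D}$ is convex, so the variation is at most $2$. For $k < r$, $g_k$ is the $(r-k)$‑dimensional volume of a slice of the convex body $\mathcal{D}$, so by the Brunn--Minkowski inequality $g_k^{1/(r-k)}$ is concave, hence unimodal, along each coordinate direction; combined with $0 \le g_k \le 1$ this again forces the variation of every such slice to be at most $2$. (Alternatively, using that $\mathcal{D}$ is a polytope, one checks directly that $g_k$ is continuous and piecewise polynomial with combinatorial complexity bounded in terms of $\mathcal{D}$, which gives the same conclusion.) Once this uniformity is established, the remaining steps — the one‑variable estimate above and the bookkeeping in the iteration — are routine.
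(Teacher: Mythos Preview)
Your proof is correct, but it takes a slightly different route from the paper. Both arguments are inductions on $r$ and both peel off one variable at a time, but the bookkeeping differs.

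In the paper's proof, after summing over $n_r$ (using only the fact that each one‑dimensional slice of $\mathcal{D}$ is an interval), the integral $\int_0^1 \cdots \,\D t_r$ is kept on the \emph{outside} of the remaining $(r-1)$‑fold sum. For each fixed $t_r$, the $t_r$‑slice of $\mathcal{D}$ is again a polytope in $[0,1]^{r-1}$, so the induction hypothesis applies verbatim to a characteristic function, with an $O(\log(X)^{r-2})$ error that is uniform in $t_r$. No analysis of the integrated functions $g_k$ for $k<r$ is ever needed.

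Your version instead pushes the integral inside to form the $g_k$, which are no longer indicator functions, and then controls the error at each step via a bounded‑variation partial‑summation estimate together with Brunn--Minkowski to ensure unimodality of the volume slices. This is perfectly valid and in fact proves a mildly more general statement (the one‑variable step works for any $h$ of bounded variation, and the scheme would handle, say, convex bodies that are not polytopes). The cost is that you have to invoke Brunn--Minkowski (or the piecewise‑polynomial structure), whereas the paper's reordering of sum and integral sidesteps this entirely and stays within the class of characteristic functions of polytopes throughout.
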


\begin{proof}
Let us reason by induction on $r$. Let $f$ be the characteristic function of $[a,b] \subset [0,1]$. We have
\begin{eqnarray*}
\sum_{n \leq X} \frac{f \left( \frac{\log(n)}{\log(X)} \right)}{n} & = &
\sum_{X^{a} \leq n \leq X^{b}} \frac1{n} \\
& = & (b - a) \log(X) + O(1) \textrm{,}
\end{eqnarray*}
as wished. Let us assume that the result is true for an integer $r-1 \geq 1$ and let us prove it for $r$. The result for
$r = 1$ applied to $n_r$ shows that the sum to be estimated is equal to
$$\sum_{n_1, \dots, n_{r-1} \leq X} \frac1{n_1 \cdots n_{r-1}}
\left( \log(X) \int_0^1 f \left( \frac{\log(n_1)}{\log(X)}, \dots, \frac{\log(n_{r-1})}{\log(X)}, t_r \right) \D t_r
+ O \left( 1 \right) \right) \textrm{.}$$
This quantity is plainly equal to
\begin{eqnarray*}
& & \log(X) \int_0^1 \left( \sum_{n_1, \dots, n_{r-1} \leq X}
\frac{f \left( \frac{\log(n_1)}{\log(X)}, \dots, \frac{\log(n_{r-1})}{\log(X)}, t_r \right) }{n_1 \cdots n_{r-1}} \right) \D t_r
+ O\left( \log(X)^{r-1} \right) \textrm{,}
\end{eqnarray*}
so the induction assumption applied to the $r-1$ remaining variables immediately completes the proof.
\end{proof}

\begin{lemma}
\label{final sum}
Let $\Psi$ be an arithmetic function of $r$ variables satisfying
\begin{eqnarray}
\label{Psi}
\sum_{\mathbf{n} \in \mathbb{Z}_{>0}^r} \frac{|(\Psi \ast \boldsymbol{\mu})(\mathbf{n})|}{n_1 \cdots n_r} 
\log \left( 2 \prod_{i=1}^r n_i \right) & < & + \infty \textrm{,} 
\end{eqnarray}
where $\boldsymbol{\mu}$ is the generalized Möbius function defined by
\begin{eqnarray*}
\boldsymbol{\mu}(n_1, \dots, n_r) & = & \mu(n_1) \cdots \mu(n_r) \textrm{.}
\end{eqnarray*}
We have
\begin{eqnarray*}
\sum_{\mathbf{n} \in \mathcal{V}} \frac{\Psi(\mathbf{n})}{n_1 \cdots n_r} & = & \vol(\mathcal{C})
\left( \sum_{\mathbf{n} \in \mathbb{Z}_{>0}^r} \frac{(\Psi \ast \boldsymbol{\mu})(\mathbf{n})}{n_1 \cdots n_r} \right) \log(X)^r + O \left( \log(X)^{r-1} \right) \textrm{.}
\end{eqnarray*}
\end{lemma}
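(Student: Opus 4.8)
The plan is to expand $\Psi$ by means of its convolution with the generalized M\"obius function, which turns the sum over $\mathcal{V}$ into a main term assembled from sums of $1/(m_1 \cdots m_r)$ over lattice points lying in small translates of the polytope $\mathcal{C}$, each of which is then evaluated using Lemma \ref{final 0}. First I would write $\Psi = (\Psi \ast \boldsymbol{\mu}) \ast \mathbf{1}$, that is $\Psi(\mathbf{n}) = \sum_{\mathbf{d}} (\Psi \ast \boldsymbol{\mu})(\mathbf{d})$, the sum being over all $\mathbf{d}$ with $d_i \mid n_i$ for every $i$; this follows coordinatewise from classical M\"obius inversion. Taking logarithms in the inequalities defining $\mathcal{V}$ shows that $\mathbf{n} \in \mathcal{V}$ is equivalent to $\bigl( \log(n_1)/\log(X), \dots, \log(n_r)/\log(X) \bigr) \in \mathcal{C}$; since $\mathcal{C} \subset [0,1]^r$, this forces $n_i \leq X$, so $\mathcal{V}$ is finite and, writing $n_i = d_i m_i$, I may rearrange the resulting finite double sum into
\begin{eqnarray*}
\sum_{\mathbf{n} \in \mathcal{V}} \frac{\Psi(\mathbf{n})}{n_1 \cdots n_r} & = & \sum_{\mathbf{d} \in \mathbb{Z}_{>0}^r} \frac{(\Psi \ast \boldsymbol{\mu})(\mathbf{d})}{d_1 \cdots d_r} \sum_{\substack{\mathbf{m} \in \mathbb{Z}_{>0}^r \\ (d_1 m_1, \dots, d_r m_r) \in \mathcal{V}}} \frac1{m_1 \cdots m_r} \textrm{,}
\end{eqnarray*}
where the outer sum is effectively over $d_i \leq X$. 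With $\delta_i = \log(d_i)/\log(X)$, the inner condition on $\mathbf{m}$ reads $\bigl( \log(m_1)/\log(X), \dots, \log(m_r)/\log(X) \bigr) \in \mathcal{C}_{\boldsymbol{\delta}}$, where $\mathcal{C}_{\boldsymbol{\delta}} := (\mathcal{C} - \boldsymbol{\delta}) \cap \mathbb{R}_{\geq 0}^r \subset [0,1]^r$ is the (possibly empty) polytope cut out by $t_i \geq 0$ and $\sum_i \beta_{i,j} t_i \leq \varepsilon_j - \sum_i \beta_{i,j} \delta_i$ for $1 \leq j \leq N$.

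Applying Lemma \ref{final 0} with $\mathcal{D} = \mathcal{C}_{\boldsymbol{\delta}}$ then gives, uniformly in $\mathbf{d}$,
\begin{eqnarray*}
\sum_{\substack{\mathbf{m} \in \mathbb{Z}_{>0}^r \\ (d_1 m_1, \dots, d_r m_r) \in \mathcal{V}}} \frac1{m_1 \cdots m_r} & = & \vol(\mathcal{C}_{\boldsymbol{\delta}}) \log(X)^r + O \left( \log(X)^{r-1} \right) \textrm{;}
\end{eqnarray*}
the uniformity holds because all the $\mathcal{C}_{\boldsymbol{\delta}}$ have facet normals taken from the fixed finite set determined by the bounded rationals $\beta_{i,j}$, with only the right-hand sides varying, while the induction proving Lemma \ref{final 0} only feeds on the one-dimensional estimate $\sum_{X^{a} \leq n \leq X^{b}} n^{-1} = (b-a) \log(X) + O(1)$, whose implied constant is absolute. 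Summing this asymptotic against $(\Psi \ast \boldsymbol{\mu})(\mathbf{d})/(d_1 \cdots d_r)$ and using that $\sum_{\mathbf{d}} |(\Psi \ast \boldsymbol{\mu})(\mathbf{d})|/(d_1 \cdots d_r)$ converges (a fortiori from \eqref{Psi}), the error terms contribute $O(\log(X)^{r-1})$ altogether, so what remains is the main term $\log(X)^r \sum_{\mathbf{d}} (\Psi \ast \boldsymbol{\mu})(\mathbf{d}) \vol(\mathcal{C}_{\boldsymbol{\delta}})/(d_1 \cdots d_r)$, in which I want to replace $\vol(\mathcal{C}_{\boldsymbol{\delta}})$ by $\vol(\mathcal{C})$.

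Since $\mathcal{C}_{\boldsymbol{\delta}}$ is, up to a volume-preserving translation, the subset $\{ \mathbf{s} \in \mathcal{C} : s_i \geq \delta_i \textrm{ for all } i \}$ of $\mathcal{C}$, and $\mathcal{C} \subset [0,1]^r$, one has
\begin{eqnarray*}
0 \ \leq \ \vol(\mathcal{C}) - \vol(\mathcal{C}_{\boldsymbol{\delta}}) & \leq & \sum_{i=1}^{r} \vol \left( \{ \mathbf{s} \in \mathcal{C} : s_i < \delta_i \} \right) \ \leq \ \sum_{i=1}^{r} \delta_i \ = \ \frac{\log(d_1 \cdots d_r)}{\log(X)} \textrm{.}
\end{eqnarray*}
Hence, by \eqref{Psi}, the difference $\vol(\mathcal{C}_{\boldsymbol{\delta}}) - \vol(\mathcal{C})$ contributes to the main term an error $\ll \log(X)^{r-1} \sum_{\mathbf{d} \in \mathbb{Z}_{>0}^r} |(\Psi \ast \boldsymbol{\mu})(\mathbf{d})| (d_1 \cdots d_r)^{-1} \log \left( 2 \prod_{i=1}^{r} d_i \right) \ll \log(X)^{r-1}$, and collecting the terms yields the claimed estimate, the constant being the absolutely convergent series $\sum_{\mathbf{n} \in \mathbb{Z}_{>0}^r} (\Psi \ast \boldsymbol{\mu})(\mathbf{n})/(n_1 \cdots n_r)$.

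The two points demanding genuine care — and, I expect, the only real obstacles — are the uniformity in $\mathbf{d}$ of the error term when Lemma \ref{final 0} is applied to the moving family $\mathcal{C}_{\boldsymbol{\delta}}$, and the Lipschitz-type volume bound $|\vol(\mathcal{C}_{\boldsymbol{\delta}}) - \vol(\mathcal{C})| \leq \log(d_1 \cdots d_r)/\log(X)$. The logarithmic weight built into the hypothesis \eqref{Psi} is there precisely for this: it is exactly what allows $\sum_i \delta_i$, of size $\log(d_1 \cdots d_r)/\log(X)$, to be absorbed against the convolved coefficients.
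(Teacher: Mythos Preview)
Your argument is correct and follows the same overall strategy as the paper: write $\Psi = (\Psi \ast \boldsymbol{\mu}) \ast \mathbf{1}$, interchange sums, and compare the inner sum over $\mathbf{m}$ (your notation; the paper's $\mathbf{k}$) to the corresponding sum over the fixed polytope $\mathcal{C}$, with the logarithmic weight in \eqref{Psi} absorbing the discrepancy.

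The one organisational difference is where the comparison happens. You apply Lemma~\ref{final 0} to each translated polytope $\mathcal{C}_{\boldsymbol{\delta}}$ and then bound $\vol(\mathcal{C}) - \vol(\mathcal{C}_{\boldsymbol{\delta}}) \leq \sum_i \delta_i$; the paper instead bounds the difference of the two \emph{sums} directly (for each active constraint $j_0$, the symmetric difference forces $\prod_i k_i^{\beta_{i,j_0}}$ into a multiplicative window of length $\prod_i d_i^{\pm\beta_{i,j_0}}$, which costs $O(\log \prod_i d_i)$ in one variable and $O(\log(X)^{r-1})$ in the rest), and then invokes Lemma~\ref{final 0} only once, for the fixed polytope $\mathcal{C}$. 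The payoff of the paper's ordering is that it never needs the error in Lemma~\ref{final 0} to be uniform over the moving family $\{\mathcal{C}_{\boldsymbol{\delta}}\}$; your version does need this, and while your justification (fixed facet normals, so the induction in Lemma~\ref{final 0} runs with absolute constants for convex polytopes in $[0,1]^r$) is sound, it is an extra point to check. Either way the proof goes through.
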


\begin{proof}
Writing $\Psi = (\Psi \ast \boldsymbol{\mu}) \ast \mathbf{1}$, we get
\begin{eqnarray*}
\sum_{\mathbf{n} \in \mathcal{V}} \frac{\Psi(\mathbf{n})}{n_1 \cdots n_r} & = &
\sum_{\mathbf{n} \in \mathcal{V}} \sum_{d_1|n_1, \dots, d_r|n_r}
\frac{(\Psi \ast \boldsymbol{\mu})(\mathbf{d})}{n_1 \cdots n_r} \\
& = & \sum_{\mathbf{d} \in \mathbb{Z}_{>0}^r} \frac{(\Psi \ast \boldsymbol{\mu})(\mathbf{d})}{d_1 \cdots d_r}
\sum_{\mathbf{k}} \frac1{k_1 \cdots k_r} \textrm{,}
\end{eqnarray*}
where the latter sum is over $\mathbf{k}$ such that
\begin{eqnarray}
\label{condition k}
\left( \prod_{i=1}^r k_i^{\beta_{i,j}} \right) \left( \prod_{i=1}^r d_i^{\beta_{i,j}} \right) & \leq & X^{\varepsilon_j} \textrm{.}
\end{eqnarray}
Let us estimate the difference between this sum and the sum over $\mathbf{k}$ satisfying
\begin{eqnarray*}
\prod_{i=1}^r k_i^{\beta_{i,j}} & \leq & X^{\varepsilon_j} \textrm{.}
\end{eqnarray*}
For a certain $j_0$, we have
\begin{eqnarray*}
& & X^{\varepsilon_{j_0}} \left( \prod_{i=1}^r d_i^{\beta_{i,j_0}} \right)^{-1} \leq \prod_{i=1}^r k_i^{\beta_{i,j_0}} \leq X^{\varepsilon_{j_0}} \textrm{.}
\end{eqnarray*}
Summing first over $k_{i_0}$ for which $\beta_{i_0,j_0} \neq 0$, we see that since the $\beta_{i,j}$ are bounded by an absolute constant, the above difference is bounded by
\begin{eqnarray*}
\log \left( \prod_{i=1}^r d_i \right)
\sum_{k_1, \dots, \widehat{k_{i_0}}, \dots, k_r} \frac1{k_1 \cdots \widehat{k_{i_0}} \cdots k_r} & \ll &
\log \left( \prod_{i=1}^r d_i \right) \log(X)^{r-1} \textrm{.}
\end{eqnarray*}
Thus, lemma \ref{final 0} yields
\begin{eqnarray*}
\sum_{\mathbf{k}, \eqref{condition k}} \frac1{k_1 \cdots k_r} & = &
\vol(\mathcal{C}) \log(X)^r + O \left( \log \left( 2 \prod_{i=1}^r d_i \right) \log(X)^{r-1} \right) \textrm{.}
\end{eqnarray*}
The assumption \eqref{Psi} plainly implies the result.
\end{proof}

\section{Proof for the $3 \mathbf{A}_1$ surface}

\subsection{The universal torsor}

Using elementary techniques, Browning \cite{MR2362193} has made explicit a bijection between the set of points to be counted on $U_1$ and a certain set of integral points on the hypersurface defined by \eqref{torsor 1}. A little thought reveals that the result proved by Browning \cite[Lemma~$1$]{MR2362193} is equivalent to the following. We adopt the notation used by Derenthal in \cite{D-hyper}. Let $\mathcal{T}_1(B)$ be the set of $(\eta_1,\eta_2,\eta_3,\eta_4,\eta_5,\eta_6,\eta_7,\eta_8,\eta_9) \in \mathbb{Z}_{\neq 0}^9$ such that
$\eta_1,\eta_2,\eta_3,\eta_6,\eta_7 > 0$ and
\begin{eqnarray}
\label{tor 1}
\eta_4\eta_5 + \eta_1\eta_6\eta_7 + \eta_8\eta_9 & = & 0 \textrm{,}
\end{eqnarray}
and satisfying the coprimality conditions
\begin{eqnarray}
\label{gcd1} 
& & \gcd(\eta_8,\eta_1\eta_2\eta_4\eta_5\eta_6\eta_7) = 1 \textrm{,} \\
\label{gcd2} 
& & \gcd(\eta_4,\eta_1\eta_2\eta_6\eta_7\eta_9) = 1 \textrm{,} \\
\label{gcd3} 
& & \gcd(\eta_5,\eta_1\eta_3\eta_6\eta_7\eta_9) = 1 \textrm{,} \\
\label{gcd4} 
& & \gcd(\eta_6,\eta_2\eta_7\eta_9) = 1 \textrm{,} \\
\label{gcd5} 
& & \gcd(\eta_3,\eta_1\eta_2\eta_7\eta_9) = 1 \textrm{,} \\
\label{gcd6} 
& & \gcd(\eta_1,\eta_2\eta_9) = 1 \textrm{,} \\
\label{gcd7} 
& & \gcd(\eta_9,\eta_7) = 1 \textrm{.}
\end{eqnarray}
and the height conditions
\begin{eqnarray}
\label{condition1}
\eta_2\eta_3\eta_4^2\eta_5^2& \leq & B \textrm{,} \\
\label{condition2}
\eta_1^2\eta_2\eta_3\eta_6^2\eta_7^2 & \leq & B \textrm{,} \\
\label{condition3}
\eta_1\eta_3^2 |\eta_4| \eta_6^2 |\eta_8| & \leq & B \textrm{,} \\
\label{condition4}
\eta_1\eta_2^2 |\eta_5| \eta_7^2 |\eta_9| & \leq & B \textrm{.}
\end{eqnarray}

\begin{lemma}
\label{T}
We have the equality
\begin{eqnarray*}
N_{U_1,H}(B) & = & \frac1{2} \# \mathcal{T}_1(B) \textrm{.}
\end{eqnarray*}
\end{lemma}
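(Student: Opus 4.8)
The plan is to deduce this from the universal torsor parametrization established by Browning \cite{MR2362193}, reformulated in Derenthal's notation. First, since the height $H$ is defined on primitive integral representatives and each point of $\mathbb{P}^4(\mathbb{Q})$ admits exactly two such representatives $\pm\mathbf{x}$, one has
\[
N_{U_1,H}(B) = \frac12 \#\left\{\mathbf{x} = (x_0,\dots,x_4) \in \mathbb{Z}^5 : \gcd(x_0,\dots,x_4) = 1,\ \mathbf{x} \in V_1,\ \mathbf{x} \notin \text{lines},\ \max_i |x_i| \leq B\right\},
\]
which already accounts for the factor $\frac12$. It then suffices to exhibit a bijection between this set of primitive points lying on $U_1$ and the set $\mathcal{T}_1(B)$.

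Second, I would construct this bijection by the usual descent procedure. Starting from the relation $x_0 x_1 = x_2^2$ and using that on $U_1$ the coordinates entering the lines do not all vanish, one successively extracts greatest common divisors: each coprime factorization introduces one of the variables $\eta_1,\dots,\eta_9$ — the nine of them corresponding to the six lines and the three exceptional divisors over the $\mathbf{A}_1$ singularities of $\widetilde{V_1}$ — and expresses $x_0,\dots,x_4$ as fixed monomials in the $\eta_i$. Substituting these monomials into $x_2^2 + x_1 x_2 + x_3 x_4 = 0$ and cancelling the common factor yields precisely the torsor equation \eqref{tor 1}; the condition $\gcd(x_0,\dots,x_4) = 1$ together with the open condition cutting out $U_1$ translates into the coprimality relations \eqref{gcd1}, \eqref{gcd2}, \eqref{gcd3}, \eqref{gcd4}, \eqref{gcd5}, \eqref{gcd6}, \eqref{gcd7}; and the bound $\max_i |x_i| \leq B$ becomes \eqref{condition1}, \eqref{condition2}, \eqref{condition3}, \eqref{condition4} once the monomial expressions are inserted, the remaining coordinates being dominated. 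The two variables $\eta_2, \eta_3$ absent from \eqref{tor 1} arise as the parts of $x_1$ and $x_0$ coprime to everything else, constrained only by positivity, coprimality and the height conditions. Conversely, the same monomials recover a primitive integral point of $U_1$ from any point of $\mathcal{T}_1(B)$, and one checks the two maps are mutually inverse.

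The bulk of the work, and the main obstacle, is the bookkeeping: one must verify that the coprimality conditions \eqref{gcd1}--\eqref{gcd7} are exactly equivalent — neither stronger nor weaker — to primitivity of $\mathbf{x}$ and to $\mathbf{x}$ avoiding every line, and that neither construction loses solutions nor introduces overcounting. Since all of this has been carried out in \cite[Lemma~$1$]{MR2362193}, the cleanest route is to match Browning's variables with $\eta_1,\dots,\eta_9$ and observe that his torsor equation, coprimality conditions and height conditions coincide term by term with those above, the factor $\frac12$ reflecting the passage from the affine cone to $\mathbb{P}^4$. No genuinely new estimate is required.
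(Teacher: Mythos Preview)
Your proposal is correct and matches the paper's approach exactly: the paper gives no independent proof of this lemma but simply cites \cite[Lemma~$1$]{MR2362193}, noting that a change of notation (to Derenthal's conventions) shows it is equivalent to the stated result. Your sketch of the descent procedure and the identification of the factor $\tfrac12$ with the ambiguity $\mathbf{x}\leftrightarrow -\mathbf{x}$ on primitive representatives is accurate (the paper makes the analogous remark explicitly for $V_2$ at the end of Section~\ref{torsor section}) and adds helpful context, but the substance is the same deferral to Browning.
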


Browning \cite[Theorem 3]{MR2362193} has used this description of the problem to prove the bound \eqref{bound N_{U_1,H}}.

It is important to notice here that the contribution to $N_{U_1,H}(B)$ coming from the $(\eta_1, \dots, \eta_9) \in \mathcal{T}_1(B)$ such that all the variables appearing in the torsor equation are bounded by an absolute constant is $\gg B$ since $\eta_2,\eta_3 \leq B^{1/2}$. That is why a result similar to \eqref{ImprovedManin} seems out of reach.

\subsection{Calculation of Peyre's constant}

The constant $c_{V_1,H}$ predicted by Peyre is
\begin{eqnarray*}
c_{V_1,H} & = & \alpha(\widetilde{V_1}) \beta(\widetilde{V_1}) \omega_H(\widetilde{V_1}) \textrm{,}
\end{eqnarray*}
where $\alpha(\widetilde{V_1}) \in \mathbb{Q}$ is the volume of a certain polytope in the dual of the effective cone of $\widetilde{V_1}$ with respect to the intersection form and where
$\beta(\widetilde{V_1}) = \# H^1(\Gal(\overline{\mathbb{Q}}/\mathbb{Q}),\Pic_{\overline{\mathbb{Q}}}(\widetilde{V_1}))$ and
\begin{eqnarray*}
\omega_H(\widetilde{V_1}) & = & \omega_{\infty} \prod_p \left( 1 - \frac1{p} \right)^6 \omega_p  \textrm{,}
\end{eqnarray*}
with $\omega_{\infty}$ and $\omega_p$ being respectively the archimedean and $p$-adic densities. The work of Derenthal \cite{MR2318651} reveals that
\begin{eqnarray*}
\alpha(\widetilde{V_1}) & = & \frac1{1440} \textrm{.}
\end{eqnarray*}
Moreover, $\beta(\widetilde{V}) = 1$ for any del Pezzo surface $V$ split over $\mathbb{Q}$ and finally, using a result of Loughran \cite[Lemma 2.3]{Loughran}, we get
\begin{eqnarray*}
\omega_p & = & 1 + \frac{6}{p} + \frac1{p^2} \textrm{.}
\end{eqnarray*}
Let us calculate $\omega_{\infty}$. Set $f_1(x) = x_0 x_1 - x_2^2$ and $f_2(x) = x_2^2 + x_1 x_2 + x_3 x_4$. We parametrize the points of $V_1$ by $x_0$, $x_2$ and $x_4$. We have
\begin{eqnarray*}
\det \begin{pmatrix}
\frac{\partial f_1}{\partial x_1} & \frac{\partial f_1}{\partial x_3} \\
\frac{\partial f_2}{\partial x_1} & \frac{\partial f_2}{\partial x_3}
\end{pmatrix} & = &
\begin{vmatrix}
x_0 & 0 \\
x_2 & x_4
\end{vmatrix} \\
& = & x_0 x_4 \textrm{.}
\end{eqnarray*}
Moreover, $x_1 = x_2^2 / x_0$ and $x_3 = - x_2^2 (x_2 + x_0) / \left( x_0 x_4 \right)$. Since $\mathbf{x} = - \mathbf{x}$ in $\mathbb{P}^4$, we have
\begin{eqnarray*}
\omega_{\infty} & = & 2 \int \int \int_{x_0, x_4 > 0, x_0, x_2^2/x_0,
x_2^2 \left| x_2 + x_0 \right| / \left| x_0 x_4 \right|, x_4 \leq 1} \frac{\D x_0 \D x_2 \D x_4}{x_0 x_4} \textrm{.}
\end{eqnarray*}
Define the function
\begin{eqnarray}
\label{equation h}
h & : & (t_4,t_5,t_6) \mapsto \max \{t_4^2t_5^2,t_6^2,|t_4|t_6^2 |t_4t_5+t_6|, |t_5| \} \textrm{.}
\end{eqnarray}
The change of variables given by $x_0 = t_4^2 t_5^2$, $x_2 = t_4 t_5 t_6$ and $x_4 = t_5$ yields
\begin{eqnarray}
\nonumber
\omega_{\infty} & = & 4 \int \int \int_{t_5,t_6>0, h(t_4,t_5,t_6) \leq 1} \D t_4 \D t_5 \D t_6 \\
\label{omega_infty}
& = & 2 \int \int \int_{t_6>0, h(t_4,t_5,t_6) \leq 1} \D t_4 \D t_5 \D t_6 \textrm{.}
\end{eqnarray}

\subsection{Restriction of the domain}

In order to be able to control the error terms showing up in our estimations, we need to assume that certain variables are greater in absolute value than a fixed power of $\log(B)$. The following result shows that this assumption does not affect the main term predicted by Manin's conjecture.

\begin{lemma}
\label{lemmalog}
Let $\mathcal{M}_1(B)$ be the overall contribution to $N_{U_1,H}(B)$ coming from the
$(\eta_1, \dots ,\eta_9) \in \mathcal{T}_1(B)$ such that $|\eta_i| \leq \log(B)^A$ for a certain $i \neq 2, 3$, where $A > 0$ is any fixed constant. We have
\begin{eqnarray*}
\mathcal{M}_1(B) & \ll_A & B \log(B)^4 \log(\log(B)) \textrm{.}
\end{eqnarray*}
\end{lemma}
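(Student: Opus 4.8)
The plan is to estimate $\mathcal{M}_1(B)$ by a union bound over the seven choices of the distinguished index $i \in \{1,4,5,6,7,8,9\}$, and for each such $i$ to bound the number of torsor points with $|\eta_i| \leq \log(B)^A$ by fixing $\eta_i$ and summing dyadically. Since by Lemma~\ref{T} we have $N_{U_1,H}(B) = \frac12 \#\mathcal{T}_1(B)$, it suffices to show that for each fixed $i \neq 2,3$ the number of $(\eta_1,\dots,\eta_9) \in \mathcal{T}_1(B)$ with $|\eta_i| \leq \log(B)^A$ is $\ll_A B \log(B)^4 \log\log(B)$. The essential point is that once one of the five variables $\eta_1,\eta_6,\eta_7,\eta_4\eta_5,\eta_8\eta_9$ appearing in the torsor equation \eqref{tor 1} is fixed to a small value, the remaining count loses a factor of $\log(B)$ compared to the full count $\#\mathcal{T}_1(B) \ll B\log(B)^5$, and fixing one variable costs only $O(\log(B)^A) = B^{o(1)}$, which is absorbed.

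First I would treat the cases $i \in \{1,6,7\}$, which are the most transparent. Fix $\eta_i$; there are $\ll \log(B)^A$ choices. With $\eta_1,\eta_6,\eta_7 > 0$ we have $\eta_1\eta_6\eta_7 \leq B^{1/2}$ from \eqref{condition2}, so the product $\eta_1\eta_6\eta_7$ ranges over a set of size $\ll B^{1/2}$ and in particular the two unfixed variables among $\{\eta_1,\eta_6,\eta_7\}$ still satisfy $\prod \leq B^{1/2}$. Writing $n = \eta_1\eta_6\eta_7$, the torsor equation becomes $\eta_4\eta_5 + \eta_8\eta_9 = -n$, and counting solutions of this (a restricted divisor-type problem, cf. the machinery of Lemmas~\ref{Weil lemma}--\ref{lemma tau}) together with the height conditions \eqref{condition1}, \eqref{condition3}, \eqref{condition4} and the coprimality conditions gives a main term of size $\ll B \log(B)^{k}$ for the appropriate number $k$ of free logarithmic parameters; the point is that one logarithmic degree of freedom has been removed by fixing $\eta_i$, so $k \leq 4$, giving the bound $B\log(B)^4$ possibly decorated with one extra $\log\log(B)$ coming from a sum like $\sum_{m \leq B^{1/2}} 1/m$ over the unfixed cofactor. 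Summing over the $\ll\log(B)^A$ values of $\eta_i$ contributes only a harmless $B^{o(1)}$, but since $\log(B)^A$ is only $\log$-power size and our target already allows $\log(B)^4\log\log B$, I would instead be a bit more careful: fix $\eta_i$, note it contributes a factor $\log(B)^A$, and ensure the inner count is $\ll B\log(B)^{4}\log\log(B)/\log(B)^{A}$ — or, more simply, carry out the inner count so that the remaining four free logarithms genuinely appear and the fixed variable $\eta_i$ is summed against a convergent or near-convergent series, so no extra $\log(B)^A$ survives.

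For the cases $i \in \{4,5\}$ and $i \in \{8,9\}$ the argument is essentially the same after noting the symmetry of the setup: fixing $\eta_4$ (say) with $|\eta_4| \leq \log(B)^A$ forces, via \eqref{tor 1}, that $\eta_8\eta_9 \equiv 0 \imod{\eta_4\eta_5 + \eta_1\eta_6\eta_7}$ type relations, but more directly we lose the $\eta_4$-direction of the $[0,1]^6$-box over which the logarithmic volume is computed, hence again at most four free logarithmic scales remain; the height conditions \eqref{condition1}--\eqref{condition4} then confine the five remaining essential variables to a region of volume $\ll B$ in the appropriate sense, and summing over $|\eta_4| \ll \log(B)^A$ and over dyadic ranges produces $\ll B\log(B)^4\log\log(B)$. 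The main obstacle, and the only place requiring genuine care rather than bookkeeping, is the bound for the $i \in \{8,9\}$ cases: here $\eta_8$ (or $\eta_9$) is one of the two variables one would normally sum over \emph{last} using the divisor-function equidistribution (Lemmas~\ref{lemma tau}, \ref{lemma tau2}), so fixing it small means one must instead perform the summation over a \emph{different} pair of variables and re-derive the analogue of the counting estimate; one must check that the coprimality conditions \eqref{gcd1}--\eqref{gcd7} and height conditions still permit this reorganization and that the resulting error terms are $\ll B\log(B)^{4}$, which they are because the saving of a full power of $\log B$ dominates any $B^\varepsilon$ or $\log\log B$ loss. Collecting the seven contributions and using $N_{U_1,H}(B) = \frac12\#\mathcal{T}_1(B)$ completes the proof.
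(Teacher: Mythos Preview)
Your proposal has the right intuition --- restricting one of the seven variables $\eta_i$ with $i\neq 2,3$ to be small should kill one logarithmic degree of freedom --- but the argument as written is not a proof, and the route you sketch is both vaguer and more laborious than the paper's.

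The paper does \emph{not} split into seven cases by the index $i$ and fix $\eta_i$. Instead it first proves a uniform box-counting bound (Lemma~\ref{lemmaHB}): for dyadic ranges $K_j < |\eta_j| \le 2K_j$, the number $M_1$ of solutions of \eqref{tor 1} with the relevant coprimality is $\ll K_1K_6K_7\min(K_4K_5,K_8K_9)$. This is proved by viewing the torsor equation as a congruence modulo one of the variables and using elementary divisor-sum bounds (not the Kloosterman machinery of Lemmas~\ref{Weil lemma}--\ref{lemma tau}, which you invoke but which play no role here). One then writes $\mathcal{N}_1(Y_1,\dots,Y_9)$ for the contribution from the dyadic box, splits according to which of $Y_4Y_5$, $Y_8Y_9$, $Y_1Y_6Y_7$ dominates, and sums over the $Y_i$ using the height conditions \eqref{condition1bis}--\eqref{condition4bis} in a carefully chosen order. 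The output of this calculation is a bound of the shape $\sum_{Y_i}\mathcal{N}_1 \ll B\sum_{\widehat{Y_2},\widehat{Y_3},\widehat{Y_j},\widehat{Y_{j'}}} 1$, where the five remaining dyadic parameters can be taken to include \emph{any} desired index $i\neq 2,3$. Since the dyadic sum over a variable bounded by $\log(B)^A$ has only $O_A(\log\log B)$ terms, the result follows immediately, and no reorganisation depending on $i$ is needed.

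Your sketch has concrete gaps. You oscillate between ``sum over the $\ll\log(B)^A$ values of $\eta_i$'' costing a factor $\log(B)^A$ and hoping the inner count is $\ll B\log(B)^4\log\log(B)/\log(B)^A$, without explaining why the latter would hold; in fact it generally does not, and the correct accounting is dyadic (each $Y_i\le\log(B)^A$ contributes $O(\log\log B)$ dyadic values, not a multiplicative $\log(B)^A$). For $i\in\{8,9\}$ you acknowledge the difficulty but do not resolve it; you propose to ``re-derive the analogue of the counting estimate'' by summing over a different pair, but give no indication of how, and the Kloosterman-type lemmas you cite are irrelevant to this upper-bound problem. The paper's box-counting Lemma~\ref{lemmaHB} handles all indices symmetrically and avoids the issue entirely.
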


\begin{lemma}
\label{lemmaHB}
Let $K_1, K_4, \dots, K_9 \geq 1/2$ and define $M_1 = M_1(K_1, K_4, \dots, K_9)$ as the number of
$(m_1, m_4, \dots, m_9) \in \mathbb{Z}^7$ such that $K_i < |m_i| \leq 2K_i$ for $i=1$ and $4 \leq i \leq 9$,
$\gcd(m_4m_5,m_1m_6m_7) = 1$ and
\begin{eqnarray}
\label{equation lemma}
m_4m_5 + m_1m_6m_7 + m_8m_9 & = & 0 \textrm{.} 
\end{eqnarray}
We have
\begin{eqnarray*}
M_1 & \ll & K_1 K_6 K_7 \min(K_4 K_5, K_8 K_9) \textrm{.}
\end{eqnarray*}
\end{lemma}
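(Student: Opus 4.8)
The plan is to pass, via a symmetry and one summation, to a two‑variable counting problem, and to attack the latter with the Weil‑type estimate \eqref{average bound} together with the elementary bound for lattice points on a line inside a small box; a short case analysis on the relative sizes of the $K_i$ will be unavoidable.

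I would first record the coprimality relations that \eqref{equation lemma} forces. Writing the equation as $m_8 m_9 = -(m_4 m_5 + m_1 m_6 m_7)$ and using $\gcd(m_4 m_5, m_1 m_6 m_7) = 1$ shows that $m_4 m_5$, $m_1 m_6 m_7$ and $m_8 m_9$ are pairwise coprime; in particular $\gcd(m_4, m_8) = \gcd(m_4, m_9) = \gcd(m_5, m_8) = \gcd(m_5, m_9) = 1$ and each of $m_8, m_9$ is coprime to $m_1 m_6 m_7$. Since \eqref{equation lemma} and the coprimality hypothesis are invariant under interchanging the pairs $(m_4, m_5)$ and $(m_8, m_9)$, it suffices to treat the case $K_4 K_5 \leq K_8 K_9$ and to prove $M_1 \ll K_1 K_6 K_7 K_4 K_5$; after a further relabelling inside each pair I may also assume $K_4 \leq K_5$ and $K_8 \leq K_9$. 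Summing over $m_1, m_6, m_7$ (which contributes a factor $\ll K_1 K_6 K_7$) and writing $c = m_1 m_6 m_7 \neq 0$, everything comes down to the bound $N(c) \ll K_4 K_5$, where $N(c)$ counts the $(m_4, m_5, m_8, m_9)$ with $K_i < |m_i| \leq 2K_i$, $m_4 m_5 + m_8 m_9 = -c$ and $\gcd(m_4 m_5, c) = 1$.

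For $N(c)$ I would use two complementary devices. For a fixed admissible pair $(m_4, m_8)$ — which is automatically coprime by the above — the equation $m_4 m_5 + m_8 m_9 = -c$ picks out a single line on which $m_5$ runs through a residue class modulo $|m_8|$, so there are $\ll 1 + K_5/K_8$ admissible $m_5$ (and $m_9$ is then determined); summing over $m_4, m_8$ gives $N(c) \ll K_4 K_8 + K_4 K_5$. On the other hand, fixing only $m_8$ (which is coprime to $c$) reduces $N(c)$ to counting pairs with $m_4 m_5 \equiv -c \pmod{m_8}$, and since $\sum_{m_8 \asymp K_8} \varphi(m_8)^{-1} \ll 1$ the estimate \eqref{average bound} yields $N(c) \ll K_4 K_5 + K_8^{3/2+\varepsilon}$. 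The first bound disposes of the range $K_8 \ll K_5$ and the second of the range $K_8 \ll (K_4 K_5)^{2/3}$, so I am left with the case where $K_8$, hence also $K_9$, exceeds every one of $K_1, K_4, K_5, K_6, K_7$.

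In that remaining range, comparing the two sides of \eqref{equation lemma} forces $K_8 K_9 \asymp K_1 K_6 K_7$, and here one must \emph{not} have fixed $m_1, m_6, m_7$ first (the error $K_8^{3/2+\varepsilon}$ would be too large). Instead I would sum over $m_4, m_5$ and over the two of $m_1, m_6, m_7$ whose product $K_i K_j$ is smallest; the remaining variables then satisfy $m_8 m_9 \equiv -m_4 m_5 \pmod{m_i m_j}$, to which \eqref{average bound} applies with modulus $m_i m_j$ and main term $\ll K_8 K_9/\varphi(m_i m_j)$, the last variable $m_k$ being determined by the equation. Since the product of the two smallest of three numbers satisfies $K_i K_j \leq (K_1 K_6 K_7)^{2/3}$ and $K_8 K_9 \asymp K_1 K_6 K_7$, the total collapses to the desired $\ll K_1 K_6 K_7 K_4 K_5$. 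The main obstacle is precisely this last regime: once $m_8$ and $m_9$ dominate all the other variables, the argument has to be reorganised around the relation $K_8 K_9 \asymp K_1 K_6 K_7$ that \eqref{equation lemma} imposes rather than around a fixed order of summation, and one must handle with care the error terms of the Kloosterman estimate — using its sharper shape $\ll \tau(q) q^{1/2} (\log q)^2$ together with $\sum_{Q < q \leq 2Q} \tau(q) \ll Q \log Q$ — so that the $\varepsilon$- and logarithmic losses are absorbed into the clean bound claimed.
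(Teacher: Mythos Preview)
Your first two devices are correct, but the route differs from the paper's and your final step does not deliver the log--free bound that the lemma asserts.

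The paper organises the proof around the dichotomy $K_1K_6K_7\le K_4K_5$ versus $K_1K_6K_7>K_4K_5$ (still under $K_4K_5\le K_8K_9$). In the first case the equation forces $K_8K_9\ll K_4K_5$; one takes $K_4\le K_5$, reduces modulo $m_4$, and bounds the number of pairs $(m_8,m_9)$ elementarily, using only $|m_4|\ll(K_8K_9)^{1/2}$. In the second case $K_8K_9\ll K_1K_6K_7$; with $K_8\le K_9$ one has $|m_8|\ll(K_1K_6K_7)^{1/2}$, and the paper invokes \cite[Lemma~5]{MR2075628} to bound the number of triples $(m_1,m_6,m_7)$ with $m_1m_6m_7\equiv -m_4m_5\pmod{m_8}$ by $K_1K_6K_7/\varphi(m_8)$, after which the sum over $m_4,m_5,m_8$ is immediate. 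No Kloosterman--type input is used.

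The gap in your argument is the last regime. With modulus $q=m_im_j$ the error from \eqref{average bound}, summed over $m_4,m_5,m_i,m_j$, is $K_4K_5(K_iK_j)^{3/2+\varepsilon}$; since $K_iK_j\le(K_1K_6K_7)^{2/3}$ holds with \emph{equality} when $K_1=K_6=K_7$, this only yields $K_4K_5\,K_1K_6K_7\,(K_1K_6K_7)^{\varepsilon}$. Passing to the sharper error $\tau(q)q^{1/2}(\log q)^{2}$ does not help: the bound $\sum_{Q<q\le 2Q}\tau(q)\ll Q\log Q$ applies to a sum over $q$, whereas here one sums over pairs $(m_i,m_j)$, and $\sum_{m_i\asymp K_i}\sum_{m_j\asymp K_j}\tau(m_im_j)\ll K_iK_j(\log K_iK_j)^{2}$ still leaves a factor $(\log K_1K_6K_7)^{4}$. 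Because Lemma~\ref{lemmalog} uses the present lemma to reach $\ll B(\log B)^{4}\log\log B$, even a single stray logarithm here would be fatal. (Your assertion that $K_8$ must exceed each of $K_1,K_6,K_7$ in this regime is also unjustified, though you do not actually rely on it.) To close the argument you need either the cited triple--product estimate or a genuinely different idea in the range $K_1\asymp K_6\asymp K_7$; the pair estimate \eqref{average bound} alone is not enough.
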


\begin{proof}
We can assume by symmetry that $K_4 K_5 \leq K_8 K_9$. Let us first deal with the case where $K_1K_6K_7 \leq K_4 K_5$. The equation \eqref{equation lemma} gives $K_8 K_9 \ll K_4 K_5$. Let $M_1'$ be the number of $(m_1, m_4, \dots, m_9) \in \mathbb{Z}^7$ to be counted in this case. We can assume by symmetry that $K_4 \leq K_5$. The idea is to view the equation \eqref{equation lemma} as a congruence modulo $m_4$. Since $|m_4| \ll (K_8 K_9)^{1/2}$, the number of $m_5$, $m_8$ and $m_9$ to be counted in $M_1'$ is at most
\begin{eqnarray*}
\# \left\{
(m_8,m_9) \in \mathbb{Z}^2,
\begin{array}{l}
K_i < |m_i| \leq 2 K_i, i \in \{8, 9 \} \\
\gcd(m_8m_9,m_1m_6m_7) = 1 \\
m_8m_9 \equiv - m_1m_6m_7 \imod{m_4}
\end{array} \right\} 
& \ll & \frac{K_8 K_9}{m_4} \textrm{.}
\end{eqnarray*}
Summing over $m_1$, $m_4$, $m_6$ and $m_7$, we finally get
\begin{eqnarray*}
M_1' & \ll & K_1 K_6 K_7 K_8 K_9 \sum_{K_4 < |m_4| \leq 2K_4} \frac1{m_4} \\
& \ll & K_1 K_6 K_7 K_4 K_5 \textrm{,}
\end{eqnarray*}
since $K_8 K_9 \ll K_4 K_5$. We now treat the case where $K_1K_6K_7 > K_4 K_5$. The equation \eqref{equation lemma} gives
$K_8 K_9 \ll K_1 K_6 K_7$. Let $M_1''$ be the number of $(m_1, m_4, \dots, m_9) \in \mathbb{Z}^7$ to be counted under this assumption. We assume by symmetry that $K_8 \leq K_9$, which yields $|m_8| \ll (K_1 K_6 K_7)^{1/2}$. We can therefore use
\cite[Lemma $5$]{MR2075628} to deduce that the number of $m_1$, $m_6$, $m_7$ and $m_9$ to be counted in $M_1''$ is at most
\begin{eqnarray*}
\# \left\{
(m_1,m_6,m_7) \in \mathbb{Z}^3,
\begin{array}{l}
K_i < |m_i| \leq 2 K_i, i \in \{1, 6, 7 \} \\
\gcd(m_1m_6m_7,m_4m_5) = 1 \\
m_1m_6m_7 \equiv - m_4 m_5 \imod{m_8}
\end{array} \right\}
& \ll & \frac{K_1 K_6 K_7}{\varphi(m_8)} \textrm{.}
\end{eqnarray*}
We obtain in this case
\begin{eqnarray*}
M_1'' & \ll & K_1 K_6 K_7 K_4 K_5 \sum_{K_8 < |m_8| \leq 2K_8} \frac1{\varphi(m_8)} \\
& \ll & K_1 K_6 K_7 K_4 K_5 \textrm{,}
\end{eqnarray*}
as wished.
\end{proof}

We are now in position to prove lemma \ref{lemmalog}. Note that the following proof is largely inspired by Browning's proof of \cite[Theorem 3]{MR2362193}.

\begin{proof}
Let $Y_i \geq 1/2$ for $i = 1, \dots, 9 $ and define $\mathcal{N}_1 = \mathcal{N}_1(Y_1, \dots, Y_9)$ as the \text{contribution} of the $(\eta_1, \dots, \eta_9) \in \mathcal{T}_1(B)$ satisfying $Y_i < |\eta_i| \leq 2 Y_i$ for $i = 1, \dots, 9 $. The height conditions imply that either $\mathcal{N}_1 = 0$ or we have the inequalities
\begin{eqnarray}
\label{condition1bis}
Y_2 Y_3 Y_4^2 Y_5^2 & \leq & B \textrm{,} \\
\label{condition2bis}
Y_1^2 Y_2 Y_3 Y_6^2 Y_7^2 & \leq & B \textrm{,}\\
\label{condition3bis}
Y_1 Y_3^2 Y_4 Y_6^2 Y_8 & \leq & B \textrm{,} \\
\label{condition4bis}
Y_1 Y_2^2 Y_5 Y_7^2 Y_9 & \leq & B \textrm{.}
\end{eqnarray}
Using lemma \ref{lemmaHB} and summing over $\eta_2$ and $\eta_3$, we get
\begin{eqnarray*}
\mathcal{N}_1 & \ll & Y_1 Y_2 Y_3 Y_6 Y_7 \min(Y_4 Y_5, Y_8 Y_9) \textrm{.}
\end{eqnarray*}
Let us recall the following basic estimates. Assume that we have to sum over all the ranges $Y < |y| \leq 2Y$ for all
$|y| \leq \mathcal{Y}$, then
\begin{eqnarray*}
\sum_{Y \leq \mathcal{Y}} Y^{\delta} & \ll_{\delta} &
\begin{cases}
1 & \textrm{ if } \delta < 0 \textrm{, } \\
\log(\mathcal{Y}) & \textrm{ if } \delta = 0 \textrm{, } \\
\mathcal{Y}^{\delta} & \textrm{ if } \delta > 0 \textrm{.}
\end{cases}
\end{eqnarray*}
In the following, the notation $\sum_{\widehat{Y}}$ means that the summation is over all the $Y_i \neq Y$. We only treat the case where $Y_4 Y_5 \leq Y_8 Y_9$ (the case where $Y_4 Y_5 > Y_8 Y_9$ is identical). Let us first assume that $Y_1 Y_6 Y_7 \leq Y_4 Y_5$. We start by summing over
\begin{eqnarray*}
Y_6 & \leq &
\min \left( \frac{Y_4 Y_5}{Y_1 Y_7}, \frac{B^{1/2}}{Y_{1}^{1/2} Y_3 Y_4^{1/2} Y_8^{1/2}} \right) \\
& \leq & \frac{Y_4^{1/4} Y_5^{1/2} B^{1/4}}{Y_{1}^{3/4} Y_3^{1/2} Y_7^{1/2} Y_8^{1/4}} \textrm{,}
\end{eqnarray*}
and over $Y_3$ using \eqref{condition1bis}. We get in this case
\begin{eqnarray*}
\sum_{Y_i} \mathcal{N}_1 & \ll & \sum_{Y_i} Y_1 Y_2 Y_3 Y_4 Y_5 Y_6 Y_7 \\
& \ll & B^{1/4} \sum_{\widehat{Y_6}} Y_1^{1/4} Y_2 Y_3^{1/2} Y_4^{5/4} Y_5^{3/2} Y_7^{1/2} Y_8^{-1/4} \\
& \ll & B^{3/4} \sum_{\widehat{Y_3},\widehat{Y_6}} Y_1^{1/4} Y_2^{1/2} Y_4^{1/4} Y_5^{1/2} Y_7^{1/2} Y_8^{-1/4} \textrm{.}
\end{eqnarray*}
Now sum over $Y_2$ using \eqref{condition4bis} and over $Y_4 \leq  Y_5^{-1} Y_8 Y_9$ to obtain
\begin{eqnarray*}
\sum_{Y_i} \mathcal{N}_1 & \ll &
B \sum_{\widehat{Y_2},\widehat{Y_3},\widehat{Y_6}} Y_4^{1/4} Y_5^{1/4} Y_8^{-1/4} Y_9^{-1/4} \\
& \ll & B \sum_{\widehat{Y_2},\widehat{Y_3},\widehat{Y_4},\widehat{Y_6}} 1 \textrm{.}
\end{eqnarray*}
We could have summed over $Y_5$ instead of $Y_4$ and over $Y_7$ instead of $Y_6$, so if we assume that $|\eta_i| \leq \log(B)^A$ for a certain $i \neq 2, 3$, where $A > 0$ is any fixed constant, we get an overall contribution $\ll_A B \log(B)^4 \log(\log(B))$. Let us now assume that we have $Y_1 Y_6 Y_7 > Y_4 Y_5$. Since $Y_4 Y_5 \leq Y_8 Y_9$, we deduce from the equation \eqref{tor 1} that $Y_1 Y_6 Y_7 \ll Y_8 Y_9$. Summing over $Y_3$ using \eqref{condition3bis} and over $Y_2$ using \eqref{condition4bis} yields
\begin{eqnarray*}
\sum_{Y_i} \mathcal{N}_1 & \ll & B \sum_{\widehat{Y_2},\widehat{Y_3}} Y_4^{1/2} Y_5^{1/2} Y_8^{-1/2} Y_9^{-1/2} \\
& \ll & B \sum_{\widehat{Y_2},\widehat{Y_3},\widehat{Y_4}} Y_1^{1/2} Y_6^{1/2} Y_7^{1/2} Y_8^{-1/2} Y_9^{-1/2} \\
& \ll & B \sum_{\widehat{Y_2},\widehat{Y_3},\widehat{Y_4},\widehat{Y_6}} 1 \textrm{,}
\end{eqnarray*}
where we have summed over $Y_4 < Y_1 Y_5^{-1} Y_6 Y_7$ and $Y_6 \ll Y_1^{-1} Y_7^{-1} Y_8 Y_9$. We can now conclude exactly as in the first case.
\end{proof}

\subsection{Setting up}

To be able to apply lemma \ref{lemma tau}, we need to assume that
\begin{eqnarray*}
|\eta_9| & \leq & |\eta_8| \textrm{.}
\end{eqnarray*}
Note that this assumption together with the equation \eqref{tor 1} and the height conditions \eqref{condition1} and \eqref{condition2} yield the following condition which plays a crucial role in the proof,
\begin{eqnarray}
\label{conditionkappa}
\eta_9^2 & \leq & 2 \frac{B^{1/2}}{\eta_2^{1/2} \eta_3^{1/2}} \textrm{.}
\end{eqnarray}
The symmetry given by $(\eta_3,\eta_4,\eta_6,\eta_8) \mapsto (\eta_2,\eta_5,\eta_7,\eta_9)$ and the following lemma prove that it suffices to multiply our main term by $2$ to take into account this new assumption.

\begin{lemma}
\label{lemmaequal}
Let $N_0(B)$ be the overall contribution from the $(\eta_1, \dots, \eta_9) \in \mathcal{T}_1(B)$ such that $|\eta_8| = |\eta_9|$. We have
\begin{eqnarray*}
N_0(B) & \ll & B \log(B) \textrm{.}
\end{eqnarray*}
\end{lemma}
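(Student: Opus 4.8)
The plan is to use the torsor equation \eqref{tor 1} together with the hypothesis $|\eta_8| = |\eta_9|$ to eliminate $\eta_8$ and $\eta_9$, thereby reducing $N_0(B)$ to an essentially convergent sum. If $|\eta_8| = |\eta_9| =: N$ then $\eta_8 \eta_9 = \pm N^2$, so \eqref{tor 1} forces $N^2 = |\eta_4\eta_5 + \eta_1\eta_6\eta_7|$. Consequently, once $\eta_1, \eta_4, \eta_5, \eta_6, \eta_7$ are fixed with $\eta_4\eta_5 + \eta_1\eta_6\eta_7 \neq 0$, there are at most two admissible pairs $(\eta_8,\eta_9)$, and none at all unless $|\eta_4\eta_5 + \eta_1\eta_6\eta_7|$ is a perfect square. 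Discarding the coprimality conditions and the conditions \eqref{condition3} and \eqref{condition4}, this gives
\begin{eqnarray*}
N_0(B) & \ll & \sum_{\substack{\eta_1,\eta_6,\eta_7 \geq 1,\ \eta_4,\eta_5 \neq 0 \\ |\eta_4\eta_5 + \eta_1\eta_6\eta_7| \textrm{ a nonzero perfect square}}}
\# \left\{ (\eta_2,\eta_3) \in \mathbb{Z}_{>0}^2 : \eqref{condition1},\ \eqref{condition2} \right\} \textrm{.}
\end{eqnarray*}

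The next step is to sum over $\eta_2$ and $\eta_3$. Since $\eta_4^2\eta_5^2 = |\eta_4\eta_5|^2$ and $\eta_1^2\eta_6^2\eta_7^2 = (\eta_1\eta_6\eta_7)^2$, the conditions \eqref{condition1} and \eqref{condition2} amount to $\eta_2\eta_3 \leq B/\max(|\eta_4\eta_5|, \eta_1\eta_6\eta_7)^2$, so the inner cardinality is
$$\ll \frac{B}{\max(|\eta_4\eta_5|, \eta_1\eta_6\eta_7)^2}\, \log(B)\textrm{,}$$
using $\#\{(\eta_2,\eta_3) \in \mathbb{Z}_{>0}^2 : \eta_2\eta_3 \leq W\} \ll W\log(2W)$ and $W \leq B$ here. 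It therefore suffices to prove that
$$S := \sum_{\substack{\eta_1,\eta_6,\eta_7 \geq 1,\ \eta_4,\eta_5 \neq 0 \\ |\eta_4\eta_5 + \eta_1\eta_6\eta_7| \textrm{ a nonzero perfect square}}} \frac1{\max(|\eta_4\eta_5|, \eta_1\eta_6\eta_7)^2} \ \ll \ 1\textrm{,}$$
which then yields $N_0(B) \ll B\log(B)$.

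To estimate $S$, I would fix $P := \eta_1\eta_6\eta_7$, which is represented by $\tau_3(P)$ triples $(\eta_1,\eta_6,\eta_7)$. The square condition forces $\eta_4\eta_5 = n^2 - P$ or $\eta_4\eta_5 = -(n^2 + P)$ for some integer $n \geq 1$, so for each $n$ the value of $\eta_4\eta_5$ is determined, leaving at most $\ll \tau(|n^2 - P|)$, respectively $\ll \tau(n^2 + P)$, admissible pairs $(\eta_4,\eta_5)$. On the ranges where $|n^2 \mp P| > P$ one has $\max(|\eta_4\eta_5|, P)^{-2} \asymp |n^2 \mp P|^{-2}$ and the sum over $n$ converges at once; on the remaining range, which only occurs for $\eta_4\eta_5 = n^2 - P$ with $1 \leq n \leq (2P)^{1/2}$, one uses $\max(|\eta_4\eta_5|, P)^{-2} = P^{-2}$ together with the elementary bound $\sum_{n \leq X} \tau(|n^2 - P|) \ll_{\varepsilon} X^{1+\varepsilon}$ (uniform in $P$), which with $X \asymp P^{1/2}$ bounds that contribution by $\ll P^{-3/2 + \varepsilon}$. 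Hence the inner sum over $\eta_4,\eta_5$ is $\ll P^{-3/2+\varepsilon}$ for each $P$, and $S \ll \sum_{P \geq 1} \tau_3(P)\, P^{-3/2+\varepsilon} \ll 1$ since $\tau_3(P) \ll_{\varepsilon} P^{\varepsilon}$.

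The main obstacle is the near-cancellation regime of $S$, where $\eta_4\eta_5 < 0$ and $|\eta_4\eta_5| = P - n^2$ is small: there the factor $|\eta_4\eta_5|^{-2}$ is useless and one must instead exploit \eqref{condition2} through the factor $P^{-2}$, while controlling $\sum_{1 \leq n < P^{1/2}} \tau(P - n^2)$. In particular the convergence of $S$ really requires both \eqref{condition1} and \eqref{condition2}: without the bound $\eta_2\eta_3 \leq B/(\eta_1\eta_6\eta_7)^2$ coming from \eqref{condition2}, the contribution of, say, $|\eta_4\eta_5| = 1$ with $\eta_1\eta_6\eta_7 = n^2 - 1$ would already diverge.
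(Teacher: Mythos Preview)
Your proof is correct and takes a genuinely different route from the paper's. The paper fixes $\eta_2,\eta_3,\eta_6,\eta_7,\eta_9$, interprets \eqref{tor 1} as the congruence $\eta_4\eta_5 \equiv \pm\eta_9^2 \pmod{\eta_6\eta_7}$, bounds the number of $(\eta_1,\eta_4,\eta_5)$ by a divisor sum in an arithmetic progression $\ll \mathcal{X}^{\varepsilon}(\mathcal{X}/(\eta_6\eta_7)+1)$ with $\mathcal{X}=B^{1/2}/(\eta_2\eta_3)^{1/2}$, and then sums over the remaining variables using \eqref{conditionkappa} and \eqref{condition2}. You instead fix $\eta_1,\eta_4,\eta_5,\eta_6,\eta_7$ first, exploit the hypothesis $|\eta_8|=|\eta_9|$ directly to force the perfect-square condition on $|\eta_4\eta_5+\eta_1\eta_6\eta_7|$ (so $(\eta_8,\eta_9)$ costs only $O(1)$), perform the easy $\eta_2,\eta_3$ summation via the hyperbola bound, and reduce everything to the absolutely convergent series $S$. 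Your argument is more elementary in that it avoids any divisor-in-progressions estimate and makes the role of the diagonal constraint $|\eta_8|=|\eta_9|$ completely transparent; the paper's argument, by contrast, stays closer to the congruence framework used throughout and does not need the perfect-square parametrisation or the case analysis in $n$. Both yield the same $B\log(B)$.

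One cosmetic point: in the $\eta_4\eta_5=-(n^2+P)$ branch you write that the sum ``converges at once'' since $|n^2+P|>P$, but for $n\leq P^{1/2}$ the denominator is only $\asymp P^2$ and you still need the trivial bound $\tau(n^2+P)\ll P^{\varepsilon}$ on those $O(P^{1/2})$ terms. This is of course exactly the same mechanism you spell out for the near-cancellation range $|n^2-P|\leq P$, so the conclusion $\ll P^{-3/2+\varepsilon}$ is unaffected.
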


\begin{proof}
Note that we have the inequality \eqref{conditionkappa} here too. Define
\begin{eqnarray*}
\mathcal{X} & = & \frac{B^{1/2}}{\eta_2^{1/2}\eta_3^{1/2}} \textrm{.}
\end{eqnarray*}
The number of $\eta_1$, $\eta_4$ and $\eta_5$ to be counted is
\begin{eqnarray*}
& \ll & \# \left\{ \left( \eta_1, \eta_4, \eta_5 \right) \in \mathbb{Z}_{>0} \times \mathbb{Z}_{\neq 0}^2,
\begin{array}{l}
\eta_4\eta_5 = \pm \eta_9^2 + \eta_1\eta_6\eta_7 \\
|\eta_4\eta_5| \leq \mathcal{X} \\
\end{array}
\right\} \\
& \ll & \# \left\{ \left( \eta_4, \eta_5 \right) \in \mathbb{Z}_{\neq 0}^2,
\begin{array}{l}
\eta_4\eta_5 \equiv \pm \eta_9^2 \imod{\eta_6\eta_7} \\
|\eta_4\eta_5| \leq \mathcal{X} \\
\end{array}
\right\} \\
& \ll & \sum_{\substack{1 \leq |n| \leq \mathcal{X} \\ n \equiv \pm \eta_9^2 \imod{\eta_6\eta_7}}} \tau(|n|) \\
& \ll & \mathcal{X}^{\varepsilon} \left( \frac{\mathcal{X}}{\eta_6\eta_7} + 1 \right) \textrm{,}
\end{eqnarray*}
for all $\varepsilon > 0$. Taking $\varepsilon = 1/4$ and summing over $\eta_9$ using the condition \eqref{conditionkappa}, we get
\begin{eqnarray*}
N_0(B) & \ll & \sum_{\eta_2,\eta_3,\eta_6,\eta_7} \left( \frac{B^{7/8}}{\eta_2^{7/8}\eta_3^{7/8}\eta_6\eta_7} + \frac{B^{3/8}}{\eta_2^{3/8}\eta_3^{3/8}} \right) \\
& \ll & \sum_{\eta_2,\eta_6,\eta_7} \frac{B}{\eta_2\eta_6^{5/4}\eta_7^{5/4}} \\
& \ll & B \log(B) \textrm{,}
\end{eqnarray*}
where we have summed over $\eta_3$ using \eqref{condition2}.
\end{proof}

Since $(\eta_8,\eta_9) \mapsto (-\eta_8,-\eta_9)$ is a bijection between the set of solutions with $\eta_9 > 0$ and the set of solutions with $\eta_9 < 0$, we can assume that $\eta_9 > 0$ if we multiply our main term by $2$ once again. Furthermore, we need to assume that $\eta_4$ and $\eta_5$ are greater in absolute value than a power of $\log(B)$. To sum up, denote by $N(A,B)$ the contribution to $N_{U_1,H}(B)$ from the $(\eta_1, \dots, \eta_9) \in \mathcal{T}_1(B)$ satisfying
\begin{eqnarray}
\label{condition6}
& & 0 < \eta_9 \leq |\eta_8| \textrm{,} \\
\label{condition7}
& & \log(B)^A \leq |\eta_4| \textrm{,} \\
\label{condition8}
& & \log(B)^A \leq |\eta_5| \textrm{,}
\end{eqnarray}
where $A > 0$ is a constant to be chosen later. Note that combining the conditions \eqref{condition1} and \eqref{condition7}, we get
\begin{eqnarray}
\label{new condition}
\log(B)^{2A}\eta_2\eta_3\eta_5^2 & \leq & B \textrm{.}
\end{eqnarray}
This inequality is crucial in the estimation of our error terms. Lemmas \ref{T}, \ref{lemmalog} and \ref{lemmaequal} yield the following result.

\begin{lemma} 
\label{lemmaA}
For any fixed $A > 0$, we have
\begin{eqnarray*}
N_{U_1,H}(B) & = & 2 N(A,B) + O \left(B \log(B)^4 \log(\log(B)) \right) \textrm{.}
\end{eqnarray*}
\end{lemma}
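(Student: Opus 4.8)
The plan is to pass from the count $\frac12\#\mathcal{T}_1(B)$ provided by Lemma \ref{T} to the restricted count $N(A,B)$ by removing the three extra constraints \eqref{condition6}, \eqref{condition7}, \eqref{condition8} one at a time, controlling each loss either by a symmetry of $\mathcal{T}_1(B)$ or by one of Lemmas \ref{lemmalog}, \ref{lemmaequal}. The two symmetries I would use both arise from the invariance of $V_1$ under $x_3\leftrightarrow x_4$: on the torsor they are the sign change $\sigma\colon(\eta_8,\eta_9)\mapsto(-\eta_8,-\eta_9)$ and the index exchange $\iota$ swapping $\eta_2\leftrightarrow\eta_3$, $\eta_4\leftrightarrow\eta_5$, $\eta_6\leftrightarrow\eta_7$, $\eta_8\leftrightarrow\eta_9$ and fixing $\eta_1$. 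Each fixes the torsor equation \eqref{tor 1} and the positivity conditions $\eta_1,\eta_2,\eta_3,\eta_6,\eta_7>0$; $\sigma$ fixes each of \eqref{condition1}--\eqref{condition4} and each of \eqref{gcd1}--\eqref{gcd7} (gcd's being insensitive to signs), while $\iota$ exchanges \eqref{condition3} with \eqref{condition4}, fixes \eqref{condition1} and \eqref{condition2}, and carries each of \eqref{gcd1}--\eqref{gcd7} to a coprimality statement which is a consequence of the full system \eqref{gcd1}--\eqref{gcd7}; since $\iota^2=\mathrm{id}$ the converse then holds automatically, so $\iota$ is also a bijection of $\mathcal{T}_1(B)$.

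With these in hand the two reductions are short. Every tuple of $\mathcal{T}_1(B)$ has $\eta_8\eta_9\neq 0$, and $\iota$ interchanges the loci $|\eta_9|<|\eta_8|$ and $|\eta_9|>|\eta_8|$, while by Lemma \ref{lemmaequal} the locus $|\eta_8|=|\eta_9|$ carries only $O(B\log B)$ of the count, whence
\begin{eqnarray*}
\#\left(\mathcal{T}_1(B)\cap\{|\eta_9|\leq|\eta_8|\}\right) & = & \frac12\#\mathcal{T}_1(B)+O(B\log B)\textrm{.}
\end{eqnarray*}
On this latter set $\sigma$ is a fixed-point-free involution interchanging $\eta_9>0$ with $\eta_9<0$, so imposing \eqref{condition6} halves the count once more, giving
\begin{eqnarray*}
\#\left(\mathcal{T}_1(B)\cap\{0<\eta_9\leq|\eta_8|\}\right) & = & \frac14\#\mathcal{T}_1(B)+O(B\log B)\textrm{.}
\end{eqnarray*}

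It then remains to install \eqref{condition7} and \eqref{condition8}. The tuples of $\mathcal{T}_1(B)$ violating either of them have $|\eta_i|\leq\log(B)^A$ for some $i\neq 2,3$, so by Lemma \ref{lemmalog} they contribute $O_A(B\log(B)^4\log(\log(B)))$; removing from the set of the last display the tuples that fail \eqref{condition7} or \eqref{condition8} therefore leaves exactly the set counted by $N(A,B)$, up to this error, so that $\#\mathcal{T}_1(B)=4N(A,B)+O_A(B\log(B)^4\log(\log(B)))$. Combining with Lemma \ref{T} gives $N_{U_1,H}(B)=\frac12\#\mathcal{T}_1(B)=2N(A,B)+O_A(B\log(B)^4\log(\log(B)))$, as claimed. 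The argument is essentially formal, and the one step that genuinely has to be carried out rather than quoted is the verification that $\iota$ stabilises the coprimality system \eqref{gcd1}--\eqref{gcd7}; beyond that, the only thing requiring care is the bookkeeping of the powers of $2$ and of which reduction is an exact symmetry as opposed to an error term.
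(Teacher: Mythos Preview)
Your argument is correct and follows essentially the same route as the paper: the paper derives the lemma from Lemmas \ref{T}, \ref{lemmalog}, \ref{lemmaequal} together with the two symmetries $(\eta_3,\eta_4,\eta_6,\eta_8)\leftrightarrow(\eta_2,\eta_5,\eta_7,\eta_9)$ and $(\eta_8,\eta_9)\mapsto(-\eta_8,-\eta_9)$, exactly as you do. You spell out more of the bookkeeping than the paper does, in particular the verification that $\iota$ preserves the coprimality system \eqref{gcd1}--\eqref{gcd7} and the accounting for the factors of $2$, which the paper leaves implicit.
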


Our goal is now to estimate $N(A,B)$ and for this, we start by investigating the contribution of the variables $\eta_4$, $\eta_5$ and $\eta_8$. The idea is to view the torsor equation \eqref{tor 1} as a congruence modulo $\eta_9$. For this, we replace the height conditions \eqref{condition3} and \eqref{condition6} by the following (we keep denoting them by \eqref{condition3} and \eqref{condition6}), obtained using the torsor equation \eqref{tor 1},
\begin{eqnarray*}
\eta_1 \eta_3^2 |\eta_4| \eta_6^2 | \eta_4\eta_5 + \eta_1\eta_6\eta_7 | \eta_9^{-1} & \leq & B \textrm{,} \\
\eta_9^2 & \leq & |\eta_4\eta_5 + \eta_1\eta_6\eta_7 | \textrm{.}
\end{eqnarray*}
Set $\boldsymbol{\eta}' = (\eta_1, \eta_2, \eta_3, \eta_6, \eta_7, \eta_9) \in \mathbb{Z}_{>0}^6$. Assume that
$\boldsymbol{\eta}' \in \mathbb{Z}_{>0}^6$ is fixed and subject to the height conditions \eqref{condition2} and \eqref{conditionkappa} and to the coprimality conditions \eqref{gcd4}, \eqref{gcd5}, \eqref{gcd6} and \eqref{gcd7}. Let $N(\boldsymbol{\eta}',B)$ be the number of $\eta_4$, $\eta_5$ and $\eta_8$ satisfying the torsor equation \eqref{tor 1}, the height conditions \eqref{condition1}, \eqref{condition3} and \eqref{condition4}, the conditions \eqref{condition6}, \eqref{condition7} and \eqref{condition8} and the coprimality conditions \eqref{gcd1}, \eqref{gcd2} and \eqref{gcd3}. Recalling the definition \eqref{def ast} of $\varphi^{\ast}$, we have the following result.

\begin{lemma}
\label{lemma inter}
For any fixed $A \geq 7$, we have
\begin{eqnarray*}
N(\boldsymbol{\eta}',B) & = & \frac1{\eta_9}
\sum_{\substack{k_8|\eta_2 \\ \gcd(k_8,\eta_7) = 1}} \frac{\mu(k_8)}{k_8 \varphi^{\ast}(k_8\eta_9)}
\sum_{\substack{k_4|\eta_1\eta_2\eta_6\eta_7 \\ \gcd(k_4, k_8\eta_9)=1}} \mu(k_4)
\sum_{\substack{k_5|\eta_1\eta_3\eta_6\eta_7 \\ \gcd(k_5, k_8\eta_9)=1}} \mu(k_5) \\
& & \sum_{\substack{\ell_4|k_8\eta_9 \\ \ell_5|k_8\eta_9}} \mu(\ell_4) \mu(\ell_5) C(\boldsymbol{\eta}',B) + R(\boldsymbol{\eta}',B) \textrm{,}
\end{eqnarray*}
where, with the notations $\eta_4 = k_4 \ell_4 \eta_4''$ and $\eta_5 = k_5 \ell_5 \eta_5''$,
\begin{eqnarray*}
C(\boldsymbol{\eta}',B) & = &
\# \left\{ \left( \eta_4'', \eta_5'' \right) \in \mathbb{Z}_{\neq 0}^2,
\begin{array}{l}
\eqref{condition1}, \eqref{condition3}, \eqref{condition4} \\
\eqref{condition6}, \eqref{condition7}, \eqref{condition8} \\
\end{array}
\right\} \textrm{,}
\end{eqnarray*}
and $\sum_{\boldsymbol{\eta}'} R(\boldsymbol{\eta}',B) \ll B \log(B)^2$.
\end{lemma}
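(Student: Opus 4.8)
The plan is to use the torsor equation \eqref{tor 1} to eliminate $\eta_8$ and thereby reduce $N(\boldsymbol{\eta}',B)$ to a count of pairs $(\eta_4,\eta_5)$ lying in a region of the type $\mathcal{S}$ of lemma \ref{lemma tau} and subject to a congruence modulo $\eta_9$. Since $\eta_8\eta_9=-(\eta_4\eta_5+\eta_1\eta_6\eta_7)$, for any admissible $(\eta_4,\eta_5)$ the variable $\eta_8$ is forced to be $-(\eta_4\eta_5+\eta_1\eta_6\eta_7)/\eta_9$, and this is an integer precisely when $\eta_4\eta_5\equiv-\eta_1\eta_6\eta_7\imod{\eta_9}$. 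Substituting this value of $\eta_8$ into the height conditions — which is exactly the replacement of \eqref{condition3} and \eqref{condition6} carried out in the setting up — the pair $(\eta_4,\eta_5)$ is seen to range over the lattice points of a region $\mathcal{S}(X,X_1,X_2,T,Z,L_1,L_2)$ with $X=(B/\eta_2\eta_3)^{1/2}$, $X_1=B\eta_9/(\eta_1\eta_3^2\eta_6^2)$, $X_2=B/(\eta_1\eta_2^2\eta_7^2\eta_9)$, $T=\eta_1\eta_6\eta_7$, $Z=\eta_9^2$, $L_1=L_2=\log(B)^A$; conditions \eqref{condition1}, \eqref{condition3}, \eqref{condition4}, \eqref{condition6}, \eqref{condition7}, \eqref{condition8} correspond respectively to \eqref{A}, \eqref{B}, \eqref{C}, \eqref{D}, \eqref{E}, \eqref{E'}. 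The hypothesis $T\le X$ of lemma \ref{lemma tau} is precisely \eqref{condition2}, and \eqref{conditionkappa} gives $Z\le2X$.

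Next I would extract the coprimality conditions by Möbius inversion. Using \eqref{gcd2}, \eqref{gcd3} and the torsor equation one checks that $\gcd(\eta_8,\eta_1\eta_4\eta_5\eta_6\eta_7)=1$ automatically, so \eqref{gcd1} reduces to $\gcd(\eta_8,\eta_2)=1$; I detect this by $\sum_{k_8\mid\gcd(\eta_8,\eta_2)}\mu(k_8)$, where $\gcd(\eta_8,\eta_7)=1$ (again automatic) forces $\gcd(k_8,\eta_7)=1$, and the constraint $k_8\mid\eta_8$ is equivalent to the sharper congruence $\eta_4\eta_5\equiv-\eta_1\eta_6\eta_7\imod{k_8\eta_9}$. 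I then detect the non-$\eta_9$ parts of \eqref{gcd2} and \eqref{gcd3} by $\sum_{k_4\mid\gcd(\eta_4,\eta_1\eta_2\eta_6\eta_7)}\mu(k_4)$ and $\sum_{k_5\mid\gcd(\eta_5,\eta_1\eta_3\eta_6\eta_7)}\mu(k_5)$; since $\gcd(\eta_1\eta_6\eta_7,k_8\eta_9)=1$, the congruence is unsolvable unless $\gcd(k_4k_5,k_8\eta_9)=1$, so the sums may be restricted as in the statement. Writing $\eta_4=k_4\eta_4^{\flat}$ and $\eta_5=k_5\eta_5^{\flat}$, the region turns into one of the same shape $\mathcal{S}$ (with $T$ replaced by $T/k_4k_5$, which is still real and still bounded by the new value of $X$), the congruence becomes $\eta_4^{\flat}\eta_5^{\flat}\equiv b\imod{k_8\eta_9}$ with $b=-\eta_1\eta_6\eta_7(k_4k_5)^{-1}$ coprime to $k_8\eta_9$, and the remaining $\eta_9$-parts of \eqref{gcd2}, \eqref{gcd3} are now automatic since $b$ is coprime to $\eta_9$.

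Now I would apply lemma \ref{lemma tau} to this rescaled region with modulus $q=k_8\eta_9$: the number of admissible $(\eta_4^{\flat},\eta_5^{\flat})$ equals $D^{\ast}$ of the rescaled region at $q$, plus an error. The coprimality condition $\gcd(\eta_4^{\flat}\eta_5^{\flat},k_8\eta_9)=1$ built into $D^{\ast}$ I remove by a final Möbius step $\sum_{\ell_4,\ell_5\mid k_8\eta_9}\mu(\ell_4)\mu(\ell_5)$, writing $\eta_4^{\flat}=\ell_4\eta_4''$, $\eta_5^{\flat}=\ell_5\eta_5''$, which produces exactly $C(\boldsymbol{\eta}',B)$; and since $\varphi(k_8\eta_9)=k_8\eta_9\,\varphi^{\ast}(k_8\eta_9)$, the factor $1/\varphi(k_8\eta_9)$ becomes the claimed $1/(\eta_9k_8\varphi^{\ast}(k_8\eta_9))$. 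Reinstating the signs $\mu(k_8)\mu(k_4)\mu(k_5)$ then assembles the main term of the lemma.

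Finally I would bound $R(\boldsymbol{\eta}',B)$, which collects the errors arising from lemma \ref{lemma tau} when $q=k_8\eta_9$ is small enough, and from the trivial estimate \eqref{average bound} (or a direct count) in the ranges where the modulus is too large. Summing over $k_8\mid\eta_2$, $k_4\mid\eta_1\eta_2\eta_6\eta_7$ and $k_5\mid\eta_1\eta_3\eta_6\eta_7$ costs only divisor-function factors $\ll B^{\varepsilon}$, and one then sums over $\boldsymbol{\eta}'$ using the height conditions \eqref{condition2}, \eqref{conditionkappa} and, crucially, \eqref{new condition}. The term $X^{2/3+\varepsilon}q^{-1/2}$ of lemma \ref{lemma tau} is a genuine power saving over the trivial size and absorbs the $B^{\varepsilon}$, whereas the term $X\varphi(q)^{-1}(L_1^{-1}+L_2^{-1})$ sums to something of size $B\log(B)^{O(1)}/\log(B)^{A}$, which is $\ll B\log(B)^2$ once $A\ge7$. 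This last multi-variable summation — juggling the six coordinates of $\boldsymbol{\eta}'$ together with $k_4,k_5,k_8$, spending each height condition exactly once, and controlling the rarer large-modulus ranges — is where the real work lies and is the main obstacle; the rest is bookkeeping.
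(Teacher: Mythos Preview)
Your outline is correct and matches the paper's proof essentially step for step: M\"obius over $k_8$ (reducing \eqref{gcd1} to $k_8\mid\eta_2$ with $\gcd(k_8,\eta_7)=1$), then over $k_4,k_5$ (with the restriction $\gcd(k_4k_5,k_8\eta_9)=1$ forced by solvability of the congruence), application of lemma~\ref{lemma tau} with $q=k_8\eta_9$ in the range $q\le X^{2/3}$ and a trivial divisor bound otherwise, and a final M\"obius over $\ell_4,\ell_5\mid k_8\eta_9$ to unpack $D^{\ast}$. Two small points: \eqref{new condition} is not actually needed for the error sums in this lemma (only \eqref{condition2} and \eqref{conditionkappa} are used; \eqref{new condition} enters later), and the divisor-type factors from the $k_4,k_5,k_8$ sums must be tracked as genuine $2^{\omega(\cdot)}$ weights rather than absorbed into a $B^{\varepsilon}$, since it is precisely the estimate $\sum 2^{\omega(n)}/n\ll(\log B)^{2}$ that yields the threshold $A\ge 7$.
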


The thrust of lemma \ref{lemma inter} is that the summation over $\eta_8$ has been carried out, which explains the absence of the torsor equation in $C(\boldsymbol{\eta}',B)$. The remainder of this section is devoted to proving lemma \ref{lemma inter}.

Let us remove the coprimality condition \eqref{gcd1} using a Möbius inversion. We get
\begin{eqnarray*}
N(\boldsymbol{\eta}',B) & = & \sum_{k_8|\eta_1\eta_2\eta_4\eta_5\eta_6\eta_7} \mu(k_8) S_{k_8}(\boldsymbol{\eta}',B) \textrm{,}
\end{eqnarray*}
where
\begin{eqnarray*}
S_{k_8}(\boldsymbol{\eta}',B) & = & \# \left\{ \left( \eta_4, \eta_5, \eta_8' \right) \in \mathbb{Z}_{\neq 0}^3 ,
\begin{array}{l}
\eta_4 \eta_5 + k_8\eta_8'\eta_9 = - \eta_1\eta_6\eta_7 \\
\eqref{condition1}, \eqref{condition3}, \eqref{condition4} \\
\eqref{condition6}, \eqref{condition7}, \eqref{condition8} \\
\eqref{gcd2}, \eqref{gcd3}
\end{array}
\right\} \textrm{.}
\end{eqnarray*}
It is clear that if $\gcd(k_8,\eta_1\eta_6\eta_7) \neq 1$ or $\gcd(k_8,\eta_4\eta_5) \neq 1$ then
$\gcd(\eta_4\eta_5,\eta_1\eta_6\eta_7) \neq 1$ and thus $S_{k_8}(\boldsymbol{\eta}',B) = 0$. We can therefore assume that
$\gcd(k_8,\eta_1\eta_4\eta_5\eta_6\eta_7) = 1$. We have
\begin{eqnarray*}
S_{k_8}(\boldsymbol{\eta}',B) & = &
\# \left\{ (\eta_4, \eta_5) \in \mathbb{Z}_{\neq 0}^2 ,
\begin{array}{l}
\eta_4 \eta_5 \equiv - \eta_1\eta_6\eta_7 \imod{k_8 \eta_9} \\
\eqref{condition1}, \eqref{condition3}, \eqref{condition4} \\
\eqref{condition6}, \eqref{condition7}, \eqref{condition8} \\
\eqref{gcd2}, \eqref{gcd3}
\end{array}
\right\} + R_0(\boldsymbol{\eta}',B) \textrm{,}
\end{eqnarray*}
where the error term $R_0(\boldsymbol{\eta}',B)$ comes from the fact $\eta_8'$ has to be non-zero. Otherwise, we would have
$\eta_4 \eta_5 = - \eta_1\eta_6\eta_7$ and so the coprimality condition $\gcd(\eta_4 \eta_5,\eta_1\eta_6\eta_7) = 1$ would give
$|\eta_4| = |\eta_5| = \eta_1 = \eta_6 = \eta_7 = 1$. Summing over $\eta_9$ using \eqref{conditionkappa}, we obtain
\begin{eqnarray*}
\sum_{k_8,\boldsymbol{\eta}'} |\mu(k_8)| R_0(\boldsymbol{\eta}',B) & \ll & \sum_{\eta_2, \eta_3, \eta_9} 2^{\omega(\eta_2)} \\
& \ll & \sum_{\eta_2, \eta_3} 2^{\omega(\eta_2)} \frac{B^{1/4}}{\eta_2^{1/4} \eta_3^{1/4}} \\
& \ll & B \log(B)^2 \textrm{.}
\end{eqnarray*}
Let us remove the coprimality conditions \eqref{gcd2} and \eqref{gcd3}. The main term of $N(\boldsymbol{\eta}',B)$ is equal to
\begin{eqnarray*}
& & \sum_{\substack{k_8|\eta_2 \\ \gcd(k_8,\eta_1\eta_6\eta_7) = 1}} \mu(k_8)
\sum_{\substack{k_4|\eta_1\eta_2\eta_6\eta_7\eta_9 \\ \gcd(k_4, k_8\eta_9)=1}} \mu(k_4)
\sum_{\substack{k_5|\eta_1\eta_3\eta_6\eta_7\eta_9 \\ \gcd(k_5, k_8\eta_9)=1}} \mu(k_5) S(\boldsymbol{\eta}',B) \textrm{,}
\end{eqnarray*}
where, with the notations $\eta_4 = k_4\eta_4'$ and $\eta_5 = k_5\eta_5'$,
\begin{eqnarray*}
S(\boldsymbol{\eta}',B) & = & \# \left\{ \left( \eta_4', \eta_5' \right) \in \mathbb{Z}_{\neq 0}^2,
\begin{array}{l}
\eta_4'\eta_5' \equiv - (k_4k_5)^{-1}\eta_1\eta_6\eta_7 \imod{k_8 \eta_9} \\
\eqref{condition1}, \eqref{condition3}, \eqref{condition4} \\
\eqref{condition6}, \eqref{condition7}, \eqref{condition8} \\
\end{array}
\right\} \textrm{.}
\end{eqnarray*}
Indeed, $k_4$ and $k_5$ are invertible modulo $k_8\eta_9$ since $\gcd(k_8\eta_9,\eta_1\eta_6\eta_7) = 1$. We can therefore remove $\eta_9$ from the conditions on $k_4$ and $k_5$. Having in mind that our aim is to apply lemma \ref{lemma tau}, we define
\begin{eqnarray*}
X & = & \frac{B^{1/2}}{k_4 k_5 \eta_2^{1/2} \eta_3^{1/2}} \textrm{.}
\end{eqnarray*}
Let us prove that we can assume that $k_8 \leq (2k_4k_5)^{-1/2} X^{1/6}$, the contribution coming from the condition
$k_8 > (2k_4k_5)^{-1/2} X^{1/6}$ being negligible. Indeed, let $N'(\boldsymbol{\eta}',B)$ be this contribution and define
$a = - (k_4k_5)^{-1}\eta_1\eta_6\eta_7$. We have
\begin{eqnarray*}
S(\boldsymbol{\eta}',B) & \leq & \# \left\{ \left( \eta_4', \eta_5' \right) \in \mathbb{Z}_{\neq 0}^2,
\begin{array}{l}
\eta_4'\eta_5' \equiv a \imod{k_8\eta_9} \\
|\eta_4'\eta_5'| \leq X \\
\end{array}
\right\} \\
& = & 2 \sum_{\substack{1 \leq |n| \leq X \\ n \equiv a \imod{k_8\eta_9}}} \tau(|n|) \textrm{.}
\end{eqnarray*}
Thus, for all $\varepsilon > 0$,
\begin{eqnarray*}
S(\boldsymbol{\eta}',B) & \ll & X^{\varepsilon} \left( \frac{X}{k_8\eta_9} + 1 \right) \\
& \ll & (k_4k_5)^{1/4} \frac{X^{1 + \varepsilon - 1/12}}{k_8^{1/2} \eta_9} + X^{\varepsilon} \textrm{,}
\end{eqnarray*}
since $k_8 > k_8^{1/2} (2k_4k_5)^{-1/4} X^{1/12}$. Note that if $k_4$, $k_5$ or $k_8$ appears in the denominator then the arithmetic function involved by the corresponding Möbius inversion has average order $O(1)$ and therefore does not play any role in the estimation of the contribution of the error term. Thus we have
\begin{eqnarray*}
N'(\boldsymbol{\eta}',B) & \ll & \frac1{\eta_9} \left( \frac{B^{1/2}}{\eta_2^{1/2}\eta_3^{1/2}} \right)^{1 + \varepsilon - 1/12}
+ 2^{\omega(\eta_2)} \left( \frac{B^{1/2}}{\eta_2^{1/2}\eta_3^{1/2}} \right)^{\varepsilon} \textrm{.}
\end{eqnarray*}
Let us estimate the overall contribution of the right-hand side summing over $\boldsymbol{\eta}'$. Using the condition \eqref{conditionkappa} to sum over $\eta_9$, we get
\begin{eqnarray*}
\sum_{\eta_9} N'(\boldsymbol{\eta}',B) & \ll & \left( \frac{B^{1/2}}{\eta_2^{1/2}\eta_3^{1/2}} \right)^{1 + 2 \varepsilon - 1/12}
+ 2^{\omega(\eta_2)} \left( \frac{B^{1/2}}{\eta_2^{1/2}\eta_3^{1/2}} \right)^{1/2 + \varepsilon} \textrm{.}
\end{eqnarray*}
Taking $\varepsilon = 1/48$ and summing over $\eta_3$ using the condition \eqref{condition2}, we obtain
\begin{eqnarray*}
\sum_{\boldsymbol{\eta}'} N'(\boldsymbol{\eta}',B) & \ll &
\sum_{\eta_1,\eta_2,\eta_6,\eta_7} \left( \frac{B}{\eta_1^{25/24}\eta_2\eta_6^{25/24}\eta_7^{25/24}}
+ 2^{\omega(\eta_2)} \frac{B}{\eta_1^{71/48}\eta_2\eta_6^{71/48}\eta_7^{71/48}} \right) \\
& \ll & B \log(B)^2 \textrm{.} 
\end{eqnarray*}
Therefore, $N(\boldsymbol{\eta}',B)$ is the sum of the main term
\begin{eqnarray*}
& & \sum_{\substack{k_4|\eta_1\eta_2\eta_6\eta_7 \\ \gcd(k_4,\eta_9)=1}} \mu(k_4)
\sum_{\substack{k_5|\eta_1\eta_3\eta_6\eta_7 \\ \gcd(k_5,\eta_9)=1}} \mu(k_5)
\sum_{\substack{k_8|\eta_2, k_8 \leq (2k_4k_5)^{-1/2} X^{1/6} \\ \gcd(k_8,k_4k_5\eta_1\eta_6\eta_7) = 1}} \mu(k_8) S(\boldsymbol{\eta}',B) \textrm{,}
\end{eqnarray*}
and an error term whose overall contribution is $\ll B \log(B)^2$. Note that thanks to the condition \eqref{conditionkappa}, we now have $k_8 \eta_9 \leq X^{2/3}$. We want to apply lemma \ref{lemma tau} with
$L_1 = \log(B)^A/k_4$, $L_2 = \log(B)^A/k_5$ and $T = \eta_1\eta_6\eta_7/(k_4k_5)$. Since $T \leq X$ by \eqref{condition2} and
$k_8 \eta_9 \leq X^{2/3}$, lemma \ref{lemma tau} proves that
\begin{eqnarray*}
S(\boldsymbol{\eta}',B) & = & S^{\ast}(\boldsymbol{\eta}',B) +
O \left( \frac{X^{2/3 + \varepsilon}}{(k_8\eta_9)^{1/2}} + \frac{X}{\varphi(k_8\eta_9)} \left( \frac{k_4}{\log(B)^A} + \frac{k_5}{\log(B)^A} \right) \right) \textrm{,}
\end{eqnarray*}
for all $\varepsilon > 0$, with
\begin{eqnarray*}
S^{\ast}(\boldsymbol{\eta}',B) & = & \frac1{\varphi(k_8\eta_9)} \# \left\{ \left( \eta_4', \eta_5' \right) \in \mathbb{Z}_{\neq 0}^2,
\begin{array}{l}
\gcd(\eta_4'\eta_5',k_8\eta_9) = 1 \\
\eqref{condition1}, \eqref{condition3}, \eqref{condition4} \\
\eqref{condition6}, \eqref{condition7}, \eqref{condition8} \\
\end{array}
\right\} \textrm{.}
\end{eqnarray*}
As explained above, $k_4$, $k_5$ and $k_8$ do not play any role in the estimation of the contribution of the first error term. Using \eqref{conditionkappa} to sum over $\eta_9$, we find that the contribution of the first error term is
\begin{eqnarray*}
\sum_{\boldsymbol{\eta}'}
\frac{B^{1/3 + \varepsilon}}{\eta_2^{1/3 + \varepsilon} \eta_3^{1/3 + \varepsilon} \eta_9^{1/2}}
& \ll & \sum_{\eta_1,\eta_2,\eta_3,\eta_6,\eta_7}
\frac{B^{11/24 + \varepsilon}}{\eta_2^{11/24 + \varepsilon}\eta_3^{11/24 + \varepsilon}} \\
& \ll & \sum_{\eta_1,\eta_2,\eta_6,\eta_7}
\frac{B}{\eta_1^{13/12 - 2 \varepsilon} \eta_2 \eta_6^{13/12 - 2 \varepsilon} \eta_7^{13/12 - 2 \varepsilon}} \\
& \ll & B \log(B) \textrm{,}
\end{eqnarray*}
for $\varepsilon = 1/48$ and where we have summed over $\eta_3$ using \eqref{condition2}. Furthermore, the contribution of the second error term is
\begin{eqnarray*}
\sum_{\boldsymbol{\eta}'} 2^{\omega(\eta_1\eta_2\eta_6\eta_7)}
\frac{B^{1/2} \log(B)^{-A}}{\eta_2^{1/2} \eta_3^{1/2} \eta_9}
& \ll & \sum_{\eta_1,\eta_2,\eta_6,\eta_7,\eta_9} 2^{\omega(\eta_1\eta_2\eta_6\eta_7)}
\frac{B \log(B)^{-A}}{\eta_1\eta_2\eta_6\eta_7\eta_9} \\
& \ll & B \log(B)^{9 - A} \textrm{,}
\end{eqnarray*}
which is satisfactory if $A \geq 7$. The contribution of the third error term is easily seen to be also $\ll B \log(B)^{9 - A}$. Furthermore, we have
\begin{eqnarray*}
S^{\ast}(\boldsymbol{\eta}',B) & = & \frac1{\varphi(k_8\eta_9)} \sum_{\ell_4|k_8\eta_9} \mu(\ell_4) \sum_{\ell_5|k_8\eta_9} \mu(\ell_5) C(\boldsymbol{\eta}',B) \textrm{,}
\end{eqnarray*}
where we have set $\eta_4' = \ell_4 \eta_4''$ and $\eta_5' = \ell_5 \eta_5''$. We now prove that we can remove the condition
$k_8 \leq (2k_4k_5)^{-1/2} X^{1/6}$ from the sum over $k_8$. The height condition \eqref{condition1} plainly gives
\begin{eqnarray*}
C(\boldsymbol{\eta}',B) & \ll & \left( \frac{X}{\ell_4 \ell_5} \right)^{1 + \varepsilon} \textrm{.} 
\end{eqnarray*}
Let us bound the overall contribution corresponding to $k_8 > (2k_4k_5)^{-1/2} X^{1/6}$. Note that
$\varphi(k_8\eta_9) = k_8\eta_9 \varphi^{\ast}(k_8\eta_9)$ and write $k_8 > k_8^{1/2} (2k_4k_5)^{-1/4} X^{1/12}$. Once again, the Möbius inversions do not play any part in the estimation of the contribution of this error term, which we find to be less than
\begin{eqnarray*}
\sum_{\boldsymbol{\eta}'} \frac1{\eta_9} \left( \frac{B^{1/2}}{\eta_2^{1/2}\eta_3^{1/2}} \right)^{1 + \varepsilon - 1/12} & \ll &
\sum_{\eta_1,\eta_2,\eta_6,\eta_7,\eta_9} \frac{B}{\eta_1^{25/24}\eta_2\eta_6^{25/24}\eta_7^{25/24}\eta_9} \\
& \ll & B \log(B)^2 \textrm{,}
\end{eqnarray*}
where we have set $\varepsilon = 1/24$. Finally, we can remove the condition $\gcd(k_8,\eta_1 \eta_6) = 1$ from the sum over $k_8$ since $k_8|\eta_2$ and $\gcd(\eta_1 \eta_6,\eta_2) = 1$, which completes the proof of lemma \ref{lemma inter}.

\subsection{Summing over $\eta_4''$, $\eta_5''$ and $\eta_6$}

\label{Summing}

We intend to sum also over $\eta_6$ and thus we set
$\boldsymbol{\eta} = (\eta_1, \eta_2, \eta_3, \eta_7, \eta_9) \in \mathbb{Z}_{>0}^5$. For $(r_1,r_2,r_3,r_7,r_9) \in \mathbb{Q}^5$, we introduce the useful notation
\begin{eqnarray*}
\boldsymbol{\eta}^{(r_1,r_2,r_3,r_7,r_9)} & = & \eta_1^{r_1} \eta_2^{r_2} \eta_3^{r_3} \eta_7^{r_7} \eta_9^{r_9} \textrm{.}
\end{eqnarray*}
Setting
\begin{align*}
& Y_4 = \frac{\boldsymbol{\eta}^{(1,3/2,-1/2,2,1)}}{B^{1/2}} \textrm{,} &
& Y_4'' = \frac{Y_4}{k_4\ell_4} \textrm{,} \\
& Y_5 = \frac{B}{\boldsymbol{\eta}^{(1,2,0,2,1)}} \textrm{,} &
& Y_5'' = \frac{Y_5}{k_5\ell_5} \textrm{,} \\
& Y_6 = \frac{B^{1/2}}{\boldsymbol{\eta}^{(1,1/2,1/2,1,0)}} \textrm{,} &
\end{align*}
and recalling the definition \eqref{equation h} of the function $h$, the height conditions \eqref{condition1}, \eqref{condition2}, \eqref{condition3} and \eqref{condition4} can be rewritten as
\begin{eqnarray*}
h \left( \frac{\eta_4''}{Y_4''}, \frac{\eta_5''}{Y_5''}, \frac{\eta_6}{Y_6} \right) & \leq & 1 \textrm{.}
\end{eqnarray*}
We also define the real-valued functions
\begin{eqnarray*}
g_1 & : & (t_5,t_6,t;\boldsymbol{\eta},B) \mapsto
\int_{h(t_4,t_5,t_6) \leq 1, t \leq |t_4 t_5 + t_6|, |t_4|Y_4 \geq \log(B)^A} \D t_4 \textrm{,} \\
g_2 & : & (t_6,t;\boldsymbol{\eta},B) \mapsto \int_{|t_5| Y_5 \geq \log(B)^A} g_1(t_5,t_6,t;\boldsymbol{\eta},B) \D t_5 \textrm{,} \\
g_3 & : & (t;\boldsymbol{\eta},B) \mapsto \int_{t_6 Y_6 \geq 1} g_2(t_6,t;\boldsymbol{\eta},B) \D t_6 \textrm{,} \\
g_4 & : & t \mapsto \int \int \int_{t_6 > 0, h(t_4,t_5,t_6) \leq 1, t \leq |t_4 t_5 + t_6|} \D t_4 \D t_5 \D t_6 \textrm{.}
\end{eqnarray*}
The condition $t \leq |t_4 t_5 + t_6|$ corresponds to the condition \eqref{condition6} which becomes, in our new notations,
\begin{eqnarray*}
\frac{\eta_9^2}{Y_4Y_5} & \leq & \left| \frac{\eta_4''}{Y_4''} \frac{\eta_5''}{Y_5''} + \frac{\eta_6}{Y_6} \right| \textrm{.}
\end{eqnarray*}
We denote by $\kappa$ the left-hand side of this inequality. Note that the condition \eqref{conditionkappa} is exactly
$\kappa \leq 2$.

\begin{lemma}
\label{bounds}
We have the bounds
\begin{eqnarray*}
g_1(t_5,t_6,t;\boldsymbol{\eta},B) & \ll & |t_5|^{-2/3} t_6^{-2/3} \textrm{,} \\
g_2(t_6,t;\boldsymbol{\eta},B) & \ll & t_6^{-2/3} \textrm{.}
\end{eqnarray*}
\end{lemma}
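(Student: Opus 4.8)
The plan is to prove the bound for $g_1$ first and then deduce the bound for $g_2$ by integrating over $t_5$. For $g_1$, I would start by discarding the two lower-bound constraints $t \leq |t_4 t_5 + t_6|$ and $|t_4| Y_4 \geq \log(B)^A$ appearing in its definition: they only shrink the domain of integration, so it suffices to bound the Lebesgue measure of the set of $t_4 \in \mathbb{R}$ with $h(t_4,t_5,t_6) \leq 1$. We may assume $t_5 \neq 0$ and $t_6 > 0$, since otherwise the claimed right-hand side is infinite. By the definition \eqref{equation h} of $h$, the inequality $h(t_4,t_5,t_6) \leq 1$ forces in particular $t_4^2 t_5^2 \leq 1$ and $|t_4| t_6^2 |t_4 t_5 + t_6| \leq 1$.

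The first of these gives immediately the crude bound $g_1 \ll |t_5|^{-1}$. For a complementary estimate I would introduce a free parameter $T_0 > 0$ and split according to whether $|t_4| \leq T_0$ or $|t_4| > T_0$. In the first case the corresponding set has measure $\ll T_0$; in the second, $|t_4 t_5 + t_6| \leq (|t_4| t_6^2)^{-1} \leq (T_0 t_6^2)^{-1}$, so $t_4$ lies in an interval of length $\ll (|t_5| T_0 t_6^2)^{-1}$ centred at $-t_6/t_5$. Hence $g_1 \ll T_0 + (|t_5| T_0 t_6^2)^{-1}$, and the choice $T_0 = (|t_5| t_6^2)^{-1/2}$ yields $g_1 \ll |t_5|^{-1/2} t_6^{-1}$. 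Putting the two bounds together, $g_1 \ll \min\bigl(|t_5|^{-1},\, |t_5|^{-1/2} t_6^{-1}\bigr)$, and a short case distinction comparing $t_6$ with $|t_5|^{1/2}$ shows that this minimum is always $\ll |t_5|^{-2/3} t_6^{-2/3}$, which is the first assertion.

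For $g_2$, I would simply integrate the bound just obtained for $g_1$ over $t_5$. The key observation is that $h(t_4,t_5,t_6) \leq 1$ also forces $|t_5| \leq 1$, so that $g_1(t_5,t_6,t;\boldsymbol{\eta},B)$ vanishes when $|t_5| > 1$; consequently
\[ g_2(t_6,t;\boldsymbol{\eta},B) \ll t_6^{-2/3} \int_{0 < |t_5| \leq 1} |t_5|^{-2/3}\, \D t_5 \ll t_6^{-2/3}, \]
since $|t_5|^{-2/3}$ is integrable near the origin.

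The only point requiring a little care is the interpolation step at the end of the argument for $g_1$: neither of the two elementary bounds $|t_5|^{-1}$ and $|t_5|^{-1/2} t_6^{-1}$ is on its own $\ll |t_5|^{-2/3} t_6^{-2/3}$ throughout the region $|t_5|, t_6 \leq 1$, so one genuinely needs both of them together with the observation that their minimum is dominated by the geometric-mean-type quantity $|t_5|^{-2/3} t_6^{-2/3}$. Everything else is routine.
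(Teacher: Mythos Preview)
Your proposal is correct and follows essentially the same route as the paper: obtain the two crude bounds $g_1 \ll |t_5|^{-1}$ (from $|t_4t_5|\leq 1$) and $g_1 \ll |t_5|^{-1/2}t_6^{-1}$ (from $|t_4|t_6^2|t_4t_5+t_6|\leq 1$), take their minimum, interpolate to get $|t_5|^{-2/3}t_6^{-2/3}$, and then integrate over $|t_5|\leq 1$ for $g_2$. The paper simply asserts the bound $|t_5|^{-1/2}t_6^{-1}$ without spelling out your $T_0$-split, and replaces your case distinction by the one-line inequality $\min(A,B)\leq A^{1/3}B^{2/3}$, but the content is identical.
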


\begin{proof}
Recall the definition \eqref{equation h} of the function $h$. To begin with, we see that the condition
$|t_4|t_6^2 |t_4t_5+t_6| \leq 1$ shows that $t_4$ runs over a set whose \text{measure} is $\ll |t_5|^{-1/2} t_6^{-1}$. Then, since we also have $|t_4t_5| \leq 1$, we can derive the bound $g_1(t_5,t_6,t;\boldsymbol{\eta},B) \ll \min \left( |t_5|^{-1/2} t_6^{-1}, |t_5|^{-1} \right) \leq |t_5|^{-2/3} t_6^{-2/3}$. The bound for $g_2$ \text{immediately} follows since $|t_5| \leq 1$.
\end{proof}

It is immediate to check that $\boldsymbol{\eta}$ is restricted to lie in the region
\begin{eqnarray}
\label{V}
\mathcal{V} & = & \left\{ \boldsymbol{\eta} \in \mathbb{Z}_{>0}^5,
Y_5 \geq \log(B)^A, Y_6 \geq 1, 2Y_4Y_5 \geq \eta_9^2 \right\} \textrm{.}
\end{eqnarray}
Assume that $\boldsymbol{\eta} \in \mathcal{V}$ and $\eta_6 \in \mathbb{Z}_{>0}$ are fixed and satisfy the height condition \eqref{condition2} and the coprimality conditions \eqref{gcd4}, \eqref{gcd5}, \eqref{gcd6} and \eqref{gcd7}.

Our next task is to estimate $C(\boldsymbol{\eta}',B)$. Recall the condition \eqref{new condition} which can be rewritten as
$|\eta_5''| \leq Y_4 Y_5''\log(B)^{-A}$. Let us sum over $\eta_4''$ using the basic estimate
$\# \{ n \in \mathbb{Z} , t_1 \leq n \leq t_2 \} = t_2 - t_1 + O(1)$. The change of variable $t_4 \mapsto Y_4'' t_4$ shows that
\begin{eqnarray*}
C(\boldsymbol{\eta}',B) & = & \sum_{\eta_5'' \leq Y_4 Y_5''\log(B)^{-A}} \left( Y_4'' g_1 \left( \frac{\eta_5''}{Y_5''}, \frac{\eta_6}{Y_6}, \kappa; \boldsymbol{\eta}, B \right) + O(1) \right) \textrm{.}
\end{eqnarray*}
The overall contribution of the error term is
\begin{eqnarray*}
\sum_{\boldsymbol{\eta}'} 2^{\omega(\eta_1\eta_2\eta_6\eta_7)} 2^{\omega(\eta_2\eta_9)}
\frac{B^{1/2} \log(B)^{-A}}{\boldsymbol{\eta}^{(0,1/2,1/2,0,1)}} & \ll & 
\sum_{\boldsymbol{\eta}} 2^{\omega(\eta_1 \eta_2 \eta_7)} 2^{\omega(\eta_2 \eta_9)}
\frac{B \log(B)^{1-A}}{\boldsymbol{\eta}^{(1,1,1,1,1)}} \\
& \ll & B \log(B)^{12-A} \textrm{,} 
\end{eqnarray*}
where we have summed over $\eta_6$ using \eqref{condition2}. Let us now sum over $\eta_5''$. Partial summation and the change of variable $t_5 \mapsto Y_5'' t_5$ yield
\begin{eqnarray*}
C(\boldsymbol{\eta}',B) & = & Y_4'' Y_5'' g_2 \left( \frac{\eta_6}{Y_6}, \kappa; \boldsymbol{\eta}, B \right) + O
\left( Y_4'' \sup_{|t_5| Y_5 \geq \log(B)^A} g_1 \left( t_5, \frac{\eta_6}{Y_6}, \kappa; \boldsymbol{\eta}, B \right) \right) \textrm{.}
\end{eqnarray*}
Since $h(t_4,t_5,t_6) \leq 1$ implies $|t_4t_5| \leq 1$, we have $g_1(t_5,t_6,t;\boldsymbol{\eta},B) \ll |t_5|^{-1}$ and thus
\begin{eqnarray*}
\sup_{|t_5| Y_5 \geq \log(B)^A} g_1 \left( t_5, \frac{\eta_6}{Y_6}, \kappa; \boldsymbol{\eta}, B \right) & \ll & Y_5 \log(B)^{-A} \textrm{.}
\end{eqnarray*}
Summing over $\eta_6$ using \eqref{condition2}, we see that the overall contribution of this error term is
\begin{eqnarray*}
\sum_{\boldsymbol{\eta}'} 2^{\omega(\eta_1\eta_3\eta_6\eta_7)} 2^{\omega(\eta_2\eta_9)}
\frac{B^{1/2} \log(B)^{-A}}{\boldsymbol{\eta}^{(0,1/2,1/2,0,1)}} & \ll &
\sum_{\boldsymbol{\eta}} 2^{\omega(\eta_1 \eta_3 \eta_7)} 2^{\omega(\eta_2 \eta_9)}
\frac{B \log(B)^{1-A}}{\boldsymbol{\eta}^{(1,1,1,1,1)}} \\
& \ll & B \log(B)^{11-A} \textrm{.} 
\end{eqnarray*}
Recalling lemma \ref{lemma inter}, for any fixed $A \geq 10$, we have obtained
\begin{eqnarray*}
N(\boldsymbol{\eta}',B) & = & \frac1{\eta_9} g_2 \left(\frac{\eta_6}{Y_6}, \kappa; \boldsymbol{\eta}, B \right) Y_4 Y_5
\sum_{\substack{k_8|\eta_2 \\ \gcd(k_8,\eta_7) = 1}} \frac{\mu(k_8)}{k_8\varphi^{\ast}(k_8\eta_9)}
\sum_{\substack{k_4|\eta_1\eta_2\eta_6\eta_7 \\ \gcd(k_4,k_8\eta_9)=1}} \frac{\mu(k_4)}{k_4} \\
& & \sum_{\substack{k_5|\eta_1\eta_3\eta_6\eta_7 \\ \gcd(k_5,k_8\eta_9)=1}} \frac{\mu(k_5)}{k_5}
\sum_{\ell_4|k_8\eta_9} \frac{\mu(\ell_4)}{\ell_4} \sum_{\ell_5|k_8\eta_9} \frac{\mu(\ell_5)}{\ell_5} + R_1(\boldsymbol{\eta}',B) \textrm{,}
\end{eqnarray*}
where $\sum_{\boldsymbol{\eta}'} R_1(\boldsymbol{\eta}',B) \ll B \log(B)^2$. A straightforward calculation reveals that the main term of $N(\boldsymbol{\eta}',B)$ is equal to
\begin{eqnarray*}
& & \theta(\boldsymbol{\eta}) \frac{\varphi^{\ast}(\eta_6)}{\varphi^{\ast}(\gcd(\eta_6,\eta_1\eta_2\eta_7))} \frac{\varphi^{\ast}(\eta_6)}{\varphi^{\ast}(\gcd(\eta_6,\eta_1\eta_3\eta_7))}
g_2 \left(\frac{\eta_6}{Y_6}, \kappa; \boldsymbol{\eta}, B \right) \frac{Y_4Y_5}{\eta_9} \textrm{,}
\end{eqnarray*}
where
\begin{eqnarray*}
\theta(\boldsymbol{\eta}) & = & \varphi^{\ast}(\eta_1\eta_2\eta_7) \varphi^{\ast}(\eta_1\eta_3\eta_7) \frac{\varphi^{\ast}(\eta_2\eta_9)}{\varphi^{\ast}(\gcd(\eta_2,\eta_7))} \textrm{.}
\end{eqnarray*}
For fixed $\boldsymbol{\eta} \in \mathcal{V}$ satisfying the coprimality conditions \eqref{gcd5}, \eqref{gcd6} and \eqref{gcd7}, let $\mathbf{N}(\boldsymbol{\eta},B)$ be the sum over $\eta_6$ of the main term of $N(\boldsymbol{\eta}',B)$, with $\eta_6$ satisfying the height \text{condition} \eqref{condition2} and the coprimality condition \eqref{gcd4}. Let us use lemma \ref{arithmetic preliminary} to sum over $\eta_6$. We find that for any fixed $A \geq 10$ and $0 < \sigma \leq 1$, we have
\begin{eqnarray}
\label{N}
\ \ \ \ \ \ \ \mathbf{N}(\boldsymbol{\eta},B) & = & \frac1{\eta_9} \mathcal{P} \Theta(\boldsymbol{\eta})
g_3 \left( \kappa; \boldsymbol{\eta}, B \right) Y_4 Y_5 Y_6 \\
\nonumber
& & + O \left( \frac{Y_4Y_5}{\eta_9} \varphi_{\sigma}(\eta_2\eta_7\eta_9) Y_6^{\sigma} 
\sup_{t_6 Y_6 \geq 1} g_2 \left( t_6, \kappa; \boldsymbol{\eta}, B \right) \right) \textrm{,}
\end{eqnarray}
where
\begin{eqnarray*}
\Theta(\boldsymbol{\eta}) & = & \theta(\boldsymbol{\eta}) \varphi^{\ast}(\eta_2\eta_7\eta_9) \varphi^{\dag}(\eta_3) \varphi'(\eta_1\eta_2\eta_3\eta_7\eta_9) \textrm{,}
\end{eqnarray*}
and where $\varphi^{\dag}$, $\varphi'$, $\varphi_{\sigma}$ and $\mathcal{P}$ are respectively introduced in \eqref{def dag}, \eqref{def '}, \eqref{varphi_sigma} and \eqref{def P}. Using the bound of lemma \ref{bounds} for $g_2$ and choosing $\sigma = 1/4$, we see that the overall contribution of the error term is
\begin{eqnarray*}
\sum_{\boldsymbol{\eta}} \varphi_{\sigma}(\eta_2\eta_7\eta_9) \frac{Y_4Y_5}{\eta_9} Y_6^{11/12} & \ll &
\sum_{\eta_2,\eta_3,\eta_7,\eta_9} \varphi_{\sigma}(\eta_2\eta_7\eta_9) \frac{B}{\boldsymbol{\eta}^{(0,1,1,1,1)}} \\
& \ll & B \log(B)^4 \textrm{,}
\end{eqnarray*}
since $\varphi_{\sigma}$ has average order $O(1)$ and where we have summed over $\eta_1$ using $Y_6 \geq 1$. Note that
\begin{eqnarray*}
\frac{Y_4 Y_5 Y_6}{\eta_9} & = & \frac{B}{\boldsymbol{\eta}^{(1,1,1,1,1)}} \textrm{.}
\end{eqnarray*}
The aim now is to remove the conditions $|t_4|Y_4, |t_5| Y_5 \geq \log(B)^A$ from the integral defining $g_3$ in the main term of $\mathbf{N}(\boldsymbol{\eta},B)$ in \eqref{N} and to replace $t_6Y_6 \geq 1$ by $t_6 > 0$. This will replace
$g_3 \left( \kappa; \boldsymbol{\eta}, B \right)$ by $g_4(\kappa)$ in the main term of $\mathbf{N}(\boldsymbol{\eta},B)$ in \eqref{N}. This is more subtle for $t_4$ than for $t_5$ and $t_6$. Indeed, since $Y_5  \geq \log(B)^A$ and $Y_6 \geq 1$, we can prove that the conditions $|t_5| < \log(B)^A/Y_5$ and $t_6 < 1 /Y_6$ in the integral both yield a negligible contribution. However, we do not have
$Y_4 \geq \log(B)^A$ so our reasoning consists in proving that the contribution corresponding to $Y_4 < \log(B)^A$ is negligible, which will allow us to assume that
$Y_4 \geq \log(B)^A$ and therefore conclude as for $t_5$ and $t_6$. For brevity, we set
\begin{eqnarray*}
D_h & = & \left\{ (t_4,t_5,t_6) \in \mathbb{R}^3, t_6 > 0, h(t_4,t_5,t_6) \leq 1 \right\} \textrm{.}
\end{eqnarray*}

\begin{lemma}
For $Z_4, Z_5, Z_6 > 0$, we have
\begin{eqnarray}
\label{1.}
\meas \{ (t_4,t_5,t_6) \in D_h, |t_4| Z_4 \geq 1 \} & \ll & Z_4^{1/4} \textrm{,} \\
\label{4.}
\meas \{ (t_4,t_5,t_6) \in D_h, |t_4| Z_4 < 1 \} & \ll & Z_4^{-1} \textrm{,} \\
\label{5.}
\meas \{ (t_4,t_5,t_6) \in D_h, |t_5| Z_5 < 1 \} & \ll & Z_5^{-1/3} \textrm{,} \\
\label{6.}
\meas \{ (t_4,t_5,t_6) \in D_h, t_6 Z_6 < 1 \} & \ll & Z_6^{-1/3} \textrm{.}
\end{eqnarray}
\end{lemma}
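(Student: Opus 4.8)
The plan is to bound each of these measures by integrating the three variables in a convenient order, using only the defining constraints of $D_h$, namely $|t_5| \leq 1$, $0 < t_6 \leq 1$, $|t_4 t_5| \leq 1$ and $|t_4| t_6^2 |t_4 t_5 + t_6| \leq 1$. The only genuinely non-trivial ingredient is the pointwise estimate already used in the proof of lemma \ref{bounds}: for fixed $t_5 \neq 0$ and $t_6 \in (0,1]$, completing the square in the quadratic $t_5 t_4^2 + t_6 t_4$ and using $t_6 \leq 1$ shows that $\{t_4 \in \mathbb{R}, |t_4| t_6^2 |t_4 t_5 + t_6| \leq 1\}$ has measure $\ll |t_5|^{-1/2} t_6^{-1}$; combining this with $|t_4 t_5| \leq 1$ gives a measure $\ll \min(|t_5|^{-1/2} t_6^{-1}, |t_5|^{-1})$ for the admissible $t_4$. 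Integrating this bound over $t_6 \in (0,1]$, splitting the range at $t_6 = |t_5|^{1/2}$, yields $\ll |t_5|^{-1/2}(1 + \log(1/|t_5|))$, and integrating once more over $|t_5| \leq 1$ gives $\meas(D_h) \ll 1$. Consequently each of \eqref{1.}--\eqref{6.} is trivial whenever the relevant parameter $Z_i$ is $\ll 1$, and we may assume it is large.

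For \eqref{1.}, if $Z_4 < 1$ then $|t_4| \geq 1/Z_4 > 1$ together with $|t_4 t_5| \leq 1$ forces $|t_5| \leq Z_4$, so the pointwise bound gives $\meas \ll \int_{|t_5| \leq Z_4} |t_5|^{-1/2}(1 + \log(1/|t_5|))\,\D t_5 \ll Z_4^{1/2}(1 + \log(1/Z_4)) \ll Z_4^{1/4}$. For \eqref{5.}, if $Z_5 > 1$ we integrate directly over $|t_5| < 1/Z_5$ and obtain $\meas \ll Z_5^{-1/2}(1 + \log Z_5) \ll Z_5^{-1/3}$. For \eqref{6.}, if $Z_6 > 1$ we integrate $t_6$ outermost over $(0,1/Z_6)$; for fixed $t_6$ the measure of admissible $(t_4,t_5)$ is, by the pointwise bound integrated over $|t_5| \leq 1$ (splitting the $t_5$-range at $|t_5| = t_6^2$), $\ll 1 + \log(1/t_6)$, whence $\meas \ll \int_0^{1/Z_6}(1 + \log(1/t_6))\,\D t_6 \ll Z_6^{-1}(1 + \log Z_6) \ll Z_6^{-1/3}$. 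Finally \eqref{4.} needs no quadratic input at all: if $Z_4 \geq 2$ then $|t_4| < 1/Z_4 \leq 1/2$, so $|t_4| t_6^2 |t_4 t_5 + t_6| \leq \tfrac12 \cdot 1 \cdot (\tfrac12 + 1) < 1$ automatically, and the region reduces to $\{|t_4| < 1/Z_4,\ |t_5| \leq 1,\ 0 < t_6 \leq 1\}$, of measure $\ll 1/Z_4 = Z_4^{-1}$.

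I expect the main obstacle to be the pointwise measure bound for $t_4$: it is the completing-the-square computation, and the point where $t_6 \leq 1$ is essential is in controlling the term $t_6/|t_5|$ coming from the vertex of the parabola. Concretely, when $\{|t_5 t_4^2 + t_6 t_4| \leq t_6^{-2}\}$ splits into two far-apart intervals (which happens only when $|t_5| \ll t_6^4$), each has length $\ll t_6^{-3}$, and $t_6^{-3} \leq |t_5|^{-1/2} t_6^{-1}$ precisely because $|t_5| \leq t_6^4$ in that regime; otherwise the set is a single interval of length $\ll (t_6^{-2}/|t_5|)^{1/2} = |t_5|^{-1/2} t_6^{-1}$. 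Everything else is elementary one-variable integration, and the mild logarithmic losses are harmlessly absorbed into the deliberately non-sharp exponents $1/4$ and $1/3$ in the statement.
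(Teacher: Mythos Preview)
Your proof is correct. The one notable difference from the paper is in \eqref{1.}: instead of integrating $t_4$ first and picking up a logarithm in $|t_5|$ to be absorbed into the exponent, the paper integrates $t_5$ first for fixed $(t_4,t_6)$, observing that the constraints $|t_4|t_6^2|t_4t_5+t_6|\leq 1$ and $|t_4t_5|\leq 1$ confine $t_5$ to a set of measure $\ll \min(|t_4|^{-2}t_6^{-2},|t_4|^{-1})\leq |t_4|^{-5/4}t_6^{-1/2}$; integrating over $t_6\leq 1$ and then $|t_4|\geq 1/Z_4$ gives $Z_4^{1/4}$ on the nose with no log. For \eqref{4.}--\eqref{6.} the two arguments are essentially identical, except that the paper interpolates the bound $\min(|t_5|^{-1/2}t_6^{-1},|t_5|^{-1})$ as $|t_5|^{-2/3}t_6^{-2/3}$ (lemma~\ref{bounds}), again sidestepping the logarithmic losses you carry through and absorb. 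Both routes are perfectly sound; the paper's choice of integration order simply avoids the need to budget for logs.
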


\begin{proof}
The conditions $|t_4|t_6^2 |t_4t_5+t_6| \leq 1$ and $|t_4t_5| \leq 1$ show that $t_5$ runs over a set whose measure is
$\ll \min \left( t_4^{-2}t_6^{-2}, |t_4|^{-1} \right) \leq |t_4|^{-5/4}t_6^{-1/2}$, which proves the bound \eqref{1.} since
$t_6 \leq 1$. The bound \eqref{4.} is clear since $|t_5|, t_6 \leq 1$. The bound \eqref{5.} follows from the bound of lemma \ref{bounds} for $g_1$ and $t_6 \leq 1$. In a similar way, \eqref{6.} is a consequence of the bound of lemma \ref{bounds} for $g_1$ and $|t_5| \leq 1$.
\end{proof}

Using the bound \eqref{5.}, we see that removing the condition $|t_5| Y_5 \geq \log(B)^A$ from the integral defining $g_3$ in the main term of $\mathbf{N}(\boldsymbol{\eta},B)$ in \eqref{N} yields an error term whose overall contribution is
\begin{eqnarray*}
\sum_{\boldsymbol{\eta}} Y_4Y_5^{2/3}Y_6\log(B)^{A/3} & \ll &
\sum_{\eta_2,\eta_3,\eta_7,\eta_9} \frac{B}{\boldsymbol{\eta}^{(0,1,1,1,1)}} \\
& \ll & B \log(B)^4\textrm{,}
\end{eqnarray*}
where we have summed over $\eta_1$ using $Y_5 \geq \log(B)^A$. In a similar fashion, the bound \eqref{6.} shows that replacing the condition $t_6Y_6 \geq 1$ by $t_6 > 0$ in the integral defining $g_3$ in the main term of $\mathbf{N}(\boldsymbol{\eta},B)$ in \eqref{N} also creates an error term whose overall contribution is $\ll B \log(B)^4$.

We now assume that $Y_4 < \log(B)^A$ and we bound the contribution of the main term of $\mathbf{N}(\boldsymbol{\eta},B)$ under this assumption. The bound \eqref{1.} shows that this contribution is
\begin{eqnarray*}
\sum_{\boldsymbol{\eta}} Y_4^{5/4}Y_5Y_6 \log(B)^{-A/4} & \ll & \sum_{\eta_2,\eta_3,\eta_7,\eta_9} \frac{B}{\boldsymbol{\eta}^{(0,1,1,1,1)}} \\
& \ll & B \log(B)^4 \textrm{.}
\end{eqnarray*}
We can therefore assume from now on that
\begin{eqnarray}
\label{condition V1}
Y_4 & \geq & \log(B)^A \textrm{.}
\end{eqnarray}
Under this assumption, exactly as for $t_5$ and $t_6$, the bound \eqref{4.} shows that the overall contribution of the error term created by removing the condition $|t_4| Y_4 \geq \log(B)^A$ from the integral defining $g_3$ in the main term of $\mathbf{N}(\boldsymbol{\eta},B)$ in \eqref{N} is $\ll B \log(B)^4$. We have proved that for any fixed $A \geq 9$,
\begin{eqnarray}
\label{estimate Ng4}
\mathbf{N}(\boldsymbol{\eta},B) & = & \mathcal{P} g_4(\kappa) \frac{B}{\boldsymbol{\eta}^{(1,1,1,1,1)}} \Theta(\boldsymbol{\eta}) + R_2(\boldsymbol{\eta},B) \textrm{,}
\end{eqnarray}
where $\sum_{\boldsymbol{\eta}} R_2(\boldsymbol{\eta},B) \ll B \log(B)^4$. The goal of the following lemma is to replace $g_4(\kappa)$  by $g_4(0)$ in the main term of $\mathbf{N}(\boldsymbol{\eta},B)$ in \eqref{estimate Ng4}. By \eqref{omega_infty}, $g_4(0)$ is equal to
\begin{eqnarray*}
\int \int \int_{t_6 > 0, h(t_4,t_5,t_6) \leq 1} \D t_4 \D t_5 \D t_6 & = & \frac{\omega_{\infty}}{2} \textrm{.}
\end{eqnarray*}

\begin{lemma}
For $t > 0$, we have
\begin{eqnarray}
\label{condition t}
\meas \{ (t_4,t_5,t_6) \in D_h, |t_4t_5 + t_6| < t \} & \ll & t^{1/2} \textrm{.}
\end{eqnarray} 
\end{lemma}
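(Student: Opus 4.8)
The plan is to turn $t_4 t_5 + t_6$ into a coordinate and then integrate out the other two variables in turn, keeping track of the (harmless) logarithmic losses.

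First I would dispose of the range $t \geq 1$: since $D_h$ has finite measure — indeed $\meas(D_h) = g_4(0) = \omega_{\infty}/2$ by \eqref{omega_infty}, or one may simply add the bounds \eqref{1.} and \eqref{4.} with $Z_4 = 1$ — we get $\meas\{(t_4,t_5,t_6) \in D_h, |t_4 t_5 + t_6| < t\} \leq \meas(D_h) \ll 1 \ll t^{1/2}$ in this range. From now on I therefore assume $0 < t < 1$.

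Next, discarding the negligible set $\{t_4 = 0\}$, I would apply the change of variables $(t_4, t_5, t_6) \mapsto (t_4, w, t_6)$ with $w = t_4 t_5 + t_6$, whose Jacobian is $|t_4|$, so $\D t_4\, \D t_5\, \D t_6 = |t_4|^{-1} \D t_4\, \D w\, \D t_6$. By the definition \eqref{equation h} of $h$, the conditions $(t_4, t_5, t_6) \in D_h$ and $|t_4 t_5 + t_6| < t$ translate into $|w| < t$, $0 < t_6 \leq 1$, $|w - t_6| \leq 1$ (from $t_4^2 t_5^2 \leq 1$), $|t_4| \geq |w - t_6|$ (from $|t_5| \leq 1$) and $|t_4| \leq (t_6^2 |w|)^{-1}$ (from $|t_4| t_6^2 |t_4 t_5 + t_6| \leq 1$). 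Integrating $|t_4|^{-1}$ over $|w - t_6| \leq |t_4| \leq (t_6^2 |w|)^{-1}$, a range which is non-empty only when $|w - t_6| t_6^2 |w| \leq 1$, produces the nonnegative quantity $2 \log\bigl(|w - t_6| t_6^2 |w|\bigr)^{-1}$, so that
\begin{eqnarray*}
\meas\{(t_4,t_5,t_6) \in D_h, |t_4 t_5 + t_6| < t\} & = & 2 \int_{|w| < t} \int_{\substack{0 < t_6 \leq 1,\ |w - t_6| \leq 1 \\ |w - t_6| t_6^2 |w| \leq 1}} \log\frac{1}{|w - t_6| t_6^2 |w|}\, \D t_6\, \D w \textrm{.}
\end{eqnarray*}

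To finish, I would bound the integrand by $\log^+\tfrac1{|w - t_6|} + 2\log^+\tfrac1{t_6} + \log^+\tfrac1{|w|}$, where $\log^+ x = \max(0, \log x)$. Integrating over $t_6 \in [0,1]$, the first two terms each contribute $O(1)$ (the functions $\log^+\tfrac1{|u|}$ and $\log^+\tfrac1{t_6}$ being integrable), while the last contributes $\log^+\tfrac1{|w|}$; hence the double integral is $\ll \int_{|w| < t}\bigl(1 + \log^+\tfrac1{|w|}\bigr)\D w \ll t\bigl(1 + \log\tfrac1t\bigr)$. Since $t \mapsto t^{1/2}\bigl(1 + \log\tfrac1t\bigr)$ is bounded on $(0,1)$, the right-hand side is $\ll t^{1/2}$, which is the claim. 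The only point requiring a little care is that $D_h$ is unbounded — the constraint $|t_4| t_6^2 |t_4 t_5 + t_6| \leq 1$ permits $t_4$ large when $t_5$ and $t_6$ are small — and this is precisely what produces the factor $\log\tfrac1t$; but $t \log\tfrac1t \ll t^{1/2}$, so it costs us nothing, and there is no real obstacle beyond this routine verification.
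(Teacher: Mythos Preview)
Your proof is correct, and in fact it yields the sharper bound $\meas\{\cdots\} \ll t(1 + \log\tfrac{1}{t})$ before you throw away the gain. The change of variables $w = t_4 t_5 + t_6$ captures all four constraints in $h$ exactly, and the singular integrations over $t_4$ (near $|w-t_6|=0$) and over $w$ (near $0$) are correctly controlled by the local integrability of $\log^+(1/|\cdot|)$.

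The paper's argument is different and shorter: fixing $t_5, t_6$, it bounds the measure of the $t_4$-fiber by two separate estimates --- $\ll |t_5|^{-1/2} t_6^{-1}$ coming from the quadratic constraint $|t_4||t_4 t_5 + t_6| \leq t_6^{-2}$, and $\ll t|t_5|^{-1}$ coming from $|t_4 t_5 + t_6| < t$ --- and then takes the geometric mean $\min(a,b) \leq \sqrt{ab}$ to get a fiber bound $\ll t^{1/2}|t_5|^{-3/4} t_6^{-1/2}$, which integrates over $|t_5|, t_6 \leq 1$ to $\ll t^{1/2}$. So the paper gets $t^{1/2}$ directly with no logarithmic detour, at the cost of losing the true order of magnitude; your approach is more explicit and recovers the sharper $t\log(1/t)$, at the cost of a longer computation. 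Both are perfectly adequate for the application, where only $t^{1/2}$ is needed.
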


\begin{proof}
The conditions $|t_4|t_6^2 |t_4t_5+t_6| \leq 1$ and $|t_4t_5 + t_6| < t$ imply that $t_4$ runs over a set whose measure is
$\ll \min \left( |t_5|^{-1/2}t_6^{-1}, t |t_5|^{-1} \right) \leq t^{1/2}|t_5|^{-3/4}t_6^{-1/2}$, which suffices since
$|t_5|,t_6 \leq 1$.
\end{proof}

Let us estimate the overall contribution of the error term which appears if we replace $g_4(\kappa)$ by $g_4(0)$ in the main term of $\mathbf{N}(\boldsymbol{\eta},B)$ in \eqref{estimate Ng4}. Using \eqref{condition t} and summing over $\eta_9$ using the condition \eqref{conditionkappa}, we find that this contribution is
\begin{eqnarray*}
\sum_{\boldsymbol{\eta}} \frac{B}{\boldsymbol{\eta}^{(1,1,1,1,1)}} \kappa^{1/2} & \ll &
\sum_{\eta_1,\eta_2,\eta_3,\eta_7} \frac{B}{\boldsymbol{\eta}^{(1,1,1,1,0)}} \\
& \ll & B \log(B)^4 \textrm{.}
\end{eqnarray*}
We have therefore obtained the following result.

\begin{lemma}
\label{lemmafin}
For any fixed $A \geq 10$, we have the estimate
\begin{eqnarray*}
\mathbf{N}(\boldsymbol{\eta},B) & = & \mathcal{P} \frac{\omega_{\infty}}{2} \frac{B}{\boldsymbol{\eta}^{(1,1,1,1,1)}} \Theta(\boldsymbol{\eta}) + R_3(\boldsymbol{\eta},B) \textrm{,}
\end{eqnarray*}
where $\sum_{\boldsymbol{\eta}} R_3(\boldsymbol{\eta},B) \ll B \log(B)^4$.
\end{lemma}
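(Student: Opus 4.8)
The plan is to assemble the estimate \eqref{estimate Ng4}, which already isolates the right shape of the main term, with the identification of $g_4(0)$ provided by \eqref{omega_infty}. Concretely, \eqref{estimate Ng4} reads $\mathbf{N}(\boldsymbol{\eta},B) = \mathcal{P}\, g_4(\kappa)\, \frac{B}{\boldsymbol{\eta}^{(1,1,1,1,1)}}\, \Theta(\boldsymbol{\eta}) + R_2(\boldsymbol{\eta},B)$ with $\sum_{\boldsymbol{\eta}} R_2(\boldsymbol{\eta},B) \ll B\log(B)^4$, and \eqref{omega_infty} gives $g_4(0) = \omega_{\infty}/2$. Hence everything reduces to estimating the cost of replacing $g_4(\kappa)$ by $g_4(0)$ in that main term; once this cost is shown to be $\ll B\log(B)^4$ after summation over $\boldsymbol{\eta}$, the lemma follows by collecting $R_2(\boldsymbol{\eta},B)$ and the new error into $R_3(\boldsymbol{\eta},B)$.

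For this replacement I would argue as follows. First, $\Theta(\boldsymbol{\eta})$ is uniformly bounded: it is a product of factors, the $\varphi^{\ast}$- and $\varphi^{\dag}$-factors being all $\leq 1$ (note that every prime dividing $\gcd(\eta_2,\eta_7)$ divides $\eta_2\eta_9$, so that $\varphi^{\ast}(\eta_2\eta_9)/\varphi^{\ast}(\gcd(\eta_2,\eta_7)) \leq 1$), while $\varphi'(n) = \prod_{p|n}\varphi'(p) \leq \prod_p \varphi'(p) < +\infty$ since $\varphi'(p) = 1 + O(p^{-2})$. Second, since increasing $t$ only shrinks the domain of integration defining $g_4$, the function $g_4$ is non-increasing on $\mathbb{R}_{\geq 0}$, so that
\[
0 \leq g_4(0) - g_4(\kappa) = \meas\left\{ (t_4,t_5,t_6) \in D_h, \ |t_4t_5+t_6| < \kappa \right\} \ll \kappa^{1/2}
\]
by the bound \eqref{condition t}. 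Consequently, the total error made by the substitution is
\[
\ll \sum_{\boldsymbol{\eta} \in \mathcal{V}} \frac{B}{\boldsymbol{\eta}^{(1,1,1,1,1)}}\, \kappa^{1/2}.
\]

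To conclude I would perform this final summation. From the definitions of $Y_4$ and $Y_5$ one has $Y_4 Y_5 = B^{1/2} \eta_2^{-1/2} \eta_3^{-1/2}$, hence $\kappa = \eta_9^2 \eta_2^{1/2} \eta_3^{1/2} B^{-1/2}$ and $\kappa^{1/2} = \eta_9 \eta_2^{1/4} \eta_3^{1/4} B^{-1/4}$. The factor $\eta_9$ cancels the power $\eta_9^{-1}$ appearing in $\boldsymbol{\eta}^{(1,1,1,1,1)}$, and summing $\eta_9$ over the range permitted by \eqref{conditionkappa}, namely $\eta_9 \ll B^{1/4} \eta_2^{-1/4} \eta_3^{-1/4}$, produces a further factor $\ll B^{1/4} \eta_2^{-1/4} \eta_3^{-1/4}$; thus
\[
\sum_{\boldsymbol{\eta} \in \mathcal{V}} \frac{B}{\boldsymbol{\eta}^{(1,1,1,1,1)}}\, \kappa^{1/2} \ll \sum_{\eta_1,\eta_2,\eta_3,\eta_7} \frac{B}{\eta_1 \eta_2 \eta_3 \eta_7} \ll B\log(B)^4,
\]
each of the four remaining variables ranging over $[1,B]$ in view of \eqref{condition2} (recall $\eta_6 \geq 1$). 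I do not anticipate a genuine obstacle here: all the analytic content has already been spent in Lemmas \ref{lemma tau} and \ref{arithmetic preliminary} and in the preceding measure estimates \eqref{1.}--\eqref{condition t}, and the only point requiring care is the bookkeeping of the exponents, so that precisely the power $\log(B)^4$ --- and no larger one --- is produced.
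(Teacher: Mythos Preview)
Your argument is correct and follows exactly the route taken in the paper: start from \eqref{estimate Ng4}, identify $g_4(0)=\omega_\infty/2$ via \eqref{omega_infty}, bound $g_4(0)-g_4(\kappa)$ by \eqref{condition t}, and sum the resulting error $\frac{B}{\boldsymbol{\eta}^{(1,1,1,1,1)}}\kappa^{1/2}$ over $\boldsymbol{\eta}$ using \eqref{conditionkappa} to dispose of~$\eta_9$. Your explicit justification that $\Theta(\boldsymbol{\eta})$ is uniformly bounded (in particular that $\varphi'(n)\leq\prod_p\varphi'(p)<\infty$) is a point the paper leaves implicit, and your bookkeeping of the exponents in the final summation is accurate.
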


\subsection{Conclusion}

\label{Conclusion}

Recall the definition \eqref{V} of $\mathcal{V}$. It remains to sum the main term of $\mathbf{N}(\boldsymbol{\eta},B)$ over the $\boldsymbol{\eta} \in \mathcal{V}$ satisfying \eqref{condition V1} and the coprimality conditions  \eqref{gcd5}, \eqref{gcd6} and \eqref{gcd7}. It is easy to see that replacing $\left\{ \boldsymbol{\eta} \in \mathcal{V}, \eqref{condition V1} \right\}$ by the region 
\begin{eqnarray*}
\mathcal{V}' & = &  \left\{ \boldsymbol{\eta} \in \mathbb{Z}_{>0}^5, Y_4 \geq 1, Y_5 \geq 1, Y_6 \geq 1, Y_4 Y_5 \geq \eta_9^2 \right\} \textrm{,}
\end{eqnarray*}
produces an error term whose overall contribution is $\ll B \log(B)^4 \log(\log(B))$. Let us redefine $\Theta$ as being equal to zero if the remaining coprimality conditions \eqref{gcd5}, \eqref{gcd6} and \eqref{gcd7} are not satisfied. Fixing for example $A = 10$ and combining lemmas \ref{lemmaA} and \ref{lemmafin}, we obtain
\begin{eqnarray*}
N_{U_1,H}(B) & = & \mathcal{P} \omega_{\infty} B \sum_{\boldsymbol{\eta} \in \mathcal{V}'} \frac{\Theta(\boldsymbol{\eta})}{\boldsymbol{\eta}^{(1,1,1,1,1)}} + O \left( B \log(B)^4 \log(\log(B)) \right) \textrm{.}
\end{eqnarray*}
Set $\mathbf{k} = (k_1,k_2,k_3,k_7,k_9)$ and define, for $s \in \mathbb{C}$ such that $\Re(s) > 1$,
\begin{eqnarray*}
F(s) & = & \sum_{\boldsymbol{\eta} \in \mathbb{Z}_{>0}^5}
\frac{\left|(\Theta \ast \boldsymbol{\mu})(\boldsymbol{\eta})\right|}{\eta_1^s \eta_2^s \eta_3^s \eta_7^s \eta_9^s} \\
& = & \prod_p \left( \sum_{\mathbf{k} \in \mathbb{Z}_{\geq 0}^5}
\frac{\left|(\Theta \ast \boldsymbol{\mu}) \left( p^{k_1},p^{k_2},p^{k_3},p^{k_7},p^{k_9} \right)\right|}
{p^{k_1 s}p^{k_2 s}p^{k_3 s}p^{k_7 s}p^{k_9 s}} \right) \textrm{.}
\end{eqnarray*}
If $\mathbf{k} \notin \{0,1\}^5$ then $(\Theta \ast \boldsymbol{\mu}) \left( p^{k_1},p^{k_2},p^{k_3},p^{k_7},p^{k_9} \right) = 0$ and furthermore if only one of the $k_i$ is equal to $1$, then
$(\Theta \ast \boldsymbol{\mu}) \left( p^{k_1},p^{k_2},p^{k_3},p^{k_7},p^{k_9} \right) \ll 1/p$, so the local factors $F_p$ of $F$ satisfy
\begin{eqnarray*}
F_p(s) & = & 1 + O \left( \frac1{p^{ \min \left( \Re(s)+1, 2 \Re(s) \right)}} \right) \textrm{,}
\end{eqnarray*}
and thus $F$ actually converges in the half-plane $\Re(s) > 1/2$. This proves that $\Theta$ satifies the assumption \eqref{Psi} of lemma
\ref{final sum}. We therefore get
\begin{eqnarray*}
N_{U_1,H}(B) & = & \mathcal{P} \omega_{\infty} \alpha \left( \sum_{\boldsymbol{\eta} \in \mathbb{Z}_{>0}^5}
\frac{(\Theta \ast \boldsymbol{\mu})(\boldsymbol{\eta})}{\boldsymbol{\eta}^{(1,1,1,1,1)}} \right) B \log(B)^5
+ O \left( B \log(B)^4 \log(\log(B))  \right) \textrm{,}
\end{eqnarray*}
where $\alpha$ is the volume of the polytope defined in $\mathbb{R}^5$ by $t_1,t_2,t_3,t_7,t_9 \geq 0$ and
\begin{eqnarray*}
2 t_1 + 3 t_2 - t_3 + 4 t_7 + 2 t_9 & \geq & 1 \textrm{,} \\
t_1 + 2 t_2 + 2 t_7 + t_9 & \leq & 1 \textrm{,} \\
2 t_1 + t_2 + t_3 + 2 t_7 & \leq & 1 \textrm{,} \\
t_2 + t_3 + 4 t_9 & \leq & 1 \textrm{.}
\end{eqnarray*}
A computation using Franz's additional \textit{Maple} package \cite{Convex} gives $\alpha = 1/1440$, that is to say
\begin{eqnarray*}
\alpha & = & \alpha(\widetilde{V_1}) \textrm{,}
\end{eqnarray*}
and moreover
\begin{eqnarray*}
\sum_{\boldsymbol{\eta} \in \mathbb{Z}_{>0}^5}
\frac{(\Theta \ast \boldsymbol{\mu}) (\boldsymbol{\eta})}{\boldsymbol{\eta}^{(1,1,1,1,1)}} & = & \prod_p
\left( \sum_{\mathbf{k} \in \mathbb{Z}_{\geq 0}^5} 
\frac{(\Theta \ast \boldsymbol{\mu}) \left( p^{k_1},p^{k_2},p^{k_3},p^{k_7},p^{k_9} \right)}
{p^{k_1}p^{k_2}p^{k_3}p^{k_7}p^{k_9}} \right) \\
& = & \prod_p \left( 1 - \frac1{p} \right)^5 
\left( \sum_{\mathbf{k} \in \mathbb{Z}_{\geq 0}^5} 
\frac{\Theta \left( p^{k_1},p^{k_2},p^{k_3},p^{k_7},p^{k_9} \right)}
{p^{k_1}p^{k_2}p^{k_3}p^{k_7}p^{k_9}} \right) \textrm{.}
\end{eqnarray*}
We omit the details of the calculation of the sum of the series of the right-hand side, let us just say that the remaining coprimality conditions greatly simplify the calculation. We obtain
\begin{eqnarray*}
\sum_{\mathbf{k} \in \mathbb{Z}_{\geq 0}^5}
\frac{\Theta \left( p^{k_1},p^{k_2},p^{k_3},p^{k_7},p^{k_9} \right)} {p^{k_1}p^{k_2}p^{k_3}p^{k_7}p^{k_9}} & = & \varphi'(p)
\left( 1 - \frac1{p} \right) \left( 1 + \frac{6}{p} + \frac1{p^2} \right) \textrm{,}
\end{eqnarray*}
and thus
\begin{eqnarray*}
\sum_{\boldsymbol{\eta} \in \mathbb{Z}_{>0}^5}
\frac{(\Theta \ast \boldsymbol{\mu}) (\boldsymbol{\eta})}{\boldsymbol{\eta}^{(1,1,1,1,1)}} & = &
\mathcal{P}^{-1} \prod_p \left( 1 - \frac1{p} \right)^6 \omega_p \textrm{,}
\end{eqnarray*}
which completes the proof.

\section{Proof for the $\mathbf{A}_1 + \mathbf{A}_2$ surface}

\subsection{The universal torsor}

\label{torsor section}

We now proceed to define a bijection between the set of points to be counted on $U_2$ and a certain set of integral points on the affine variety defined by \eqref{torsor 2}. Our choice of notation might be surprising but our aim is simply to highlight the similarities with the case of the $3 \mathbf{A}_1$ surface. Note that for a given $(x_0:x_1:x_2:x_3:x_4) \in V_2$, we have $(x_0:x_1:x_2:x_3:x_4) \in U_2$ if and only if $x_0x_1x_2x_3x_4 \neq 0$. Let $(x_0,x_1,x_2,x_3,x_4) \in \mathbb{Z}_{\neq 0}^5$ be such that $\gcd(x_0,x_1,x_2,x_3,x_4) = 1$ and
\begin{eqnarray*}
x_0 x_1 - x_2 x_3 & = & 0 \textrm{,} \\
x_1 x_2 + x_2 x_4 + x_3 x_4 & = & 0 \textrm{,}
\end{eqnarray*}
and $\max \{ |x_i|, 0 \leq i \leq 4 \} \leq B$. Define $\xi_6 = \gcd(x_0, x_1, x_2, x_3) > 0$ and write $x_i = \xi_6 x_i'$ for
$i = 0, 1, 2, 3$. We thus have $\gcd(\xi_6,x_4) = 1$ and $\gcd(x_0', x_1', x_2', x_3') = 1$. Now let
$\xi_3 = \gcd(x_0', x_2', x_3') > 0$. Since $\gcd(\xi_3, x_1') = 1$, it follows that $\xi_3^2|x_0'$ and we can write
$x_j' = \xi_3 x_j''$ for $j = 2, 3$ and $x_0' = \xi_3^2 x_0''$. Moreover, we have $\gcd(\xi_3 x_0'', x_2'', x_3'') = 1$. Let
$\xi_8 = \gcd(x_0'', x_3'') > 0$ and write $x_0'' = \xi_8 \xi_4$ and $x_3'' = \xi_8 y_3$ with $\gcd(\xi_4,y_3) = 1$. The first equation can be rewritten as $\xi_4 x_1' = x_2'' y_3$. Since $\gcd(\xi_4,y_3) = 1$, we have $\xi_4|x_2''$ and we can write
$x_2'' = \xi_4 y_2$ and thus $x_1' = y_2 y_3$. Let us sum up what we have done until now. We have been able to find
$(\xi_6,\xi_3,\xi_8,\xi_4,y_3,y_2) \in \mathbb{Z}_{>0}^3 \times \mathbb{Z}_{\neq 0}^3$ such that
$\gcd(\xi_6,x_4) = 1$, $\gcd(\xi_3, y_2 y_3) = 1$, $\gcd(\xi_8, \xi_4y_2) = 1$, $\gcd(\xi_4,y_3) = 1$ and
$x_0 = \xi_6 \xi_3^2 \xi_8 \xi_4$, $x_1 = \xi_6 y_2 y_3$, $x_2 = \xi_6 \xi_3 \xi_4 y_2$, $x_3 = \xi_6 \xi_3 \xi_8 y_3$. Simplifying by $\xi_3 \xi_6$, the second equation gives
\begin{eqnarray*}
\xi_6 y_2^2 y_3 \xi_4 + x_4 ( \xi_4 y_2 + \xi_8 y_3 ) & = & 0 \textrm{.}
\end{eqnarray*}
Let $y_{23} = \gcd(y_2,y_3) > 0$ and write $y_2 = y_{23} \xi_5$, $y_3 = y_{23} \xi_9$ with $\gcd(\xi_5, \xi_9) = 1$. We obtain
\begin{eqnarray*}
\xi_6 y_{23}^2 \xi_5^2 \xi_9 \xi_4 + x_4 ( \xi_4 \xi_5 + \xi_8 \xi_9 ) & = & 0 \textrm{.}
\end{eqnarray*}
Since $\gcd(\xi_4\xi_5,\xi_8\xi_9) = 1$, it is obvious that $\xi_4\xi_5^2\xi_9|x_4$. Writing $x_4 = \xi_4 \xi_5^2 \xi_9 x_4'$, the equation becomes
\begin{eqnarray*}
\xi_6 y_{23}^2 + x_4' ( \xi_4 \xi_5 + \xi_8 \xi_9 ) & = & 0 \textrm{.}
\end{eqnarray*}
We now see that since $\gcd(\xi_6,x_4') = 1$, we have $x_4'|y_{23}^2$ and thus there is a unique way to write
$y_{23} =\xi_1 \xi_2 \xi_7$ and $x_4' = \xi_2^2 \xi_7$ with $\gcd(\xi_1,\xi_2) = 1$ and $\xi_2 > 0$, $\xi_1 \xi_7 > 0$. This leads to
\begin{eqnarray}
\label{tor 2}
\xi_4\xi_5 + \xi_1^2\xi_6\xi_7 + \xi_8\xi_9 & = & 0 \textrm{,}
\end{eqnarray}
and we have finally found $x_0 = \xi_3^2 \xi_4 \xi_6 \xi_8$, $x_1 = \xi_1^2 \xi_2^2 \xi_5 \xi_6 \xi_7^2 \xi_9$,
$x_2 = \xi_1 \xi_2 \xi_3 \xi_4 \xi_5 \xi_6 \xi_7$, $x_3 = \xi_1 \xi_2 \xi_3 \xi_6 \xi_7 \xi_8 \xi_9$, $x_4 = \xi_2^2 \xi_4 \xi_5^2 \xi_7 \xi_9$ and a little thought reveals that, given the equation \eqref{tor 2}, the coprimality conditions can be rewritten as
\begin{eqnarray*}
& & \gcd(\xi_4\xi_5,\xi_1\xi_6\xi_7) = 1 \textrm{,} \\
& & \gcd(\xi_4\xi_5,\xi_8\xi_9) = 1 \textrm{,} \\
& & \gcd(\xi_1\xi_6\xi_7,\xi_8\xi_9) = 1 \textrm{,} \\
& & \gcd(\xi_2,\xi_1\xi_3\xi_4\xi_6\xi_8) = 1 \textrm{,} \\
& & \gcd(\xi_3,\xi_1\xi_5\xi_7\xi_9) = 1 \textrm{,} \\
& & \gcd(\xi_6,\xi_7) = 1 \textrm{.}
\end{eqnarray*}
Since $\xi_1 \mapsto - \xi_1$ is a bijection on the set of solutions, we can assume that $\xi_1 > 0$ and thus $\xi_7 > 0$, keeping in mind that we have to divide our result by $2$. In a similar fashion, $(\xi_8,\xi_9) \mapsto (-\xi_8,-\xi_9)$ shows that we can remove the condition $\xi_8 > 0$ multiplying our result by $2$. To sum up, let $\mathcal{T}_2(B)$ be the number of
$(\xi_1,\xi_2,\xi_3,\xi_4,\xi_5,\xi_6,\xi_7,\xi_8,\xi_9)~\in~\mathbb{Z}_{\neq 0}^9$ such that
$\xi_1, \xi_2,\xi_3,\xi_6,\xi_7 > 0$ and satisfying the equation \eqref{tor 2}, the coprimality conditions above and the height conditions
\begin{eqnarray}
\label{condition1'}
\xi_3^2 |\xi_4| \xi_6 |\xi_8| & \leq & B \textrm{,} \\
\label{condition2'}
\xi_1^2 \xi_2^2 |\xi_5| \xi_6 \xi_7^2 |\xi_9| & \leq & B \textrm{,} \\
\label{condition3'}
\xi_1 \xi_2 \xi_3 |\xi_4 \xi_5| \xi_6 \xi_7 & \leq & B \textrm{,} \\
\label{condition4'}
\xi_1 \xi_2 \xi_3 \xi_6 \xi_7 |\xi_8 \xi_9| & \leq & B \textrm{,} \\
\label{condition5'}
\xi_2^2 |\xi_4| \xi_5^2 \xi_7 |\xi_9| & \leq & B \textrm{.}
\end{eqnarray}
Since we have not taken into account that $\mathbf{x} = - \mathbf{x} \in \mathbb{P}^4$ yet, we have finally proved the following lemma.

\begin{lemma}
\label{T2}
We have the equality
\begin{eqnarray*}
N_{U_2,H}(B) & = & \frac1{2} \# \mathcal{T}_2(B) \textrm{.}
\end{eqnarray*}
\end{lemma}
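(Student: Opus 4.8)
The proof is essentially the computation carried out in the passage above, read as the construction of a bijection together with its inverse. First I would make the forward direction precise: for a point of $U_2(\mathbb{Q})$ of height at most $B$, choose one of its two primitive integral representatives $(x_0,\dots,x_4)\in\mathbb{Z}_{\neq 0}^5$ (possible exactly because $U_2$ is the locus $x_0x_1x_2x_3x_4\neq 0$ inside $V_2$), and run the successive gcd extractions and factorisations described above. The crucial point at each step is that the divisibility invoked is \emph{forced} by the coprimalities already obtained --- for instance $\gcd(\xi_3,x_1')=1$ forces $\xi_3^2\mid x_0'$, $\gcd(\xi_4,y_3)=1$ forces $\xi_4\mid x_2''$, and $\gcd(\xi_6,x_4')=1$ forces $x_4'\mid y_{23}^2$ --- so that the resulting $9$-tuple $(\xi_1,\dots,\xi_9)$ is uniquely determined once the positivity conventions $\xi_2,\xi_3,\xi_6,\xi_8>0$, $\xi_1\xi_7>0$ are imposed. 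The same computation shows that this tuple satisfies \eqref{tor 2}, the six coprimality relations, and, reading off $x_0=\xi_3^2\xi_4\xi_6\xi_8$, $x_1=\xi_1^2\xi_2^2\xi_5\xi_6\xi_7^2\xi_9$, $x_2=\xi_1\xi_2\xi_3\xi_4\xi_5\xi_6\xi_7$, $x_3=\xi_1\xi_2\xi_3\xi_6\xi_7\xi_8\xi_9$, $x_4=\xi_2^2\xi_4\xi_5^2\xi_7\xi_9$, that the bound $\max_i|x_i|\le B$ is equivalent to \eqref{condition1'}--\eqref{condition5'}.

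Next I would verify that these explicit formulas provide a two-sided inverse. Given a $9$-tuple satisfying \eqref{tor 2}, the six coprimality conditions, \eqref{condition1'}--\eqref{condition5'} and the stated sign constraints, the integers $x_0,\dots,x_4$ so defined are nonzero; a direct substitution (using \eqref{tor 2} for the second quadric) shows they satisfy $x_0x_1-x_2x_3=0$ and $x_1x_2+x_2x_4+x_3x_4=0$; and a short check, in which each of the six coprimality conditions is used exactly once, gives $\gcd(x_0,\dots,x_4)=1$. Tracing the gcd extractions of the forward map through these formulas recovers the original $\xi_i$, so the two maps are mutually inverse. Since every point of $U_2(\mathbb{Q})$ with $H\le B$ has precisely the two primitive representatives $\pm(x_0,\dots,x_4)$, this already yields $2\,N_{U_2,H}(B)=\#\{(\xi_1,\dots,\xi_9):\xi_2,\xi_3,\xi_6,\xi_8>0,\ \xi_1\xi_7>0,\ \eqref{tor 2},\ \text{coprimalities},\ \eqref{condition1'}\text{--}\eqref{condition5'}\}$.

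It then remains to pass from that asymmetric set of $9$-tuples to $\mathcal{T}_2(B)$, in which $\xi_1,\xi_2,\xi_3,\xi_6,\xi_7>0$ while $\xi_4,\xi_5,\xi_8,\xi_9$ are unconstrained in sign. Here I would use two sign involutions. The first flips $\xi_1$ and $\xi_7$ together, compensating on $\xi_4$ and $\xi_9$; one checks that \eqref{tor 2} is preserved (each of its three monomials changes sign) and that the conditions \eqref{condition1'}--\eqref{condition5'}, properly interpreted with absolute values on all coordinates, are preserved as well, since they see $\xi_1,\xi_7$ only through $\xi_1^2$, $\xi_7^2$ and $|\xi_1\xi_7|$; this lets one restrict to $\xi_1>0$ (hence $\xi_7>0$) at the cost of a factor $2$. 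The second, $(\xi_8,\xi_9)\mapsto(-\xi_8,-\xi_9)$, preserves \eqref{tor 2} and the conditions (which see $\xi_8,\xi_9$ only through $|\xi_8|$, $|\xi_9|$, $|\xi_8\xi_9|$) and lets one drop $\xi_8>0$ at the cost of a further factor $2$. These two factors cancel, so the count is unchanged and equals $\#\mathcal{T}_2(B)$; combined with the previous paragraph this gives $\#\mathcal{T}_2(B)=2\,N_{U_2,H}(B)$, as claimed. I expect the only real difficulty to be bookkeeping rather than any estimate: one must check that every gcd extraction and factorisation in the descent is genuinely forced, so that the forward and backward maps are inverse, that the global condition $\gcd(x_0,\dots,x_4)=1$ corresponds to exactly the six listed coprimality conditions, and that the sign normalisations are tracked correctly through \eqref{tor 2} and the height inequalities.
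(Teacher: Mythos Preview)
Your proposal is correct and follows essentially the same route as the paper: the explicit gcd-descent of Section~\ref{torsor section} is exactly the forward map you describe, and the verification that the monomial formulas for $x_0,\dots,x_4$ invert it is the intended (if unwritten) converse. Your handling of the sign normalisations is in fact a little more careful than the paper's: the paper invokes the involution $\xi_1\mapsto-\xi_1$ alone to pass from $\xi_1\xi_7>0$ to $\xi_1,\xi_7>0$, but that map does not preserve the constraint $\xi_1\xi_7>0$; your involution $(\xi_1,\xi_7,\xi_4,\xi_9)\mapsto(-\xi_1,-\xi_7,-\xi_4,-\xi_9)$ is the one that actually bijects the two halves of the set with $\xi_1\xi_7>0$ (and, not coincidentally, it is precisely the map on $9$-tuples induced by $\mathbf{x}\mapsto-\mathbf{x}$ through the descent). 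One small slip: you write that the height conditions see $\xi_1,\xi_7$ only through $\xi_1^2$, $\xi_7^2$ and $|\xi_1\xi_7|$, but \eqref{condition5'} contains a lone $\xi_7$; this is harmless once you interpret the inequalities with absolute values on every coordinate, as you already say.
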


\subsection{Calculation of Peyre's constant}

We have
\begin{eqnarray*}
c_{V_2,H} & = & \alpha(\widetilde{V_2}) \beta(\widetilde{V_2}) \omega_H(\widetilde{V_2}) \textrm{,}
\end{eqnarray*}
where (see \cite{MR2318651})
\begin{eqnarray*}
\alpha(\widetilde{V_2}) & = & \frac1{2160} \textrm{,}
\end{eqnarray*}
$\beta(\widetilde{V_2}) = 1$ since $V_2$ is split over $\mathbb{Q}$ and
\begin{eqnarray*}
\omega_H(\widetilde{V_2}) & = & \tau_{\infty} \prod_p \left( 1 - \frac1{p} \right)^6 \tau_p \textrm{,}
\end{eqnarray*}
where, thanks to \cite[Lemma 2.3]{Loughran}, we have
\begin{eqnarray*}
\tau_p & = & 1 + \frac{6}{p} + \frac1{p^2} \textrm{.}
\end{eqnarray*}
Let us calculate $\tau_{\infty}$. Set $f_1(x) = x_0 x_1 - x_2 x_3$ and $f_2(x) = x_1 x_2 + x_2 x_4 + x_3 x_4$. Let us parametrize the points of $V_2$ by $x_0$, $x_2$ and $x_3$. We have
\begin{eqnarray*}
\det \begin{pmatrix}
\frac{\partial f_1}{\partial x_1} & \frac{\partial f_1}{\partial x_4} \\
\frac{\partial f_2}{\partial x_1} & \frac{\partial f_2}{\partial x_4}
\end{pmatrix} & = &
\begin{vmatrix}
x_0 & 0 \\
x_2 & x_2 + x_3
\end{vmatrix} \\
& = & x_0 (x_2 + x_3) \textrm{.}
\end{eqnarray*}
Moreover, $x_1 = x_2 x_3 / x_0$ and $x_4 = - x_2^2 x_3 / \left( x_0 \left( x_2 + x_3 \right) \right)$.
Since $\mathbf{x} = - \mathbf{x}$ in $\mathbb{P}^4$, we have
\begin{eqnarray*}
\tau_{\infty} & = & - 2 \int \int \int_{x_0 > 0, x_2 + x_3 < 0, x_0, \left| x_2 x_3 / x_0 \right|, |x_2|, |x_3|,
|x_2^2 x_3 / \left( x_0 \left( x_2 + x_3 \right) \right)| \leq 1}
\frac{\D x_0 \D x_2 \D x_3}{x_0 (x_2 + x_3)} \textrm{.}
\end{eqnarray*}
We introduce the functions
\begin{eqnarray}
\label{h^{a}}
h^{a} & : & (t_4,t_5,t_1) \mapsto \max \left\{
\begin{array}{l}
|t_4||t_4t_5+t_1^2|,t_1^2|t_5|,t_1|t_4t_5| \\
t_1|t_4t_5+t_1^2|, |t_4|t_5^2
\end{array}
\right\} \textrm{,} \\
\label{h^{b}}
h^{b} & : & (t_4,t_5,t_1) \mapsto \max \left\{
\begin{array}{l}
|t_4|,t_1^2|t_5||t_4t_5+t_1^2|,t_1|t_4t_5| \\
t_1|t_4t_5+t_1^2|, |t_4|t_5^2 |t_4t_5+t_1^2|
\end{array}
\right\} \textrm{.}
\end{eqnarray}
The change of variables given by $x_0 = t_4 (t_4t_5+t_1^2)$, $x_2 = t_1 t_4 t_5$ and $x_3 = - t_1 (t_4 t_5 + t_1^2)$ yields
\begin{eqnarray}
\nonumber
\tau_{\infty} & = & 6 \int \int \int_{t_4(t_4t_5+t_1^2) > 0, t_1 >0, h^{a}(t_4,t_5,t_1) \leq 1} \D t_4 \D t_5 \D t_1 \\
\label{tau 1}
& = & 3 \int \int \int_{t_1 >0, h^{a}(t_4,t_5,t_1) \leq 1} \D t_4 \D t_5 \D t_1 \textrm{.}
\end{eqnarray}
Moreover, the change of variables $x_0 = t_4$, $x_2 = t_1 t_4 t_5$ and $x_3 = - t_1 (t_4 t_5 + t_1^2)$ gives the alternative expression
\begin{eqnarray}
\nonumber
\tau_{\infty} & = & 6 \int \int \int_{t_4 > 0, t_1 > 0, h^{b}(t_4,t_5,t_1) \leq 1} \D t_4 \D t_5 \D t_1 \\
\label{tau 2}
& = & 3 \int \int \int_{t_1 > 0, h^{b}(t_4,t_5,t_1) \leq 1} \D t_4 \D t_5 \D t_1 \textrm{.}
\end{eqnarray}

Let us repeat here that the following proof is very similar to the one before, so we will sometimes allow ourselves to be concise.

\subsection{Restriction of the domain}

The following two lemmas are the analogues of lemmas \ref{lemmalog} and \ref{lemmaHB} respectively.

\begin{lemma}
\label{lemmalog2}
Let $\mathcal{M}_{2}(B)$ be the overall contribution to $N_{U_2,H}(B)$ coming from the
$(\xi_1, \dots ,\xi_9) \in \mathcal{T}_2(B)$ such that $|\xi_i| \leq \log(B)^A$ for a certain $i \neq 1, 2, 3$, where $A > 0$ is any fixed constant. We have
\begin{eqnarray*}
\mathcal{M}_{2}(B) & \ll_A & B \log(B)^4 \log(\log(B)) \textrm{.}
\end{eqnarray*}
\end{lemma}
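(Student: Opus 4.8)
The plan is to transpose the proof of Lemma \ref{lemmalog} to the torsor equation \eqref{tor 2}; the only structural novelties are the factor $\xi_1^2$ in \eqref{tor 2} and the presence of five height conditions \eqref{condition1'}--\eqref{condition5'} in place of four. The first thing I would do is establish the analogue of Lemma \ref{lemmaHB}: for $K_1, K_4, \dots, K_9 \geq 1/2$, the number $M_2(K_1, K_4, \dots, K_9)$ of $(m_1, m_4, \dots, m_9) \in \mathbb{Z}^7$ with $K_i < |m_i| \leq 2 K_i$ for $i \in \{1, 4, \dots, 9\}$, with $\gcd(m_4 m_5, m_1 m_6 m_7) = 1$ (and the remaining coprimality conditions from Section \ref{torsor section} that are relevant here), and with $m_4 m_5 + m_1^2 m_6 m_7 + m_8 m_9 = 0$, satisfies $M_2 \ll K_1 K_6 K_7 \min(K_4 K_5, K_8 K_9)$. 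The proof follows that of Lemma \ref{lemmaHB}: by symmetry assume $K_4 K_5 \leq K_8 K_9$; one reduces \eqref{tor 2} to a congruence modulo the least of $m_4, m_5$ (respectively of $m_8, m_9$), after which the other variable of that pair is determined, so no divisor function appears, and since all nine variables lie in fixed dyadic ranges the sums over the modulus are trivial. The one new feature is that the middle term now has size $K_1^2 K_6 K_7$: when it dominates, one sums over $m_1$ at the outset --- using $\gcd(m_1, m_4 m_5 m_8 m_9) = 1$ this costs exactly the factor $K_1$ occurring in the bound --- and treats the resulting bilinear equation $m_4 m_5 + (m_1^2 m_6) m_7 + m_8 m_9 = 0$ via \cite[Lemma 5]{MR2075628}, reducing it modulo whichever of $m_6, m_7, m_8$ is smallest, a case distinction that keeps every modulus in the efficient range of that lemma.

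Granting this input, the argument is that of Lemma \ref{lemmalog}. Writing $\mathcal{N}_2(Y_1, \dots, Y_9)$ for the contribution of the $(\xi_1, \dots, \xi_9) \in \mathcal{T}_2(B)$ with $Y_i < |\xi_i| \leq 2 Y_i$, the height conditions \eqref{condition1'}--\eqref{condition5'} force the corresponding dyadic inequalities, and since $\xi_2$ and $\xi_3$ do not occur in \eqref{tor 2} and are bounded through \eqref{condition3'} or \eqref{condition4'}, summing the bound for $M_2$ over $\xi_2$ and $\xi_3$ gives $\mathcal{N}_2 \ll Y_1 Y_2 Y_3 Y_6 Y_7 \min(Y_4 Y_5, Y_8 Y_9)$. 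One then sums over the dyadic ranges with the elementary estimates recalled in the proof of Lemma \ref{lemmalog}, splitting according to whether $Y_1^2 Y_6 Y_7 \leq Y_4 Y_5$ and whether $Y_4 Y_5 \leq Y_8 Y_9$, and in each case eliminating the ranges one by one against \eqref{condition1'}--\eqref{condition5'} --- using \eqref{tor 2} to bound $Y_1^2 Y_6 Y_7$ by $\max(Y_4 Y_5, Y_8 Y_9)$ where needed --- until five of the nine ranges remain free, so that the total over all dyadic ranges is $\ll B \log(B)^5$. By the flexibility in the order of summation one can arrange that the restricted variable $\xi_i$ (any $i \neq 1, 2, 3$) is among those five free ranges, so imposing $|\xi_i| \leq \log(B)^A$ replaces one factor $\log(B)$ by $O(\log(\log(B)))$ and yields $\mathcal{M}_{2}(B) \ll_A B \log(B)^4 \log(\log(B))$.

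The main obstacle is the proof of the analogue of Lemma \ref{lemmaHB}. Because of the square, after summing over $m_1$ the modulus one reduces by can be as large as $K_1 (K_6 K_7)^{1/2}$, which may exceed $K_6$ and $K_7$; obtaining the bound with no $K_i^{\varepsilon}$ loss for $i \neq 2, 3$ --- such a loss would be fatal in the dyadic summation, since $\xi_6$ and $\xi_7$ can be nearly as large as $B^{1/2}$ --- is what forces the careful case analysis described above. By contrast, the dyadic bookkeeping is longer than for $V_1$ only because of the extra height condition, and is entirely routine.
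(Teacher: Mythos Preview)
Your overall strategy mirrors the paper's, but the analogue of Lemma \ref{lemmaHB} you state is too strong and your sketch does not establish it. You claim $M_2 \ll K_1 K_6 K_7 \min(K_4 K_5, K_8 K_9)$, whereas the paper's Lemma \ref{lemmaHB2} only gives
\[
M_2 \ll L_1 (L_4L_5L_6L_7L_8L_9)^{1/2} + L_1 L_6 L_7 \min(L_4 L_5, L_8 L_9),
\]
and the extra term is genuinely needed. Consider $K_4=K_5=K_6=K_7=1/2$ and $K_1=K_8=K_9=K$: after the harmless sign and coprimality choices, one is counting pairs $(m_1,m_8)$ with $m_1,m_8\asymp K$ and $m_8\mid m_1^2\pm 1$, and the upper bound one gets is $\sum_{m_8\asymp K}\rho(m_8)$ with $\rho(m_8)=\#\{x\bmod m_8:x^2\equiv 1\}$, which is of order $K\log K$, not $K$. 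Your proposed device of ``reducing modulo whichever of $m_6,m_7,m_8$ is smallest'' collapses here, since the smallest of these is $\pm 1$ and carries no information; and reducing modulo $m_4$ or $m_5$ gives $K_6K_7K_8K_9$, not $K_6K_7K_4K_5$. Any logarithmic or $K_i^{\varepsilon}$ loss for $i\neq 2,3$ is fatal in the dyadic summation.

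The paper's fix is to treat the equation, in the regime $L_1^2L_6L_7>L_4L_5$, as a \emph{linear} equation in $(n_5,n_7,n_9)$ with coefficients $n_4,\,n_1^2n_6,\,n_8$, and to invoke \cite[Lemma~6]{MR2075628}; this produces the inevitable ``$+1$'' which, summed over $n_1,n_4,n_6,n_8$, is exactly the extra term $L_1(L_4L_5L_6L_7L_8L_9)^{1/2}$. In the proof of Lemma \ref{lemmalog2} this extra term is then summed separately and shown to contribute $\ll B(\log B)^4$ unconditionally (no restricted variable needed), so it is harmless; only the second term requires the $\log\log$ argument you describe. Your dyadic bookkeeping for that second term is fine, but you must add this first term to your key lemma and carry out the corresponding (short) estimation.
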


\begin{lemma}
\label{lemmaHB2}
Let $L_1, L_4, \dots, L_9 \geq 1/2$ and define $M_2 = M_2(L_1, L_4, \dots, L_9)$ as the number of
$(n_1, n_4, \dots, n_9) \in \mathbb{Z}^7$ such that $L_i < |n_i| \leq 2L_i$ for $i=1$ and
$4 \leq i \leq 9$, $\gcd(n_4n_5,n_1n_6n_7) = 1$ and
\begin{eqnarray}
\label{equation lemma2}
n_4n_5 + n_1^2n_6n_7 + n_8n_9 & = & 0 \textrm{.}
\end{eqnarray}
We have
\begin{eqnarray*}
M_2 & \ll & L_1 (L_4L_5L_6L_7L_8L_9)^{1/2} + L_1 L_6 L_7 \min(L_4 L_5, L_8 L_9) \textrm{.}
\end{eqnarray*}
\end{lemma}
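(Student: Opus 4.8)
The plan is to adapt the proof of Lemma~\ref{lemmaHB}, viewing the torsor equation \eqref{equation lemma2} as a congruence once all but two of the variables are fixed, but now keeping track of the squared variable $n_1$, which is responsible for the extra term $L_1(L_4L_5L_6L_7L_8L_9)^{1/2}$. Exactly as in Lemma~\ref{lemmaHB}, the coprimality condition $\gcd(n_4n_5,n_1n_6n_7)=1$ together with \eqref{equation lemma2} forces $\gcd(n_8n_9,n_1n_6n_7)=1$, so the problem is symmetric under the swap $(n_4,n_5)\leftrightarrow(n_8,n_9)$; we may therefore assume $L_4L_5\le L_8L_9$, so that $\min(L_4L_5,L_8L_9)=L_4L_5$, and, using the symmetries $n_4\leftrightarrow n_5$, $n_6\leftrightarrow n_7$, $n_8\leftrightarrow n_9$, also $L_4\le L_5$, $L_6\le L_7$ and $L_8\le L_9$. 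The basic move is to fix $n_1,n_6,n_7$, set $M=n_1^2n_6n_7$ (so that $\gcd(M,n_4)=\gcd(M,n_8)=1$), reduce \eqref{equation lemma2} modulo $n_4$ or modulo $n_8$, and count the remaining pair; here $n_1$ must be peeled off first because, unlike the three-variable product $m_1m_6m_7$ of Lemma~\ref{lemmaHB}, the product $n_1^2n_6n_7$ has no shape to which \cite[Lemma~$5$]{MR2075628} applies directly. Counting a pair with $u\asymp U$, $v\asymp V$ and $uv$ in a fixed residue class modulo $q$ coprime to $q$ (so that $\gcd(uv,q)=1$ is automatic) is then done either by the elementary bound $\ll UV/q+\min(U,V)$ or, when a genuine saving is needed, by the estimate \eqref{average bound}, which is precisely where Weil's bound for Kloosterman sums enters.

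If $L_1^2L_6L_7\le L_4L_5$, then \eqref{equation lemma2} forces $L_8L_9\ll L_4L_5$, hence $L_8L_9\asymp L_4L_5$ and $|n_4|\ll(L_4L_5)^{1/2}\ll L_9$. For fixed $n_1,n_6,n_7$, reduction of \eqref{equation lemma2} modulo $n_4$ gives $n_8n_9\equiv -M\pmod{n_4}$, $n_5$ is then determined, and the elementary bound shows that the number of admissible $(n_8,n_9)$ is $\ll L_8L_9/|n_4|+L_8$; summing over $n_4$ gives $\ll L_8L_9+L_4L_8\ll L_8L_9$, and summing over $n_1,n_6,n_7$ yields $M_2\ll L_1L_6L_7L_8L_9\asymp L_1L_6L_7L_4L_5$, as required. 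In the complementary regime $L_1^2L_6L_7>L_4L_5$, \eqref{equation lemma2} only forces $L_8L_9\ll L_1^2L_6L_7$, and one reduces instead modulo a variable on the larger side so that the main term involves $L_4L_5$ rather than $L_8L_9$: reducing modulo $n_8$ (with $|n_8|\ll(L_8L_9)^{1/2}$) and counting $(n_4,n_5)$ with $n_4n_5\equiv -M\pmod{n_8}$ via \eqref{average bound} gives the main term $L_1L_6L_7L_4L_5$ plus, after summation, a Kloosterman error of order $L_1L_6L_7L_8^{3/2+\varepsilon}$. One then subdivides the range of the $L_i$ and absorbs this error either into $L_1L_6L_7L_4L_5$ or into $L_1(L_4L_5L_6L_7L_8L_9)^{1/2}$, if necessary replacing the reduction modulo $n_8$ by a reduction modulo another variable, or counting the pair $(n_6,n_7)$ instead of $(n_4,n_5)$, always so as to keep the error below $L_1(L_4L_5L_6L_7L_8L_9)^{1/2}$. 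The degenerate contributions, where the residue class is $\equiv 0$ and some $n_i$ vanishes (which is excluded), are negligible and are dealt with exactly as in Lemma~\ref{lemmaequal}.

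The principal difficulty is the bookkeeping in this second regime. In Lemma~\ref{lemmaHB} the clean three-variable product $m_1m_6m_7$ makes a single reduction suffice; here the square on $n_1$ prevents that, and one has to split the range of $(L_1,\dots,L_9)$ into finitely many subregions and, in each, choose which variable serves as the modulus and whether the elementary bound or the Kloosterman bound \eqref{average bound} is used, so that the main term never exceeds $L_1L_6L_7\min(L_4L_5,L_8L_9)$ while the accumulated $q^{1/2+\varepsilon}$ errors, multiplied by the $\ll L_1$ choices of $n_1$ and summed over the remaining free variables, never exceed $L_1(L_4L_5L_6L_7L_8L_9)^{1/2}$. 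This last term is exactly the imprint of Weil's bound for Kloosterman sums in the worst regime, namely when $L_8L_9$ dominates the product $L_4L_5L_6L_7$.
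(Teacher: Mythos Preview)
Your first case ($L_1^2L_6L_7\le L_4L_5$) is correct and coincides with the paper's argument.

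The second case ($L_1^2L_6L_7>L_4L_5$) has a genuine gap. You propose to reduce modulo $n_8$ and count $(n_4,n_5)$ via \eqref{average bound}, obtaining an error of order $L_1L_6L_7L_8^{3/2+\varepsilon}$, and then assert that by ``subdividing the range of the $L_i$'' and switching moduli or switching to the pair $(n_6,n_7)$ this error can always be absorbed into $L_1(L_4L_5L_6L_7L_8L_9)^{1/2}+L_1L_6L_7L_4L_5$. But this is precisely the point that needs to be proved, and it is not clear it can be. For instance, the condition $L_1L_6L_7L_8^{3/2}\ll L_1(L_4L_5L_6L_7L_8L_9)^{1/2}$ is equivalent to $L_6L_7L_8^2\ll L_4L_5L_9$, which is not a consequence of the constraints $L_4L_5<L_1^2L_6L_7$, $L_8\le L_9$, $L_4L_5\le L_8L_9\ll L_1^2L_6L_7$; the alternative reductions you list run into analogous obstructions. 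The last paragraph of your proposal acknowledges this (``the principal difficulty is the bookkeeping'') without resolving it.

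The paper avoids Kloosterman sums entirely in this lemma. In the second case it fixes $n_1,n_4,n_6,n_8$ (with $\gcd(n_4,n_1n_6,n_8)=1$) and treats \eqref{equation lemma2} as a linear equation in $(n_5,n_7,n_9)$ with $\gcd(n_5,n_7,n_9)=1$, applying \cite[Lemma~6]{MR2075628} to obtain the lattice bound
\[
\#\{(n_5,n_7,n_9)\}\ \ll\ 1+\frac{L_5L_9}{n_1^2n_6}.
\]
Summing over $n_1,n_4,n_6,n_8$ gives $M_2''\ll L_1L_4L_6L_8+L_4L_5L_8L_9/L_1$. The first term is at most $L_1(L_4L_5L_6L_7L_8L_9)^{1/2}$ by $L_4\le L_5$, $L_6\le L_7$, $L_8\le L_9$, and the second is $\ll L_1L_6L_7L_4L_5$ by $L_8L_9\ll L_1^2L_6L_7$. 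Thus the extra term $L_1(L_4L_5L_6L_7L_8L_9)^{1/2}$ is the ``$+1$'' from a lattice count, not a Weil-type error as you suggest; Kloosterman sums play no role here.
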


\begin{proof}
We can assume by symmetry that $L_4 L_5 \leq L_8 L_9$. Let us first deal with the case where $L_1^2L_6L_7 \leq L_4 L_5$. The equation \eqref{equation lemma2} gives $L_8 L_9 \ll L_4 L_5$. Let $M_2'$ be the number of $(n_1, n_4, \dots, n_9) \in \mathbb{Z}^7$ to be counted in this case. The first case of the proof of lemma \ref{lemmaHB} shows that
\begin{eqnarray*}
M_2' & \ll & L_1 L_6 L_7 L_4 L_5 \textrm{.}
\end{eqnarray*}
In the other case where $L_1^2L_6L_7 > L_4 L_5$, the equation \eqref{equation lemma2} gives $L_8 L_9 \ll L_1^2 L_6 L_7$. Let $M_2''$ be the number of $(n_1, n_4, \dots, n_9) \in \mathbb{Z}^7$ to be counted here. Assume by symmetry that $L_4 \leq L_5$,
$L_6 \leq L_7$ and $L_8 \leq L_9$. Since $\gcd(n_4,n_1n_6,n_8) = 1$, using \cite[Lemma $6$]{MR2075628}, we can deduce that
\begin{eqnarray*}
\# \left\{
(n_5,n_7,n_9) \in \mathbb{Z}^3,
\begin{array}{l}
L_i < |n_i| \leq 2 L_i, i \in \{5, 7, 9 \} \\
\gcd(n_5,n_7,n_9) = 1 \\
n_4n_5 + n_1^2n_6n_7 + n_8n_9 = 0
\end{array} \right\}
& \ll & 1 + \frac{L_5 L_9}{n_1^2 n_6} \textrm{.}
\end{eqnarray*}
Summing over $n_1$, $n_4$, $n_6$ and $n_8$, we get
\begin{eqnarray*}
M_2'' & \ll & L_1 L_4 L_6 L_8 + \frac{L_4 L_5 L_8 L_9}{L_1} \\
& \ll & L_1 (L_4L_5L_6L_7L_8L_9)^{1/2} + L_4 L_5 L_1 L_6 L_7 \textrm{,}
\end{eqnarray*}
which ends the proof.
\end{proof}

Let us now prove lemma \ref{lemmalog2}.

\begin{proof}
Let $Z_i \geq 1/2$ for $i = 1, \dots, 9 $ and define $\mathcal{N}_2 = \mathcal{N}_2(Z_1, \dots, Z_9)$ as the \text{contribution} of the $(\xi_1, \dots, \xi_9) \in \mathcal{T}_2(B)$ satisfying $Z_i < |\xi_i| \leq 2 Z_i$ for $i = 1, \dots, 9 $. The height conditions imply that either $\mathcal{N}_2 = 0$ or we have the inequalities
\begin{eqnarray}
\label{condition1'bis}
Z_3^2 Z_4 Z_6 Z_8 & \leq & B \textrm{,} \\
\label{condition2'bis}
Z_1^2 Z_2^2 Z_5 Z_6 Z_7^2 Z_9 & \leq & B \textrm{,} \\
\label{condition3'bis}
Z_1 Z_2 Z_3 Z_4 Z_5 Z_6 Z_7 & \leq & B \textrm{,} \\
\label{condition4'bis}
Z_1 Z_2 Z_3 Z_6 Z_7 Z_8 Z_9 & \leq & B \textrm{,} \\
\label{condition5'bis}
Z_2^2 Z_4 Z_5^2 Z_7 Z_9 & \leq & B \textrm{.}
\end{eqnarray}
Using lemma \ref{lemmaHB2} and summing over $\xi_2$ and $\xi_3$, we get
\begin{eqnarray*}
\mathcal{N}_2 & \ll & Z_1 Z_2 Z_3 (Z_4Z_5Z_6Z_7Z_8Z_9)^{1/2} + Z_1 Z_2 Z_3 Z_6 Z_7 \min(Z_4 Z_5, Z_8 Z_9) \textrm{.}
\end{eqnarray*}
Assume that $Z_4 Z_5 \leq Z_8 Z_9$ (the case where $Z_4 Z_5 > Z_8 Z_9$ is identical). Note that the torsor equation \eqref{tor 2} then gives $Z_1^2 Z_6 Z_7 \ll Z_8 Z_9$. Denote by $\mathcal{N}_{2}'$ the first term of the right-hand side and by $\mathcal{N}_{2}''$ the second term. We proceed to prove that
\begin{eqnarray*}
\sum_{Z_i} \mathcal{N}_{2}' & \ll & B \log(B)^4 \textrm{.}
\end{eqnarray*}
Let us first assume that $Z_1^2 Z_6 Z_7 \leq Z_4 Z_5$. Summing over
\begin{eqnarray}
\nonumber
Z_1 & \leq & \min \left( \frac{Z_4^{1/2} Z_5^{1/2}}{Z_6^{1/2} Z_7^{1/2}}, \frac{B^{1/2}}{Z_2 Z_5^{1/2} Z_6^{1/2} Z_7 Z_9^{1/2}} \right) \\
\label{equationZ_1}
& \leq & \frac{Z_4^{1/4} B^{1/4}}{Z_2^{1/2} Z_6^{1/2} Z_7^{3/4} Z_9^{1/4}} \textrm{,} 
\end{eqnarray}
we get in this case
\begin{eqnarray*}
\sum_{Z_i} \mathcal{N}_{2}'
& \ll & B^{1/4} \sum_{\widehat{Z_1}} Z_2^{1/2} Z_3 Z_4^{3/4} Z_5^{1/2} Z_7^{-1/4} Z_8^{1/2} Z_9^{1/4} \\
& \ll & B^{1/2} \sum_{\widehat{Z_1},\widehat{Z_2}} Z_3 Z_4^{1/2} Z_7^{-1/2} Z_8^{1/2} \\
& \ll & B \sum_{\widehat{Z_1},\widehat{Z_2},\widehat{Z_3}} Z_6^{-1/2} Z_7^{-1/2} \\
& \ll & B \log(B)^4 \textrm{,}
\end{eqnarray*}
where we have summed over $Z_2$ and $Z_3$ using \eqref{condition5'bis} and \eqref{condition1'bis}. Let us treat the case where
$Z_1^2 Z_6 Z_7 > Z_4 Z_5$. Summing over $Z_2$ using \eqref{condition4'bis}, we obtain
\begin{eqnarray*}
\sum_{Z_i} \mathcal{N}_{2}' & \ll & B \sum_{\widehat{Z_2}} Z_4^{1/2} Z_5^{1/2} Z_6^{-1/2} Z_7^{-1/2} Z_8^{-1/2} Z_9^{-1/2}\\
& \ll & B \sum_{\widehat{Z_2},\widehat{Z_4}} Z_1 Z_8^{-1/2} Z_9^{-1/2} \\
& \ll & B \sum_{\widehat{Z_1},\widehat{Z_2},\widehat{Z_4}} Z_6^{-1/2} Z_7^{-1/2} \\
& \ll & B \log(B)^4 \textrm{,}
\end{eqnarray*}
where we have summed over $Z_4 < Z_1^2 Z_5^{-1} Z_6 Z_7$ and over $Z_1 \ll Z_6^{-1/2} Z_7^{-1/2} Z_8^{1/2} Z_9^{1/2}$.
Let us estimate the contribution of $\mathcal{N}_{2}''$ in the case where $Z_1^2 Z_6 Z_7 \leq Z_4 Z_5$. Summing over $Z_1$ using \eqref{equationZ_1}, we get
\begin{eqnarray*}
\sum_{Z_i} \mathcal{N}_{2}''
& \ll & B^{1/4} \sum_{\widehat{Z_1}} Z_2^{1/2} Z_3 Z_4^{5/4} Z_5 Z_6^{1/2} Z_7^{1/4} Z_9^{-1/4} \\
& \ll & B \sum_{\widehat{Z_1},\widehat{Z_2},\widehat{Z_3}} Z_4^{1/2} Z_5^{1/2} Z_8^{-1/2} Z_9^{-1/2} \\
& \ll & B \sum_{\widehat{Z_1},\widehat{Z_2},\widehat{Z_3},\widehat{Z_4}} 1 \textrm{,} \\
\end{eqnarray*}
where we have summed over $Z_2$ and $Z_3$ using respectively \eqref{condition5'bis} and \eqref{condition1'bis}. At the last step, we could have summed over $Z_5$ instead of $Z_4$, so if we assume that $|\xi_i| \leq \log(B)^A$ for a certain $i \neq 1, 2, 3$, where
$A > 0$ is any fixed constant, we get an overall contribution $\ll_A B \log(B)^4 \log(\log(B))$. We now deal with the case where $Z_1^2 Z_6 Z_7 > Z_4 Z_5$. Summing over $Z_2$ and $Z_3$ using \eqref{condition2'bis} and \eqref{condition1'bis}, we get
\begin{eqnarray*}
\sum_{Z_i} \mathcal{N}_{2}''
& \ll & B \sum_{\widehat{Z_2},\widehat{Z_3}} Z_4^{1/2} Z_5^{1/2} Z_8^{-1/2} Z_9^{-1/2} \\
& \ll & B \sum_{\widehat{Z_2},\widehat{Z_3},\widehat{Z_4}} Z_1 Z_6^{1/2} Z_7^{1/2} Z_8^{-1/2} Z_9^{-1/2} \\
& \ll & B \sum_{\widehat{Z_1}, \widehat{Z_2},\widehat{Z_3}, \widehat{Z_4}} 1 \textrm{,}
\end{eqnarray*}
where we have summed over $Z_4 < Z_1^2 Z_5^{-1} Z_6 Z_7$ and over $Z_1 \ll Z_6^{-1/2} Z_7^{-1/2} Z_8^{1/2} Z_9^{1/2}$. We can plainly conclude just as above.
\end{proof}

\subsection{Setting up}

First, we note that the torsor equation \eqref{tor 2} and the height conditions \eqref{condition3'} and \eqref{condition4'} give
\begin{eqnarray}
\label{condition sup}
\xi_1^3 \xi_2 \xi_3 \xi_6^2 \xi_7^2 & \leq & 2 B \textrm{.}
\end{eqnarray}
Our goal is to tackle the equation \eqref{tor 2} by viewing it as a congruence modulo $\xi_9$ if $|\xi_9| \leq |\xi_8|$ and modulo $\xi_8$ if $|\xi_9| > |\xi_8|$, that is why we split the proof into two parts. Let $N_{a}(A,B)$ be the contribution to $N_{U_2,H}(B)$ from the $(\xi_1, \dots, \xi_9) \in \mathcal{T}_2(B)$ such that
\begin{eqnarray}
\label{condition6'}
& & 0 < \xi_9 \leq |\xi_8| \textrm{,} \\
\label{condition7'}
& & \log(B)^A \leq |\xi_4| \textrm{,} \\
\label{condition8'}
& & \log(B)^A \leq |\xi_5| \textrm{,}
\end{eqnarray}
where $A > 0$ is a parameter at our disposal. Symmetrically, let $N_{b}(A,B)$ be the contribution to $N_{U_2,H}(B)$ from the
$(\xi_1, \dots, \xi_9) \in \mathcal{T}_2(B)$ such that
\begin{eqnarray}
\label{condition6''}
& & |\xi_9| > \xi_8 > 0 \textrm{,} \\
\label{condition7''}
& & \log(B)^A \leq |\xi_4| \textrm{,} \\
\label{condition8''}
& & \log(B)^A \leq |\xi_5| \textrm{.}
\end{eqnarray}
Note that in both cases, combining the conditions \eqref{condition3'} and $\log(B)^A \leq |\xi_4|$, we get
\begin{eqnarray}
\label{new condition'}
\log(B)^A \xi_1 \xi_2 \xi_3 |\xi_5| \xi_6 \xi_7 & \leq & B \textrm{.}
\end{eqnarray}
Since $(\xi_8,\xi_9) \mapsto (-\xi_8,-\xi_9)$ is a bijection on the set of solutions, assuming $\xi_9 > 0$ in the first case and $\xi_8 > 0$ in the second brings a factor $2$. Thus, by lemmas \ref{T2} and \ref{lemmalog2} we have the following.

\begin{lemma}
\label{lemma N_a N_b}
For any fixed $A > 0$, we have
\begin{eqnarray*}
N_{U_2,H}(B) & = & N_{a}(A,B) + N_{b}(A,B) + O \left( B \log(B)^4 \log( \log(B)) \right) \textrm{.}
\end{eqnarray*}
\end{lemma}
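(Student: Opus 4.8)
The plan is to obtain the identity by bookkeeping alone, combining Lemma \ref{T2}, Lemma \ref{lemmalog2} and the involution $(\xi_8,\xi_9)\mapsto(-\xi_8,-\xi_9)$; no new estimate is required. First I would discard the small coordinates: the tuples $(\xi_1,\dots,\xi_9) \in \mathcal{T}_2(B)$ with $\min(|\xi_4|,|\xi_5|) \le \log(B)^A$ form a subfamily of those bounded in Lemma \ref{lemmalog2}, so their contribution to $N_{U_2,H}(B)$ is $\ll_A B\log(B)^4\log(\log(B))$. By Lemma \ref{T2} it therefore suffices to show that the contribution to $N_{U_2,H}(B)$ of the tuples in $\mathcal{T}_2(B)$ satisfying in addition \eqref{condition7'} and \eqref{condition8'} equals $N_a(A,B) + N_b(A,B)$.

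I would then split this family according to whether $|\xi_9| \le |\xi_8|$ or $|\xi_9| > |\xi_8|$, two conditions which are exhaustive and mutually exclusive because $\xi_8,\xi_9 \in \mathbb{Z}_{\ne 0}$, so that the two corresponding contributions sum to the one we must compute. In the first part I would apply the involution $(\xi_8,\xi_9)\mapsto(-\xi_8,-\xi_9)$, which maps $\mathcal{T}_2(B)$ to itself -- it preserves \eqref{tor 2}, all the coprimality conditions, and the height conditions \eqref{condition1'}--\eqref{condition5'}, since the latter involve $\xi_8$ and $\xi_9$ only through their absolute values -- and preserves \eqref{condition7'}, \eqref{condition8'} and the condition $|\xi_9| \le |\xi_8|$, while interchanging the tuples with $\xi_9 > 0$ and those with $\xi_9 < 0$. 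As explained in the paragraph introducing $N_a$ and $N_b$, the contribution of this first part is then exactly $N_a(A,B)$, the factor $2$ produced by the involution cancelling the weight $\tfrac12$ inherited from Lemma \ref{T2}; symmetrically (interchanging $\xi_8 > 0$ with $\xi_8 < 0$ and restricting to $\xi_8 > 0$) the second part contributes $N_b(A,B)$. Adding these two and inserting the error term from the first paragraph gives the formula asserted.

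I do not expect any real difficulty. The only point calling for care is keeping track of the factors of $2$ -- the weight $\tfrac12$ of Lemma \ref{T2} against the factor $2$ created by the involution -- and observing that the equality case $|\xi_8| = |\xi_9|$ requires no separate argument here, since it is subsumed in \eqref{condition6'}; this is in contrast with the $3\mathbf{A}_1$ surface, where the analogous equality case had to be dealt with via Lemma \ref{lemmaequal}.
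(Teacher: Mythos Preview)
Your proposal is correct and follows exactly the same approach as the paper, which dispatches this lemma in one sentence by invoking Lemmas~\ref{T2} and~\ref{lemmalog2} together with the involution $(\xi_8,\xi_9)\mapsto(-\xi_8,-\xi_9)$. Your remark that the equality case $|\xi_8|=|\xi_9|$ is automatically absorbed in \eqref{condition6'} (so no analogue of Lemma~\ref{lemmaequal} is needed here) is a useful clarification that the paper leaves implicit.
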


The next two sections are respectively devoted to the estimations of $N_{a}(A,B)$ and $N_{b}(A,B)$.

\subsection{Estimating $N_{a}(A,B)$}

\label{N_a}

We see that the assumption $\xi_9 \leq |\xi_8|$ and \eqref{condition4'} give the following condition which is crucial in order to apply lemma \ref{lemma tau},
\begin{eqnarray}
\label{conditionkappa'}
\xi_9^2 & \leq & \frac{B}{\xi_1\xi_2\xi_3\xi_6\xi_7} \textrm{.}
\end{eqnarray}
We first estimate the contribution of the variables $\xi_4$, $\xi_5$ and $\xi_8$. We rewrite the coprimality conditions as
\begin{eqnarray}
\label{gcd1'}
& & \gcd(\xi_8,\xi_1\xi_2\xi_4\xi_5\xi_6\xi_7) = 1 \textrm{,} \\
\label{gcd2'}
& & \gcd(\xi_4,\xi_1\xi_2\xi_6\xi_7\xi_9) = 1 \textrm{,} \\
\label{gcd3'}
& & \gcd(\xi_5,\xi_1\xi_3\xi_6\xi_7\xi_9) = 1 \textrm{,} \\
\label{gcd4'}
& & \gcd(\xi_1,\xi_2\xi_3\xi_9) = 1 \textrm{,} \\
\label{gcd5'}
& & \gcd(\xi_3,\xi_2\xi_7\xi_9) = 1 \textrm{,} \\
\label{gcd6'}
& & \gcd(\xi_6,\xi_2\xi_7\xi_9) = 1 \textrm{,} \\
\label{gcd7'}
& & \gcd(\xi_7,\xi_9) = 1 \textrm{.}
\end{eqnarray}
We view the torsor equation \eqref{tor 2} as a congruence modulo $\xi_9$. To do so, we replace the height conditions \eqref{condition1'}, \eqref{condition4'} and \eqref{condition6'} by the following (we keep denoting them by \eqref{condition1'}, \eqref{condition4'} and \eqref{condition6'} respectively), obtained using the torsor equation \eqref{tor 2},
\begin{eqnarray*}
\xi_3^2 \left| \xi_4 \right| \xi_6 \left| \xi_4\xi_5 + \xi_1^2\xi_6\xi_7 \right| \xi_9^{-1} & \leq & B \textrm{,} \\
\xi_1 \xi_2 \xi_3 \xi_6 \xi_7 \left| \xi_4\xi_5 + \xi_1^2\xi_6\xi_7 \right| & \leq & B \textrm{,} \\
\xi_9^2 & \leq & \left| \xi_4\xi_5 + \xi_1^2\xi_6\xi_7 \right| \textrm{.}
\end{eqnarray*}
Set $\boldsymbol{\xi}_{a}' = (\xi_1, \xi_2, \xi_3, \xi_6, \xi_7, \xi_9) \in \mathbb{Z}_{>0}^6$. Assume that
$\boldsymbol{\xi}_{a}' \in \mathbb{Z}_{>0}^6$ is fixed and subject to the height conditions \eqref{condition sup} and \eqref{conditionkappa'} and to the coprimality conditions \eqref{gcd4'}, \eqref{gcd5'}, \eqref{gcd6'} and \eqref{gcd7'}. Let $N_{a}(\boldsymbol{\xi}_{a}',B)$ be the number of $\xi_4$, $\xi_5$ and $\xi_8$ satisfying the torsor equation \eqref{tor 2}, the height conditions \eqref{condition1'}, \eqref{condition2'}, \eqref{condition3'}, \eqref{condition4'} and \eqref{condition5'}, the conditions \eqref{condition6'}, \eqref{condition7'}, \eqref{condition8'} and the coprimality conditions \eqref{gcd1'}, \eqref{gcd2'} and \eqref{gcd3'}.

\begin{lemma}
\label{lemma inter a}
For any fixed $A \geq 8$, we have
\begin{eqnarray*}
N(\boldsymbol{\xi}_{a}',B) & = & \frac1{\xi_9}
\sum_{\substack{k_8|\xi_2 \\ \gcd(k_8,\xi_7) = 1}} \frac{\mu(k_8)}{k_8 \varphi^{\ast}(k_8\xi_9)}
\sum_{\substack{k_4|\xi_1\xi_2\xi_6\xi_7 \\ \gcd(k_4, k_8\xi_9)=1}} \mu(k_4)
\sum_{\substack{k_5|\xi_1\xi_3\xi_6\xi_7 \\ \gcd(k_5, k_8\xi_9)=1}} \mu(k_5) \\
& & \sum_{\substack{\ell_4|k_8\xi_9 \\ \ell_5|k_8\xi_9}} \mu(\ell_4) \mu(\ell_5) C(\boldsymbol{\xi}_{a}',B) + R(\boldsymbol{\xi}_{a}',B) \textrm{,}
\end{eqnarray*}
where, setting $\xi_4 = k_4 \ell_4 \xi_4''$ and $\xi_5 = k_5 \ell_5 \xi_5''$,
\begin{eqnarray*}
C(\boldsymbol{\xi}_{a}',B) & = &
\# \left\{ \left( \xi_4'', \xi_5'' \right) \in \mathbb{Z}_{\neq 0}^2,
\begin{array}{l}
\eqref{condition1'}, \eqref{condition2'}, \eqref{condition3'}, \eqref{condition4'}, \eqref{condition5'} \\
\eqref{condition6'}, \eqref{condition7'}, \eqref{condition8'}
\end{array}
\right\} \textrm{,}
\end{eqnarray*}
and $\sum_{\boldsymbol{\xi}_{a}'} R(\boldsymbol{\xi}_{a}',B) \ll B \log(B)^2$.
\end{lemma}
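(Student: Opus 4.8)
The argument runs in close parallel to the proof of lemma \ref{lemma inter}, with \eqref{tor 2} playing the role of \eqref{tor 1}; the only genuine differences are the exponent $2$ on $\xi_1$ in the torsor equation, the extra height condition \eqref{condition5'}, and the resulting shift of the threshold from $A \geq 7$ to $A \geq 8$. First I would remove \eqref{gcd1'} by a M\"obius inversion over $k_8 \mid \xi_1\xi_2\xi_4\xi_5\xi_6\xi_7$, writing $N(\boldsymbol{\xi}_{a}',B) = \sum_{k_8} \mu(k_8) S_{k_8}(\boldsymbol{\xi}_{a}',B)$, where $S_{k_8}$ counts the triples $(\xi_4,\xi_5,\xi_8')$ with $\xi_4\xi_5 + k_8\xi_8'\xi_9 = -\xi_1^2\xi_6\xi_7$ and the remaining conditions. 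As in the earlier proof, $S_{k_8}$ vanishes unless $\gcd(k_8,\xi_1\xi_4\xi_5\xi_6\xi_7)=1$, so reading the equation as the congruence $\xi_4\xi_5 \equiv -\xi_1^2\xi_6\xi_7 \imod{k_8\xi_9}$ only costs the error coming from dropping the requirement $\xi_8'\ne 0$; when $\xi_8'=0$ the coprimality forces $|\xi_4|=|\xi_5|=\xi_1=\xi_6=\xi_7=1$, and summing over $\boldsymbol{\xi}_{a}'$ with \eqref{conditionkappa'} for $\xi_9$ and \eqref{condition sup} for $\xi_3$ bounds this by $\ll B\log(B)^2$. Next I would strip \eqref{gcd2'} and \eqref{gcd3'} by M\"obius inversions over $k_4\mid\xi_1\xi_2\xi_6\xi_7$ and $k_5\mid\xi_1\xi_3\xi_6\xi_7$ (one may delete $\xi_9$ from these divisibility ranges since $\gcd(k_8\xi_9,\xi_1\xi_6\xi_7)=1$), setting $\xi_4=k_4\xi_4'$ and $\xi_5=k_5\xi_5'$, which reduces matters to the count $S(\boldsymbol{\xi}_{a}',B)$ of pairs $(\xi_4',\xi_5')$ in the region cut out by the height conditions and subject to $\xi_4'\xi_5' \equiv -(k_4k_5)^{-1}\xi_1^2\xi_6\xi_7 \imod{k_8\xi_9}$.

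The heart of the matter is an application of lemma \ref{lemma tau} --- in fact of its refinement lemma \ref{lemma tau2} for the region $\mathcal{S}_1$, since the eight inequalities \eqref{condition1'}, \eqref{condition2'}, \eqref{condition3'}, \eqref{condition4'}, \eqref{condition5'}, \eqref{condition6'}, \eqref{condition7'}, \eqref{condition8'}, rewritten in the variables $(\xi_4',\xi_5')$ by means of the torsor equation, turn into \eqref{B}, \eqref{C}, \eqref{A}, \eqref{F}, \eqref{G}, \eqref{D}, \eqref{E}, \eqref{E'} respectively. I would take $X = B/(k_4k_5\xi_1\xi_2\xi_3\xi_6\xi_7)$, $T = \xi_1^2\xi_6\xi_7/(k_4k_5)$, $L_1 = \log(B)^A/k_4$, $L_2 = \log(B)^A/k_5$, the remaining parameters $X_1,X_2,X_3,Z$ being read off from \eqref{condition1'}, \eqref{condition2'}, \eqref{condition5'}, \eqref{condition6'}; then $|\xi_4'\xi_5'|\le X$ is exactly \eqref{condition3'}, the hypothesis $T\le 2X$ is exactly \eqref{condition sup}, and \eqref{conditionkappa'} forces $k_8\xi_9\le X^{2/3}$ as soon as $k_8\le(2k_4k_5)^{-1/2}X^{1/6}$. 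Before invoking the lemma one checks, exactly as before, that the divisor bound $S(\boldsymbol{\xi}_{a}',B)\ll X^{\varepsilon}\bigl(X/(k_8\xi_9)+1\bigr)$ together with $k_8>k_8^{1/2}(2k_4k_5)^{-1/4}X^{1/12}$ makes the contribution of $k_8>(2k_4k_5)^{-1/2}X^{1/6}$ negligible ($\ll B\log(B)^2$ after summation over $\boldsymbol{\xi}_{a}'$, again via \eqref{conditionkappa'} and \eqref{condition sup}), and that the arithmetic functions produced by the $k_4,k_5,k_8$-inversions have bounded average order wherever these variables appear in a denominator, hence do not affect the error analysis. Lemma \ref{lemma tau2} then yields $S(\boldsymbol{\xi}_{a}',B) = S^{\ast}(\boldsymbol{\xi}_{a}',B) + O\bigl(X^{2/3+\varepsilon}(k_8\xi_9)^{-1/2} + X\varphi(k_8\xi_9)^{-1}(k_4+k_5)\log(B)^{-A}\bigr)$, where $S^{\ast}$ is the same count with the congruence replaced by $\gcd(\xi_4'\xi_5',k_8\xi_9)=1$ and weighted by $1/\varphi(k_8\xi_9)$.

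Summing the first error over $\boldsymbol{\xi}_{a}'$ --- dispatching $\xi_9$ via \eqref{conditionkappa'} and then the remaining variables via \eqref{condition sup}, with a suitable choice of $\varepsilon$ --- gives $\ll B\log(B)$; summing the second gives $\ll B\log(B)^{c-A}$ for an absolute constant $c$, and tracking the $2^{\omega(\cdot)}$-factors produced by the M\"obius inversions shows $c\le 10$, so this is $\ll B\log(B)^2$ provided $A\ge 8$. It then remains to expand $S^{\ast}(\boldsymbol{\xi}_{a}',B)$ by a M\"obius inversion over $\ell_4,\ell_5\mid k_8\xi_9$, with $\xi_4'=\ell_4\xi_4''$ and $\xi_5'=\ell_5\xi_5''$, which produces the asserted formula with $C(\boldsymbol{\xi}_{a}',B)$ inside; to remove the auxiliary cutoff $k_8\le(2k_4k_5)^{-1/2}X^{1/6}$, using $C(\boldsymbol{\xi}_{a}',B)\ll\bigl(X/(\ell_4\ell_5)\bigr)^{1+\varepsilon}$ from \eqref{condition3'} and summing (once more $\ll B\log(B)^2$); and to drop the redundant condition $\gcd(k_8,\xi_1\xi_6)=1$, since $k_8\mid\xi_2$ while $\gcd(\xi_2,\xi_1\xi_6)=1$ by the coprimality conditions. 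Collecting the four error contributions gives $\sum_{\boldsymbol{\xi}_{a}'}R(\boldsymbol{\xi}_{a}',B)\ll B\log(B)^2$, which completes the proof.

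The one real obstacle is the bookkeeping in the error estimates: verifying that each term, after summation over the six components of $\boldsymbol{\xi}_{a}'$ in a well-chosen order (dispatch $\xi_9$ first via \eqref{conditionkappa'}, then exploit \eqref{condition sup} on the rest), genuinely comes out as $O(B\log(B)^2)$, and locating the exponent $c$ precisely enough to land on the stated threshold $A\ge 8$. Because $\xi_1$ occurs squared in \eqref{tor 2} and because of the additional inequality \eqref{condition5'}, these exponents differ from their counterparts in the proof of lemma \ref{lemma inter}, so every estimate has to be recomputed --- but no idea beyond lemma \ref{lemma tau2} and the average saving $1/\varphi(k_8\xi_9)$ is required.
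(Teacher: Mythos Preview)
Your proposal is correct and follows essentially the same approach as the paper's own proof: the same sequence of M\"obius inversions, the same cutoff on $k_8$, the application of the first estimate of lemma~\ref{lemma tau2} with the same choice of $X$, $T$, $L_1$, $L_2$, and the same bookkeeping for the four error contributions. Your identification of the height conditions with the defining inequalities of $\mathcal{S}_1$ is accurate, and your location of the threshold $A\ge 8$ via $c=10$ matches the paper's computation $\ll B\log(B)^{10-A}$.
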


Let us remove the coprimality condition \eqref{gcd1'} using a Möbius inversion. We get
\begin{eqnarray*}
N(\boldsymbol{\xi}_{a}',B) & = & \sum_{k_8|\xi_1\xi_2\xi_4\xi_5\xi_6\xi_7} \mu(k_8) S_{k_8}(\boldsymbol{\xi}_{a}',B) \textrm{,}
\end{eqnarray*}
where
\begin{eqnarray*}
S_{k_8}(\boldsymbol{\xi}_{a}',B) & = & \# \left\{ \left( \xi_4, \xi_5, \xi_8' \right) \in \mathbb{Z}_{\neq 0}^3 ,
\begin{array}{l}
\xi_4 \xi_5 + k_8 \xi_8' \xi_9 = - \xi_1^2\xi_6\xi_7 \\
\eqref{condition1'}, \eqref{condition2'}, \eqref{condition3'}, \eqref{condition4'}, \eqref{condition5'} \\
\eqref{condition6'}, \eqref{condition7'}, \eqref{condition8'} \\
\eqref{gcd2'}, \eqref{gcd3'}
\end{array}
\right\} \textrm{.}
\end{eqnarray*}
If $\gcd(k_8,\xi_1\xi_6\xi_7) \neq 1$ or $\gcd(k_8,\xi_4\xi_5) \neq 1$ then $\gcd(\xi_4\xi_5,\xi_1\xi_6\xi_7) \neq 1$ and thus
$S_{k_8}(\boldsymbol{\xi}_{a}',B) = 0$, so we can assume that $\gcd(k_8,\xi_1\xi_4\xi_5\xi_6\xi_7) = 1$. We have
\begin{eqnarray*}
S_{k_8}(\boldsymbol{\xi}_{a}',B) & = &
\# \left\{ \left( \xi_4, \xi_5 \right) \in \mathbb{Z}_{\neq 0}^2 ,
\begin{array}{l}
\xi_4 \xi_5 \equiv - \xi_1^2\xi_6\xi_7 \imod{k_8 \xi_9} \\
\eqref{condition1'}, \eqref{condition2'}, \eqref{condition3'}, \eqref{condition4'}, \eqref{condition5'} \\
\eqref{condition6'}, \eqref{condition7'}, \eqref{condition8'} \\
\eqref{gcd2'}, \eqref{gcd3'}
\end{array}
\right\} + R_0(\boldsymbol{\xi}_{a}',B) \textrm{,}
\end{eqnarray*}
where the error term $R_0(\boldsymbol{\xi}_{a}',B)$ comes from the fact $\xi_8'$ has to be non-zero. Otherwise, we would have
$\xi_4 \xi_5 = - \xi_1^2\xi_6\xi_7$ and thus $|\xi_4| = |\xi_5| = \xi_1 = \xi_6 = \xi_7 = 1$. Summing over $\xi_9$ using the condition \eqref{conditionkappa'}, we easily obtain
\begin{eqnarray*}
\sum_{k_8,\boldsymbol{\xi}_{a}'} |\mu(k_8)| R_0(\boldsymbol{\xi}_{a}',B) & \ll & B \log(B)^2 \textrm{.}
\end{eqnarray*}
Let us remove the coprimality conditions \eqref{gcd2'} and \eqref{gcd3'}. The main term of $N(\boldsymbol{\xi}_{a}',B)$ is equal to
\begin{eqnarray*}
& & \sum_{\substack{k_8|\xi_2 \\ \gcd(k_8,\xi_1\xi_6\xi_7) = 1}} \mu(k_8)
\sum_{\substack{k_4|\xi_1\xi_2\xi_6\xi_7\xi_9 \\ \gcd(k_4, k_8\xi_9)=1}} \mu(k_4)
\sum_{\substack{k_5|\xi_1\xi_3\xi_6\xi_7\xi_9 \\ \gcd(k_5, k_8\xi_9)=1}} \mu(k_5) S(\boldsymbol{\xi}_{a}',B) \textrm{,}
\end{eqnarray*}
where, with the notations $\xi_4 = k_4 \xi_4'$ and $\xi_5 = k_5 \xi_5'$,
\begin{eqnarray*}
S(\boldsymbol{\xi}_{a}',B) & = & \# \left\{ \left( \xi_4', \xi_5' \right) \in \mathbb{Z}_{\neq 0}^2,
\begin{array}{l}
\xi_4'\xi_5' \equiv - (k_4k_5)^{-1}\xi_1^2\xi_6\xi_7 \imod{k_8 \xi_9} \\
\eqref{condition1'}, \eqref{condition2'}, \eqref{condition3'}, \eqref{condition4'}, \eqref{condition5'} \\
\eqref{condition6'}, \eqref{condition7'}, \eqref{condition8'}
\end{array}
\right\} \textrm{.}
\end{eqnarray*}
Indeed, we clearly have $\gcd(k_4k_5,k_8\xi_9) = 1$ since $\gcd(k_8\xi_9,\xi_1\xi_6\xi_7) = 1$. We can therefore remove $\xi_9$ from the conditions on $k_4$ and $k_5$. We now proceed to apply lemma \ref{lemma tau2}. To do so, define
\begin{eqnarray*}
X & = & \frac{B}{k_4k_5\xi_1\xi_2\xi_3\xi_6\xi_7} \textrm{.}
\end{eqnarray*}
An argument identical to the one developed in the proof of lemma \ref{lemma inter} shows that assuming that
$k_8 \leq (k_4 k_5)^{-1/2} X^{1/6}$ produces an error term $N'(\boldsymbol{\xi}_{a}',B)$ which satisfies
\begin{eqnarray*}
\sum_{\xi_9} N'(\boldsymbol{\xi}_{a}',B) & \ll & \left( \frac{B}{\xi_1\xi_2\xi_3\xi_6\xi_7} \right)^{1 + 2 \varepsilon - 1/12}
+ 2^{\omega(\xi_2)} \left( \frac{B}{\xi_1\xi_2\xi_3\xi_6\xi_7} \right)^{1/2 + \varepsilon} \textrm{.}
\end{eqnarray*}
Choosing $\varepsilon = 1/48$ and summing over $\xi_3$ using \eqref{condition sup}, we see that
\begin{eqnarray*}
\sum_{\boldsymbol{\xi}_{a}'} N'(\boldsymbol{\xi}_{a}',B) & \ll & \sum_{\xi_1,\xi_2,\xi_6,\xi_7}
\left( \frac{B}{\xi_1^{13/12}\xi_2\xi_6^{25/24}\xi_7^{25/24}} + 2^{\omega(\xi_2)} \frac{B}{\xi_1^{47/24}\xi_2\xi_6^{71/48}\xi_7^{71/48}} \right) \\
& \ll & B \log(B)^2 \textrm{.}
\end{eqnarray*}
The assumption $k_8 \leq (k_4 k_5)^{-1/2} X^{1/6}$ and the condition \eqref{conditionkappa'} prove that we now have
$k_8 \xi_9 \leq X^{2/3}$. We apply the first estimate of lemma \ref{lemma tau2} with $L_1 = \log(B)^A/k_4$, $L_2 = \log(B)^A/k_5$ and
$T = \xi_1^2\xi_6\xi_7/(k_4k_5)$. We have $T \leq 2X$ by \eqref{condition sup} and $k_8 \xi_9 \leq X^{2/3}$ thus lemma
\ref{lemma tau2} shows that
\begin{eqnarray*}
S(\boldsymbol{\xi}_{a}',B) & = & S^{\ast}(\boldsymbol{\xi}_{a}',B) +
O \left( \frac{X^{2/3 + \varepsilon}}{(k_8 \xi_9)^{1/2}} + \frac{X}{\varphi(k_8 \xi_9)} \left( \frac{k_4}{\log(B)^A} + \frac{k_5}{\log(B)^A} \right) \right) \textrm{,}
\end{eqnarray*}
for all $\varepsilon > 0$, with
\begin{eqnarray*}
S^{\ast}(\boldsymbol{\xi}_{a}',B) & = & \frac1{\varphi(k_8\xi_9)} \# \left\{ \left( \xi_4', \xi_5' \right) \in \mathbb{Z}_{\neq 0}^2,
\begin{array}{l}
\gcd(\xi_4'\xi_5',k_8\xi_9) = 1 \\
\eqref{condition1'}, \eqref{condition2'}, \eqref{condition3'}, \eqref{condition4'}, \eqref{condition5'} \\
\eqref{condition6'}, \eqref{condition7'}, \eqref{condition8'}
\end{array}
\right\} \textrm{.}
\end{eqnarray*}
The Möbius inversions do not play any part in the estimation of the contribution of the first error term. Using \eqref{conditionkappa'} to sum over $\xi_9$, we find that this contribution is
\begin{eqnarray*}
\sum_{\xi_1,\xi_2,\xi_3,\xi_6,\xi_7} \left( \frac{B}{\xi_1\xi_2\xi_3\xi_6\xi_7} \right)^{11/12 + \varepsilon} & \ll &
\sum_{\xi_1,\xi_2,\xi_6,\xi_7}
\frac{B}{\xi_1^{7/6 - 2 \varepsilon} \xi_2 \xi_6^{13/12 - \varepsilon} \xi_7^{13/12 - \varepsilon}} \\
& \ll & B \log(B)
\textrm{,}
\end{eqnarray*}
for $\varepsilon = 1/24$ and where we have used \eqref{condition sup} to sum over $\xi_3$. The contribution of the second error term is
\begin{eqnarray*}
\sum_{\boldsymbol{\xi}_{a}'} 2^{\omega(\xi_1\xi_2\xi_6\xi_7)} \frac{B \log(B)^{-A}}{\xi_1\xi_2\xi_3\xi_6\xi_7\xi_9} & \ll & B \log(B)^{10 - A} \textrm{,}
\end{eqnarray*}
which is satisfactory provided that $A \geq 8$. The contribution of the third error term is also $\ll B \log(B)^{10 - A}$. Furthermore, we have
\begin{eqnarray*}
S^{\ast}(\boldsymbol{\xi}_{a}',B) & = & \frac1{\varphi(k_8\xi_9)} \sum_{\ell_4|k_8\xi_9} \mu(\ell_4) \sum_{\ell_5|k_8\xi_9} \mu(\ell_5) C(\boldsymbol{\xi}_{a}',B) \textrm{,}
\end{eqnarray*}
with the notations $\xi_4' = \ell_4 \xi_4''$ and $\xi_5' = \ell_5 \xi_5''$. It is obvious that
\begin{eqnarray*}
C(\boldsymbol{\xi}_{a}',B) & \ll & \left( \frac{X}{\ell_4 \ell_5} \right)^{1 + \varepsilon} \textrm{.}
\end{eqnarray*}
Let us use this bound to estimate the overall contribution of the error term produced if we remove the condition
$k_8 \leq (k_4k_5)^{-1/2} X^{1/6}$ from the sum over $k_8$. Writing $k_8 > k_8^{1/2} (k_4k_5)^{-1/4} X^{1/12}$ and choosing $\varepsilon = 1/24$, we infer that this contribution is
\begin{eqnarray*}
\sum_{\boldsymbol{\xi}_{a}'} \frac1{\xi_9} \left( \frac{B}{\xi_1\xi_2\xi_3\xi_6\xi_7} \right)^{23/24} & \ll &
\sum_{\xi_1,\xi_2,\xi_6,\xi_7,\xi_9} \frac{B}{\xi_1^{13/12}\xi_2\xi_6^{25/24}\xi_7^{25/24}\xi_9} \\
& \ll & B \log(B)^2 \textrm{.}
\end{eqnarray*}
We can remove the condition $\gcd(k_8,\xi_1 \xi_6) = 1$ from the sum over $k_8$ since it follows from $k_8|\xi_2$ and
$\gcd(\xi_1 \xi_6,\xi_2) = 1$, which completes the proof of lemma \ref{lemma inter a}.

We intend to sum over $\xi_1$ also. For this, we set
$\boldsymbol{\xi}_{a} = (\xi_2, \xi_3, \xi_6, \xi_7, \xi_9) \in \mathbb{Z}_{>0}^5$. We also define
${\boldsymbol{\xi}_{a}}^{(r_2,r_3,r_6,r_7,r_9)} = \xi_2^{r_2} \xi_3^{r_3} \xi_6^{r_6} \xi_7^{r_7} \xi_9^{r_9}$ for $(r_2,r_3,r_6,r_7,r_9) \in \mathbb{Q}^5$. Setting
\begin{align*}
& Y_4 = \frac{B^{1/3}}{\boldsymbol{\xi}_{a}^{(-2/3,4/3,2/3,-1/3,-1)}} \textrm{,} &
& Y_4'' = \frac{Y_4}{k_4\ell_4} \textrm{,} \\
& Y_5 = \frac{B^{1/3}}{\boldsymbol{\xi}_{a}^{(4/3,-2/3,-1/3,2/3,1)}} \textrm{,} &
& Y_5'' = \frac{Y_5}{k_5\ell_5} \textrm{,} \\
& Y_1 = \frac{B^{1/3}}{\boldsymbol{\xi}_{a}^{(1/3,1/3,2/3,2/3,0)}} \textrm{,} &
\end{align*}
and recalling the definition \eqref{h^{a}} of the function $h^{a}$, we can sum up the height \text{conditions} \eqref{condition1'}, \eqref{condition2'}, \eqref{condition3'}, \eqref{condition4'} and \eqref{condition5'} as
\begin{eqnarray*}
h^{a} \left( \frac{\xi_4''}{Y_4''}, \frac{\xi_5''}{Y_5''}, \frac{\xi_1}{Y_1} \right) & \leq & 1 \textrm{.}
\end{eqnarray*}
Note also that the condition \eqref{condition sup} can be rewritten as
\begin{eqnarray*}
\frac{\xi_1}{Y_1} & \leq & 2^{1/3} \textrm{.}
\end{eqnarray*}
We also define the following real-valued functions
\begin{eqnarray*}
g_1^{a} & : & (t_5,t_1,t;\boldsymbol{\xi}_{a}, B) \mapsto
\int_{h^{a}(t_4,t_5,t_1) \leq 1, t \leq |t_4 t_5 + t_1^2|, |t_4|Y_4 \geq \log(B)^A} \D t_4 \textrm{,} \\
g_2^{a} & : & (t_1,t;\boldsymbol{\xi}_{a}, B) \mapsto \int_{|t_5| Y_5 \geq \log(B)^A} g_1^{a}(t_5,t_1,t;\boldsymbol{\xi}_{a}, B) \D t_5 \textrm{,} \\
g_3^{a} & : & (t;\boldsymbol{\xi}_{a}, B) \mapsto \int_{t_1 > 0} g_2^{a}(t_1,t; \boldsymbol{\xi}_{a}, B) \D t_1 \textrm{,} \\
g_4^{a} & : & t \mapsto \int \int \int_{t_1 > 0, h^{a}(t_4,t_5,t_1) \leq 1, t \leq |t_4 t_5 + t_1^2|} \D t_4 \D t_5 \D t_1 \textrm{.}
\end{eqnarray*}
The condition $t \leq |t_4 t_5 + t_1^2|$ corresponds to the condition \eqref{condition6'} which can now be rewritten as
\begin{eqnarray*}
\frac{\xi_9^2}{Y_4Y_5} & \leq & \left| \frac{\xi_4''}{Y_4''} \frac{\xi_5''}{Y_5''} + \left( \frac{\xi_1}{Y_1} \right)^2 \right| \textrm{.}
\end{eqnarray*}
We denote by $\kappa_{a}$ the left-hand side of this inequality.

\begin{lemma}
\label{bounds'}
We have the bounds
\begin{eqnarray*}
g_1^{a}(t_5,t_1,t;\boldsymbol{\xi}_{a}, B) & \ll & t_1^{-1}|t_5|^{-1} \textrm{,} \\
g_2^{a}(t_1,t;\boldsymbol{\xi}_{a}, B) & \ll & 1 \textrm{.}
\end{eqnarray*}
\end{lemma}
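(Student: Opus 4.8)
The proof is meant to be the analogue of that of Lemma~\ref{bounds}, and the plan is to follow the same pattern. Recall the definition \eqref{h^{a}} of $h^{a}$, and observe that the integrand occurring in $g_1^{a}$ is identically equal to $1$, so that $g_1^{a}(t_5,t_1,t;\boldsymbol{\xi}_{a},B)$ is simply the Lebesgue measure of the set of admissible $t_4$; discarding some of the conditions cutting out this set only enlarges it, hence provides upper bounds for $g_1^{a}$.

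First I would prove the bound for $g_1^{a}$. Among the five quantities bounded above by $h^{a}(t_4,t_5,t_1) \leq 1$ is $t_1|t_4t_5|$; the condition $t_1|t_4t_5| \leq 1$ forces $|t_4| \leq (t_1|t_5|)^{-1}$, so the admissible $t_4$ lie in an interval of length $\ll t_1^{-1}|t_5|^{-1}$. This is exactly the claimed estimate.

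The bound for $g_2^{a}$ is where the only genuinely non-routine step occurs. One cannot simply integrate the previous estimate against $\D t_5$, since $|t_5|^{-1}$ is not integrable near the origin and the only size constraint on $t_5$ contained in $h^{a} \leq 1$, namely $t_1^2|t_5| \leq 1$, bounds $|t_5|$ only from above. Instead I would establish the sharper estimate
\begin{eqnarray*}
g_1^{a}(t_5,t_1,t;\boldsymbol{\xi}_{a},B) & \ll & \min\left(|t_5|^{-1/2},\, t_5^{-2}\right) \textrm{,}
\end{eqnarray*}
where $t_5^{-2}$ comes from the condition $|t_4|t_5^2 \leq 1$, which yields $|t_4| \leq t_5^{-2}$, and where $|t_5|^{-1/2}$ comes from the condition $|t_4||t_4t_5+t_1^2| \leq 1$: rewriting it as $|t_5 t_4^2 + t_1^2 t_4| \leq 1$, completing the square, and distinguishing whether $t_1^4 \leq 4|t_5|$ or not, one checks in both cases that, for $t_5 \neq 0$, the admissible $t_4$ form a set of measure $\ll |t_5|^{-1/2}$, with an absolute implied constant. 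Since $\int_{\mathbb{R}} \min(|t_5|^{-1/2}, t_5^{-2})\, \D t_5 < +\infty$, integrating this estimate over $t_5$ gives $g_2^{a} \ll 1$.

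The crux is therefore the elementary measure computation for the quadratic condition $|t_4||t_4t_5+t_1^2| \leq 1$; everything else is routine bookkeeping, entirely parallel to the proof of Lemma~\ref{bounds}. Alternatively, one may note that $g_2^{a}(t_1,t;\boldsymbol{\xi}_{a},B)$ is at most the measure of $\{(t_4,t_5) \in \mathbb{R}^2 : h^{a}(t_4,t_5,t_1) \leq 1\}$, a continuous function of $t_1$ supported on the bounded set $t_1 \leq 2^{1/3}$ and hence bounded; but quantifying this boundedness uses precisely the same measure estimate.
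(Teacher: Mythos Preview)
Your proof is correct. The bound on $g_1^{a}$ is established exactly as in the paper, via $t_1|t_4t_5|\le 1$.

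For $g_2^{a}$ you take a genuinely different route from the paper. You keep the original order of integration and bound, for fixed $t_5$, the measure of admissible $t_4$ by $\min(|t_5|^{-1/2},t_5^{-2})$; the delicate part is the $|t_5|^{-1/2}$ bound coming from the quadratic constraint $|t_5 t_4^2 + t_1^2 t_4|\le 1$, which you handle by completing the square. The paper instead applies Fubini and bounds, for fixed $t_4$, the measure of admissible $t_5$: the same two conditions $|t_4||t_4t_5+t_1^2|\le 1$ and $|t_4|t_5^2\le 1$ now become \emph{linear} and square-root constraints on $t_5$, giving directly $\ll\min(t_4^{-2},|t_4|^{-1/2})$, after which one integrates over $|t_4|\lessgtr 1$. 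The paper's swap of integration order is slicker because it sidesteps the quadratic algebra entirely, but your argument is perfectly valid and arguably closer in spirit to the proof of Lemma~\ref{bounds} that you are mirroring. Your closing remark about continuity on $t_1\le 2^{1/3}$ is heuristic rather than a self-contained alternative, as you yourself note.
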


\begin{proof}
Recall the definition \eqref{h^{a}} of the function $h^{a}$. The first bound is clear since $t_1|t_4t_5|\leq 1$. Moreover, the conditions $|t_4||t_4t_5+t_1^2| \leq 1$ and $|t_4|t_5^2 \leq 1$ show that $t_5$ runs over a set whose measure is
$\ll \min \left( t_4^{-2}, |t_4|^{-1/2} \right)$. Splitting the integration of this minimum over $t_4$ depending on whether $|t_4|$ is greater or less than $1$ completes the proof.
\end{proof}

It is easy to check that $\boldsymbol{\xi}_{a}$ is restricted to lie in the region
\begin{eqnarray}
\label{Va}
\mathcal{V}_a & = & \left\{ \boldsymbol{\xi}_{a} \in \mathbb{Z}_{>0}^5, Y_1 \geq 2^{-1/3}, Y_1 Y_4 Y_5 \geq \xi_9^2 \right\} \textrm{.}
\end{eqnarray}
Assume that $\boldsymbol{\xi}_{a} \in \mathcal{V}_a$ and $\xi_1 \in \mathbb{Z}_{>0}$ are fixed and satisfy the coprimality conditions \eqref{gcd4'}, \eqref{gcd5'}, \eqref{gcd6'} and \eqref{gcd7'}.

We now proceed to estimate $C(\boldsymbol{\xi}_{a}',B)$. Recall the condition \eqref{new condition'} which can be rewritten as
$|\xi_5''| \leq Y_1 \xi_1^{-1} Y_4 Y_5'' \log(B)^{-A}$. Let us sum over $\xi_4''$ using the basic estimate
$\# \{ n \in \mathbb{Z} , t_1 \leq n \leq t_2 \} = t_2 - t_1 + O(1)$.
The change of variable $t_4 \mapsto Y_4'' t_4$ shows that
\begin{eqnarray*}
C(\boldsymbol{\xi}_{a}',B) & = & \sum_{|\xi_5''| \leq Y_1 \xi_1^{-1} Y_4 Y_5'' \log(B)^{-A}} \left( Y_4'' g_1^{a} \left( \frac{\xi_5''}{Y_5''}, \frac{\xi_1}{Y_1}, \kappa_{a}; \boldsymbol{\xi}_{a}, B \right) + O(1) \right) \textrm{.}
\end{eqnarray*}
We see that the overall contribution of the error term is
\begin{eqnarray*}
\sum_{\boldsymbol{\xi}_{a}'} 2^{\omega(\xi_1\xi_2\xi_6\xi_7)} 2^{\omega(\xi_2\xi_9)}
\frac{B \log(B)^{-A}}{\boldsymbol{\xi}_{a}^{(1,1,1,1,1)}\xi_1} & \ll & \log(B)^{13-A} \textrm{.} 
\end{eqnarray*}
Let us now sum over $\xi_5''$. Partial summation and the change of variable $t_5 \mapsto Y_5'' t_5$ yield
$$C(\boldsymbol{\xi}_{a}',B) = Y_4'' Y_5'' g_2^{a} \left( \frac{\xi_1}{Y_1}, \kappa_{a}; \boldsymbol{\xi}_{a}, B \right) + O
\left( Y_4'' \sup_{|t_5| Y_5 \geq \log(B)^A} g_1^{a} \left( t_5, \frac{\xi_1}{Y_1}, \kappa_{a}; \boldsymbol{\xi}_{a}, B \right) \right) \textrm{.}$$
Using the bound of lemma \ref{bounds'} for $g_1^{a}$, we get
\begin{eqnarray*}
\sup_{|t_5| Y_5 \geq \log(B)^A} g_1^{a} \left( t_5, \frac{\xi_1}{Y_1}, \kappa; \boldsymbol{\xi}_{a}, B \right) & \ll &
Y_5 \log(B)^{-A} \frac{Y_1}{\xi_1} \textrm{.}
\end{eqnarray*}
The overall contribution coming from this error term is therefore
\begin{eqnarray*}
\sum_{\boldsymbol{\xi}_{a}'} 2^{\omega(\xi_1\xi_3\xi_6\xi_7)} 2^{\omega(\xi_2\xi_9)}
\frac{B \log(B)^{-A}}{\boldsymbol{\xi}_{a}^{(1,1,1,1,1)}\xi_1}
& \ll & B \log(B)^{12-A} \textrm{.}
\end{eqnarray*}
Recalling lemma \ref{lemma inter a}, we find that for any fixed $A \geq 9$, we have
\begin{eqnarray*}
N(\boldsymbol{\xi}_{a}',B) & = & \frac1{\xi_9} \theta_{a}(\boldsymbol{\xi}_{a}) \frac{\varphi^{\ast}(\xi_1)}{\varphi^{\ast}(\gcd(\xi_1,\xi_2\xi_6\xi_7))} \frac{\varphi^{\ast}(\xi_1)}{\varphi^{\ast}(\gcd(\xi_1,\xi_3\xi_6\xi_7))} \\
& & g_2^{a} \left(\frac{\xi_1}{Y_1}, \kappa_a; \boldsymbol{\xi}_{a}, B \right) Y_4Y_5 + R_1(\boldsymbol{\xi}_{a}',B) \textrm{,}
\end{eqnarray*}
where
\begin{eqnarray*}
\theta_{a}(\boldsymbol{\xi}_{a}) & = & \varphi^{\ast}(\xi_2\xi_6\xi_7) \varphi^{\ast}(\xi_3\xi_6\xi_7) \frac{\varphi^{\ast}(\xi_2\xi_9)}{\varphi^{\ast}(\gcd(\xi_2,\xi_7))} \textrm{,}
\end{eqnarray*}
and where $\sum_{\boldsymbol{\xi}_{a}'} R_1(\boldsymbol{\xi}_{a}',B) \ll B \log(B)^4$. For fixed
$\boldsymbol{\xi}_{a} \in \mathcal{V}_a$ satisfying the coprimality conditions \eqref{gcd5'}, \eqref{gcd6'} and \eqref{gcd7'}, let $\mathbf{N}(\boldsymbol{\xi}_{a},B)$ be the sum over $\xi_1$ of the main term of $N(\boldsymbol{\xi}_{a}',B)$, with $\xi_1$ subject to the coprimality condition \eqref{gcd4'}. Let us make use of lemma \ref{arithmetic preliminary} to sum over $\xi_1$. We find that for any fixed $A \geq 9$ and $0 < \sigma \leq 1$, we have
\begin{eqnarray}
\label{Na}
\ \ \ \ \ \ \ \mathbf{N}(\boldsymbol{\xi}_{a},B) & = & \frac1{\xi_9} \mathcal{P} \Theta_{a}(\boldsymbol{\xi}_{a}) g_3^{a} \left( \kappa_a; \boldsymbol{\xi}_{a}, B \right) Y_4 Y_5 Y_1 \\
\nonumber
& & + O \left( \frac{Y_4Y_5}{\xi_9} \varphi_{\sigma}(\xi_2\xi_3\xi_9) Y_1^{\sigma}
\sup_{t_1 > 0} g_2^{a} \left( t_1, \kappa_a; \boldsymbol{\xi}_{a}, B \right) \right) \textrm{,}
\end{eqnarray}
where
\begin{eqnarray*}
\Theta_{a}(\boldsymbol{\xi}_{a}) & = & \theta_{a}(\boldsymbol{\xi}_{a}) \varphi^{\ast}(\xi_2\xi_3\xi_9) \varphi'(\xi_2\xi_3\xi_6\xi_7\xi_9) \textrm{.}
\end{eqnarray*}
Using the bound of lemma \ref{bounds'} for $g_2^{a}$ and choosing $\sigma = 1/2$, we see that the overall contribution of the error term is
\begin{eqnarray*}
\sum_{\boldsymbol{\xi}_{a}} \varphi_{\sigma}(\xi_2\xi_3\xi_9) \frac{Y_4Y_5}{\xi_9} Y_1^{1/2} & \ll &
\sum_{\xi_2,\xi_3,\xi_6,\xi_9} \varphi_{\sigma}(\xi_2\xi_3\xi_9) \frac{B}{\boldsymbol{\xi}_{a}^{(1,1,1,0,1)}}\\
& \ll & B \log(B)^4 \textrm{,}
\end{eqnarray*}
where we have summed over $\xi_7$ using $Y_1 \geq 2^{-1/3}$ and used the fact that $\varphi_{\sigma}$ has average order $O(1)$. Note that
\begin{eqnarray*}
\frac{Y_4 Y_5 Y_1}{\xi_9} & = & \frac{B}{\boldsymbol{\xi}_{a}^{(1,1,1,1,1)}} \textrm{.}
\end{eqnarray*}
For brevity, we set
\begin{eqnarray*}
D_{h^{a}} & = & \left\{ (t_4,t_5,t_1) \in \mathbb{R}^3, t_1 > 0, h^{a}(t_4,t_5,t_1) \leq 1 \right\} \textrm{.}
\end{eqnarray*}

\begin{lemma}
\label{bounds integrals}
For $Z_4, Z_5 > 0$, we have
\begin{eqnarray}
\label{1}
\meas \{ (t_4,t_5,t_1) \in D_{h^{a}}, |t_4| Z_4 \geq 1 \} & \ll & Z_4 \textrm{,} \\
\label{2}
\meas \{ (t_4,t_5,t_1) \in D_{h^{a}}, |t_5| Z_5 \geq 1 \} & \ll & Z_5 \textrm{,} \\
\label{4}
\meas \{ (t_4,t_5,t_1) \in D_{h^{a}}, |t_4| Z_4 < 1 \} & \ll & Z_4^{-1/2} \textrm{,} \\
\label{5}
\meas \{ (t_4,t_5,t_1) \in D_{h^{a}}, |t_5| Z_5 < 1 \} & \ll & Z_5^{-1/2} \textrm{.}
\end{eqnarray}
\end{lemma}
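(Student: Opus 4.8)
The plan is to estimate each of the four measures by fixing two of the three coordinates and bounding the Lebesgue measure of the set of admissible values of the remaining one, exactly in the spirit of the bounds for $g_1^{a}$ and $g_2^{a}$ in lemma \ref{bounds'}. Several consequences of $h^{a}(t_4,t_5,t_1)\le1$ will be used repeatedly: the conditions $t_1|t_4t_5|\le1$ and $t_1|t_4t_5+t_1^2|\le1$ give $t_1^3\le2$ by the triangle inequality, so the $t_1$-variable always ranges over a set of measure $O(1)$; the condition $|t_4|t_5^2\le1$ gives $|t_5|\le|t_4|^{-1/2}$ and $|t_4|\le|t_5|^{-2}$; the condition $|t_4||t_4t_5+t_1^2|\le1$ confines $t_4t_5$ to an interval of length $2|t_4|^{-1}$, hence $t_5$ to an interval of length $2|t_4|^{-2}$; and the condition $t_1^2|t_5|\le1$ gives $t_1\le|t_5|^{-1/2}$.

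For \eqref{1} and \eqref{4} I would fix $t_4$ and $t_1$, bound the measure of the admissible $t_5$ by $2\min(|t_4|^{-1/2},|t_4|^{-2})$ in the first case and by $2|t_4|^{-1/2}$ in the second, and then integrate over $t_1\in(0,2^{1/3}]$ and over $|t_4|\ge1/Z_4$, respectively $|t_4|<1/Z_4$. This reduces \eqref{1} to the elementary bound $\int_{|t_4|\ge1/Z_4}\min(|t_4|^{-1/2},|t_4|^{-2})\,\D t_4\ll Z_4$ (checking $Z_4\le1$ and $Z_4\ge1$ separately) and \eqref{4} to $\int_{|t_4|<1/Z_4}|t_4|^{-1/2}\,\D t_4\ll Z_4^{-1/2}$. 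For \eqref{5} I would additionally feed the restriction $|t_5|<1/Z_5$ into the bound on the measure of the admissible $t_5$ as a third term $2/Z_5$, so that this measure is $\ll\min(1/Z_5,|t_4|^{-1/2},|t_4|^{-2})$; integrating this over all $t_4\in\mathbb{R}$ (splitting at $|t_4|=1$ and $|t_4|=Z_5^{1/2}$) gives $\ll Z_5^{-1/2}$, and the $t_1$-integration costs only $O(1)$.

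For \eqref{2} I would instead fix $t_5$ with $|t_5|\ge1/Z_5$ and bound the measure of the admissible pairs $(t_4,t_1)$ by $(2|t_5|^{-2})(|t_5|^{-1/2})=2|t_5|^{-5/2}$, using $|t_4|\le|t_5|^{-2}$ and $t_1\le|t_5|^{-1/2}$; integrating over $|t_5|\ge1/Z_5$ then gives $\ll Z_5^{3/2}\ll Z_5$ when $Z_5\le1$, while for $Z_5\ge1$ I would simply bound the set in question by $D_{h^{a}}$ itself, whose measure is $O(1)$ (this is the estimate $g_2^{a}\ll1$ of lemma \ref{bounds'}, integrated over $t_1\in(0,2^{1/3}]$). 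I do not anticipate any real obstacle; the only point requiring a little care is the bookkeeping of the cases $Z_j\le1$ versus $Z_j\ge1$, and making sure that whenever the singled-out variable is forced to be large one exploits the genuinely decaying power of $|t_4|$ or $|t_5|$ supplied by $h^{a}$ rather than a merely bounded contribution.
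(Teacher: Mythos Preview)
Your proposal is correct and follows essentially the same strategy as the paper: use the constraints in $h^{a}$ (in particular $t_1^3\le 2$, $|t_4|t_5^2\le 1$, and $|t_4||t_4t_5+t_1^2|\le 1$) to bound the measure of one coordinate and then integrate over the remaining ones. Your arguments for \eqref{1} and \eqref{4} match the paper's almost verbatim; for \eqref{2} the paper simply uses $|t_4|\le t_5^{-2}$ together with $t_1^3\le 2$ (rather than your sharper $t_1\le|t_5|^{-1/2}$) to get $\int_{|t_5|\ge 1/Z_5}|t_5|^{-2}\,\D t_5\ll Z_5$ in one stroke without a case split, and for \eqref{5} the paper fixes $t_5$ instead and bounds the $t_4$-measure by $|t_5|^{-1/2}$ via the quadratic constraint $|t_4||t_4t_5+t_1^2|\le1$, then integrates over $|t_5|<1/Z_5$ and $t_1\ll1$ --- but these are purely cosmetic differences.
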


\begin{proof}
First, the conditions $t_1|t_4t_5| \leq 1$ and $t_1|t_4t_5 + t_1^2| \leq 1$ show that we always have $t_1^3 \leq 2$. Using
$|t_4||t_4t_5 + t_1^2| \leq 1$, we see that $t_5$ runs over a set whose measure is $\ll |t_4|^{-2}$ and thus integrating over $t_1$ using $t_1^3 \leq 2$ gives \eqref{1}. Since $|t_4|t_5^2 \leq 1$, we see that $t_4$ runs over a set whose measure is $\ll t_5^{-2}$. Integrating this over $t_1$ using $t_1^3 \leq 2$ leads to \eqref{2}. Furthermore, integrating over $t_5$ using $|t_4|t_5^2 \leq 1$ and then over $t_1$ using $t_1^3 \leq 2$ leads to \eqref{4}. Finally, the condition $|t_4||t_4t_5 + t_1^2| \leq 1$ shows that $t_4$ runs over a set whose measure is $\ll |t_5|^{-1/2}$ and integrating this quantity over $t_1$ using $t_1^3 \leq 2$ proves \eqref{5}.
\end{proof}

Exactly as in section \ref{Summing} for the case of the $3 \mathbf{A}_1$ surface, the bounds \eqref{1} and \eqref{2} show that if we do not have $Y_4, Y_5 \geq \log(B)^A$, the contribution of the main term of $\mathbf{N}(\boldsymbol{\xi}_{a},B)$ is $\ll B \log(B)^4$. Thus we can assume from now on that
\begin{eqnarray}
\label{condition Va1}
Y_4 & \geq & \log(B)^A \textrm{,} \\
\label{condition Va2}
Y_5 & \geq & \log(B)^A \textrm{,}
\end{eqnarray}
and the two bounds \eqref{4}, \eqref{5} therefore show that removing the conditions
$|t_4| Y_4, |t_5| Y_5 \geq \log(B)^A$ from the integral defining $g_3^{a}$ in the main term of $\mathbf{N}(\boldsymbol{\xi}_{a},B)$  in \eqref{Na} creates an error term whose overall contribution is $\ll B \log(B)^4$. We have thus proved that for any fixed
$A \geq 9$,
\begin{eqnarray}
\label{estimate Na}
\mathbf{N}(\boldsymbol{\xi}_{a},B) & = & \mathcal{P} g_4^{a}(\kappa_a) \frac{B}{\boldsymbol{\xi}_{a}^{(1,1,1,1,1)}}
\Theta_{a}(\boldsymbol{\xi}_{a}) + R_2(\boldsymbol{\xi}_{a},B) \textrm{,}
\end{eqnarray}
where $\sum_{\boldsymbol{\xi}_{a}} R_2(\boldsymbol{\xi}_{a},B) \ll B \log(B)^4$.

\begin{lemma}
For $t > 0$, we have
\begin{eqnarray}
\label{7}
\meas \{ (t_4,t_5,t_1) \in D_{h^{a}}, |t_4t_5 + t_1^2| \geq t \} & \ll & t^{-3/2} \textrm{,} \\
\label{8}
\meas \{ (t_4,t_5,t_1) \in D_{h^{a}}, |t_4t_5 + t_1^2| < t \} & \ll & t^{1/2} \textrm{.}
\end{eqnarray}
\end{lemma}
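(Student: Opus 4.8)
The plan is to prove \eqref{7} and \eqref{8} by two quite different arguments, reflecting the fact that for $|t_4t_5+t_1^2|\geq t$ the relevant set is either of bounded measure or empty, while for $|t_4t_5+t_1^2|<t$ an honest volume estimate is required. I shall use throughout that $\meas(D_{h^{a}})=\tau_{\infty}/3$ is finite (the convergence of the archimedean density computed above) and that on $D_{h^{a}}$ one has $t_1^3=t_1\,|(t_4t_5+t_1^2)-t_4t_5|\leq t_1|t_4t_5+t_1^2|+t_1|t_4t_5|\leq 2$.

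For \eqref{7}: since $\meas(D_{h^{a}})<\infty$ and $t^{-3/2}\gg 1$ whenever $t$ is bounded, the bound holds trivially for $t$ at most an absolute constant, so it suffices to show that the set is empty once $t$ exceeds an absolute constant. On $D_{h^{a}}$ the inequalities $|t_4||t_4t_5+t_1^2|\leq 1$ and $t_1|t_4t_5+t_1^2|\leq 1$ together with $|t_4t_5+t_1^2|\geq t$ force $|t_4|\leq 1/t$ and $t_1\leq 1/t$; hence $|t_4t_5|\geq|t_4t_5+t_1^2|-t_1^2\geq t-t^{-2}\geq t/2$ for $t$ large, so $t_5^2\geq t^2/(4t_4^2)$, and the condition $|t_4|t_5^2\leq 1$ then gives $|t_4|\geq t^2/4$. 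Combined with $|t_4|\leq 1/t$ this yields $t^3\leq 4$, which is impossible as soon as $t>4^{1/3}$.

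For \eqref{8}: the range $t\geq 1$ is again immediate from $\meas(D_{h^{a}})<\infty$, so assume $0<t<1$. I would fix $t_4\neq 0$ and $t_1$ and bound the measure of the set of admissible $t_5$. Three of the five defining inequalities each give such a bound one-dimensionally: $|t_4t_5+t_1^2|<t$ confines $t_5$ to an interval of length $\ll t/|t_4|$; $|t_4|t_5^2\leq 1$ gives $|t_5|\leq|t_4|^{-1/2}$; and $|t_4||t_4t_5+t_1^2|\leq 1$ confines $t_5$ to an interval of length $\ll|t_4|^{-2}$. Thus the $t_5$-measure is $\ll\min(t/|t_4|,\,|t_4|^{-1/2},\,|t_4|^{-2})$. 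Integrating this even function over $t_4\in\mathbb{R}$, splitting at $|t_4|=t^2$ and $|t_4|=1/t$ (where $|t_4|^{-1/2}$, $t/|t_4|$, $|t_4|^{-2}$ are respectively the smallest of the three), gives a quantity $\ll t+t\log(1/t)+t\ll t^{1/2}$; and since $D_{h^{a}}$ forces $0<t_1\leq 2^{1/3}$, the final integration over $t_1$ costs only an absolute constant.

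The routine part is the one-dimensional integral in the last step, together with the elementary inequality $t(1+\log(1/t))\ll t^{1/2}$ on $(0,1)$. The only point requiring care is the choice, in \eqref{8}, of integrating out $t_5$ first and of which three inequalities to use for its measure; the proof of \eqref{7} is a short finiteness-plus-emptiness argument and poses no real obstacle.
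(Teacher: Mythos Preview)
Your argument is correct, but both halves take a different route from the paper's.

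For \eqref{7}, the paper does not argue by emptiness for large $t$; instead it integrates directly. From $|t_4||t_4t_5+t_1^2|\leq 1$ and $t_1|t_4t_5+t_1^2|\leq 1$ together with $|t_4t_5+t_1^2|\geq t$ one gets $|t_4|\leq t^{-1}$ and $t_1\leq t^{-1}$; then integrating $|t_5|\leq |t_4|^{-1/2}$ over $|t_4|\leq t^{-1}$ and $t_1\leq t^{-1}$ gives $\ll t^{-3/2}$. Your emptiness-plus-finiteness trick is shorter and perfectly valid, but it relies on having already established $\meas(D_{h^a})<\infty$, whereas the paper's computation is self-contained.

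For \eqref{8}, the paper integrates out $t_4$ first rather than $t_5$: the bound $\min(t/|t_5|,\,|t_5|^{-1/2})\leq t^{1/2}|t_5|^{-3/4}$ (a geometric-mean step) reduces the $t_5$-integration to $\int_0^{t_1^{-2}}|t_5|^{-3/4}\,\D t_5\ll t_1^{-1/2}$, and then the $t_1$-integral over $(0,2^{1/3}]$ converges. This avoids the three-way case split you perform over $|t_4|$. Your route is equally rigorous and in fact yields the stronger intermediate bound $t\log(1/t)$ before you coarsen to $t^{1/2}$; the paper's order of integration trades that sharpness for a shorter calculation.
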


\begin{proof}
First, the conditions $|t_4||t_4t_5 + t_1^2| \leq 1$, $t_1|t_4t_5 + t_1^2| \leq 1$ and $|t_4t_5 + t_1^2| \geq t$ yield $t|t_4| \leq 1$ and $t t_1 \leq 1$. Therefore, integrating over $t_5$ using $|t_4|t_5^2 \leq 1$ and then over $|t_4|,t_1 \leq t^{-1}$ yields \eqref{7}. In addition, the condition $|t_4t_5 + t_1^2| < t$ shows that $t_4$ runs over a set whose measure is
$\ll \min \left( t|t_5|^{-1}, |t_5|^{-1/2} \right) \leq t^{1/2} |t_5|^{-3/4}$. Integrating this quantity over $t_5$ using $t_1^2|t_5| \leq 1$ and then over $t_1$ using $t_1^3 \leq 2$ gives \eqref{8}.
\end{proof}

The bound \eqref{7} shows that if $\kappa_a > 1$, the contribution of the main term of $\mathbf{N}(\boldsymbol{\xi}_{a},B)$ is
$\ll B \log(B)^4$, thus we assume from now on that $\kappa_a \leq 1$, namely
\begin{eqnarray}
\label{condition Va3}
Y_4Y_5 & \geq & \xi_9^2 \textrm{.}
\end{eqnarray}
Replacing $g_4^{a}(\kappa_a)$ by $g_4^{a}(0)$ in the main term of $\mathbf{N}(\boldsymbol{\xi}_{a},B)$ in \eqref{estimate Na} therefore produces an error term whose overall contribution is $\ll B \log(B)^4$ thanks to \eqref{8}. Since $g_4^{a}(0) = \tau_{\infty}/3$ by \eqref{tau 1}, we have obtained the following result.

\begin{lemma}
\label{lemmafin a}
For any fixed $A \geq 9$, we have
\begin{eqnarray*}
\mathbf{N}(\boldsymbol{\xi}_{a},B) & = & \mathcal{P} \frac{\tau_{\infty}}{3} \frac{B}{\boldsymbol{\xi}_{a}^{(1,1,1,1,1)}} \Theta_{a}(\boldsymbol{\xi}_{a}) + R_3(\boldsymbol{\xi}_{a},B) \textrm{,}
\end{eqnarray*}
where $\sum_{\boldsymbol{\xi}_{a}} R_3(\boldsymbol{\xi}_{a},B) \ll B \log(B)^4$.
\end{lemma}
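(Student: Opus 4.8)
The plan is to deduce the lemma by assembling the estimates already established in this subsection, the point being that almost all of the work has been pushed into the asymptotic \eqref{estimate Na}. That formula already incorporates: the summation over $\xi_8$ performed in Lemma~\ref{lemma inter a} by viewing the torsor equation \eqref{tor 2} as a congruence modulo $\xi_9$ and applying the first estimate of Lemma~\ref{lemma tau2} (which is where Weil's bound for Kloosterman sums enters, through Lemma~\ref{Weil lemma}); the subsequent summations over $\xi_4''$, $\xi_5''$ and then $\xi_1$ via Lemma~\ref{arithmetic preliminary}; and the restriction to the range \eqref{condition Va1}--\eqref{condition Va2} followed by the removal of the conditions $|t_4| Y_4, |t_5| Y_5 \geq \log(B)^A$ from the archimedean integral defining $g_3^{a}$. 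So at this stage we already have $\mathbf{N}(\boldsymbol{\xi}_{a},B) = \mathcal{P} g_4^{a}(\kappa_{a}) \frac{B}{\boldsymbol{\xi}_{a}^{(1,1,1,1,1)}} \Theta_{a}(\boldsymbol{\xi}_{a}) + R_2(\boldsymbol{\xi}_{a},B)$ with $\sum_{\boldsymbol{\xi}_{a}} R_2 \ll B \log(B)^4$, and the only remaining task is to replace $g_4^{a}(\kappa_{a})$ by the constant $g_4^{a}(0)$.

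I would carry this out in two steps. First, dispose of the terms with $\kappa_{a} > 1$: there $g_4^{a}(\kappa_a)$ is the measure of the set of $(t_4,t_5,t_1) \in D_{h^{a}}$ with $|t_4 t_5 + t_1^2| \geq \kappa_a$, so \eqref{7} gives $g_4^{a}(\kappa_a) \ll \kappa_a^{-3/2}$; writing $\kappa_a = \xi_9^2/(Y_4 Y_5)$ and summing first over $\xi_9 > (Y_4 Y_5)^{1/2}$ makes the $\xi_9$-contribution $O(1)$, and the remaining sum of $B/(\xi_2 \xi_3 \xi_6 \xi_7)$ over $\xi_2,\xi_3,\xi_6,\xi_7$ (all $\ll B^{1/2}$ by the height conditions) is $\ll B\log(B)^4$. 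We may therefore assume \eqref{condition Va3}, i.e. $\kappa_a \leq 1$. Second, for $\kappa_a \leq 1$ one has $g_4^{a}(0) - g_4^{a}(\kappa_a) = \meas\{(t_4,t_5,t_1) \in D_{h^{a}} : |t_4 t_5 + t_1^2| < \kappa_a\} \ll \kappa_a^{1/2}$ by \eqref{8}; the resulting error is $\sum_{\boldsymbol{\xi}_{a}} \frac{B}{\boldsymbol{\xi}_{a}^{(1,1,1,1,1)}} \kappa_a^{1/2}$, and summing over $\xi_9 \leq (Y_4 Y_5)^{1/2}$ collapses the factor $\kappa_a^{1/2}/\xi_9 = 1/(Y_4 Y_5)^{1/2}$ to $O(1)$, again leaving $\sum_{\xi_2,\xi_3,\xi_6,\xi_7} B/(\xi_2 \xi_3 \xi_6 \xi_7) \ll B\log(B)^4$. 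It then remains to note that $g_4^{a}(0) = \int\int\int_{t_1 > 0,\, h^{a}(t_4,t_5,t_1) \leq 1} \D t_4 \D t_5 \D t_1 = \tau_{\infty}/3$ by \eqref{tau 1}; absorbing $R_2$ and the two error terms just produced into $R_3$ completes the argument.

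The step that carries the real difficulty has in fact already been spent by the time one reaches \eqref{estimate Na}: it is the passage from the congruence modulo $\xi_9$ to a genuine main term, which relies on Lemma~\ref{lemma tau2} and hence on Weil's bound, together with the slightly delicate argument — mirroring what is done for the $3\mathbf{A}_1$ surface — that the contribution of the range $Y_4 < \log(B)^A$ is negligible. At the present stage the only substantive point is the convergence of the archimedean sums after replacing $g_4^{a}(\kappa_a)$ by $g_4^{a}(0)$, and the mechanism that makes it work is the inequality $\xi_9^2 \leq Y_4 Y_5$ coming from the hypothesis $\xi_9 \leq |\xi_8|$ via \eqref{conditionkappa'}; this plays here exactly the part that \eqref{conditionkappa} plays in the corresponding argument for $V_1$. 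Everything else is routine bookkeeping of error terms already shown to be $\ll B\log(B)^4$.
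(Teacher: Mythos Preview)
Your proposal is correct and follows essentially the same route as the paper: starting from \eqref{estimate Na}, using \eqref{7} to discard the range $\kappa_a>1$, then \eqref{8} to replace $g_4^{a}(\kappa_a)$ by $g_4^{a}(0)$ on the remaining range, and finally identifying $g_4^{a}(0)=\tau_\infty/3$ via \eqref{tau 1}. One tiny imprecision: the variables $\xi_2,\xi_3,\xi_6,\xi_7$ are not all $\ll B^{1/2}$ (only $Y_1\geq 2^{-1/3}$, i.e.\ $\xi_2\xi_3\xi_6^2\xi_7^2\ll B$, is available), but this still bounds each by a power of $B$, so $\sum 1/(\xi_2\xi_3\xi_6\xi_7)\ll\log(B)^4$ as you need.
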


Recall the definition \eqref{Va} of $\mathcal{V}_a$. It remains to sum the main term of $\mathbf{N}(\boldsymbol{\xi}_{a},B)$ over the $\boldsymbol{\xi}_{a} \in \mathcal{V}_a$ satifying \eqref{condition Va1}, \eqref{condition Va2} and \eqref{condition Va3} and the coprimality conditions \eqref{gcd5'}, \eqref{gcd6'} and \eqref{gcd7'}. It is easy to check that replacing
$\left\{ \boldsymbol{\xi}_{a} \in \mathcal{V}_a, \eqref{condition Va1}, \eqref{condition Va2}, \eqref{condition Va3} \right\}$ by the region 
\begin{eqnarray*}
\mathcal{V}_a' & = & \left\{ \boldsymbol{\xi}_{a} \in \mathbb{Z}_{>0}^5, Y_4 \geq 1, Y_5 \geq 1, Y_1 \geq 1, Y_4Y_5 \geq \xi_9^2, \right\}  \textrm{,}
\end{eqnarray*}
produces an error term whose overall contribution is $\ll B \log(B)^4 \log(\log(B))$. Let us redefine $\Theta_{a}$ as being equal to zero if the remaining coprimality conditions \eqref{gcd5'}, \eqref{gcd6'} and \eqref{gcd7'} are not satisfied. Lemma \ref{lemmafin a} proves that for any fixed $A \geq 9$, we have
\begin{eqnarray*}
N_{a}(A,B) & = & \mathcal{P} \frac{\tau_{\infty}}{3} B \sum_{\boldsymbol{\xi}_{a} \in \mathcal{V}_a'} \frac{\Theta_{a}(\boldsymbol{\xi}_{a})}{\boldsymbol{\xi}_{a}^{(1,1,1,1,1)}} + O \left( B \log(B)^4 \log(\log(B)) \right) \textrm{.}
\end{eqnarray*}
As in section \ref{Conclusion}, $\Theta_{a}$ satifies the assumption \eqref{Psi} of lemma \ref{final sum} and thus we get
$$N_{a}(A,B) = \mathcal{P} \frac{\tau_{\infty}}{3} \alpha_{a} \left( \sum_{\boldsymbol{\xi}_{a} \in \mathbb{Z}_{>0}^5}
\frac{(\Theta_{a} \ast \boldsymbol{\mu})(\boldsymbol{\xi}_{a})}{\boldsymbol{\xi}_{a}^{(1,1,1,1,1)}} \right) B \log(B)^5
+ O \left( B \log(B)^4 \log(\log(B)) \right) \textrm{,}$$
where $\alpha_{a}$ is the volume of the polytope defined in $\mathbb{R}^5$ by $t_2,t_3,t_6,t_7,t_9 \geq 0$ and
\begin{eqnarray*}
- 2 t_2 + 4 t_3 + 2 t_6 - t_7 - 3 t_9 & \leq & 1 \textrm{,} \\
4 t_2 - 2 t_3 - t_6 + 2 t_7 + 3 t_9 & \leq & 1 \textrm{,} \\
t_2 + t_3 + 2 t_6 + 2 t_7 & \leq & 1 \textrm{,} \\
2 t_2 + 2 t_3 + t_6 + t_7 + 6 t_9 & \leq & 2 \textrm{.}
\end{eqnarray*}
A computation using \cite{Convex} gives
\begin{eqnarray}
\label{alpha_a}
\alpha_a & = & \frac{1871}{2016000} \textrm{,}
\end{eqnarray}
and moreover, as in section \ref{Conclusion}, we get
\begin{eqnarray*}
\sum_{\boldsymbol{\xi}_{a} \in \mathbb{Z}_{>0}^5}
\frac{(\Theta_{a} \ast \boldsymbol{\mu})(\boldsymbol{\xi}_{a})}{\boldsymbol{\xi}_{a}^{(1,1,1,1,1)}} & = &
\mathcal{P}^{-1} \prod_p \left( 1 - \frac1{p} \right)^6 \tau_p \textrm{,}
\end{eqnarray*}
thus we have obtained the following lemma.
\begin{lemma}
\label{lemma N_a}
For any fixed $A \geq 9$, we have
\begin{eqnarray*}
N_{a}(A,B) & = & \frac1{3} \alpha_{a} \omega_H(\widetilde{V_2}) B \log(B)^5 + O \left( B \log(B)^4 \log(\log(B)) \right) \textrm{.}
\end{eqnarray*}
\end{lemma}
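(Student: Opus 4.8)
The plan is simply to assemble the pieces established above in this section. Indeed, by the discussion immediately preceding the statement, for any fixed $A \geq 9$ one already has
\[
N_{a}(A,B) = \mathcal{P}\,\frac{\tau_{\infty}}{3}\,\alpha_{a}\left(\sum_{\boldsymbol{\xi}_{a}\in\mathbb{Z}_{>0}^5}\frac{(\Theta_{a}\ast\boldsymbol{\mu})(\boldsymbol{\xi}_{a})}{\boldsymbol{\xi}_{a}^{(1,1,1,1,1)}}\right)B\log(B)^5 + O\left(B\log(B)^4\log(\log(B))\right),
\]
where $\alpha_a$ is the volume \eqref{alpha_a} of the explicit polytope cut out by the height conditions \eqref{condition1'}--\eqref{condition5'}. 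So the only remaining task is to identify the Dirichlet series above with $\mathcal{P}^{-1}\prod_p(1-1/p)^6\tau_p$ and then to recognize the shape of Peyre's constant.

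First I would check that Lemma \ref{final sum} applies to $\Theta_a$, exactly as in Section \ref{Conclusion}: the convolution $(\Theta_a\ast\boldsymbol{\mu})(p^{k_2},p^{k_3},p^{k_6},p^{k_7},p^{k_9})$ vanishes unless $\mathbf{k}\in\{0,1\}^5$ and is $\ll 1/p$ when a single coordinate equals $1$, so the local factors of the associated series are $1+O(p^{-\min(\Re(s)+1,\,2\Re(s))})$ and the series converges for $\Re(s)>1/2$, which is the hypothesis \eqref{Psi}. Next I would carry out the Euler product computation: for each prime $p$ one evaluates $\sum_{\mathbf{k}\in\{0,1\}^5}\Theta_a(p^{k_2},\dots,p^{k_9})\,p^{-(k_2+k_3+k_6+k_7+k_9)}$, where the coprimality conditions \eqref{gcd5'}, \eqref{gcd6'}, \eqref{gcd7'} hard-wired into $\theta_a$ and $\Theta_a$ (via the $\varphi^{\ast}(\gcd(\cdot))^{-1}$ factors) annihilate most of the $32$ monomials and leave $\varphi'(p)(1-1/p)(1+6/p+1/p^2)$. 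Multiplying by $(1-1/p)^5$ and taking the product over all primes produces $\mathcal{P}^{-1}\prod_p(1-1/p)^6\tau_p$, with $\tau_p = 1+6/p+1/p^2$; this computation is routine and its details may be omitted.

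It then only remains to substitute back: the two factors $\mathcal{P}$ cancel, leaving $\frac{\tau_\infty}{3}\alpha_a\prod_p(1-1/p)^6\tau_p\cdot B\log(B)^5$ plus the admissible error. Since $\omega_H(\widetilde{V_2})=\tau_\infty\prod_p(1-1/p)^6\tau_p$ from the calculation of Peyre's constant, this is precisely $\frac{1}{3}\,\alpha_a\,\omega_H(\widetilde{V_2})\,B\log(B)^5 + O(B\log(B)^4\log(\log(B)))$, as claimed.

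I do not expect a genuine obstacle at this final stage, since the analytic heart of the argument is Lemma \ref{lemma tau2} together with the error-term bookkeeping already carried out above. The two points needing a little attention are, first, verifying that replacing $\{\boldsymbol{\xi}_a\in\mathcal{V}_a:\eqref{condition Va1},\eqref{condition Va2},\eqref{condition Va3}\}$ by $\mathcal{V}_a'$ costs only $O(B\log(B)^4\log(\log(B)))$ — this is where the extra $\log(\log(B))$ really enters, through summing $1/\xi_9$ over $\xi_9^2\le Y_4Y_5$ — and second, the somewhat tedious but entirely mechanical Euler-factor computation described above.
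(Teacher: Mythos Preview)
Your proposal is correct and follows essentially the same route as the paper: start from the displayed formula for $N_a(A,B)$ already established, check the hypothesis \eqref{Psi} of Lemma~\ref{final sum} via the local-factor estimate (the paper just says ``as in section~\ref{Conclusion}''), compute the Euler product to get $\mathcal{P}^{-1}\prod_p(1-1/p)^6\tau_p$, and combine with $\omega_H(\widetilde{V_2})=\tau_\infty\prod_p(1-1/p)^6\tau_p$. One small inaccuracy: the coprimality conditions \eqref{gcd5'}--\eqref{gcd7'} are hard-wired into $\Theta_a$ by redefining it to be zero when they fail, not via $\varphi^{\ast}(\gcd(\cdot))^{-1}$ factors; and the $\log(\log(B))$ in passing from $\mathcal{V}_a$ to $\mathcal{V}_a'$ comes from the slabs $1\le Y_i<\log(B)^A$ rather than from the $\xi_9$-sum, but these do not affect the argument.
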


\subsection{Estimating $N_{b}(A,B)$}

Note that the assumption $|\xi_9| > \xi_8$ and \eqref{condition4'} yield in this case
\begin{eqnarray}
\label{conditionkappa''}
\xi_8^2 & < & \frac{B}{\xi_1\xi_2\xi_3\xi_6\xi_7} \textrm{.}
\end{eqnarray}
We estimate the contribution of the variables $\xi_4$, $\xi_5$ and $\xi_9$. To do so, we rewrite the coprimality conditions as
\begin{eqnarray}
\label{gcd1''} 
& & \gcd(\xi_9,\xi_1\xi_3\xi_4\xi_5\xi_6\xi_7) = 1 \textrm{,} \\
\label{gcd2''} 
& & \gcd(\xi_4,\xi_1\xi_2\xi_6\xi_7\xi_8) = 1 \textrm{,} \\
\label{gcd3''} 
& & \gcd(\xi_5,\xi_1\xi_3\xi_6\xi_7\xi_8) = 1 \textrm{,} \\
\label{gcd4''} 
& & \gcd(\xi_1,\xi_2\xi_3\xi_8) = 1 \textrm{,} \\
\label{gcd5''}
& & \gcd(\xi_2,\xi_3\xi_6\xi_8) = 1 \textrm{,} \\
\label{gcd6''} 
& & \gcd(\xi_7,\xi_3\xi_6\xi_8) = 1 \textrm{,} \\
\label{gcd7''}
& & \gcd(\xi_8,\xi_6) = 1 \textrm{,}
\end{eqnarray}
This time, we want to view the torsor equation \eqref{tor 2} as a congruence modulo $\xi_8$. To do so, we replace \eqref{condition2'}, \eqref{condition4'}, \eqref{condition5'} and \eqref{condition6''} by the following (we keep denoting them by \eqref{condition2'}, \eqref{condition4'}, \eqref{condition5'} and \eqref{condition6''}), obtained using the torsor equation
\eqref{tor 2},
\begin{eqnarray*}
\xi_1^2 \xi_2^2 | \xi_5 | \xi_6 \xi_7^2 \left| \xi_4\xi_5 + \xi_1^2\xi_6\xi_7 \right| \xi_8^{-1} & \leq & B \textrm{,} \\
\xi_1 \xi_2 \xi_3 \xi_6 \xi_7 \left| \xi_4\xi_5 + \xi_1^2\xi_6\xi_7 \right| & \leq & B \textrm{,} \\
\xi_2^2 | \xi_4 | \xi_5^2 \xi_7 \left| \xi_4\xi_5 + \xi_1^2\xi_6\xi_7 \right| \xi_8^{-1} & \leq & B \textrm{,} \\
\xi_8^2 & < & \left| \xi_4\xi_5 + \xi_1^2\xi_6\xi_7 \right| \textrm{.}
\end{eqnarray*}
Set $\boldsymbol{\xi}_{b}' = (\xi_1, \xi_2, \xi_3, \xi_6, \xi_7, \xi_8) \in \mathbb{Z}_{>0}^6$. Assume that
$\boldsymbol{\xi}_{b}' \in \mathbb{Z}_{>0}^6$ is fixed and satisfies the height conditions \eqref{condition sup} and \eqref{conditionkappa''} and the coprimality conditions \eqref{gcd4''}, \eqref{gcd5''}, \eqref{gcd6''} and \eqref{gcd7''}. Let
$N_{b}(\boldsymbol{\xi}_{b}',B)$ be the number of $\xi_4$, $\xi_5$ and $\xi_9$ satisfying the torsor equation \eqref{tor 2}, the height conditions \eqref{condition1'}, \eqref{condition2'}, \eqref{condition3'}, \eqref{condition4'}, \eqref{condition5'}, the conditions \eqref{condition6''}, \eqref{condition7''} and \eqref{condition8''} and the coprimality conditions \eqref{gcd1''}, \eqref{gcd2''} and \eqref{gcd3''}.

\begin{lemma}
\label{lemma inter b}
For any fixed $A \geq 8$, we have
\begin{eqnarray*}
N(\boldsymbol{\xi}_{b}',B) & = & \frac1{\xi_8}
\sum_{\substack{k_9|\xi_3 \\ \gcd(k_9,\xi_6) = 1}} \frac{\mu(k_9)}{k_9 \varphi^{\ast}(k_9\xi_8)}
\sum_{\substack{k_4|\xi_1\xi_2\xi_6\xi_7 \\ \gcd(k_4, k_9\xi_8)=1}} \mu(k_4)
\sum_{\substack{k_5|\xi_1\xi_3\xi_6\xi_7 \\ \gcd(k_5, k_9\xi_8)=1}} \mu(k_5) \\
& & \sum_{\substack{\ell_4|k_9\xi_8 \\ \ell_5|k_9\xi_8}} \mu(\ell_4) \mu(\ell_5) C(\boldsymbol{\xi}_{b}',B) + R(\boldsymbol{\xi}_{b}',B) \textrm{,} 
\end{eqnarray*}
where, setting $\xi_4 = k_4 \ell_4 \xi_4''$ and $\xi_5 = k_5 \ell_5 \xi_5''$,
\begin{eqnarray*}
C(\boldsymbol{\xi}_{b}',B) & = &
\# \left\{ \left( \xi_4'', \xi_5'' \right) \in \mathbb{Z}_{\neq 0}^2,
\begin{array}{l}
\eqref{condition1'}, \eqref{condition2'}, \eqref{condition3'}, \eqref{condition4'}, \eqref{condition5'} \\
\eqref{condition6''}, \eqref{condition7''}, \eqref{condition8''}
\end{array}
\right\} \textrm{,}
\end{eqnarray*}
and $\sum_{\boldsymbol{\xi}_{b}'} R(\boldsymbol{\xi}_{b}',B) \ll B \log(B)^2$.
\end{lemma}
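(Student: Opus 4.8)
The plan is to transcribe the proof of Lemma~\ref{lemma inter a} almost verbatim, the only structural change being that $\xi_8$ now plays the role of the modulus (previously $\xi_9$), so that \eqref{tor 2} is read modulo $\xi_8$ and the variable eventually summed out is $\xi_9$. First I would remove the coprimality condition \eqref{gcd1''} by a M\"obius inversion, writing $\xi_9=k_9\xi_9'$ and
\[
N(\boldsymbol{\xi}_{b}',B)=\sum_{k_9\mid\xi_1\xi_3\xi_4\xi_5\xi_6\xi_7}\mu(k_9)\,S_{k_9}(\boldsymbol{\xi}_{b}',B).
\]
Reducing \eqref{tor 2} modulo $k_9$ shows, exactly as before, that $S_{k_9}$ vanishes unless $\gcd(k_9,\xi_1\xi_4\xi_5\xi_6\xi_7)=1$, which together with $k_9\mid\xi_1\xi_3\xi_4\xi_5\xi_6\xi_7$ forces $k_9\mid\xi_3$. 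The torsor equation then produces the congruence $\xi_4\xi_5\equiv-\xi_1^2\xi_6\xi_7\imod{k_9\xi_8}$, meaningful because $\gcd(\xi_4\xi_5,\xi_8)=1$ by \eqref{gcd2''} and \eqref{gcd3''}; the error $R_0(\boldsymbol{\xi}_{b}',B)$ produced by discarding the constraint $\xi_9'\neq 0$ is handled as in Lemma~\ref{lemma inter a}, the offending tuples having $|\xi_4|=|\xi_5|=\xi_1=\xi_6=\xi_7=1$, so that summing over $\xi_8$ via \eqref{conditionkappa''} and then over the remaining variables yields $\sum_{k_9,\boldsymbol{\xi}_{b}'}|\mu(k_9)|R_0(\boldsymbol{\xi}_{b}',B)\ll B\log(B)^2$.

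Next I would strip off \eqref{gcd2''} and \eqref{gcd3''} by M\"obius inversions in $k_4$ and $k_5$; since $\gcd(k_9\xi_8,\xi_1\xi_6\xi_7)=1$, the integers $k_4,k_5$ are invertible modulo $k_9\xi_8$ and $\xi_8$ may be removed from the divisibility conditions on them. Setting $X=B/(k_4k_5\xi_1\xi_2\xi_3\xi_6\xi_7)$, the divisor bound shows (using \eqref{conditionkappa''} to sum over $\xi_8$ and \eqref{condition sup} to sum over $\xi_3$, with $\varepsilon=1/48$) that the range $k_9>(k_4k_5)^{-1/2}X^{1/6}$ contributes $\ll B\log(B)^2$, so we may assume $k_9\leq(k_4k_5)^{-1/2}X^{1/6}$; combined with \eqref{conditionkappa''} this gives $k_9\xi_8\leq X^{2/3}$. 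The crucial observation is that, after using \eqref{tor 2} to replace \eqref{condition2'}, \eqref{condition4'}, \eqref{condition5'} and \eqref{condition6''}, the height conditions on $(\xi_4'',\xi_5'')$ are precisely those defining the region $\mathcal{S}_2$, with $T=\xi_1^2\xi_6\xi_7/(k_4k_5)$, $Z=\xi_8^2$ and suitable $X_i$ (note $T\leq 2X$ is exactly \eqref{condition sup}). Applying the second estimate of Lemma~\ref{lemma tau2} then yields
\[
S(\boldsymbol{\xi}_{b}',B)=S^{\ast}(\boldsymbol{\xi}_{b}',B)+O\left(\frac{X^{4/5+\varepsilon}}{(k_9\xi_8)^{7/10}}+\frac{X}{\varphi(k_9\xi_8)}\cdot\frac{k_4+k_5}{\log(B)^A}\right).
\]

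The remainder is bookkeeping: summing the first error term over $\boldsymbol{\xi}_{b}'$ through \eqref{conditionkappa''} and \eqref{condition sup} gives $\ll B\log(B)$, while the second (and, symmetrically, the third) contributes $\ll B\log(B)^{10-A}$, so everything is $\ll B\log(B)^2$ once $A\geq 8$. Expanding $S^{\ast}=\varphi(k_9\xi_8)^{-1}\sum_{\ell_4\mid k_9\xi_8}\mu(\ell_4)\sum_{\ell_5\mid k_9\xi_8}\mu(\ell_5)\,C(\boldsymbol{\xi}_{b}',B)$ removes the condition $\gcd(\xi_4'\xi_5',k_9\xi_8)=1$, and the crude bound $C(\boldsymbol{\xi}_{b}',B)\ll(X/(\ell_4\ell_5))^{1+\varepsilon}$ lets me reinstate the full range of $k_9$ at the cost of $\ll B\log(B)^2$. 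Finally, since $k_9\mid\xi_3$ and $\gcd(\xi_1\xi_7,\xi_3)=1$ (from the coprimality relation $\gcd(\xi_3,\xi_1\xi_5\xi_7\xi_9)=1$), the condition $\gcd(k_9,\xi_1\xi_7)=1$ is automatic and can be dropped, leaving only $\gcd(k_9,\xi_6)=1$ in the statement---and this last one genuinely survives because $\gcd(\xi_3,\xi_6)$ need not be $1$. The only delicate points I anticipate are the correct identification of the region as $\mathcal{S}_2$ (so that the weaker exponent $4/5$ of Lemma~\ref{lemma tau2} is the one in play) and the subsequent verification that, despite this loss, every error term still sums to $\ll B\log(B)^2$; the remaining steps are a line-by-line transcription of the argument for $N_{a}(A,B)$.
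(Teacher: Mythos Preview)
Your proposal is correct and follows the paper's own proof essentially step for step: M\"obius inversion on \eqref{gcd1''}, reduction to $k_9\mid\xi_3$ with $\gcd(k_9,\xi_1\xi_4\xi_5\xi_6\xi_7)=1$, the congruence modulo $k_9\xi_8$, the truncation $k_9\leq(k_4k_5)^{-1/2}X^{1/6}$, the identification of the region as $\mathcal{S}_2$ so that the second estimate of Lemma~\ref{lemma tau2} applies with exponent $4/5$, and the final removal of $\gcd(k_9,\xi_1\xi_7)=1$ via $\gcd(\xi_3,\xi_1\xi_7)=1$. The only cosmetic difference is that the paper sums the $N'$ contribution over $\xi_2$ (rather than $\xi_3$) via \eqref{condition sup}, which is immaterial since the bound is symmetric in these two variables apart from the harmless factor $2^{\omega(\xi_3)}$.
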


Let us remove the coprimality condition \eqref{gcd1'} using a Möbius inversion. We get
\begin{eqnarray*}
N(\boldsymbol{\xi}_{b}',B) & = & \sum_{k_9|\xi_1\xi_3\xi_4\xi_5\xi_6\xi_7} \mu(k_9) S_{k_9}(\boldsymbol{\xi}_{b}',B) \textrm{,}
\end{eqnarray*}
where
\begin{eqnarray*}
S_{k_9}(\boldsymbol{\xi}_{b}',B) & = & \# \left\{ \left( \xi_4, \xi_5, \xi_9' \right) \in \mathbb{Z}_{\neq 0}^3 ,
\begin{array}{l}
\xi_4 \xi_5 + k_9 \xi_8 \xi_9' = - \xi_1^2\xi_6\xi_7 \\
\eqref{condition1'}, \eqref{condition2'}, \eqref{condition3'}, \eqref{condition4'}, \eqref{condition5'} \\
\eqref{condition6''}, \eqref{condition7''}, \eqref{condition8''} \\
\eqref{gcd2''}, \eqref{gcd3''}
\end{array}
\right\} \textrm{.}
\end{eqnarray*}
Note that if $\gcd(k_9,\xi_1\xi_6\xi_7) \neq 1$ or $\gcd(k_9,\xi_4\xi_5) \neq 1$ then $\gcd(\xi_4\xi_5,\xi_1\xi_6\xi_7) \neq 1$ and so $S_{k_9}(\boldsymbol{\xi}_{b}',B) = 0$ thus we can assume that $\gcd(k_9,\xi_1\xi_4\xi_5\xi_6\xi_7) = 1$. We have
\begin{eqnarray*}
S_{k_9}(\boldsymbol{\xi}_{b}',B) & = &
\# \left\{ \left( \xi_4, \xi_5 \right) \in \mathbb{Z}_{\neq 0}^2 ,
\begin{array}{l}
\xi_4 \xi_5 \equiv - \xi_1^2\xi_6\xi_7 \imod{k_9 \xi_8} \\
\eqref{condition1'}, \eqref{condition2'}, \eqref{condition3'}, \eqref{condition4'}, \eqref{condition5'} \\
\eqref{condition6''}, \eqref{condition7''}, \eqref{condition8''} \\
\eqref{gcd2''}, \eqref{gcd3''}
\end{array}
\right\} + R_0(\boldsymbol{\xi}_{b}',B) \textrm{,}
\end{eqnarray*}
where the error term $R_0(\boldsymbol{\xi}_{b}',B)$ comes from the fact $\xi_9'$ has to be non-zero. Otherwise, we would have $\xi_4 \xi_5 = - \xi_1^2\xi_6\xi_7$ and thus $|\xi_4| = |\xi_5| = \xi_1 = \xi_6 = \xi_7 = 1$. Summing over $\xi_8$ using the condition \eqref{conditionkappa''}, we easily obtain
\begin{eqnarray*}
\sum_{k_9,\boldsymbol{\xi}_{b}'} |\mu(k_9)| R_0(\boldsymbol{\xi}_{b}',B) & \ll & B \log(B)^2 \textrm{.}
\end{eqnarray*}
We now remove the coprimality conditions \eqref{gcd2''} and \eqref{gcd3''}. The main term of $N(\boldsymbol{\xi}_{b}',B)$ is equal to
\begin{eqnarray*}
& & \sum_{\substack{k_9|\xi_3 \\ \gcd(k_9,\xi_1\xi_6\xi_7) = 1}} \mu(k_9)
\sum_{\substack{k_4|\xi_1\xi_2\xi_6\xi_7\xi_8 \\ \gcd(k_4, k_9\xi_8)=1}} \mu(k_4)
\sum_{\substack{k_5|\xi_1\xi_3\xi_6\xi_7\xi_8 \\ \gcd(k_5, k_9\xi_8)=1}} \mu(k_5) S(\boldsymbol{\xi}_{b}',B) \textrm{,}
\end{eqnarray*}
where, setting $\xi_4 = k_4 \xi_4'$ and $\xi_5 = k_5 \xi_5'$,
\begin{eqnarray*}
S(\boldsymbol{\xi}_{b}',B) & = & \# \left\{ \left( \xi_4', \xi_5' \right) \in \mathbb{Z}_{\neq 0}^2,
\begin{array}{l}
\xi_4'\xi_5' \equiv - (k_4k_5)^{-1}\xi_1^2\xi_6\xi_7 \imod{k_9 \xi_8} \\
\eqref{condition1'}, \eqref{condition2'}, \eqref{condition3'}, \eqref{condition4'}, \eqref{condition5'} \\
\eqref{condition6''}, \eqref{condition7''}, \eqref{condition8''}
\end{array}
\right\} \textrm{.}
\end{eqnarray*}
Indeed, since $\gcd(k_9\xi_8,\xi_1\xi_6\xi_7) = 1$, we have $\gcd(k_4k_5,k_9\xi_8) = 1$. We can therefore remove $\xi_8$ from the conditions on $k_4$ and $k_5$. Everything is now in place to apply lemma \ref{lemma tau2}. Set
\begin{eqnarray*}
X & = & \frac{B}{k_4k_5\xi_1\xi_2\xi_3\xi_6\xi_7} \textrm{.}
\end{eqnarray*}
An argument identical to the one given in the proof of lemma \ref{lemma inter} shows that assuming that
$k_9 \leq (k_4 k_5)^{-1/2} X^{1/6}$ produces an error term $N'(\boldsymbol{\xi}_{b}',B)$ which satisfies
\begin{eqnarray*}
\sum_{\xi_8} N'(\boldsymbol{\xi}_{b}',B) & \ll & \left( \frac{B}{\xi_1\xi_2\xi_3\xi_6\xi_7} \right)^{1 + 2 \varepsilon - 1/12}
+ 2^{\omega(\xi_3)} \left( \frac{B}{\xi_1\xi_2\xi_3\xi_6\xi_7} \right)^{1/2 + \varepsilon} \textrm{.}
\end{eqnarray*}
Choosing $\varepsilon = 1/48$ and summing over $\xi_2$ using \eqref{condition sup}, we get
\begin{eqnarray*}
\sum_{\boldsymbol{\xi}_{b}'} N'(\boldsymbol{\xi}_{b}',B) & \ll & \sum_{\xi_1,\xi_3,\xi_6,\xi_7}
\left( \frac{B}{\xi_1^{13/12}\xi_3\xi_6^{25/24}\xi_7^{25/24}}
+ 2^{\omega(\xi_3)} \frac{B}{\xi_1^{47/24}\xi_3\xi_6^{71/48}\xi_7^{71/48}} \right) \\
& \ll & B \log(B)^2 \textrm{.}
\end{eqnarray*}
The assumption $k_9 \leq (k_4 k_5)^{-1/2} X^{1/6}$ and \eqref{conditionkappa''} give $k_9 \xi_8 \leq X^{2/3}$. We proceed to apply the second estimate of lemma \ref{lemma tau2}. Set as in the first case $L_1 = \log(B)^A/k_4$, $L_2 = \log(B)^A/k_5$ and
$T = \xi_1^2\xi_6\xi_7/(k_4k_5)$. We have $T \leq 2X$ by \eqref{condition sup} and $k_9 \xi_8 \leq X^{2/3}$ thus lemma \ref{lemma tau2} shows that
\begin{eqnarray*}
S(\boldsymbol{\xi}_{b}',B) & = & S^{\ast}(\boldsymbol{\xi}_{b}',B)
+ O \left( \frac{X^{4/5 + \varepsilon}}{(k_9 \xi_8)^{7/10}} + \frac{X}{\varphi(k_9 \xi_8)} \left( \frac{k_4}{\log(B)^A} + \frac{k_5}{\log(B)^A} \right) \right) \textrm{,}
\end{eqnarray*}
for all $\varepsilon > 0$, with
\begin{eqnarray*}
S^{\ast}(\boldsymbol{\xi}_{b}',B) & = & \frac1{\varphi(k_9\xi_8)} \# \left\{ \left( \xi_4', \xi_5' \right) \in \mathbb{Z}_{\neq 0}^2,
\begin{array}{l}
\gcd(\xi_4'\xi_5',k_9\xi_8) = 1 \\
\eqref{condition1'}, \eqref{condition2'}, \eqref{condition3'}, \eqref{condition4'}, \eqref{condition5'} \\
\eqref{condition6''}, \eqref{condition7''}, \eqref{condition8''}
\end{array}
\right\} \textrm{.}
\end{eqnarray*}
Using \eqref{conditionkappa''} to sum over $\xi_8$, we obtain that the contribution of the first error term is
\begin{eqnarray*}
\sum_{\xi_1,\xi_2,\xi_3,\xi_6,\xi_7} \left( \frac{B}{\xi_1\xi_2\xi_3\xi_6\xi_7} \right)^{19/20 + \varepsilon} & \ll & B \log(B)
\textrm{,}
\end{eqnarray*}
for $\varepsilon = 1/40$ and where we have summed over $\xi_3$ using \eqref{condition sup}. The contributions of the second and the third error terms are easily seen to be both $\ll B \log(B)^{10 - A}$, which is satisfactory if $A \geq 8$. Furthermore, we have
\begin{eqnarray*}
S^{\ast}(\boldsymbol{\xi}_{b}',B) & = & \frac1{\varphi(k_9\xi_8)} \sum_{\ell_4|k_9\xi_8} \mu(\ell_4) \sum_{\ell_5|k_9\xi_8} \mu(\ell_5) C(\boldsymbol{\xi}_{b}',B) \textrm{,}
\end{eqnarray*}
where we have set $\xi_4' = \ell_4 \xi_4''$ and $\xi_5' = \ell_5 \xi_5''$. It is plain that
\begin{eqnarray*}
C(\boldsymbol{\xi}_{b}',B) & \ll & \left( \frac{X}{\ell_4 \ell_5} \right)^{1 + \varepsilon} \textrm{.}
\end{eqnarray*}
Let us use this bound to estimate the overall contribution of the error term produced by removing the condition
$k_9 \leq (k_4k_5)^{-1/2} X^{1/6}$ from the sum over $k_9$. Writing $k_9 > k_9^{1/2} (k_4k_5)^{-1/4} X^{1/12}$ and choosing $\varepsilon = 1/24$, we see that this contribution is
\begin{eqnarray*}
\sum_{\boldsymbol{\xi}_{a}'} \frac1{\xi_8} \left( \frac{B}{\xi_1\xi_2\xi_3\xi_6\xi_7} \right)^{23/24} & \ll &
B \log(B)^2 \textrm{,}
\end{eqnarray*}
as in section \ref{N_a}. We can remove the condition $\gcd(k_9,\xi_1 \xi_7) = 1$ from the sum over $k_9$ since it follows from
$k_9|\xi_3$ and $\gcd(\xi_1 \xi_7,\xi_3) = 1$. This ends the proof of lemma \ref{lemma inter b}.

We intend to sum also over $\xi_1$ and we therefore set
$\boldsymbol{\xi}_{b} = (\xi_2, \xi_3, \xi_6, \xi_7, \xi_8) \in \mathbb{Z}_{>0}^5$. We also set,
${\boldsymbol{\xi}_{b}}^{(r_2,r_3,r_6,r_7,r_8)} = \xi_2^{r_2} \xi_3^{r_3} \xi_6^{r_6} \xi_7^{r_7} \xi_8^{r_8}$ for $(r_2,r_3,r_6,r_7,r_8) \in \mathbb{Q}^5$ and finally
\begin{align*}
& Y_4 = \frac{B}{\boldsymbol{\xi}_{b}^{(0,2,1,0,1)}} \textrm{,} &
& Y_4'' = \frac{Y_4}{k_4\ell_4} \textrm{,} \\
& Y_5 = \frac{\boldsymbol{\xi}_{b}^{(-2/3,4/3,2/3,-1/3,1)}}{B^{1/3}} \textrm{,} &
& Y_5'' = \frac{Y_5}{k_5\ell_5} \textrm{,} \\
& Y_1 = \frac{B^{1/3}}{\boldsymbol{\xi}_{b}^{(1/3,1/3,2/3,2/3,0)}} \textrm{.} &
\end{align*}
Recalling the definition \eqref{h^{b}} of the function $h^{b}$, we can sum up the height conditions \eqref{condition1'}, \eqref{condition2'}, \eqref{condition3'}, \eqref{condition4'} and \eqref{condition5'} as
\begin{eqnarray*}
h^{b} \left( \frac{\xi_4''}{Y_4''}, \frac{\xi_5''}{Y_5''}, \frac{\xi_1}{Y_1} \right) & \leq & 1 \textrm{.}
\end{eqnarray*}
Note that, as in the first case, the condition \eqref{condition sup} can be rewritten as
\begin{eqnarray*}
\frac{\xi_1}{Y_1} & \leq & 2^{1/3} \textrm{.}
\end{eqnarray*}
We also introduce the following real-valued functions
\begin{eqnarray*}
g_1^{b} & : & (t_5,t_1,t;\boldsymbol{\xi}_{b}, B) \mapsto
\int_{h^{b}(t_4,t_5,t_1) \leq 1, t < |t_4 t_5 + t_1^2|, |t_4|Y_4 \geq \log(B)^A} \D t_4 \textrm{,} \\
g_2^{b} & : & (t_1,t;\boldsymbol{\xi}_{b}, B) \mapsto \int_{|t_5| Y_5 \geq \log(B)^A} g_1^{b}(t_5,t_1,t;\boldsymbol{\xi}_{b}, B) \D t_5 \textrm{,} \\
g_3^{b} & : & (t;\boldsymbol{\xi}_{b}, B) \mapsto \int_{t_1 > 0} g_2^{b}(t_1,t; \boldsymbol{\xi}_{b}, B) \D t_1 \textrm{,} \\
g_4^{b} & : & t \mapsto \int \int \int_{t_1 > 0, h^{b}(t_4,t_5,t_1) \leq 1, t < |t_4 t_5 + t_1^2|} \D t_4 \D t_5 \D t_1 \textrm{.}
\end{eqnarray*}
The condition $t < |t_4 t_5 + t_1^2|$ corresponds to the condition \eqref{condition6''} which can now be rewritten as
\begin{eqnarray*}
\frac{\xi_8^2}{Y_4 Y_5} & < & \left| \frac{\xi_4''}{Y_4''} \frac{\xi_5''}{Y_5''} + \left( \frac{\xi_1}{Y_1} \right)^2 \right| \textrm{.}
\end{eqnarray*}
We denote by $\kappa_{b}$ the left-hand side of this inequality.

\begin{lemma}
\label{bounds''}
We have the bounds
\begin{eqnarray*}
g_1^{b}(t_5,t_1,t;\boldsymbol{\xi}_{b}, B) & \ll & t_1^{-1}|t_5|^{-1} \textrm{,} \\
g_2^{b}(t_1,t;\boldsymbol{\xi}_{b}, B) & \ll & 1 \textrm{.}
\end{eqnarray*}
\end{lemma}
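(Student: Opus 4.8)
The plan is to argue as in the proof of Lemma~\ref{bounds'}, the only new feature being that the factor $|t_4t_5+t_1^2|$ now occurs in several entries of $h^b$ rather than in just one; since for fixed $t_5$ this factor is an affine function of $t_4$ of slope $|t_5|$, translating it does not affect the lengths of the intervals it cuts out, and the estimates carry over. For the first bound, the third entry $t_1|t_4t_5|\leq1$ of $h^b$ confines $t_4$ to a set of measure $\ll(t_1|t_5|)^{-1}$, which is exactly the claimed bound for $g_1^b$.

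For the second bound, note that the three additional conditions entering the definitions of $g_1^b$ and $g_2^b$ — namely $|t_4|Y_4\geq\log(B)^A$, $|t_5|Y_5\geq\log(B)^A$ and $t<|t_4t_5+t_1^2|$ — only shrink the domain of integration, so it suffices to prove $\meas\{(t_4,t_5)\in\mathbb{R}^2:h^b(t_4,t_5,t_1)\leq1\}\ll1$ uniformly in $t_1>0$. First I would observe that $h^b(t_4,t_5,t_1)\leq1$ forces $t_1^3=t_1\,\bigl|(t_4t_5+t_1^2)-t_4t_5\bigr|\leq t_1|t_4t_5+t_1^2|+t_1|t_4t_5|\leq2$ by the third and fourth entries of $h^b$, so this measure vanishes unless $t_1\leq2^{1/3}$; I may therefore assume $t_1\leq2^{1/3}$ from now on.

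Now fix $t_5\neq0$ and split the range of $t_4$ according to whether $|t_4t_5|\geq2t_1^2$ or $|t_4t_5|<2t_1^2$. In the first case $|t_4t_5+t_1^2|\geq\tfrac12|t_4t_5|$, so the fifth entry of $h^b$ gives $t_4^2|t_5|^3\leq2$; together with the first entry $|t_4|\leq1$ this confines $t_4$ to a set of measure $\ll\min(1,|t_5|^{-3/2})$. In the second case, $t_4$ lies in the interval $|t_4|<2t_1^2/|t_5|$; moreover the second entry of $h^b$ confines $t_4t_5+t_1^2$ to an interval of length $2t_1^{-2}|t_5|^{-1}$, hence $t_4$ to an interval of length $2t_1^{-2}|t_5|^{-2}$, and the first entry again gives $|t_4|\leq1$, so $t_4$ runs over a set of measure $\ll\min(1,t_1^2|t_5|^{-1},t_1^{-2}|t_5|^{-2})$. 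Integrating these two bounds over $t_5\in\mathbb{R}$, one gets $\int_{\mathbb{R}}\min(1,|t_5|^{-3/2})\,\D t_5\ll1$ and $\int_{\mathbb{R}}\min(1,t_1^2|t_5|^{-1},t_1^{-2}|t_5|^{-2})\,\D t_5\ll1$ for $0<t_1\leq2^{1/3}$ (a short computation gives the bound $\ll t_1^{2}(1+\log(1/t_1))$ when $t_1\leq1$), and adding the two contributions completes the proof. The only point requiring care is this last uniformity in $t_1$: near $t_5=0$ the entries of $h^b$ involving $|t_4t_5+t_1^2|$ degenerate and one is forced to fall back on the crude bound $|t_4|\leq1$, while for large $|t_5|$ one needs the quadratic decay coming from the second and fifth entries; the restriction $t_1\leq2^{1/3}$ is precisely what keeps the resulting logarithmic factor harmless.
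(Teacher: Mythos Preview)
Your argument is correct. The first bound matches the paper verbatim. For the second bound you take a somewhat more elaborate route than the paper: you split according to whether $|t_4t_5|\ge 2t_1^2$ or not, and in the second case you bring in the second entry $t_1^2|t_5||t_4t_5+t_1^2|\le 1$ of $h^b$ together with the constraint $|t_4|<2t_1^2/|t_5|$, which forces you to track the $t_1$-dependence and produces the harmless factor $t_1^2(1+\log(1/t_1))$. The paper avoids all of this by observing that the single condition $|t_4|t_5^2|t_4t_5+t_1^2|\le 1$ is of the form $|t_5 t_4^2+t_1^2 t_4|\le |t_5|^{-2}$, a quadratic-in-$t_4$ inequality with leading coefficient $t_5$; the sublevel set of any such quadratic has measure $\ll(|t_5|^{-2}/|t_5|)^{1/2}=|t_5|^{-3/2}$ \emph{uniformly in $t_1$}. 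Combining this with $|t_4|\le 1$ gives $\min(1,|t_5|^{-3/2})$, which integrates to $O(1)$ directly. So your case split and the appeal to the second entry of $h^b$ are unnecessary, though not wrong; the paper's argument is shorter and sidesteps the uniformity issue you flagged in your last paragraph.
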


\begin{proof}
Recall the definition \eqref{h^{b}} of the function $h^{b}$. The first bound is clear since $t_1|t_4t_5|\leq 1$. For the other one, the conditions $|t_4| \leq 1 $ and $|t_4|t_5^2|t_4t_5+t_1^2| \leq 1$ show that $t_4$ runs over a set whose measure is
$\ll \min \left( 1, |t_5|^{-3/2} \right)$. Splitting the integration of this minimum over $t_5$ depending on whether $|t_5|$ is greater or less than $1$ provides the desired bound.
\end{proof}

It is immediate to check that $\boldsymbol{\xi}_{b}$ is restricted to lie in the region
\begin{eqnarray}
\label{Vb}
\mathcal{V}_b & = & \left\{ \boldsymbol{\xi}_{b} \in \mathbb{Z}_{>0}^5, Y_4 \geq \log(B)^A, Y_1 \geq 2^{-1/3} \right\} \textrm{.}
\end{eqnarray}
Assume that $\boldsymbol{\xi}_{b} \in \mathcal{V}_b$ and $\xi_1 \in \mathbb{Z}_{>0}$ are fixed and satisfy the coprimality conditions \eqref{gcd4''}, \eqref{gcd5''}, \eqref{gcd6''} and \eqref{gcd7''}. 

We now turn to the estimation of $C(\boldsymbol{\xi}_{b}',B)$. Let us sum over $\xi_4''$ using the basic estimate $\# \{ n \in \mathbb{Z} , t_1 \leq n \leq t_2 \} = t_2 - t_1 + O(1)$. The change of variable $t_4 \mapsto Y_4'' t_4$ shows that
\begin{eqnarray*}
C(\boldsymbol{\xi}_{b}',B) & = & \sum_{|\xi_5''| \leq Y_1 \xi_1^{-1} Y_4 Y_5'' \log(B)^{-A}} \left( Y_4'' g_1^{b} \left( \frac{\xi_5''}{Y_5''}, \frac{\xi_1}{Y_1}, \kappa_{b}; \boldsymbol{\xi}_{b}, B \right) + O(1) \right) \textrm{.}
\end{eqnarray*}
The overall contribution of the error term is
\begin{eqnarray*}
\sum_{\boldsymbol{\xi}_{b}'} 2^{\omega(\xi_1\xi_2\xi_6\xi_7)} 2^{\omega(\xi_3\xi_8)}
\frac{B \log(B)^{-A}}{\boldsymbol{\xi}_{b}^{(1,1,1,1,1)}\xi_1} & \ll & \log(B)^{12-A} \textrm{.}
\end{eqnarray*}
Let us now sum over $\xi_5''$. Partial summation and the change of variable $t_5 \mapsto Y_5'' t_5$ yield
$$C(\boldsymbol{\xi}_{b}',B) = Y_4'' Y_5'' g_2^{b} \left( \frac{\xi_1}{Y_1}, \kappa_{b}; \boldsymbol{\xi}_{b}, B \right)
+ O \left( Y_4'' \sup_{|t_5| Y_5 \geq \log(B)^A} g_1^{b} \left( t_5, \frac{\xi_1}{Y_1}, \kappa_{b}; \boldsymbol{\xi}_{b}, B \right) \right) \textrm{.}$$
Using the bound of lemma \ref{bounds''} for $g_1^{b}$, we get
\begin{eqnarray*}
\sup_{|t_5| Y_5 \geq \log(B)^A} g_1^{b} \left( t_5, \frac{\xi_1}{Y_1}, \kappa; \boldsymbol{\xi}_{b}, B \right) & \ll & 
Y_5 \log(B)^{-A} \frac{Y_1}{\xi_1} \textrm{.}
\end{eqnarray*}
The overall contribution coming from this error term is therefore
\begin{eqnarray*}
\sum_{\boldsymbol{\xi}_{b}'} 2^{\omega(\xi_1\xi_3\xi_6\xi_7)} 2^{\omega(\xi_3\xi_8)}
\frac{B \log(B)^{-A}}{\boldsymbol{\xi}_{b}^{(1,1,1,1,1)}\xi_1}
& \ll & B \log(B)^{13-A} \textrm{.}
\end{eqnarray*}
Recalling lemma \ref{lemma inter b}, we find that for any fixed $A \geq 9$, we have
\begin{eqnarray*}
N(\boldsymbol{\xi}_{b}',B) & = & \frac1{\xi_8} \theta_{b}(\boldsymbol{\xi}_{b}) \frac{\varphi^{\ast}(\xi_1)}{\varphi^{\ast}(\gcd(\xi_1,\xi_2\xi_6\xi_7))} \frac{\varphi^{\ast}(\xi_1)}{\varphi^{\ast}(\gcd(\xi_1,\xi_3\xi_6\xi_7))} \\
& & g_2^{b} \left(\frac{\xi_1}{Y_1}, \kappa_b; \boldsymbol{\xi}_{b}, B \right) {Y_4Y_5} + R_1(\boldsymbol{\xi}_{b}',B) \textrm{,}
\end{eqnarray*}
where
\begin{eqnarray*}
\theta_{b}(\boldsymbol{\xi}_{b}) & = & \varphi^{\ast}(\xi_2\xi_6\xi_7) \varphi^{\ast}(\xi_3\xi_6\xi_7) \frac{\varphi^{\ast}(\xi_3\xi_8)}{\varphi^{\ast}(\gcd(\xi_3,\xi_6))} \textrm{,}
\end{eqnarray*}
and where $\sum_{\boldsymbol{\xi}_{b}'} R_1(\boldsymbol{\xi}_{b}',B) \ll B \log(B)^4$. For fixed
$\boldsymbol{\xi}_{b} \in \mathcal{V}_b$ satisfying the coprimality conditions \eqref{gcd5''}, \eqref{gcd6''} and \eqref{gcd7''}, let $\mathbf{N}(\boldsymbol{\xi}_{b},B)$ be the sum over $\xi_1$ of the main term of $N(\boldsymbol{\xi}_{b}',B)$, with $\xi_1$ subject to the coprimality condition \eqref{gcd4''}. Let us make use of lemma \ref{arithmetic preliminary} to sum over $\xi_1$. We find that for any fixed $A \geq 9$ and $0 < \sigma \leq 1$, we have
\begin{eqnarray}
\label{Nb}
\ \ \ \ \ \ \ \mathbf{N}(\boldsymbol{\xi}_{b},B) & = & \frac1{\xi_8} \mathcal{P} \Theta_{b}(\boldsymbol{\xi}_{b}) g_3^{b} \left( \kappa_b; \boldsymbol{\xi}_{b}, B \right) Y_4 Y_5 Y_1 \\
\nonumber
& & + O \left( \frac{Y_4Y_5}{\xi_8} \varphi_{\sigma}(\xi_2\xi_3\xi_8) Y_1^{\sigma}
\sup_{t_1 > 0} g_2^{b} \left( t_1, \kappa_b; \boldsymbol{\xi}_{b}, B \right) \right) \textrm{,}
\end{eqnarray}
where
\begin{eqnarray*}
\Theta_{b}(\boldsymbol{\xi}_{b}) & = & \theta_{b}(\boldsymbol{\xi}_{b}) \varphi^{\ast}(\xi_2\xi_3\xi_8) \varphi'(\xi_2\xi_3\xi_6\xi_7\xi_8) \textrm{.}
\end{eqnarray*}
Using the bound of lemma \ref{bounds''} for $g_2^{b}$ and choosing $\sigma = 1/2$, we see that the overall contribution of the error term is
\begin{eqnarray*}
\sum_{\boldsymbol{\xi}_{b}} \varphi_{\sigma}(\xi_2\xi_3\xi_8) \frac{Y_4 Y_5}{\xi_8} Y_1^{1/2} & \ll & \sum_{\xi_2,\xi_3,\xi_6,\xi_8} \varphi_{\sigma}(\xi_2\xi_3\xi_8) \frac{B}{\boldsymbol{\xi}_{b}^{(1,1,1,0,1)}} \\
& \ll & B \log(B)^4 \textrm{,}
\end{eqnarray*}
where we have summed over $\xi_7$ using $Y_1 \geq 2^{-1/3}$. Note that
\begin{eqnarray*}
\frac{Y_4 Y_5 Y_1}{\xi_8} & = & \frac{B}{\boldsymbol{\xi}_{b}^{(1,1,1,1,1)}} \textrm{.}
\end{eqnarray*}
For brevity, we set
\begin{eqnarray*}
D_{h^{b}} & = & \left\{ (t_4,t_5,t_1) \in \mathbb{R}^3, t_1 > 0, h^{b}(t_4,t_5,t_1) \leq 1 \right\} \textrm{.}
\end{eqnarray*}

\begin{lemma}
\label{bounds integrals'}
For $Z_4, Z_5 > 0$, we have
\begin{eqnarray}
\label{2'}
\meas \{ (t_4,t_5,t_1) \in D_{h^{b}}, |t_5| Z_5 \geq 1 \} & \ll & Z_5^{1/2} \textrm{,} \\
\label{4'}
\meas \{ (t_4,t_5,t_1) \in D_{h^{b}}, |t_4| Z_4 < 1 \} & \ll & Z_4^{-1/3} \textrm{,} \\
\label{5'}
\meas \{ (t_4,t_5,t_1) \in D_{h^{b}}, |t_5| Z_5 < 1 \} & \ll & Z_5^{-1} \textrm{.}
\end{eqnarray}
\end{lemma}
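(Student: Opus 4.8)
The plan is to follow the scheme used for $D_{h^a}$ in Lemma~\ref{bounds integrals}, the one genuinely new ingredient being a one-variable \emph{cubic} measure estimate replacing the quadratic input of Lemma~\ref{square}. First I would record, from \eqref{h^{b}}, that every $(t_4,t_5,t_1)\in D_{h^b}$ satisfies $|t_4|\le 1$, $t_1|t_4t_5|\le 1$, $t_1|t_4t_5+t_1^2|\le 1$ and $|t_4|\,t_5^2\,|t_4t_5+t_1^2|\le 1$; the second and third of these give $t_1^3=t_1|t_1^2|\le t_1|t_4t_5+t_1^2|+t_1|t_4t_5|\le 2$, so $t_1\in\,]0,2^{1/3}]$ throughout. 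I would also note that $\meas(D_{h^b})=\tau_\infty/3\ll 1$ by \eqref{tau 2}, so this trivial bound already proves \eqref{2'} when $Z_5\ge 1$ and \eqref{4'} when $Z_4\le 1$; hence I may assume $Z_5\le 1$ in \eqref{2'} and $Z_4\ge 1$ in \eqref{4'}.

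For \eqref{5'} I would fix $t_5$ with $|t_5|<Z_5^{-1}$ and bound the fibre in $(t_4,t_1)$ using only $|t_4|\le 1$ and $0<t_1\le 2^{1/3}$, so that it has measure $\ll 1$; integrating over the slab $|t_5|<Z_5^{-1}$ then gives $\ll Z_5^{-1}$. For \eqref{2'} (where now $|t_5|\ge Z_5^{-1}\ge 1$ on the relevant set) I would fix $t_5$ and $t_1$ and estimate the set of admissible $t_4$: rewriting $|t_4|\,t_5^2\,|t_4t_5+t_1^2|\le 1$ as $|t_4|\,|t_4+t_1^2t_5^{-1}|\le|t_5|^{-3}$, the measure bound $\meas(\mathcal R)\ll\delta^{1/2}M$ established inside the proof of Lemma~\ref{square} (here with $Y=|t_5|^{-3}$, $Y'=0$) shows that this $t_4$-set has measure $\ll|t_5|^{-3/2}$. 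Integrating over $t_1\in\,]0,2^{1/3}]$ and then over $|t_5|\ge Z_5^{-1}$ yields $\int_{|t_5|\ge Z_5^{-1}}|t_5|^{-3/2}\,\D t_5\ll Z_5^{1/2}$, which is \eqref{2'}.

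The bound \eqref{4'} is the delicate one: for a fixed small $t_4$ the fibre in $(t_5,t_1)$ is unbounded, and I must control its measure as a function of $t_4$. Keeping only $|t_4|\,t_5^2\,|t_4t_5+t_1^2|\le 1$ and $t_1^3\le 2$, I would fix $t_4\ne 0$ and $t_1$ and read the first inequality as a constraint on $t_5$: dividing through by $t_4^2$ it becomes $t_5^2\,|t_5+t_1^2t_4^{-1}|\le t_4^{-2}$, i.e. $t_5^2|t_5-c|\le D$ with $c=-t_1^2t_4^{-1}$ and $D=t_4^{-2}$. The crucial point, to be proved here, is the elementary estimate $\meas\{t\in\mathbb{R}:t^2|t-c|\le D\}\ll D^{1/3}$, uniform in $c$: one distinguishes the cases $|c|^3\le D$ and $|c|^3>D$ and, in each, bounds separately the contributions near the double root $t=0$ and near the simple root $t=c$, using $a^{1/3}-b^{1/3}\le(a-b)^{1/3}$ for $a\ge b\ge 0$ exactly as the square-root analogue is used in the proof of Lemma~\ref{square}. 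It follows that the admissible $t_5$ form a set of measure $\ll(t_4^{-2})^{1/3}=|t_4|^{-2/3}$; integrating over $t_1\in\,]0,2^{1/3}]$ bounds each fibre by $\ll|t_4|^{-2/3}$, and then $\int_{|t_4|<Z_4^{-1}}|t_4|^{-2/3}\,\D t_4\ll Z_4^{-1/3}$ finishes \eqref{4'}. I expect this cubic one-variable measure estimate to be the main obstacle: unlike the situation for $D_{h^a}$, where the quadratic Lemma~\ref{square} suffices, here the relevant polynomial $t^2(t-c)$ is genuinely cubic, so one has to work near its double root and its simple root by hand.
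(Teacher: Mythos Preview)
Your proof is correct. Bounds \eqref{2'} and \eqref{5'} match the paper's argument essentially verbatim: for \eqref{5'} both of you use only $|t_4|\le 1$ and $t_1\ll 1$, and for \eqref{2'} both of you extract $\meas\{t_4:|t_4|t_5^2|t_4t_5+t_1^2|\le 1\}\ll|t_5|^{-3/2}$ from the quadratic structure and integrate over $t_1\ll 1$ and $|t_5|\ge Z_5^{-1}$.

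The genuine divergence is in \eqref{4'}. You reduce to the uniform one-variable estimate $\meas\{t:t^2|t-c|\le D\}\ll D^{1/3}$, which you prove by treating the double root $t=0$ and the simple root $t=c$ separately; applied with $D=t_4^{-2}$ this gives the $(t_5,t_1)$-fibre measure $\ll|t_4|^{-2/3}$. The paper avoids any cubic lemma by a threshold trick: it splits on whether $|t_4t_5+t_1^2|\ge|t_4|^{1/3}$ or not. In the first case the height condition $|t_4|t_5^2|t_4t_5+t_1^2|\le 1$ forces $|t_5|\le|t_4|^{-2/3}$ directly; in the second case $t_5$ lies in an interval of length $2|t_4|^{1/3}/|t_4|=2|t_4|^{-2/3}$. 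Either way the fibre has measure $\ll|t_4|^{-2/3}$, and the rest is the same integration you perform. The paper's device is a one-line ad hoc balancing that exploits the specific shape of $h^b$; your cubic estimate is a clean reusable statement (the natural degree-three analogue of what the proof of Lemma~\ref{square} provides in degree two) and would transfer unchanged to other regions of the same type.
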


\begin{proof}
As in lemma \ref{bounds integrals}, we have $t_1^3 \leq 2$. The condition $|t_4|t_5^2|t_4t_5+t_1^2| \leq 1$ shows that $t_4$ runs over a set whose measure is $\ll |t_5|^{-3/2}$ and integrating this over $t_1 \ll 1$ yields \eqref{2'}. Concerning \eqref{4'}, we split the proof into two cases depending on whether $|t_4t_5 + t_1^2|$ is greater or less than $|t_4|^{1/3}$. If
$|t_4t_5 + t_1^2| \geq |t_4|^{1/3}$, the condition $|t_4|t_5^2|t_4t_5+t_1^2| \leq 1$ gives $|t_4|^{4/3} t_5^2 \leq 1$ and integrating over $t_5$ using this inequality and over $t_1 \ll 1$ gives the result. In the other case, $t_5$ runs over a set whose measure is
$\ll |t_4|^{-2/3}$ and integrating this over $t_1 \ll 1$ also gives the result. Finally, the statement \eqref{5'} is an immediate consequence of $|t_4| \leq 1$ and $t_1 \ll 1$.
\end{proof}

As in section \ref{Summing}, the bound \eqref{2'} shows that if we do not have $Y_5 \geq \log(B)^A$, the contribution of the main term of $\mathbf{N}(\boldsymbol{\xi}_{b},B)$ is $\ll B \log(B)^4$. Thus we can assume from now on that
\begin{eqnarray}
\label{condition Vb1}
Y_5 & \geq & \log(B)^A \textrm{,}
\end{eqnarray}
and since we also have $Y_4 \geq \log(B)^A$, the two bounds \eqref{4'} and \eqref{5'} prove that removing the conditions
$|t_4| Y_4, |t_5| Y_5 \geq \log(B)^A$ from the integral defining $g_3^{b}$ in the main term of $\mathbf{N}(\boldsymbol{\xi}_{b},B)$ in \eqref{Nb} produces an error term whose overall contribution is $\ll B \log(B)^4$. We have thus proved that for any fixed
$A \geq 9$, we have
\begin{eqnarray}
\label{estimate Nb}
\mathbf{N}(\boldsymbol{\xi}_{b},B) & = & \mathcal{P} g_4^{b}(\kappa_b) \frac{B}{\boldsymbol{\xi}_{b}^{(1,1,1,1,1)}}
\Theta_{b}(\boldsymbol{\xi}_{b}) + R_2(\boldsymbol{\xi}_{b},B) \textrm{,}
\end{eqnarray}
where $\sum_{\boldsymbol{\xi}_{b}} R_2(\boldsymbol{\xi}_{b},B) \ll B \log(B)^4$.

\begin{lemma}
For $t > 0$, we have
\begin{eqnarray}
\label{7'}
\meas \{ (t_4,t_5,t_1) \in D_{h^{b}}, |t_4t_5+t_1^2| > t \} & \ll & t^{-3/2} \textrm{,} \\
\label{8'}
\meas \{ (t_4,t_5,t_1) \in D_{h^{b}}, |t_4t_5+t_1^2| \leq t \} & \ll & t^{1/2} \textrm{.}
\end{eqnarray}
\end{lemma}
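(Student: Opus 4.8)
The plan is to estimate each of the two measures by integrating over $(t_4,t_5,t_1)$ in a well-chosen order and exploiting the individual factors appearing in the definition \eqref{h^{b}} of $h^{b}$, entirely in the spirit of lemmas \ref{bounds integrals} and \ref{bounds integrals'}. As there, the two factors $t_1|t_4t_5| \leq 1$ and $t_1|t_4t_5+t_1^2| \leq 1$ together give $t_1^3 \leq 2$, so $0 < t_1 \ll 1$ throughout $D_{h^{b}}$; this will control the $t_1$-integration in \eqref{8'}.

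For \eqref{7'} I would proceed as for \eqref{7} in lemma \ref{bounds integrals}. Combining $|t_4t_5+t_1^2| > t$ with the factors $t_1|t_4t_5+t_1^2| \leq 1$ and $|t_4|t_5^2|t_4t_5+t_1^2| \leq 1$ yields $t_1 < 1/t$ and $|t_4|\,t_5^2 < 1/t$. Hence, for fixed $t_4 \neq 0$ the admissible $t_5$ form a set of measure $\ll (|t_4|t)^{-1/2}$; integrating this over $|t_4| \leq 1$ gives $\ll t^{-1/2}$, and integrating over $t_1 \in (0,1/t)$ then produces the bound $\ll t^{-1/2}\cdot t^{-1} = t^{-3/2}$. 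This argument is uniform in $t>0$ and requires no further input.

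For \eqref{8'} it suffices to treat $0 < t \leq 1$, since for $t \geq 1$ the set has measure at most $\meas(D_{h^{b}}) = \tau_\infty/3 \ll 1 \ll t^{1/2}$ by \eqref{tau 2}. Fix $t_1$ (with $0<t_1\ll 1$) and estimate the $(t_4,t_5)$-measure of the admissible set. For fixed $t_5 \neq 0$ the hypothesis $|t_4t_5+t_1^2|\leq t$ confines $t_4$ to an interval of length $\ll t/|t_5|$, the factor $|t_4|\leq 1$ confines it to a set of measure $\ll 1$, and rewriting $|t_4|t_5^2|t_4t_5+t_1^2|\leq 1$ as $\bigl|t_4^2 + (t_1^2/t_5)t_4\bigr| \leq |t_5|^{-3}$ and completing the square shows, as in lemma \ref{square}, that $t_4$ lies in a set of measure $\ll |t_5|^{-3/2}$, valid for all $|t_5|$. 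I would then integrate over $t_5$ by splitting into $|t_5|\leq 1$, where $\min(t/|t_5|,1)$ has $t_5$-integral $\ll t\log(2/t)\ll t^{1/2}$, and $|t_5|>1$, where $\min(t/|t_5|,|t_5|^{-3/2}) \ll t^{1/2}|t_5|^{-5/4}$ has integral over $|t_5|>1$ equal to $\ll t^{1/2}$. Integrating the resulting $O(t^{1/2})$ bound over the bounded range of $t_1$ gives \eqref{8'}.

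The only step that is not purely mechanical is the control of $t_4$ in the second case near the zero locus of $t_4t_5+t_1^2$: the naive length bound $t/|t_5|$ alone leaves a $t_5$-integral that diverges at $t_5=0$, and it is the completing-the-square estimate providing the $|t_5|^{-3/2}$ bound, combined with the trivial bound $|t_4|\leq 1$ for small $|t_5|$, that rescues the argument and produces the correct exponent $t^{1/2}$. Everything else is routine and parallel to the $h^{a}$ estimates already established.
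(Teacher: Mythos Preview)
Your proof of \eqref{7'} is essentially identical to the paper's: both combine $t_1|t_4t_5+t_1^2|\leq 1$ and $|t_4|t_5^2|t_4t_5+t_1^2|\leq 1$ with $|t_4t_5+t_1^2|>t$ to force $t_1\leq t^{-1}$ and $|t_4|t_5^2\leq t^{-1}$, then integrate in the same order.

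For \eqref{8'} your argument is correct but genuinely different from the paper's. You fix $t_5$ and bound the $t_4$-measure via the three estimates $t/|t_5|$, $1$, and $|t_5|^{-3/2}$ (the last coming from $|t_4|t_5^2|t_4t_5+t_1^2|\leq 1$ by completing the square), then split the $t_5$-integration at $|t_5|=1$. The paper instead fixes $t_4$ and bounds the $t_5$-measure: from $t_1^2|t_5||t_4t_5+t_1^2|\leq 1$ one gets (by the same square-root device) $\ll t_1^{-1}|t_4|^{-1/2}$, and from $|t_4t_5+t_1^2|\leq t$ one gets $\ll t|t_4|^{-1}$; the geometric mean $t^{1/2}t_1^{-1/2}|t_4|^{-3/4}$ integrates directly over $|t_4|\leq 1$ and $t_1\ll 1$ without any case split or the intermediate $t\log(2/t)$ bound. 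So the paper's route is a little cleaner, and it exploits the factor $t_1^2|t_5||t_4t_5+t_1^2|\leq 1$ of $h^b$ which you do not use; conversely your route makes explicit use of $|t_4|\leq 1$, which the paper only uses at the final integration step. Both approaches rely on the same underlying idea---taking the geometric mean of the ``thin shell'' bound from $|t_4t_5+t_1^2|\leq t$ and a square-root bound extracted from one of the cubic-type factors of $h^b$---and both yield the exponent $t^{1/2}$.
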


\begin{proof}
For the bound \eqref{7'}, the conditions $t_1|t_4t_5+t_1^2| \leq 1$, $|t_4|t_5^2|t_4t_5+t_1^2| \leq 1$ and $|t_4t_5+t_1^2| > t$ give $t_1t \leq 1$ and $|t_4|t_5^2t \leq 1$. Integrating over $t_5$ using this inequality then over $|t_4| \leq 1$ and over $t_1$ using $t_1 t \leq 1$ yields \eqref{7'}. For \eqref{8'}, the conditions $t_1^2|t_5||t_4t_5+t_1^2| \leq 1$ and $|t_4t_5+t_1^2| \leq t$ show that $t_5$ runs over a set whose measure is
$\ll \min \left( t_1^{-1} |t_4|^{-1/2}, t |t_4|^{-1} \right) \leq t^{1/2} t_1^{-1/2} |t_4|^{-3/4}$. This concludes the proof since
$t_1, |t_4| \ll 1$.
\end{proof}

The bound \eqref{7'} shows that if $\kappa_b > 1$, the contribution of the main term of $\mathbf{N}(\boldsymbol{\xi}_{b},B)$ is
$\ll B \log(B)^4$, thus we assume from now on that $\kappa_b \leq 1$, namely
\begin{eqnarray}
\label{condition Vb2}
Y_4 Y_5 & \geq & \xi_8^2 \textrm{.}
\end{eqnarray}
Replacing $g_4^{b}(\kappa_b)$ by $g_4^{b}(0)$ in the main term of $\mathbf{N}(\boldsymbol{\xi}_{b},B)$ in \eqref{estimate Nb} therefore creates an error term whose overall contribution is $\ll B \log(B)^4$. Since $g_4^{b}(0) = \tau_{\infty}/3$ by
\eqref{tau 2}, we have obtained the following result.

\begin{lemma}
\label{lemmafin b}
For any fixed $A \geq 9$, we have
\begin{eqnarray*}
\mathbf{N}(\boldsymbol{\xi}_{b},B) & = & \mathcal{P} \frac{\tau_{\infty}}{3} \frac{B}{\boldsymbol{\xi}_{b}^{(1,1,1,1,1)}} \Theta_{b}(\boldsymbol{\xi}_{b}) + R_3(\boldsymbol{\xi}_{b},B) \textrm{,}
\end{eqnarray*}
where $\sum_{\boldsymbol{\xi}_{b}} R_3(\boldsymbol{\xi}_{b},B) \ll B \log(B)^4$.
\end{lemma}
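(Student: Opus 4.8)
The plan is to read the statement off from the analysis carried out in the paragraphs that immediately precede it: at this stage all of the real work has already been done --- in Lemma~\ref{lemma tau2}, in Lemma~\ref{lemma inter b}, and in the chain of reductions ending at \eqref{estimate Nb} --- so the only remaining task is to remove the dependence on $\boldsymbol{\xi}_{b}$ that still survives through the factor $g_4^{b}(\kappa_b)$ in \eqref{estimate Nb}, where $\kappa_b=\xi_8^2/(Y_4Y_5)$, to replace it by the constant $g_4^{b}(0)$, and then to identify that constant. Concretely, \eqref{estimate Nb} already yields
\[
\mathbf{N}(\boldsymbol{\xi}_{b},B)=\mathcal{P}\,g_4^{b}(\kappa_b)\,\frac{B}{\boldsymbol{\xi}_{b}^{(1,1,1,1,1)}}\,\Theta_{b}(\boldsymbol{\xi}_{b})+R_2(\boldsymbol{\xi}_{b},B),\qquad\sum_{\boldsymbol{\xi}_{b}}R_2(\boldsymbol{\xi}_{b},B)\ll B\log(B)^4,
\]
so it is enough to bound by $\ll B\log(B)^4$ the overall contribution of $\frac{B}{\boldsymbol{\xi}_{b}^{(1,1,1,1,1)}}\,\Theta_{b}(\boldsymbol{\xi}_{b})\bigl(g_4^{b}(\kappa_b)-g_4^{b}(0)\bigr)$.

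I would proceed in three short steps. First, discard the range $\kappa_b>1$: there $g_4^{b}(\kappa_b)\ll\kappa_b^{-3/2}$ by \eqref{7'}, and --- exactly as recorded in the text just before the statement --- summing $\frac{B}{\boldsymbol{\xi}_{b}^{(1,1,1,1,1)}}\Theta_{b}(\boldsymbol{\xi}_{b})\kappa_b^{-3/2}$ over $\boldsymbol{\xi}_{b}$ costs only $\ll B\log(B)^4$, so one may assume $\kappa_b\le1$, i.e.\ \eqref{condition Vb2}. Second, under \eqref{condition Vb2} apply \eqref{8'}: since
\[
\bigl|g_4^{b}(\kappa_b)-g_4^{b}(0)\bigr|\le\meas\{(t_4,t_5,t_1)\in D_{h^{b}},\ |t_4t_5+t_1^2|<\kappa_b\}\ll\kappa_b^{1/2}=\frac{\xi_8}{(Y_4Y_5)^{1/2}},
\]
and since, after inserting the explicit values of $Y_4$ and $Y_5$, the factor $\frac{B}{\boldsymbol{\xi}_{b}^{(1,1,1,1,1)}}\kappa_b^{1/2}$ does not depend on $\xi_8$, the sum over $\xi_8$ --- confined to a range of length $\ll(Y_4Y_5)^{1/2}$ by \eqref{condition Vb2}, itself a consequence of \eqref{conditionkappa''} once $\xi_1$ has been summed --- together with $\Theta_{b}$ having bounded average order leaves $\ll\sum_{\xi_2,\xi_3,\xi_6,\xi_7}\frac{B}{\xi_2\xi_3\xi_6\xi_7}\ll B\log(B)^4$. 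Third, evaluate the constant: by \eqref{tau 2}, $g_4^{b}(0)=\int\!\!\int\!\!\int_{t_1>0,\,h^{b}(t_4,t_5,t_1)\le1}\D t_4\,\D t_5\,\D t_1=\tau_{\infty}/3$. Substituting these into \eqref{estimate Nb} gives the asserted formula, with $R_3(\boldsymbol{\xi}_{b},B)$ equal to $R_2(\boldsymbol{\xi}_{b},B)$ together with the two error terms just produced, whence $\sum_{\boldsymbol{\xi}_{b}}R_3(\boldsymbol{\xi}_{b},B)\ll B\log(B)^4$.

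There is no genuinely new difficulty in this statement: it is the endpoint of a long sequence of reductions, and the proof amounts to a short paragraph reorganising what precedes it. The only point worth any attention is the bookkeeping in the second step --- checking that the $\boldsymbol{\xi}_{b}$-summation of the $\kappa_b^{1/2}$-weighted error still recovers the correct power of $\log(B)$ --- and this rests on the confinement of $\xi_8$ coming from $\kappa_b\le1$, i.e.\ ultimately on the crucial inequality \eqref{conditionkappa''}. This step is strictly parallel to the corresponding one in the $3\mathbf{A}_1$ case, where the bound \eqref{condition t} plays exactly the same role, so no new input is required.
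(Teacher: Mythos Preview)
Your proposal is correct and follows exactly the same three-step route as the paper: discard $\kappa_b>1$ via \eqref{7'}, replace $g_4^{b}(\kappa_b)$ by $g_4^{b}(0)$ via \eqref{8'} using the confinement $\xi_8\le(Y_4Y_5)^{1/2}$, and identify $g_4^{b}(0)=\tau_\infty/3$ by \eqref{tau 2}. One small imprecision: the confinement \eqref{condition Vb2} is not ``a consequence of \eqref{conditionkappa''} once $\xi_1$ has been summed'' --- it is simply the restriction $\kappa_b\le1$ imposed in your first step --- but this parenthetical aside does not affect the argument.
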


Recall the definition \eqref{Vb} of $\mathcal{V}_b$. It remains to sum the main term of $\mathbf{N}(\boldsymbol{\xi}_{b},B)$ over the $\boldsymbol{\xi}_{b} \in \mathcal{V}_b$ satisfying \eqref{condition Vb1} and \eqref{condition Vb2} and the coprimality conditions \eqref{gcd5''}, \eqref{gcd6''} and \eqref{gcd7''}. It is easy to see that replacing
$\left\{ \boldsymbol{\xi}_{b} \in \mathcal{V}_b, \eqref{condition Vb1}, \eqref{condition Vb2} \right\}$ by the region
\begin{eqnarray*}
\mathcal{V}_b' & = & \left\{ \boldsymbol{\xi}_{b} \in  \mathbb{Z}_{>0}^5, Y_4 \geq 1, Y_5 \geq 1, Y_1 \geq 1, Y_4Y_5 \geq \xi_8^2 \right\}  \textrm{,}
\end{eqnarray*}
produces an error term whose overall contribution is $\ll B \log(B)^4 \log(\log(B))$. Let us redefine $\Theta_{b}$ as being equal to zero if the remaining coprimality conditions \eqref{gcd5''}, \eqref{gcd6''} and \eqref{gcd7''} are not satisfied. Lemma
\ref{lemmafin b} proves that for any fixed $A \geq 9$, we have
\begin{eqnarray*}
N_{b}(A,B) & = & \mathcal{P} \frac{\tau_{\infty}}{3} B \sum_{\boldsymbol{\xi}_{b} \in \mathcal{V}_b'} \frac{\Theta_{b}(\boldsymbol{\xi}_{b})}{\boldsymbol{\xi}_{b}^{(1,1,1,1,1)}} + O \left( B \log(B)^4 \log(\log(B)) \right) \textrm{.}
\end{eqnarray*}
As in section \ref{Conclusion}, $\Theta_{b}$ satifies the assumption \eqref{Psi} of lemma \ref{final sum} and thus we get
$$N_{b}(A,B) = \mathcal{P} \frac{\tau_{\infty}}{3} \alpha_{b} \left( \sum_{\boldsymbol{\xi}_{b} \in \mathbb{Z}_{>0}^5}
\frac{(\Theta_{b} \ast \boldsymbol{\mu})(\boldsymbol{\xi}_{b})}{\boldsymbol{\xi}_{b}^{(1,1,1,1,1)}} \right) B \log(B)^5
+ O \left( B \log(B)^4 \log(\log(B)) \right) \textrm{,}$$
where $\alpha_{b}$ is the volume of the polytope defined in $\mathbb{R}^5$ by $t_2,t_3,t_6,t_7,t_8 \geq 0$ and
\begin{eqnarray*}
2 t_3 + t_6 + t_8 & \leq & 1 \textrm{,} \\
- 2 t_2 + 4 t_3 + 2 t_6 - t_7 + 3 t_8 & \geq & 1 \textrm{,} \\
t_2 + t_3 + 2 t_6 + 2 t_7 & \leq & 1 \textrm{,} \\
2 t_2 + 2 t_3 + t_6 + t_7 + 6 t_8 & \leq & 2 \textrm{.}
\end{eqnarray*}
A computation using \cite{Convex} gives
\begin{eqnarray}
\label{alpha_b}
\alpha_b & = & \frac{929}{2016000} \textrm{,}
\end{eqnarray}
and exactly as in the case of $N_{a}(A,B)$, a calculation provides
\begin{eqnarray*}
\sum_{\boldsymbol{\xi}_{b} \in \mathbb{Z}_{>0}^5}
\frac{(\Theta_{b} \ast \boldsymbol{\mu})(\boldsymbol{\xi}_{b})}{\boldsymbol{\xi}_{b}^{(1,1,1,1,1)}} & = &
\mathcal{P}^{-1} \prod_p \left( 1 - \frac1{p} \right)^6 \tau_p \textrm{.}
\end{eqnarray*}
We have proved the following lemma.

\begin{lemma}
\label{lemma N_b}
For any fixed $A \geq 9$, we have
\begin{eqnarray*}
N_{b}(A,B) & = & \frac1{3} \alpha_b \omega_H(\widetilde{V_2}) B \log(B)^5 + O \left( B \log(B)^4 \log(\log(B)) \right) \textrm{.}
\end{eqnarray*}
\end{lemma}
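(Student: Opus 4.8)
The plan is to assemble the estimates established in this subsection and to carry out one last substitution. By Lemma \ref{lemmafin b}, the quantity $\mathbf{N}(\boldsymbol{\xi}_b,B)$ --- which already absorbs the summations over $\xi_4,\xi_5,\xi_9$ (performed by reading the torsor equation \eqref{tor 2} as a congruence modulo $\xi_8$ and invoking the second, exponent-$4/5$, estimate of Lemma \ref{lemma tau2}) and over $\xi_1$ (performed via Lemma \ref{arithmetic preliminary}), together with the replacement of the truncated integrals $g_i^b$ by $g_4^b$ and of $g_4^b(\kappa_b)$ by $g_4^b(0)=\tau_\infty/3$ --- equals $\mathcal{P}\frac{\tau_\infty}{3}\,B\,\boldsymbol{\xi}_b^{-(1,1,1,1,1)}\Theta_b(\boldsymbol{\xi}_b)$ up to an error whose sum over $\boldsymbol{\xi}_b$ is $O(B\log(B)^4)$. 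First I would sum this over the admissible $\boldsymbol{\xi}_b\in\mathcal{V}_b$ satisfying \eqref{condition Vb1} and \eqref{condition Vb2}, then enlarge the range of summation to $\mathcal{V}_b'$ --- paying an extra error $O(B\log(B)^4\log(\log(B)))$, this enlargement being the only source of the factor $\log(\log(B))$ --- and extend $\Theta_b$ by zero off the remaining coprimality conditions \eqref{gcd5''}, \eqref{gcd6''}, \eqref{gcd7''}, so that
\[
N_b(A,B)=\mathcal{P}\frac{\tau_\infty}{3}\,B\sum_{\boldsymbol{\xi}_b\in\mathcal{V}_b'}\frac{\Theta_b(\boldsymbol{\xi}_b)}{\boldsymbol{\xi}_b^{(1,1,1,1,1)}}+O\left(B\log(B)^4\log(\log(B))\right).
\]

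Next I would apply Lemma \ref{final sum} to the $\boldsymbol{\xi}_b$-sum. Its hypothesis \eqref{Psi} for $\Theta_b$ is verified exactly as in section \ref{Conclusion}: since $(\Theta_b\ast\boldsymbol{\mu})(p^{k_2},\dots,p^{k_8})$ vanishes unless all exponents lie in $\{0,1\}$ and is $O(1/p)$ when exactly one equals $1$, the local factors of $\sum_{\boldsymbol{\xi}_b}\frac{|(\Theta_b\ast\boldsymbol{\mu})(\boldsymbol{\xi}_b)|}{\xi_2^s\xi_3^s\xi_6^s\xi_7^s\xi_8^s}$ are $1+O\left(p^{-\min(\Re(s)+1,\,2\Re(s))}\right)$, so the series converges in $\Re(s)>1/2$. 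Lemma \ref{final sum} then yields the factor $\vol(\mathcal{C})\left(\sum_{\boldsymbol{\xi}_b}(\Theta_b\ast\boldsymbol{\mu})(\boldsymbol{\xi}_b)\boldsymbol{\xi}_b^{-(1,1,1,1,1)}\right)\log(B)^5$, where $\mathcal{C}\subset[0,1]^5$ is the polytope in the variables $t_2,t_3,t_6,t_7,t_8$ cut out by the four linear inequalities obtained from the defining conditions of $\mathcal{V}_b'$ written in logarithmic coordinates; a computation with \cite{Convex} evaluates $\vol(\mathcal{C})=\alpha_b$ to the value $929/2016000$ recorded in \eqref{alpha_b}. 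For the arithmetic constant I would again exploit that $(\Theta_b\ast\boldsymbol{\mu})$ is supported on $\{0,1\}^5$ at each prime, reducing the Euler product to a finite local sum which the remaining coprimality conditions collapse just as in the $3\mathbf{A}_1$ case; the outcome is $\mathcal{P}^{-1}\prod_p\left(1-\frac1p\right)^6\tau_p$.

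Finally I would substitute these two evaluations back. The factors $\mathcal{P}$ and $\mathcal{P}^{-1}$ cancel, and using $g_4^b(0)=\tau_\infty/3$ (by \eqref{tau 2}) together with $\omega_H(\widetilde{V_2})=\tau_\infty\prod_p\left(1-\frac1p\right)^6\tau_p$ one obtains
\[
N_b(A,B)=\frac{1}{3}\,\alpha_b\,\tau_\infty\prod_p\left(1-\frac1p\right)^6\tau_p\,B\log(B)^5+O\left(B\log(B)^4\log(\log(B))\right)=\frac{1}{3}\,\alpha_b\,\omega_H(\widetilde{V_2})\,B\log(B)^5+O\left(B\log(B)^4\log(\log(B))\right),
\]
which is the claim. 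I do not expect a genuine obstacle at this final stage: the substantive difficulties lie upstream --- controlling the error term produced by the weaker estimate of Lemma \ref{lemma tau2}, and checking at each step that the errors coming from the truncations $|\xi_4|,|\xi_5|\geq\log(B)^A$ and from the enlargement of $\mathcal{V}_b$ to $\mathcal{V}_b'$ are of size at most $B\log(B)^4\log(\log(B))$, which is exactly why the hypothesis $A\geq 9$ is imposed. The one point to keep in mind is that the admissible error is $B\log(B)^4\log(\log(B))$ rather than $B\log(B)^4$, so the $\log(\log(B))$ must be tracked faithfully through the widening of the summation region.
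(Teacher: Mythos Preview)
Your proposal is correct and follows essentially the same approach as the paper: you start from Lemma~\ref{lemmafin b}, sum the main term over $\boldsymbol{\xi}_b$, replace the summation region by $\mathcal{V}_b'$ at the cost of the $\log(\log(B))$ factor, extend $\Theta_b$ by zero to absorb the remaining coprimality conditions, verify hypothesis~\eqref{Psi} and apply Lemma~\ref{final sum}, and then evaluate the polytope volume and the Euler product exactly as in section~\ref{Conclusion}. This is precisely what the paper does in the passage leading up to the statement of Lemma~\ref{lemma N_b}.
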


We now fix $A = 9$ for example. Given the equalities \eqref{alpha_a} and \eqref{alpha_b}, we get
\begin{eqnarray*}
\alpha_a + \alpha_b & = & 3 \alpha(\widetilde{V_2}) \textrm{,}
\end{eqnarray*}
and we immediately complete the proof putting together lemmas \ref{lemma N_a N_b}, \ref{lemma N_a} and \ref{lemma N_b}.

\bibliographystyle{is-alpha}
\bibliography{biblio}

\end{document}